\tikzset{hfit/.style={rounded rectangle, inner xsep=0pt},
           vfit/.style={rounded corners}}
\newtheorem{thm}{Theorem}[section]
\newtheorem{lem}[thm]{Lemma}
\newtheorem{defn}[thm]{Definition}
\newtheorem{cor}[thm]{Corollary}
\newtheorem{prop}[thm]{Proposition}
\newtheorem{ex}[thm]{Example}
\newtheorem{alg}[thm]{Algorithm}
\theoremstyle{remark}
\newtheorem{rem}[thm]{Remark}
\newcommand{\setof}[1]{\left\{ {#1}\right\}}
\newcommand{\bRMod}{\text{{\bf R-Mod}} }
\newcommand{\cyclic}{strict}
\newcommand{\Cyclic}{Strict}
\newcommand{\cycsub}{s}
\newcommand{\trivial}{minimal}
\newcommand{\Trivial}{Minimal}
\newcommand{\strict}{minimal}
\newcommand{\Strict}{Minimal}
\newcommand{\Hom}{\mathrm{Hom}}
\newcommand{\bVec}{\text{{\bf Vect}}}
\newcommand{\bCh}{\text{{\bf Ch}}}
\newcommand{\bChB}{\text{\bf ChB}(\sP,\bVec)}
\newcommand{\bKB}{\text{\bf KB}(\sP,\bVec)}
\newcommand{\bGMB}{\text{\bf GMB}(\sP,\bVec^\Z)}
\newcommand{\bGVec}{\text{{\bf GrVect}}}
\newcommand{\bChG}{\bCh(\bGVec(\sP))}
\newcommand{\bChGz}{\bCh_{\cycsub}(\bGVec(\sP))}
\newcommand{\bKG}{\bK(\bGVec(\sP))}
\newcommand{\bKGz}{\bK_{\cycsub}(\bGVec(\sP))}
\newcommand{\bFVec}{\text{{\bf FVect}}}
\newcommand{\bChFL}{\bCh(\bFVec(\sL))}
\newcommand{\bChFO}{\bCh(\bFVec(\sO(\sP)))}
\newcommand{\bChFLz}{\bCh_{\cycsub}(\bFVec(\sL))}
\newcommand{\bKF}{\bK(\bFVec(\sL))}
\newcommand{\bKFz}{\bK_{\cycsub}(\bFVec(\sL))}
\newcommand{\bFPoset}{\text{{\bf FPoset}}}
\newcommand{\bFDLat}{\text{{\bf FDLat}}}
\newcommand{\Cell}{\textrm{Cell}}
\newcommand{\fL}{\mathfrak{L}}
\newcommand{\fB}{\mathfrak{B}}
\newcommand{\frH}{\mathfrak{H}}
\newcommand{\frU}{\mathfrak{u}}
\newcommand{\frF}{F}
\newcommand{\frG}{G}
\newcommand{\quo}{q}
\newcommand{\sABlock}{{\mathsf{ ABlock}}}
\newcommand{\bA}{{\bf A}}
\newcommand{\bB}{{\bf B}}
\newcommand{\bK}{{\bf K}}
\newcommand{\K}{{\mathbb{K}}}
\newcommand{\N}{{\mathbb{N}}}
\newcommand{\R}{{\mathbb{R}}}
\newcommand{\Z}{{\mathbb{Z}}}
\newcommand{\scC}{ \mathscr{C} }
\newcommand{\scB}{ \mathscr{B} }
\newcommand{\scG}{ \mathscr{G} }
\newcommand{\cA}{{\mathcal A}}
\newcommand{\cB}{{\mathcal B}}
\newcommand{\cC}{{\mathcal C}}
\newcommand{\cF}{{\mathcal F}}
\newcommand{\cK}{{\mathcal K}}
\newcommand{\cL}{{\mathcal L}}
\newcommand{\cX}{{\mathcal X}}
\newcommand{\sA}{{\mathsf A}}
\newcommand{\sJ}{{\mathsf J}}
\newcommand{\sK}{{\mathsf K}}
\newcommand{\sL}{{\mathsf L}}
\newcommand{\sM}{{\mathsf M}}
\newcommand{\sN}{{\mathsf N}}
\newcommand{\sO}{{\mathsf O}}
\newcommand{\sP}{{\mathsf P}}
\newcommand{\sQ}{{\mathsf Q}}
\newcommand{\sU}{{\mathsf U}}
\def\setof#1{\left\{{#1}\right\}}
\newcommand{\id}{\text{id}}
\newcommand{\cl}{\mathop{\mathrm{cl}}\nolimits}
\DeclareMathOperator{\img}{im}
\DeclareMathOperator{\spans}{span}
\DeclareMathOperator{\st}{star}
\newcommand{\Sub}{\mathsf{Sub}}
\newcommand{\pred}{\overleftarrow}
\hfill\end{quote}\end{snugshade}}
\definecolor{shadecolor}{rgb}{0.8,0.8,0.8}
\newcommand{\sAtt}{{\mathsf{ Att}}}
\newcommand{\sABlockR}{{\mathsf{ABlock}}_{\mathscr{R}}}
\title{A Computational Framework for Connection Matrix Theory}
\author{Shaun Harker, Konstantin Mischaikow, Kelly Spendlove}
\institute{S. Harker \at
              Department of Mathematics, Rutgers University, Piscataway, NJ 08854, USA \\
              \email{sharker@math.rutgers.edu}           
           \and
           K. Mischaikow \at
              Department of Mathematics, Rutgers University, Piscataway, NJ 08854, USA\\
              \email{mischaik@math.rutgers.edu}
            \and 
            K. Spendlove \at
            Mathematical Institute, University of Oxford, Oxford, Oxfordshire OX2 6GG, UK\\
            \email{spendlove@maths.ox.ac.uk}
}
\date{Received: date / Accepted: date}
\begin{document}

\maketitle

\begin{abstract}
The connection matrix is a powerful algebraic topological tool from Conley index theory, a subfield of topological dynamics.  Conley index theory is a purely topological generalization of Morse theory in which the connection matrix subsumes the role of the Morse boundary operator.  Over the last few decades, Conley's approach to dynamics has been cast into a purely computational form.  In this paper we introduce a computational and categorical framework for connection matrix theory.  Broadly speaking, this contribution promotes the computational Conley theory to a computational, homological theory for dynamical systems. More specifically, within this paper we have three specific aims:
\begin{enumerate}
\item We cast connection matrix theory in an appropriate categorical, homotopy-theoretic language. We demonstrate the relationship to the previous definitions of connection matrix.  Lastly, the homotopy-theoretic language allows us to formulate connection matrix theory categorically.  
\item We describe an algorithm for the computation of connection matrices based on algebraic-discrete Morse theory and formalized with the notion of reductions.  We advertise an open-source implementation of our algorithm. 
\item We show that the connection matrix can be used to compute persistent homology.  Ultimately, we believe that connection matrix theory has the potential to be an important tool within topological data analysis.
\end{enumerate}

\keywords{Connection matrix \and Conley index \and Discrete Morse theory \and Computational topology \and Computational dynamics \and  Persistent homology}
\subclass{37B30, 37B25, 55-04, 57-04}
\end{abstract}

\section{Introduction}\label{sec:intro}

The last few decades have seen the development of algebraic topological techniques for the analysis of data derived from experiment or computation.
An essential step is to make use of the data to construct a finite complex from which the algebraic topological information is computed.
For most applications this results in a high dimensional complex that, because of its lack of structure, provides limited insight into the problems of interest.
The purpose of this paper is to present an efficient algorithm for transforming the complex so that it possesses a particularly simple boundary operator, called the \emph{connection matrix}.
This process is not universally applicable; it requires the existence of a distributive lattice that is coherent with the information that is to be extracted from the complex.
However, there are at least two settings in which we believe that it offers significant potential.

We begin by considering persistent homology, which is a primary tool within the rapidly developing field of topological data analysis~\cite{edelsbrunner:harer,oudot}. In the simplest setting, the input is a cell complex $\cX$ along with a filtration $\varnothing = \cX^0 \subset \cX^1 \subset \cdots \subset \cX^n = \cX$.
Heuristically, persistent homology keeps track of how the homology generators from one level of the filtration are mapped to generators in another level of the filtration, i.e.,  $\iota_\bullet \colon H_\bullet(\cX^i) \to H_\bullet(\cX^j)$ where $\iota_\bullet$ is induced by the inclusion $\cX^i\subset \cX^j$.  In the case of a filtration, this information can be tabulated as a barcode or persistence diagram~\cite{edelsbrunner:harer,oudot}.
In the context of this paper, the filtration is regarded as a  distributive lattice\footnote{See Section~\ref{sec:prelims} for formal definitions associated with order theory and algebraic topology.}  with a total ordering given by the indexing $0 < 1 < \cdots < n$.
The fact that one is constrained to using total orders is a serious limitation and has spurred the development of multi-parameter persistent homology, which remains a topic of current research~\cite{allili2019acyclic,csz,host,mil,scaramuccia2018,sclro}.

A simple generalization of the case of a filtration is to assume that $\cX$ is filtered via a distributive lattice.
To be more precise, assume that $\sL$ is a finite distributive lattice with  partial order denoted by $\leq$.
Let $\setof{\cX^a \subset \cX \mid a\in \sL}$ be an isomorphic lattice (the indexing provides the isomorphism) with operations $\cap$ and $\cup$ and minimal and maximal elements $\varnothing$ and $\cX$, respectively.
An aim of this paper is to provide an efficient algorithm for computing a boundary operator $\Delta$ -- called the \emph{connection matrix} -- of the form
\begin{equation}
\label{eq:connectionMatrix}
\Delta \colon \bigoplus_{a\in \sJ(\sL)} H_\bullet(\cX^a,\cX^{ \pred{a}}) \to \bigoplus_{a\in \sJ(\sL)} H_\bullet(\cX^a,\cX^{\pred a}),
\end{equation}
and, moreover, which has the property that it is strictly upper triangular with respect to  $\leq$ where $\sJ(\sL)$ denotes the set of join-irreducible elements of $\sL$ and $\pred a$ denotes the unique predecessor of $a$, again with respect to $\leq$.

To put this into context, consider the classical handle body decomposition of a manifold.
In this case we have a filtration, i.e., $\sL$ is totally ordered and every element of the filtration $\cX^a$ is join-irreducible. In this setting $\Delta$ is the classical Morse boundary operator~\cite{robbin:salamon2}.  As a consequence, it should be clear that the connection matrix encodes considerable information concerning the relationships between the homology generators of the elements of the lattice. 
Furthermore, as is shown in Section~\ref{sec:PH}, with regard to persistent homology, no information is lost when computing persistent homology from the connection matrix instead of directly from the lattice $\setof{\cX^a}$.
More precisely, we prove the following theorem. 

\begin{thm}
\label{thm:PH}
Let $\cX$ be a finite cell complex with associated chain complex $(C(\cX),\partial)$.
Let $\sL$ be a finite distributive lattice. 
Let $\leq$ be the partial order on $\sL$ defined by $a\leq b$ if and only if $a=a\wedge b$.
Let $\setof{\cX^a \subset \cX \mid a\in \sL}$ be an isomorphic lattice of subcomplexes with operations $\cap$ and $\cup$ and minimal and maximal elements $\varnothing$ and $\cX$, respectively.
Let 
\[
\Delta \colon \bigoplus_{a\in \sJ(\sL)} H_\bullet(\cX^a,\cX^{ \pred a  }) \to \bigoplus_{a\in \sJ(\sL)} H_\bullet(\cX^a,\cX^{ \pred a})
\]
be an associated connection matrix.  Define 
\[
M^a:= \bigoplus_{\setof{b\in \sJ(\sL)\mid b\leq a}} H_\bullet(\cX^b,\cX^{\pred b}).
\]
Then, the persistent homology groups of $\setof{M^a}_{a\in \sL}$ and $\setof{\cX^a}_{a\in \sL}$ are isomorphic.
\end{thm}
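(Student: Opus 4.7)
The overall approach is to upgrade the connection matrix $\Delta$ into the differential of a filtered chain complex $(\cM,\Delta)$ with filtration $\{\cM^a\}_{a\in\sL}$, show that this filtered complex is filtered-chain-equivalent to $(C(\cX),\partial)$ with its filtration $\{C(\cX^a)\}_{a\in\sL}$, and then invoke the standard fact that a filtered quasi-isomorphism induces an isomorphism of persistence modules. Accordingly, the work divides into three steps.

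First, I would verify that $\{\cM^a\}_{a\in\sL}$ really is a lattice of subcomplexes of $(\cM,\Delta)$ and that $a\mapsto \cM^a$ is a lattice isomorphism onto its image. Closure of $\cM^a$ under $\Delta$ uses strict upper-triangularity: if $c\in\sJ(\sL)$ with $c\leq a$, then the component of $\Delta$ on $H_\bullet(\cX^c,\cX^{\pred c})$ lands in the span of summands indexed by $d\in\sJ(\sL)$ with $d<c$, and every such $d$ satisfies $d<c\leq a$, hence lies in the index set defining $\cM^a$. That $a\mapsto \cM^a$ respects meets and joins is Birkhoff's representation theorem for finite distributive lattices: the assignment $a\mapsto\{b\in\sJ(\sL)\mid b\leq a\}$ is a lattice embedding of $\sL$ into the power set of $\sJ(\sL)$, so $\cM^{a\wedge b}=\cM^a\cap\cM^b$ and $\cM^{a\vee b}=\cM^a+\cM^b$ follow directly.

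Second, and this is the heart of the argument, I would produce a natural family of chain equivalences $C(\cX^a)\simeq \cM^a$ indexed by $a\in\sL$. By the construction of the connection matrix via reductions developed in the paper, the complex $(\cM,\Delta)$ arises from a reduction of $(C(\cX),\partial)$ that is compatible with the entire lattice $\{C(\cX^a)\}_{a\in\sL}$, in the sense that the reduction restricts to a reduction of $C(\cX^a)$ onto $\cM^a$ for every $a\in\sL$. This simultaneously provides a chain equivalence at each filtration level and ensures these equivalences commute with inclusions induced by $a\leq b$ in $\sL$. This step depends essentially on the paper's specific construction, and the main obstacle is precisely verifying that reductions can be taken compatibly with the full lattice of subcomplexes rather than with just a single chain of subcomplexes; once that is known, naturality in $a$ is automatic.

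Third, I would pass to homology. The family of chain equivalences from Step 2 induces, after applying $H_\bullet$, a natural isomorphism of diagrams
\[
\{H_\bullet(\cM^a)\}_{a\in\sL}\;\cong\;\{H_\bullet(\cX^a)\}_{a\in\sL}
\]
indexed by $\sL$, where naturality means the squares corresponding to each relation $a\leq b$ commute. Since persistent homology in this lattice-indexed setting is precisely the isomorphism class of such a diagram, the two persistence modules are isomorphic, completing the proof. The only genuine difficulty is Step 2, and it is there that the computational/categorical framework of connection matrices built up earlier in the paper does the real work.
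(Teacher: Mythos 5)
Your proposal is correct and follows essentially the same route as the paper: the paper's proof observes that, by Definition~\ref{def:grad:cm}, a connection matrix comes with a $\sP$-filtered chain equivalence between $(C(\cX),\pi^\nu)$ and the Conley complex, which (by Proposition~\ref{prop:map:filtgrad}) restricts to a chain equivalence $C(\cX^a)\to\cM^a$ for every down-set $a$, and then applies the fact that $PH_\bullet^{a,b}$ sends filtered chain equivalences to isomorphisms (Proposition~\ref{prop:PH:factor} and Theorem~\ref{thm:PH:iso}). The only difference is one of framing: you treat the lattice-compatible equivalence as something to be extracted from the reduction algorithm, whereas in the paper it is already part of the definition of ``connection matrix,'' so your Step~2 worry dissolves by definition rather than by an appeal to the construction.
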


The significance of Theorem~\ref{thm:PH} is that the connection matrix can be interpreted as a preprocessing step to computing persistent homology, akin to~\cite{mn}.  A second use of Theorem~\ref{thm:PH} is in settings in which the partial order $\leq$ of $\sL$ has multiple linear extensions that are of interest and $|M|\ll |\cX|$.
Since $|M|\ll |\cX|$, the computation of the persistent homology groups using the chain complex $(M,\Delta)$ is significantly cheaper.
As a consequence of Theorem~\ref{thm:PH}, the relatively expensive computation of the reduced complex $(M,\Delta)$ only needs to be performed once.

The primary motivation for this paper is the goal of developing efficient techniques for the analysis of time series data and computer-assisted proofs associated with deterministic nonlinear dynamics.
As background we recall that C. Conley developed a framework for the global analysis of nonlinear dynamics that makes use of two fundamental ideas \cite{conley:cbms}.
The first is the Conley index of an isolated invariant set, which is an algebraic topological generalization of the Morse index~\cite{mrozek,robbin:salamon:1,salamon}. 
The second is the use of attractors to organize the gradient-like structure of the dynamics.

To explain the relevance of these concepts in greater detail requires a digression.
Let $\varphi$ denote a dynamical system, e.g., a continuous semiflow or a continuous map, defined on a locally compact metric space.
Let $X$ denote a compact invariant set under $\varphi$.
The set of attractors  in $X$ is a bounded distributive lattice~\cite{lsa}.
A \emph{Morse decomposition} of $X$ consists of a finite collection of mutually disjoint compact invariant sets $M(p)\subset X$, called \emph{Morse sets} and indexed by a partial order $(\sP,\leq)$, such that if $x\in X\setminus \bigcup_{p\in \sP}M(p)$, then in forward time (with respect to the dynamics) $x$ limits to a Morse set $M(p)$, in backward time an orbit through $x$ limits to a Morse set $M(q)$, and $p < q$.
To each Morse decomposition there is associated a finite lattice of attractors $\sA$ such that each Morse set $M(p)$ is the maximal invariant set of $A\setminus \pred A$ where $A\in\sJ(\sA)$ and hence has a unique predecessor under the ordering of $\sA$ \cite{kmv3}. 
In fact, Birkhoff's theorem (see Theorem~\ref{thm:birkhoff}) provides an isomorphism between the poset $\sP$ and $\sJ(\sA)$.

In general, invariant sets such as attractors and Morse sets are not computable.
Instead one needs to focus on attracting blocks, which are compact subsets of $X$ that are mapped immediately in forward time into their interior.  
The set of all attracting blocks forms a bounded distributive lattice  under $\cap$ and $\cup$.
An essential fact is that if $\cX$ is a decomposition of $X$ into a cell complex, then starting with a directed graph defined on $\cX$ that acts as an appropriate approximation of $\varphi$ it is possible to rigorously compute attracting blocks. 
Typically, the lattice of all attracting blocks has uncountably many elements.
However, as is shown in \cite{lsa} given a finite lattice of attractors $\sA$ and a fine enough cellular decomposition $\cX$ of $X$, then there exists a lattice of attracting blocks $\sABlock$ constructed using elements of  $\cX$ that is isomorphic (via taking omega limit sets) to $\sA$.

Consider a Morse decomposition of $X$ with its associated lattice of attractors $\sA$.
Let  $\sABlock$ denote an isomorphic lattice of attracting blocks as described above.
In the context of semiflows the homology Conley index of any Morse set $M(p)$ is given by 
\[
CH_\bullet M(p) = H_\bullet (N, \pred N)
\]
for the appropriate choice of $N\in \sJ(\sABlock)$.
Thus, the connection matrix of \eqref{eq:connectionMatrix} can be rewritten as
\begin{equation}
\label{eq:connectionMatrix2}
\Delta \colon \bigoplus_{p\in \sP} CH_\bullet M(p) \to \bigoplus_{p\in \sP} CH_\bullet M(p).
\end{equation}

The existence of $\Delta$ expressed in the form of \eqref{eq:connectionMatrix2} is originally due to R. Franzosa~\cite{fran}; the name connection matrix arose since $\Delta$ can be used to identify and give lower bounds on the structure of connecting orbits between Morse sets \cite{mcmodels,scalar,mischaikow}.   Although Franzosa's proof of the existence of connection matrices is constructive, it is not straightforwardly amenable to computation.    Instead, the classical method of applying connection matrix theory is to compute some of the entries in a connection matrix from knowledge of the flow (such as the flow-defined entries) and exploit the existence theorem of~\cite{fran} by leveraging algebraic constraints (e.g., upper-triangularity, $\ker \Delta/ \img\Delta = H_\bullet(X)$, etc) to reason about the unknown entries.  A nice example of this form of analysis is provided in~\cite[Section 5]{mcr}. The point of view expressed in this paper differs from classical connection matrix literature; our viewpoint is oriented toward computation and data analysis.    In our setup all the chain data is provided as input, from which one fashions a connection matrix.  From a purely data analysis perspective, computation of a connection matrix can be viewed as (chain-level) data reduction without loss of homological information (this is formalized in Section~\ref{sec:reductions}).  Our viewpoint aligns more closely with~\cite{robbin:salamon2}, wherein J. Robbin and D. Salamon provided an alternative proof for the existence of connection matrices,  explicitly relying on the language of posets and lattices which we have also employed. In particular, they introduced the idea of a chain complex being either graded by a poset $\sP$ or filtered by a lattice $\sL$.   Respectively, theses objects and their morphisms constitute the categories $\bChG$ and $\bChFL$, which are described in Sections~\ref{sec:grad} and \ref{sec:lfc}.  The approach we take in this paper is in part an algorithmic analogue to~\cite{robbin:salamon2}.  



In contrast, Franzosa's treatment uses the notion of a {\em chain complex braid} indexed over a poset $\sP$.  The chain complex braid can be understood as a data structure that stores the singular chain data associated to a lattice $\sA$ of attracting blocks.  In this case, the poset $\sP$ arises as the poset of join-irreducibles $\sJ(\sA)$. These objects constitute the category $\bChB$ and are reviewed in Section~\ref{sec:CMT}.   Graded module braids are data structures for storing the homological information contained in a chain complex braid.  Graded module braids form a category $\bGMB$ and there is a functor $\mathfrak{H}\colon \bChB\to \bGMB$ which is analogous to a homology functor. Connection matrix theory for continuous self-maps, as developed by D. Richeson, also employs the structures of chain complex braids and graded module braids~\cite{richeson}.

One goal of this paper is to address how our approach, Franzosa's approach and Robbin and Salamon's approach fit together.  First, we wish to emphasize that in applications data come in the form of a $\sP$-graded cell complex, the collection of which we call $\Cell(\sP)$. Let $\sL$ be the lattice $\sO(\sP)$ of downsets of $\sP$.  A $\sP$-graded cell complex determines three distinct objects: a $\sP$-graded chain complex, an $\sL$-filtered chain complex, and a chain complex braid over $\sP$.   This information can be organized into the following diagram.  
\begin{equation}\label{dia:concept}
\begin{tikzcd}
 & \Cell(\sP) \arrow[d,"\cC",dashed]\arrow[ddl,swap,"\cL",dashed]\arrow[ddr,"\cB",dashed] & \\
 & \bChG\arrow[dl,"\fL"] \arrow[dr,swap,"\fB"]  \ar[d] & \\
 \bChFL \ar[d]& \bKG \ar[dl,"\fL_\bK"]\ar[dr,"\fB_\bK",swap] & \bChB \ar[d]\\
 \bKF && \bKB
\end{tikzcd}
\end{equation}

The dashed arrows are assignments while the solid arrows are functors.  These are described in Sections~\ref{sec:grad}--\ref{sec:CMT}.  Franzosa's theory comprises  the right-hand side of \eqref{dia:concept}, while Robbin and Salamon's theory comprises the left-hand side.  One of our contributions to connection matrix theory is to phrase it in a homotopy-theoretic language.  In particular, we introduce the appropriate homotopy categories  $\bKF$, $\bKG$ and $\bKB$ on the bottom of \eqref{dia:concept}.  This has the following payoffs:
\begin{itemize}
    \item We  resolve the non-uniqueness problems of the connection matrix.  Classically, this is still an open problem.  We show that in our formulation, a connection matrix is unique up to isomorphism. See Proposition~\ref{prop:grad:cmiso} and Remark~\ref{rem:grad:unique}.
    \item We distill the construction of the connection matrix to a particular functor, which we call a {\em Conley functor}.   See Section~\ref{sec:catform}. 
    \item We readily relate persistent homology and connection matrix theory, as in Theorem~\ref{thm:PH}.  See Section~\ref{sec:PH}.  This result implies that the connection matrix has the potential for application in topological data analysis.
\end{itemize}

Moreover, in the context of applications we show that the notions of connection matrices for both $\bChFL$ and $\bChB$ (that is, the formulations of both Robbin-Salamon and Franzosa) may be computed by utilizing graded algebraic-discrete Morse theory (see Algorithm~\ref{alg:cm}) within the category $\bChG$; see Theorems~\ref{thm:filt:cm} and~\ref{thm:braid:cm}.  

This paper lays the theoretical foundation for a new (computational) approach to connection matrix theory.  The result is that this paper is (necessarily) quite formal. However, the work here is complemented by two concrete papers:\ \cite{braids} and\ \cite{hms2}. In~\cite{braids} there are two applications of the computational connection matrix theory developed here to dynamics: one application is to classical examples from connection matrix theory; the other to a Morse theory on spaces of braid diagrams, which has implications for scalar parabolic PDEs.  In~\cite{hms2}, more discussion is given to the specifics of Algorithm~\ref{alg:cm} (\textsc{ConnectionMatrix}), demonstrating that it may be used to compute connection matrices for large, high-dimensional examples, e.g., a graded 10-dimensional cubical complex containing over $25\times 10^9$ cells.


This paper is organized into three parts.
The first part consists of Sections~\ref{sec:examples} and \ref{sec:prelims}.
As indicated above much of the exposition of this paper is rather formal.
To give perspective to this formalism we begin Section~\ref{sec:examples} with a small set of simple examples that we refer back to throughout the paper.
We conclude with a few comments highlighting how the topics in this paper are related to the earlier work of Franzosa~\cite{fran} and Robbin-Salamon~\cite{robbin:salamon2}.  
Section~\ref{sec:prelims} provides the necessary mathematical prerequisites. These are for the most part elementary, but the subsections  cover a wide range of mathematical topics.
We expect that the typical reader is familiar with most, but perhaps not all of these subjects, and thus most of this section can be skipped on first reading.

The second part, consisting of Sections~\ref{sec:grad}--\ref{sec:alg}, presents our perspective on connection matrices and their computability.
In Section~\ref{sec:grad} we introduce poset-graded complexes and our definition of the connection matrix.  
In Section~\ref{sec:reductions} we generalize the concept of reductions for the category of poset-graded complexes.  
In Section~\ref{sec:alg} we give a algebraic-discrete Morse theoretic algorithm to compute a connection matrix.

The final part, made up of Sections~\ref{sec:lfc}--\ref{sec:PH}, is concerned with the applicability of our approach. 
In Section~\ref{sec:lfc} we introduce lattice-filtered complexes and relate our notion of connection matrix to that of Robbin and Salamon.  
In Section~\ref{sec:catform} we show that our the use of homotopy categories gives a functorial formulation of the connection matrix.  
In Section~\ref{sec:CMT} we discuss Franzosa's approach to connection matrix theory and the relationship with ours. 
A consequence of these three sections is that the classical results concerning connection matrix theory are immediately applicable in the context of the connection matrix presented in this paper.
Finally, in Section~\ref{sec:PH} we examine the relationship of the connection matrix to persistent homology and conclude the section with the proof of Theorem~\ref{thm:PH}.



\section{Examples}\label{sec:examples}

As indicated in the introduction we present a few simple examples that will be used later in the paper to illuminate various concepts or definitions.
Unfortunately, many settings in which one is traditionally interested in applying connection matrix theory (viz., dynamics and the search for connecting orbits) require setting up quite a bit of mathematical machinery from dynamical systems. Such examples can be found in~\cite{braids}.  
Thus, for this paper we content ourselves with a selection of examples drawn from applied topology. 
For all of the examples we work with cell complexes (see Definition~\ref{defn:cellComplex}) over the field $\Z_2$.  An overview of the examples is as follows:
\begin{itemize}
    \item Example~\ref{ex:complex} gives an example of a connection matrix.
    \item Example~\ref{ex:bigcomplex} provides a computational perspective on connection matrix theory.  Succinctly, the connection matrix is a form of `homologically-lossless data compression'.
    \item Example~\ref{ex:PH} examines the relationship between the connection matrix and persistent homology.
\end{itemize}
Definitions of the technical terms used these examples are provided in Section~\ref{sec:prelims}.

\begin{ex}[Connection Matrix]\label{ex:complex}
{\em
Let $\sP=\{p,q,r\}$ with order $\leq$ generated by $p\leq q$ and $r \leq q$.   
Consider the cell complexes $\cX, \cX'$ and the maps $\nu,\nu'$ shown below.  

\begin{minipage}[t]{0.45\textwidth}
\begin{center} $\cX$
\begin{tikzpicture}[dot/.style={draw,circle,fill,inner sep=1.5pt},line width=.7pt]
\node (v0) at (0cm:0cm)[dot,label=above:{$v_0$}] {};
\node (v1) at (0cm:2cm)[dot,label=above:{$v_1$}] {};
\node (v2) at (0cm:4cm)[dot,label=above:{$v_2$}] {};
\node (e0) at (0cm:1cm)[label=above:{$e_0$}] {};
\node (e1) at (0cm:3cm)[label=above:{$e_1$}] {};
\draw (v0) to[bend left=10] (v1);
\draw (v1) to[bend left=10] (v2);
\end{tikzpicture}
\[
  \nu(x) =
  \begin{cases}
          p & \text{$x=v_0$ } \\
          r & \text{$x=v_2$ } \\
          q & \text{$x \in \{e_0,v_1,e_1\}$ }\\
  \end{cases}
\]
\vspace{2mm}
\end{center}
\end{minipage}
\begin{minipage}[t]{0.5\textwidth}
\begin{center} $\cX'$
\begin{tikzpicture}[dot/.style={draw,circle,fill,inner sep=1.5pt},line width=.7pt]
\node (v0) at (0cm:0cm)[dot,label=above:{$v_0'$}] {};
\node (v1) at (0cm:2cm)[dot,label=above:{$v_1'$}] {};
\node (e0) at (0cm:1cm)[label=above:{$e_0'$}] {};
\draw (v0) to[bend left=10] (v1);
\end{tikzpicture}
\[
  \nu'(x) =
  \begin{cases}
          p & \text{$x=v_0'$ } \\
          r & \text{$x=v_1'$ } \\
          q & \text{$x=e_0'$ }\\
  \end{cases}
\]
\end{center}
\end{minipage}

 The maps $\nu$ and $\nu'$ are, respectively, poset morphisms from the face posets of $\cX$ and $\cX'$ to $\sP$. 
 The pairs $(\cX,\nu)$ and $(\cX',\nu')$ are called {\em $\sP$-graded cell complexes} (see Section~\ref{sec:grad:cell}).
 The associated chain complexes $C_\bullet(\cX)$ and $C_\bullet(\cX')$ are ($\sP$-filtered) chain equivalent (see Example~\ref{ex:homotopy}). 
 A $\sP$-graded cell complex may be visualized as in Fig.~\ref{fig:ex1}: the Hasse diagram of $\sP$ is given and each $s\in \sP$ is annotated with itself and its fiber $\cX^s:=\nu^{-1}(s)$.  
 In our visualization, if $t,s\in \sP$ and $t$ covers $s$ then there is a directed edge $t\to s$.  
 This orientation coincides with the action of the boundary operator and agrees with the Conley-theoretic literature.  
 
 \begin{figure}[h!]
 \begin{minipage}[t]{0.5\textwidth}
\begin{center}
\begin{tikzpicture}[node distance=.5cm]
\node[draw , ellipse]  (q) {$q: e_0,v_1,e_1\vphantom{v_0'}$};
\node[draw , ellipse]  (p) [below left=.5cm of q] {$p: v_0\vphantom{v_0'}$};
\node[draw , ellipse]  (r) [below right =.5cm of q] {$r: v_2\vphantom{v_0'}$};
\draw[->,>=stealth,thick] (q) -- (p);
\draw[->,>=stealth,thick] (q) -- (r);
\end{tikzpicture}
\caption{Visualization of $(\cX,\nu)$.}\label{fig:ex1}
\end{center}
\end{minipage}
\begin{minipage}[t]{0.5\textwidth}
\begin{center}
\begin{tikzpicture}[node distance=.5cm]
\node[draw , ellipse]  (q) {$q: e_0'$};
\node[draw , ellipse]  (p) [below left=.5cm of q] {$p: v_0'$};
\node[draw , ellipse]  (r) [below right =.5cm of q] {$r: v_1'$};
\draw[->,>=stealth,thick] (q) -- (p);
\draw[->,>=stealth,thick] (q) -- (r);
\end{tikzpicture}
\caption{Visualization of $(\cX',\nu')$.}\label{fig:ex2}
\end{center}
\end{minipage}
\end{figure}

Let $\sL$ be the lattice of down-sets of $\sP$, i.e.,  the lattice with operations $\cap$ and $\cup$, consisting of unions of the sets
\[
\alpha := \downarrow\! p = \{s\in \sP ~|~ s\leq p\} \quad\quad \beta := \downarrow\! q  = \{s\in \sP ~|~ s\leq q\}\quad\quad \gamma := \downarrow\! r = \{s\in \sP ~|~ s\leq r\}.
\]

For $a\in \sL$ define $\cX^a:=\nu^{-1}(a)$.  The collection of fibers  $\{\cX^a\subset \cX\mid a\in \sL\}$ is a lattice of subcomplexes of $\cX$.  Similarly, setting $\cX'^a = \nu'^{-1}(a)$,  the collection $\{\cX'^a\subset \cX'\mid a\in \sL\}$ is a lattice of subcomplexes of $\cX'$.  These lattices are given in Figs.~\ref{fig:lattice:X}--\ref{fig:lattice:X'}, where the down-sets are annotated.
\begin{figure}[h!]
\begin{minipage}[t]{0.45\textwidth}
\begin{center}
\begin{tikzpicture}[node distance=.5cm]
\node (0) {$\varnothing$};
\node (v0) [above left=.5cm of 0] {$\alpha : \setof{v_0}$};
\node (v2) [above right =.5cm of 0] {$\gamma : \setof{v_2}$};
\node (v) [above =1.15 cm of 0] {$\setof{v_0,v_2}$};
\node (e) [above =.775cm of v] {$\beta : \setof{v_0,e_0,v_1,e_1,v_2}$};
\draw (0) -- (v0);
\draw (0) -- (v2);
\draw (v0) -- (v);
\draw (v2) -- (v);
\draw (v) -- (e);
\end{tikzpicture}
\caption{$\setof{\cX^a\mid a\in \sL}$.}\label{fig:lattice:X}
\end{center}
\end{minipage}
\begin{minipage}[t]{0.5\textwidth}
\begin{center}
\begin{tikzpicture}[node distance=.5cm]
\node (0) {$\varnothing$};
\node (v0) [above left=.5cm of 0] {$\alpha : \setof{v_0'}$};
\node (v2) [above right =.5cm of 0] {$\gamma : \setof{v_1'}$};
\node (v) [above =1.1 cm of 0] {$\setof{v_0',v_1'}$};
\node (e) [above =.7cm of v] {$\beta : \setof{v_0',e_0',v_1'}$};
\draw (0) -- (v0);
\draw (0) -- (v2);
\draw (v0) -- (v);
\draw (v2) -- (v);
\draw (v) -- (e);
\end{tikzpicture}
\caption{$\setof{\cX'^a\mid a\in \sL}$.}\label{fig:lattice:X'}
\end{center}
\end{minipage}
\end{figure}

The chain complex $C_\bullet(\cX')$ is called a \emph{Conley complex} for $C_\bullet(\cX)$ and the boundary operator $\Delta'$ of $C_\bullet(\cX')$ is a connection matrix for $C_\bullet(\cX)$.  
To unpack this a bit more, for $\Delta_1'\colon C_1(\cX')\to C_0(\cX')$ we have
\[
\Delta_1' = 
\bordermatrix{    
                  & e_0'  \cr
              v_0' & 1   \cr
              v_1' & 1   }.
\]
A quick computation shows that
\[
H_1(\cX^\beta,\cX^{\pred \beta}) \cong H_1(\cX^\gamma) \cong \Z_2 = C_1(\cX'),
\]
and, using $\cX^{\pred \alpha}=\cX^{\pred \gamma} = \varnothing$, 
\[
H_0(\cX^\alpha,\cX^{\pred \alpha})\oplus H_0(\cX^\gamma,\cX^{\pred \gamma}) \cong H_0(\cX^p)\oplus H_0(\cX^r) \cong \Z_2\oplus \Z_2 = C_0(\cX').
\]
Thus we may write $\Delta_1'$ as a map
\[
\Delta_1'\colon 
H_1(\cX^\beta,\cX^{\pred \beta}) \to
H_0(\cX^\alpha,\cX^{\pred \alpha})\oplus H_0(\cX^\gamma,\cX^{\pred \gamma}).
\]

In context of nonlinear dynamics the relative homology groups $\{H_\bullet(\cX^a,\cX^{\pred a})\}_{a\in \sJ(\sL)}$ are the Conley indices indexed by the join-irreducibles of $\sL$. The Conley index is an algebraic invariant of an isolated invariant set that generalizes the notion of the Morse index~\cite{conley:cbms}.  The classical form of the connection matrix is a boundary operator on Conley indices:
\[
\Delta'\colon \bigoplus_{a\in \sJ(\sL)} H_\bullet(\cX^a,\cX^{\pred a})
\to 
\bigoplus_{a\in \sJ(\sL)} H_\bullet(\cX^a,\cX^{\pred a}).
\]
This form makes it more apparent that non-zero entries in the connection matrix may force the existence of connecting orbits.  See~\cite{fran}.  Note that the classical theory almost exclusively emphasizes the connection matrix.  While we emphasize the connection matrix as well, we also emphasize the notion of the Conley complex (Definition~\ref{def:grad:cm}), which is the chain groups (which may be interpreted as Conley indices) together with the connection matrix.
}
\end{ex}

\begin{ex}[Data Compression]\label{ex:bigcomplex}
{\em 
In applications, the data are typically many orders of magnitude larger than Example~\ref{ex:complex}.  Let $N=9\times 10^9$ and $\cK$ be the cell complex on $[0,1]\subset \R$ where the vertices $\cK_0$ and edges $\cK_1$ are given by
\[
\cK_0 = \left\{
\Big[\frac{k}{N},\frac{k}{N} \Big]
\right\}_{0\leq k\leq N}
\quad\quad \quad\quad 
\cK_1 = \left\{  \Big[
\frac{k}{N},\frac{k+1}{N} \Big]
\right\}_{0\leq k < N}.
\]

Let $\sP$ be as in Example~\ref{ex:complex}.  Let $\mu\colon \cK\to \sP$ be the poset morphism
\[
  \mu(x) = \mu([l,r])=
  \begin{cases}
          p & \text{if $r\leq 3\times 10^9$ }\\
          r & \text{if $l\geq 6\times 10^9$ } \\
          q & \text{otherwise. } \\
  \end{cases}
\]

$\cK$ contains a large number of cells; see~\cite{hms2,braids} for examples where connection matrices are computed with our algorithm for high-dimensional graded cell complexes of similar orders of magnitude.  Due to the cardinality of $\cK$, in Fig.~\ref{fig:ex:big} we give a different visualization for the $\sP$-graded cell complex $(\cK,\mu)$. Here $M:=N/3$ and each $p\in\sP$ is annotated with itself $p$ and the polynomial $F_{\cK^p}(t):=\sum_i \alpha_i t^i$ where $\alpha_i$ is the number of cells in the fiber $\cK^p=\mu^{-1}(p)$ of dimension $i$, viz., the $f$-polynomial of $\cK^p$ (see Definition~\ref{defn:fpoly}).  We call this the {\em fiber graph} of $(\cK,\mu)$, as the Hasse diagram is a directed acyclic graph and each vertex is labeled with information about the fiber.  

 \begin{figure}[h!]
 \centering
\begin{tikzpicture}[node distance=.5cm]
\node[draw , ellipse]  (q)
{$q: (M-1)t^0+Mt^1$};
\node[draw , ellipse]  (p) [below left=.5cm of q] 
{$p: (M+1) t^0 + M t^1$};
\node[draw , ellipse]  (r) [below right =.5cm of q] 
{$r: (M+1) t^0 + Mt^1$};
\draw[->,>=stealth,thick] (q) -- (p);
\draw[->,>=stealth,thick] (q) -- (r);
\end{tikzpicture}
\caption{Fiber graph of $(\cK,\mu)$.}\label{fig:ex:big}
\end{figure}

\begin{figure}[h!]
\begin{center}
\begin{tikzpicture}[node distance=.5cm]
\node[draw , ellipse]  (q) {$q: t^1$};
\node[draw , ellipse]  (p) [below left=.5cm of q] {$p: t^0$};
\node[draw , ellipse]  (r) [below right =.5cm of q] {$r: t^0$};
\draw[->,>=stealth,thick] (q) -- (p);
\draw[->,>=stealth,thick] (q) -- (r);
\end{tikzpicture}
\caption{Conley-Morse graph of $(\cX,\nu)$, i.e., the fiber graph of $(\cX',\nu')$.}\label{fig:ex:cm}
\end{center}
\end{figure}
The graded cell complex $(\cX',\nu')$ of Example~\ref{ex:complex} may also be visualized using the fiber graph, as in Fig.~\ref{fig:ex:cm}. 
An argument similar to the one in Example~\ref{ex:complex} shows that $\Delta'$ is a connection matrix for $(\cK,\mu)$.  Another way to see this is as follows.  Recall that the Poincare polynomial of $\cX$ is defined as the polynomial $P_\cX(t):=\sum_i \dim H_i(\cX) t^i$.  A quick computation shows that
\begin{align*}
    P_{\cK^q}(t) &= t^1 = F_{\cX'^{q}}(t),\\
    P_{\cK^p(t)} &= P_{\cK^r(t)} = t^0 =  F_{\cX'^{p}}(t) = F_{\cX'^{r}}(t) .
\end{align*}
Therefore for each $p\in \sP$ the $f$-polynomial $F_{\cX'^p}(t)$ is precisely the Poincare polynomial $P_{\cK^p}(t)$. This implies that the boundary operator $\Delta'$ can be interpreted as a map on the relative homology groups of the pairs $\{(\cX^a,\cX^{\pred a})\}$.  We call the fiber graph of $(\cX',\nu')$ the \emph{Conley-Morse graph} of $(\cX,\nu)$.  Note that while this is a visualization of a graded complex, the data of the connection matrix itself is not shown.

This example highlights two aspect of the connection matrix.

\begin{enumerate}
    \item The 
    cell complex $(\cX',\nu')$ and connection matrix $\Delta'$ can be thought of as a compression of $(\cK,\mu)$.  Moreover, as part of our definition of connection matrix (see Definition~\ref{def:grad:cm}) there is a ($\sP$-filtered) chain equivalence $\phi$ between the chain complexes $C(\cK)$ and $C(\cX')$. The chain equivalence $\phi$ induces an isomorphism on any homological invariant  (e.g., homology, persistent homology, graded module braid).  Thus from the computational perspective, a connection matrix is a form of (chain-level) compression that does not lose information with respect to homological invariants; that is, a connection matrix is a form of \emph{homologically lossless} compression.  See Example~\ref{ex:PH}, Section~\ref{sec:PH} and Theorems~\ref{thm:PH:iso}.  
    
    \item Notice that $(\cX',\nu')$ cannot be compressed further as $P_{\cX'p}(t)=F_{\cX'^p}(t)$ for each $p$, i.e.,  the $f$-polynomial of $\cX'^p$ is precisely the Poincare polynomial of $\cX'^p$.  In this sense, the connection matrix is maximally compressed and it is the smallest object (of the graded chain equivalence class) capable of recovering the homological invariants.
\end{enumerate}

}
\end{ex}

\begin{ex}[Persistent Homology]\label{ex:PH}
{\em 

In this example we address Theorem~\ref{thm:PH} and the interplay of persistent homology and connection matrix theory.\footnote{In this example we restrict to filtrations.  However, we wish to emphasize that our results hold for multi-parameter persistence; see Section~\ref{sec:PH}, in particular Theorem~\ref{thm:PH:iso}.}  Let $(\cX,\nu)$ and $(\cX',\nu')$ be as in Example~\ref{ex:complex}.  Let $\sQ$ be the poset $\sQ = \{0,1,2\}$ with order $0\leq 1 \leq 2$.  Consider the poset morphism $\rho\colon \sP\to \sQ$ given as follows.
\[
  \rho(x) =
  \begin{cases}
          0 & x=p\\
          1 & x=r \\
          2 & x=q \\
  \end{cases}
\]

Let $\mu:=\rho\circ \nu\colon \cX\to \sQ$ and $\mu':=\rho\circ \nu'\colon \cX'\to \sQ$.  Then $(\cX,\mu)$ and $(\cX',\mu')$ are $\sQ$-graded cell complexes, which are visualized in Fig.~\ref{fig:Q}.

\begin{figure}[h!]
 \begin{minipage}[t]{0.55\textwidth}
     \begin{minipage}[t]{0.45\textwidth}
    \centering
    \begin{tikzpicture}[node distance=.5cm]
    \node[draw , ellipse]  (0) {$0: v_0\vphantom{v_0'}$};
    \node[draw , ellipse]  (1) [above=.5cm of 0] {$1: v_2\vphantom{v_0'}$};
    \node[draw , ellipse]  (2) [above =.5cm of 1] {$2: e_0,v_1,e_1\vphantom{v_0'}$};
    \draw[->,>=stealth,thick] (1) -- (0);
    \draw[->,>=stealth,thick] (2) -- (1);
    \end{tikzpicture}
    \end{minipage}
\begin{minipage}[t]{0.35\textwidth}
\centering
\begin{tikzpicture}[node distance=.5cm]
\node[draw , ellipse]  (0) {$0: v_0'$};
\node[draw , ellipse]  (1) [above=.5cm of 0] {$1: v_1'$};
\node[draw , ellipse]  (2) [above =.5cm of 1] {$2: e_0'$};
\draw[->,>=stealth,thick] (1) -- (0);
\draw[->,>=stealth,thick] (2) -- (1);
\end{tikzpicture}
\end{minipage}
\caption{Visualization of $(\cX,\mu),(\cX',\mu')$.}\label{fig:Q}
\end{minipage}
\begin{minipage}[t]{0.45\textwidth}
\begin{minipage}[t]{0.25\textwidth}
\centering
\begin{tikzpicture}[node distance=.5cm]
\node (e) {$\varnothing$};
\node (0) [above =.5cm of e] {$\downarrow\! 0$};
\node (1) [above =.5cm of 0] {$\downarrow\! 1$};
\node (2) [above =.5cm of 1] {$\downarrow\! 2$};
\draw (0) -- (e);
\draw (1) -- (0);
\draw (2) -- (1);
\end{tikzpicture}
\end{minipage}
\begin{minipage}[t]{0.5\textwidth}
\centering
\begin{tikzpicture}[node distance=.5cm]
\node (e) {$\varnothing$};
\node (0) [above =.5cm of e] {$\setof{0}$};
\node (1) [above =.5cm of 0] {$\setof{0,1}$};
\node (2) [above =.5cm of 1] {$\sQ=\setof{0,1,2}$};
\draw (0) -- (e);
\draw (1) -- (0);
\draw (2) -- (1);
\end{tikzpicture}
\end{minipage}
\caption{Lattice $\sK$ of downsets of $\sQ$.}\label{fig:K}
\end{minipage}
\end{figure}

Let $\sK$ be the lattice of down-sets of $\sQ$; then $\sK$ is the totally ordered lattice of Fig.~\ref{fig:K}.  Setting $\cX^{\downarrow n} := \mu^{-1}(\downarrow \! n)$  for $0\leq n \leq 2$ gives the filtration
\[
\varnothing \subset \cX^{\downarrow 0} \subset \cX^{\downarrow 1} \subset \cX^{\downarrow 2}=\cX'
\quad\quad\quad\quad \varnothing \subset v_0 \subset v_0,v_2  \subset v_0,v_2,e_0,v_1,e_1,
\]
and setting $\cX'^{\downarrow n} :=\mu'^{-1}(\downarrow \! n)$ the filtration
\[
\varnothing \subset \cX'^{\downarrow 0} \subset \cX'^{\downarrow 1} \subset \cX'^{\downarrow 2}=\cX'
\quad\quad\quad\quad
\varnothing \subset v_0'\subset v_0',v_1' \subset v_0',v_1',e_0'.
\]

For each downset $\downarrow i$, the ($\sQ$-filtered) chain equivalence $\phi\colon C(\cX)\to C(\cX')$ induces a chain equivalence  $\phi^{\downarrow i}\colon C(\cX^{\downarrow i})\to C(\cX'^{\downarrow i})$ (see Section~\ref{sec:lfc}), which fit into the following commutative diagram:
\begin{equation}\label{dia:chainfilt}
\begin{tikzcd}[column sep=scriptsize]
0 \ar[r,hookrightarrow]& \ar[r,hookrightarrow]C(\cX^{\downarrow 0}) \ar[r,hookrightarrow]\ar[d,"\phi^{\downarrow 0}"] & C(\cX^{\downarrow 1})\ar[r,hookrightarrow] \ar[d,"\phi^{\downarrow 1}"]& C(\cX) \ar[d,"\phi"]\\
0 \ar[r,hookrightarrow]& C(\cX'^{\downarrow 0}) \ar[r,hookrightarrow]& C(\cX'^{\downarrow 1}) \ar[r,hookrightarrow]& C(\cX').
\end{tikzcd}
\end{equation}
Applying the $i$-th homology functor $H_i(-)$ we have the diagram
\begin{equation}\label{dia:pmm}
\begin{tikzcd}[column sep=scriptsize]
0 \ar[r]& \ar[r]H_iC(\cX^{\downarrow 0}) \ar[r]\ar[d,"H_i\phi^{\downarrow 0}"] & H_iC(\cX^{\downarrow 1})\ar[r] \ar[d,"H_i\phi^{\downarrow 1}"]& H_iC(\cX) \ar[d,"H_i\phi"]\\
0 \ar[r]& H_iC(\cX'^{\downarrow 0}) \ar[r]& H_iC(\cX'^{\downarrow 1}) \ar[r]& H_iC(\cX').
\end{tikzcd}
\end{equation}
Each horizontal sequence in\ \eqref{dia:pmm} is called a \emph{persistence module}~\cite{bubenik2014categorification,oudot}; the collection of vertical maps form a \emph{morphism of persistence modules}. As each map $\phi^{\downarrow i}$ is a chain equivalence, the induced map $H_i\phi^{\downarrow i}$ is an isomorphism.  Thus the collection $\{H_i\phi^{\downarrow i}\}$ is an isomorphism of persistence modules.  It follows that  $\phi$ induces an isomorphism on the persistent homology groups; see Section~\ref{sec:PH}.  As a corollary, all computational tools that tabulate the persistent homology groups, e.g., the persistence diagram and the barcode~\cite{edelsbrunner:harer,oudot}, can be computed via $\Delta'$.  In this way, the connection matrix may be regarded as a preprocessing step for computing the persistence diagram, cf.~\cite{allili2019acyclic,mn,scaramuccia2018}.   


}
\end{ex}

\section{Preliminary material}\label{sec:prelims}

In this section we recall the necessary mathematical prerequisites.   For a more complete discussion the reader is referred to \cite{davey:priestley,roman} for order theory; \cite{mac2013categories,gelfand,weibel} for category theory and homological algebra; \cite{lefschetz} for algebraic topology and cell complexes;  \cite{focm,mn,sko} for discrete Morse theory; \cite{kmv,lsa,lsa2} for (computational) dynamics.  Readers with familiarity of these subjects are recommended to skip this section and only refer to it when necessary.

\subsection{Notation}

Boldface font is used to denote specific categories and Fraktur font to denote particular functors introduced in this paper. Sans-serif font is used for order-theoretic structures, such as posets and lattices, as well as functors between their respective categories.  Lower case Greek letters are used for morphisms of (graded, filtered) chain complexes. Script-like letters are used for chain complex braids and graded module braids and upper case Greek letters are used for morphisms of these objects.    Calligraphic font is used for notation referring to combinatorial objects and cell complexes.

\subsection{Category Theory}
The exposition of category theory primarily follows~\cite{weibel}, see also~\cite{gelfand,mac2013categories}.

\begin{defn}
{\em
An \emph{additive category} $\bA$ is a category such that
\begin{enumerate}
    \item $\bA$ is enriched over the category of abelian groups; that is,  every hom-set $\Hom_\bA(A,B)$ in $\bA$ has the structure of an abelian group such that composition distributes over addition (the group operation),
    \item $\bA$ has a zero object and a product $A\times B$ for every pair $A,B$ of objects in $\bA$.
\end{enumerate}
}
\end{defn}

\begin{defn}
{\em 
In an additive category $\bA$ a \emph{kernel} of a morphism $f\colon B\to C$ is a map $i\colon A\to B$ such that $fi = 0$ and that is universal with respect to this property.  Dually, a \emph{cokernel} of $f$ is a map $e\colon C\to D$ which is universal with respect to having $ef=0$.  in $\bA$, a map $i\colon A\to B$ is \emph{monic} if $ig=0$ implies $g=0$ for every map $g\colon A'\to A$. A monic map is called a \emph{monomorphism}. A map $e\colon C\to D$ is an \emph{epi}, or \emph{epimorphism}, if $he = 0$ implies $h=0$ for every map $h\colon D\to D'$.      
}
\end{defn}

\begin{defn}
{\em
An \emph{abelian category} is an additive category $\bA$ such that 
\begin{enumerate}
    \item Every map in $\bA$ has a kernel and a cokernel,
    \item Every monic in $\bA$ is the kernel of its cokernel,
    \item Every epi in $\bA$ is the cokernel of its kernel.
\end{enumerate}
}
\end{defn}

Let $\frF\colon \bA\to \bB$ be a functor.  For a pair of objects $A,B$ in $\bA$, $\frF$ induces a map on hom-sets
\[
\frF_{A,B}\colon \Hom_\bA(A,B)\to \Hom_\bB(\frF(A),\frF(B)).
\]

\begin{defn}
{\em
A functor $\frF\colon \bA\to \bB$ between additive categories is \emph{additive} if each map $\frF_{A,B}$ is a group homomorphism. 
}
\end{defn}

\begin{defn}
{\em 
A functor $\frF\colon \bA\to \bB$ is \emph{full} if $\frF_{A,B}$ is surjective for all pairs $A,B$.  $\frF$ is \emph{faithful} if $\frF_{A,B}$ is injective for all pairs $A,B$. 
A subcategory $\bA$ of $\bB$ is \emph{full} if the inclusion functor $\bA\hookrightarrow \bB$ is full. A functor is \emph{fully faithful} if it is both full and faithful.
A functor $\frF\colon \bA\to \bB$ is \emph{essentially surjective} if for any object $B$ of $\bB$ there is an object $A$ in $\bA$ such that $B$ is isomorphic to $\frF(A)$.  A functor $\frF\colon \bA\to \bB$ is an \emph{equivalence of categories} if there is a functor $\frG\colon \bB\to \bA$ and natural isomorphisms $\epsilon\colon \frF\frG\to \id_\bB$ and $\eta\colon \id_\bA\to \frG\frF$.  Categories $\bA$ and $\bB$ are \emph{equivalent} if there is an equivalence $\frF\colon \bA\to \bB$.
}
\end{defn}

\begin{prop}[\cite{mac2013categories}, Theorem  IV.4.1]\label{prop:cats:equiv}
$\frF\colon \bA\to \bB$ is an equivalence of categories if and only if $\frF$ is full, faithful and essentially surjective.
\end{prop}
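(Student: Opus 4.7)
The plan is to prove the two directions separately, starting with the easier forward implication and then constructing the quasi-inverse for the converse.

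For the forward direction, assume $\frF$ is an equivalence with quasi-inverse $\frG$ and natural isomorphisms $\eta\colon \id_\bA \to \frG\frF$ and $\epsilon\colon \frF\frG\to \id_\bB$. Essential surjectivity is immediate: for any $B\in \bB$, the component $\epsilon_B$ witnesses $B\cong \frF(\frG(B))$. Faithfulness follows because if $\frF(f)=\frF(g)$ for parallel morphisms $f,g\colon A\to A'$, then $\frG\frF(f)=\frG\frF(g)$, and naturality of $\eta$ lets us conjugate to recover $f=\eta_{A'}^{-1}\circ \frG\frF(f)\circ \eta_A=g$. Fullness is the only slightly delicate step: given $h\colon \frF(A)\to \frF(A')$, set $f := \eta_{A'}^{-1}\circ \frG(h)\circ \eta_A$; naturality of $\eta$ gives $\frG\frF(f)=\frG(h)$, and then faithfulness of $\frG$ (which is established symmetrically, since $\frG$ is itself an equivalence via $(\eta^{-1},\epsilon^{-1})$) upgrades this to $\frF(f)=h$.

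For the converse, assume $\frF$ is full, faithful, and essentially surjective. I would construct a quasi-inverse $\frG\colon \bB\to \bA$ as follows. For each object $B\in \bB$, use essential surjectivity to choose an object $\frG(B)\in\bA$ together with an isomorphism $\epsilon_B\colon \frF(\frG(B))\to B$ (this invokes the axiom of choice on the collection of objects of $\bB$). Given a morphism $g\colon B\to B'$, consider the composite $\epsilon_{B'}^{-1}\circ g\circ \epsilon_B\colon \frF(\frG(B))\to \frF(\frG(B'))$; by fullness there is a morphism $\frG(g)\colon \frG(B)\to \frG(B')$ with $\frF(\frG(g))=\epsilon_{B'}^{-1}\circ g\circ \epsilon_B$, and by faithfulness $\frG(g)$ is unique with this property.

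Functoriality of $\frG$ (preservation of identities and composition) follows from this uniqueness together with faithfulness of $\frF$, since $\frF$ sends each candidate to the same morphism. The naturality square for $\epsilon$ at a morphism $g\colon B\to B'$ is exactly the defining identity $\frF(\frG(g))\circ \epsilon_B = \epsilon_{B'}\circ \frF(\frG(g))\circ \epsilon_B\cdot \epsilon_B^{-1}=g\circ \epsilon_B$, so $\epsilon$ is a natural isomorphism. To build $\eta\colon \id_\bA\to \frG\frF$, note that $\epsilon_{\frF(A)}^{-1}\colon \frF(A)\to \frF(\frG\frF(A))$ is an isomorphism in $\bB$; by fullness and faithfulness of $\frF$ there is a unique morphism $\eta_A\colon A\to \frG\frF(A)$ with $\frF(\eta_A)=\epsilon_{\frF(A)}^{-1}$, and it is an isomorphism because $\frF$ reflects isomorphisms (being fully faithful). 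Naturality of $\eta$ reduces, after applying $\frF$, to a diagram chase using naturality of $\epsilon$, and then faithfulness of $\frF$ lifts the identity back to $\bA$.

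The main subtlety will be the simultaneous use of fullness and faithfulness to define $\frG$ on morphisms and to verify all the coherence identities: every equation in $\bA$ that one wants to establish is first checked after applying $\frF$, where it becomes an equation in $\bB$ that follows from the explicit formulas involving $\epsilon$, and is then transported back to $\bA$ by faithfulness. The only non-formal ingredient is the choice of representatives $(\frG(B),\epsilon_B)$ in the essentially surjective step.
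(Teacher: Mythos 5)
Your proof is correct and is the standard argument for this result: the paper itself does not prove the proposition but simply cites it as Theorem IV.4.1 of Mac Lane, and your two-directional argument (extracting fullness, faithfulness, and essential surjectivity from a quasi-inverse via the naturality of $\eta$ and $\epsilon$, then building the quasi-inverse by choosing objects and isomorphisms $\epsilon_B$ and lifting morphisms through full faithfulness) is precisely the proof given there. The only blemish is the garbled middle term in your displayed naturality identity for $\epsilon$ (the correct square reads $\epsilon_{B'}\circ \frF\frG(g)=g\circ\epsilon_B$, which holds immediately from your definition $\frF(\frG(g))=\epsilon_{B'}^{-1}\circ g\circ\epsilon_B$), but this is typographical rather than a gap in the reasoning.
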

\begin{defn}
{\em
A functor $\frF\colon \bA\to \bB$ is \emph{conservative} if given a morphism $f\colon A\to B$ in $\bA$, $\frF(f)\colon \frF(A)\to \frF(B)$ is an isomorphism only if $f$ is an isomorphism.
}
\end{defn}

\begin{defn}
{\em
Following~\cite[Section II.8]{mac2013categories}, we say that a \emph{congruence relation} $\sim$ on a category $\bA$  is a collection of equivalence relations $\sim_{A,B}$ on $\Hom(A,B)$ for each pair of objects $A,B$ such that the equivalence relations respect composition of morphisms.  That is, if $f,f'\colon A\to B$ and $f\sim f'$ then for any $g\colon A'\to A$ and $h\colon B\to B'$ we have $hfg\sim hf'g$.  If $\bA$ is an additive category, we say a congruence relation $\sim$ is \emph{additive} if $f_0,f_1,g_0,g_1\colon A\to B$ with $f_i\sim g_i$ then $f_0+f_1\sim g_0+g_1$.
}
\end{defn}

\begin{defn}
{\em
Given a congruence relation $\sim$ on a category $\bA$ the \emph{quotient category} $\bA/\!\sim$ is defined as the category whose objects are those of $\bA$ and whose morphisms are equivalence classes of morphisms in $\bA$.  That is,
\[
\Hom_{\bA/\!\sim}(A,B)=\Hom_\bA(A,B)/\!\sim_{A,B}.
\]
There is a quotient functor from $\quo\colon \bA\to \bA/\!\sim $ which is the identity on objects and sends each morphism to its equivalence class.  If $\sim$ is additive then the quotient category $\bA/\!\sim$ is additive, and the quotient functor $\quo$ is an additive functor.
} 
\end{defn}

\begin{prop}[\cite{mac2013categories}, Proposition II.8.1]\label{prelims:cats:quotient}
Let $\sim$ be a congruence relation on the category $\bA$.  Let $\frF\colon \bA\to \bB$ be a functor such that $f\sim f'$ implies $\frF(f)=\frF(f')$ for all $f$ and $f'$, then there is a unique functor $\frF'$ from $\bA/\!\sim$ to $\bB$ such that $\frF'\circ q=\frF$.
\end{prop}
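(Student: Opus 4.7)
The plan is to invoke the universal property of the quotient construction; the proof is a routine diagram chase, but one should be careful to verify functoriality and well-definedness. Since the quotient functor $q\colon \bA \to \bA/\!\sim$ is the identity on objects, any candidate factorization $\frF'$ is forced on objects: we must set $\frF'(A) = \frF(A)$ for every object $A$ of $\bA/\!\sim$. Similarly, on morphisms, if $[f]$ denotes the $\sim$-equivalence class of $f\colon A\to B$, then the condition $\frF' \circ q = \frF$ forces $\frF'([f]) = \frF(f)$. This simultaneously proves uniqueness and tells us exactly what $\frF'$ must be.

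Next, I would verify that this assignment is well-defined on morphisms. Suppose $f \sim_{A,B} f'$; then by the hypothesis on $\frF$, we have $\frF(f) = \frF(f')$, so the rule $\frF'([f]) = \frF(f)$ does not depend on the chosen representative. After this, checking that $\frF'$ is a functor reduces to two routine verifications: $\frF'([\id_A]) = \frF(\id_A) = \id_{\frF(A)} = \id_{\frF'(A)}$, and for composable morphisms, $\frF'([g]\circ [f]) = \frF'([g \circ f]) = \frF(gf) = \frF(g)\frF(f) = \frF'([g])\frF'([f])$, where the first equality uses the definition of composition in $\bA/\!\sim$ (which is well-defined precisely because $\sim$ respects composition in the sense of the congruence relation axioms).

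Finally, the identity $\frF' \circ q = \frF$ holds by construction, since $q$ is the identity on objects and sends $f$ to $[f]$. The only subtlety worth flagging is that one must be careful that composition in $\bA/\!\sim$ is indeed given by $[g]\circ[f] = [g\circ f]$, which requires the congruence relation to be compatible with composition on both sides; this is precisely the content of the definition of congruence relation given above. I do not anticipate any real obstacle here: the statement is a direct instance of the standard universal property of quotients, and every nontrivial step is covered by the congruence relation axioms together with the hypothesis on $\frF$.
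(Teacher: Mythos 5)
Your proof is correct and is the standard universal-property argument; the paper does not prove this proposition itself but cites it directly from Mac Lane (Proposition II.8.1), whose proof is exactly the factorization you give. All the steps you flag — forced definition of $\frF'$ on objects and on classes $[f]$, well-definedness from the hypothesis $f\sim f'\Rightarrow \frF(f)=\frF(f')$, and functoriality via the compatibility of $\sim$ with composition — are the right ones and are complete.
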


The next result is elementary.

\begin{prop}\label{prop:cat:subquo}
Let $\bA$ be a category, $\sim$ a congruence relation on $\bA$ and $\bB=\bA/\!\sim$ be the quotient category.  Let $\bA'$ be a full subcategory of $\bA$.  If $\bB'$ is the full subcategory of $\bB$ whose objects are the objects in $\bA'$, then $\bB'$ is the quotient category $\bA'/\sim'$ where $\sim'$ is the restriction of $\sim$ to $\bA'$.
\end{prop}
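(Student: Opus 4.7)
The plan is to show that $\bB'$ and $\bA'/\!\sim'$ coincide on objects, on hom-sets, and on composition; once these three comparisons are made, equality of the categories follows. As a preliminary step I would verify that $\sim'$, defined as the restriction of each $\sim_{A,B}$ to pairs of objects $A,B$ of $\bA'$, is indeed a congruence relation on $\bA'$. Because $\bA'$ is full in $\bA$, we have $\Hom_{\bA'}(A,B)=\Hom_{\bA}(A,B)$ for all objects $A,B$ of $\bA'$, so the equivalence relations and their compatibility with composition transfer immediately from $\bA$. Hence $\bA'/\!\sim'$ is a well-defined quotient category.

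Next, by construction both $\bB'$ and $\bA'/\!\sim'$ have the same object class, namely the objects of $\bA'$. For any such pair $A,B$, fullness of $\bB'$ in $\bB$ and the definition of $\bB$ give
\[
\Hom_{\bB'}(A,B)=\Hom_{\bB}(A,B)=\Hom_{\bA}(A,B)/\!\sim_{A,B},
\]
while fullness of $\bA'$ in $\bA$ together with the definition of $\sim'$ yields
\[
\Hom_{\bA'/\sim'}(A,B)=\Hom_{\bA'}(A,B)/\!\sim'_{A,B}=\Hom_{\bA}(A,B)/\!\sim_{A,B}.
\]
These hom-sets are thus equal as quotient sets. Composition in both categories is induced from composition in $\bA$ via the formula $[g]\circ[f]=[g\circ f]$, well-defined since $\sim$ is a congruence, so composition agrees as well. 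This identifies the two categories.

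The argument is essentially a bookkeeping verification, and the only subtlety — the single place where the hypothesis is really used — is the appeal to fullness of $\bA'$ in $\bA$: without it, $\Hom_{\bA'}(A,B)$ could sit strictly inside $\Hom_{\bA}(A,B)$ and the second display above would fail, leaving only an inclusion $\bA'/\!\sim'\hookrightarrow\bB'$ rather than an equality.
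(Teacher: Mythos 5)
Your proof is correct and is precisely the elementary bookkeeping verification the paper has in mind when it omits the proof with the remark that the result is elementary: matching objects, hom-sets (via fullness of $\bA'$ in $\bA$ and of $\bB'$ in $\bB$), and composition. Your closing observation that fullness of $\bA'$ is the one hypothesis doing real work is accurate and worth keeping.
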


\subsection{Homological Algebra}\label{sec:prelims:HA}

\subsubsection{Additive Categories}

Let $\bA$ be an additive category.  Unless explicitly stated, we assume that functors of additive categories are additive.  In this section we outline the construction of the homotopy category.

\begin{defn}
{\em 
A \emph{chain complex} $(C_\bullet,\partial_\bullet)$ in $\bA$ is a family $C_\bullet =\{C_n\}_{n\in \Z}$ of objects of $\bA$ together with morphsims $\partial_\bullet = \{\partial_n\colon C_n\to C_{n-1}\}_{n\in \Z}$, called \emph{boundary operators}, or \emph{differentials}, such that $\partial_{n-1}\circ\partial_n =0$ for all $n$.   
}
\end{defn}

When the context is clear we abbreviate $(C_\bullet,\partial_\bullet)$ as $(C,\partial)$ or more simply as just $C$.
A morphism $\phi\colon C\to C'$ of chain complexes $C$ and $C'$ is a {\em chain map}, that is, a family $\phi=\{\phi_n\colon C_n\to C_n'\}_{n\in \Z}$ of morphisms in $\bA$ such that $\phi_{n-1}\circ \partial_n = \partial'_n \circ \phi_n$ for all $n$. Chain complexes and chain maps constitute the category
of chain complexes
denoted by $\bCh=\bCh(\bA)$.

\begin{defn}
 {\em
 A chain map $\phi\colon C\to C'$ is {\em null homotopic} if there exists a family $\gamma=\{\gamma_n\colon C_n\to C'_{n+1}\}_{n\in \Z}$ of morphisms in $\bA$ such that 
 \[
 \phi_n=\gamma_{n-1} \circ \partial_n + \partial'_{n+1}\circ\gamma_n .
 \]
 The morphisms $\{\gamma_n\}$ are called a \emph{chain contraction} of $\phi$.
 }
\end{defn}

\begin{defn}
{\em
Two chain maps $\phi,\psi \colon C\to C'$ are {\em chain homotopic} if their difference $\phi-\psi$ is null homotopic, that is, if there exists a family of morphisms $\{\gamma_n\}$ such that
\[
\phi_n-\psi_n = \gamma_{n-1}\circ \partial_n+\partial'_{n+1}\circ \gamma_n .
\]  
The morphisms $\{\gamma_n\}$ are called a \emph{chain homotopy} from $\phi$ to $\psi$.  We write $\phi\sim \psi$ to indicate that $\phi,\psi$ are chain homotopic.  
}
\end{defn}

\begin{defn}\label{defn:degree1}
{\em
Given chain complexes $C$ and $C'$ in $\bA$, a family $\gamma = \{\gamma_n\colon C_n\to C'_{n+1}\}$ of morphisms in $\bA$ is called a {\em degree 1 map} from $C$ to $C'$.
}
\end{defn}

\begin{defn}
 {\em
We say that $\phi \colon C\to D$ is a {\em chain equivalence} if there is a chain map $\psi \colon D\to C$ such that $f\circ g$ and $g\circ f$ are chain homotopic to the respective identity maps of $C$ and $D$.  We say that $C$ and $D$ are {\em chain equivalent} if there exists a chain equivalence $\phi\colon C\to D$.
 }
\end{defn}

The proofs of the next two propositions are straightforward.

\begin{prop}
Chain equivalence is an equivalence relation on the objects of $\bCh(\bA)$.
\end{prop}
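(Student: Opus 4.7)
The plan is to verify the three axioms of an equivalence relation directly from the definitions of chain equivalence and chain homotopy. The reflexivity and symmetry steps are essentially formal, while transitivity is where the small amount of genuine content lives.

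For \emph{reflexivity}, I would take $\phi = \psi = \id_C$ for any chain complex $C$. The compositions $\phi\circ\psi = \psi\circ\phi = \id_C$ are equal (not merely homotopic) to $\id_C$, so the zero degree-$1$ map (Definition~\ref{defn:degree1}) serves as a chain homotopy in both directions. For \emph{symmetry}, the definition of chain equivalence is manifestly symmetric in $C$ and $D$: if $\phi\colon C\to D$ is a chain equivalence with partner $\psi\colon D\to C$, then $\psi$ is a chain equivalence with partner $\phi$.

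The substantive step is \emph{transitivity}. Suppose $\phi\colon C\to D$ and $\phi'\colon D\to E$ are chain equivalences with partners $\psi\colon D\to C$ and $\psi'\colon E\to D$, respectively. I would show that $\phi'\circ\phi\colon C\to E$ is a chain equivalence with partner $\psi\circ\psi'\colon E\to C$. For this I need two preparatory observations, each of which follows from a one-line verification using the additive structure of $\bA$:
\begin{enumerate}
\item Pre- and post-composition preserves chain homotopy: if $f\sim g$ via $\gamma$ and $h$ is a chain map, then $h\circ f\sim h\circ g$ via $h\circ\gamma$, and $f\circ h\sim g\circ h$ via $\gamma\circ h$ (appropriately indexed).
\item Chain homotopy is itself an equivalence relation on $\Hom_{\bCh}(C,D)$ (reflexive via the zero homotopy, symmetric by negating $\gamma$, transitive by summing homotopies, using that $\bA$ is additive).
\end{enumerate}
Granting these, I compute
\[
(\psi\circ\psi')\circ(\phi'\circ\phi) \;=\; \psi\circ(\psi'\circ\phi')\circ\phi \;\sim\; \psi\circ\id_D\circ\phi \;=\; \psi\circ\phi \;\sim\; \id_C,
\]
where the first homotopy uses that $\psi'\circ\phi'\sim\id_D$ pre- and post-composed by $\psi$ and $\phi$, the second uses that $\psi\circ\phi\sim\id_C$, and the chain of homotopies is concatenated via transitivity from (ii). The symmetric computation shows $(\phi'\circ\phi)\circ(\psi\circ\psi')\sim \id_E$.

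I do not expect any real obstacle here; the only point that deserves a moment's attention is confirming that observation (i) holds in the generality of an arbitrary additive category $\bA$, which it does because composition in $\bA$ distributes over the addition in the hom-groups, so applying $h\circ(-)$ to the identity $f_n - g_n = \gamma_{n-1}\circ\partial_n + \partial'_{n+1}\circ\gamma_n$ yields the analogous identity for $h\circ f$ and $h\circ g$ with homotopy $h\circ\gamma$.
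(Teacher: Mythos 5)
Your proof is correct, and since the paper explicitly leaves this proposition as ``straightforward'' with no written proof, your argument is exactly the standard one the authors intend: identities with zero homotopies for reflexivity, symmetry of the definition, and transitivity via the compatibility of chain homotopy with composition. No gaps; the only point worth the care you already gave it is that composition distributes over addition in the hom-groups of the additive category $\bA$, which justifies your observation (i).
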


\begin{prop}
  The relation $\sim$ is an additive congruence relation on $\bCh(\bA)$.
\end{prop}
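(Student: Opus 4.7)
The plan is to verify the two clauses in the definition of an additive congruence relation in turn: first that chain homotopy is an equivalence relation on each hom-set $\Hom_{\bCh(\bA)}(C,C')$, then that this relation is respected by pre- and post-composition, and finally that it is respected by the abelian group structure on hom-sets coming from $\bA$ being additive. All of this should be mechanical manipulation of the defining identity $\phi_n - \psi_n = \gamma_{n-1}\circ\partial_n + \partial'_{n+1}\circ\gamma_n$, where $\gamma$ is a degree 1 map in the sense of Definition~\ref{defn:degree1}; no clever idea is needed, only that $\bCh(\bA)$ inherits an additive structure from $\bA$ levelwise, so that sums and negatives of degree 1 maps exist and behave formally.

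First I would check that $\sim$ is an equivalence relation on $\Hom(C,C')$. Reflexivity uses the zero degree 1 map $\gamma = 0$. Symmetry follows because $-\gamma$ is a chain homotopy from $\psi$ to $\phi$ whenever $\gamma$ is one from $\phi$ to $\psi$. Transitivity uses that if $\gamma$ witnesses $\phi \sim \psi$ and $\gamma'$ witnesses $\psi \sim \chi$, then $\gamma + \gamma'$ witnesses $\phi \sim \chi$, the point being that the defining identity is additive in $\gamma$.

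Next I would verify compatibility with composition. Suppose $\phi \sim \phi'\colon C \to C'$ via a chain homotopy $\gamma$, and fix chain maps $g\colon C'' \to C$ and $h\colon C' \to C'''$. The natural candidate for a chain homotopy from $h\phi g$ to $h\phi' g$ is the degree 1 family $\{h_{n+1}\circ \gamma_n \circ g_n\}$. A direct computation, using $\partial'''h = h\partial'$ and $g\partial'' = \partial g$ (i.e.\ that $g$ and $h$ are themselves chain maps), rearranges
\[
\partial'''_{n+1}\circ h_{n+1}\gamma_n g_n + h_n \gamma_{n-1} g_{n-1}\circ \partial''_n
= h_n \bigl(\partial'_{n+1}\gamma_n + \gamma_{n-1}\partial_n\bigr)g_n = h_n(\phi_n - \phi'_n)g_n,
\]
which is exactly what is required.

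Finally I would verify additivity of the relation: given $\phi_i \sim \psi_i\colon C \to C'$ via chain homotopies $\gamma^{(i)}$ for $i = 0,1$, the degree 1 map $\gamma^{(0)} + \gamma^{(1)}$ witnesses $\phi_0 + \phi_1 \sim \psi_0 + \psi_1$, again by linearity of the defining identity in $\gamma$. I do not anticipate any genuine obstacle; the only point worth attention is that each step silently relies on $\bA$ being additive in order to form sums and negatives of morphisms $C_n \to C'_{n+1}$, and to guarantee that bilinearity of composition lets us distribute the manipulations above. Once these observations are in place, the verification is immediate.
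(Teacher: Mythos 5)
Your proof is correct and is exactly the elementary verification the paper has in mind (the paper omits the argument, declaring it straightforward): equivalence via $0$, $-\gamma$, and $\gamma+\gamma'$; compatibility with composition via $h_{n+1}\gamma_n g_n$ using that $g$ and $h$ commute with the differentials; and additivity via the sum of the two homotopies. No gaps.
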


\begin{defn}\label{def:prelim:HA:hocat}
{\em
We define the \emph{homotopy category} of $\bCh(\bA)$, which we denote by $\bK = \bK(\bA)$, to be the category whose objects are chain complexes and whose morphisms are chain homotopy equivalence classes of chain maps between chain complexes.  In other words, $\bK(\bA)$ is the quotient category $\bCh(\bA)/\!\sim$ formed by defining hom-sets
\[
\Hom_{\bK(\bA)}(A,B) = \Hom_{\bCh(\bA)}(A,B)/\!\sim.
\]
We denote by $q\colon \bCh(\bA)\to \bK(\bA)$ the quotient functor which sends each chain complex to itself and each chain map to its chain homotopy equivalence class.
}
\end{defn}

It follows from the construction of $\bK(\bA)$ that two chain complexes are isomorphic in $\bK(\bA)$ if and only if they are chain equivalent. The  proofs of the next two results are elementary.

The proofs of the following results are straightforward.

\begin{prop}\label{prop:prelims:induce}
If $F\colon \bA\to \bB$ is a functor of additive categories then there is an associated functor $F_\bCh\colon \bCh(\bA)\to \bCh(\bB)$ given by
\[
F_\bCh(C,\partial) = \Big(\{F(C_n)\}_{n\in \Z},\{F(\partial_n\colon C_n\to C_{n-1})\}_{n\in Z}\Big).
\]
Moreover, $F$ induces a functor $F_\bK\colon \bK(\bA)\to \bK(\bB)$ between the homotopy categories $\bK(\bA)$ and $\bK(\bB)$.
\end{prop}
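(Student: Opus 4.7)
The plan is to verify the claim in two stages: first that $F_\bCh$ is well-defined as a functor on $\bCh(\bA)$, then that it respects chain homotopy and so descends to the homotopy category via Proposition~\ref{prelims:cats:quotient}.

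First I would define $F_\bCh$ on morphisms: given a chain map $\phi=\{\phi_n\colon C_n\to C_n'\}$, set $F_\bCh(\phi)=\{F(\phi_n)\colon F(C_n)\to F(C_n')\}$. To show $F_\bCh(C,\partial)$ is actually a chain complex in $\bB$, I need $F(\partial_{n-1})\circ F(\partial_n)=0$; by functoriality this equals $F(\partial_{n-1}\circ\partial_n)=F(0)$, and additivity of $F$ forces $F(0)=0$ (since $F$ acts as a group homomorphism on each hom-set and so sends zero morphisms to zero morphisms). The chain map condition for $F_\bCh(\phi)$ is immediate from applying $F$ to the equality $\phi_{n-1}\partial_n=\partial'_n\phi_n$ and using functoriality. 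Functoriality of $F_\bCh$ itself (preservation of identities and composition) follows directly from the corresponding properties of $F$ applied degree-wise.

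Next I would verify that $F_\bCh$ sends chain homotopic maps to chain homotopic maps. Suppose $\phi\sim\psi$ via a chain homotopy $\gamma=\{\gamma_n\}$, so
\[
\phi_n-\psi_n=\gamma_{n-1}\circ\partial_n+\partial'_{n+1}\circ\gamma_n.
\]
Applying $F$ to both sides and using additivity of $F$ (so that $F$ commutes with the subtraction and the sum on the right) together with functoriality yields
\[
F(\phi_n)-F(\psi_n)=F(\gamma_{n-1})\circ F(\partial_n)+F(\partial'_{n+1})\circ F(\gamma_n),
\]
so $\{F(\gamma_n)\}$ is a chain homotopy from $F_\bCh(\phi)$ to $F_\bCh(\psi)$. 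In particular, $\phi\sim\psi$ implies $q\circ F_\bCh(\phi)=q\circ F_\bCh(\psi)$ in $\bK(\bB)$, where $q\colon \bCh(\bB)\to\bK(\bB)$ is the quotient functor of Definition~\ref{def:prelim:HA:hocat}.

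Finally, I would apply Proposition~\ref{prelims:cats:quotient} to the composite functor $q\circ F_\bCh\colon \bCh(\bA)\to\bK(\bB)$: since this functor identifies chain homotopic maps, it factors uniquely through the quotient $\bCh(\bA)\to\bK(\bA)$, producing the desired functor $F_\bK\colon \bK(\bA)\to\bK(\bB)$. No step is a genuine obstacle; the only place care is needed is the use of additivity of $F$ both to ensure $F(0)=0$ (so $F_\bCh(C,\partial)$ really is a chain complex) and to ensure that $F$ commutes with the additive combination defining chain homotopy.
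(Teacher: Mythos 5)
Your proof is correct and follows the standard argument the paper has in mind (the paper labels this result "straightforward" and omits the proof). Your care in noting that additivity of $F$ is needed both for $F(0)=0$ and for preserving the chain homotopy identity is exactly the right observation, and the descent to $\bK$ via Proposition~\ref{prelims:cats:quotient} matches the paper's framework.
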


\begin{prop}\label{prop:HA:equiv}
 If $F\colon \bA\to \bB$ is a equivalence of categories then the induced functor $F_\bCh$ is an equivalence of categories.
\end{prop}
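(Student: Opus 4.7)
The plan is to construct an explicit inverse to $F_\bCh$ and exhibit natural isomorphisms, rather than verifying the three conditions of Proposition~\ref{prop:cats:equiv} directly (though that would also work). Since $F$ is an equivalence, fix a quasi-inverse $G\colon \bB\to \bA$ together with natural isomorphisms $\epsilon\colon FG\Rightarrow \id_\bB$ and $\eta\colon \id_\bA\Rightarrow GF$. Applying Proposition~\ref{prop:prelims:induce} to $G$ yields a functor $G_\bCh\colon \bCh(\bB)\to \bCh(\bA)$. The candidate quasi-inverse is $G_\bCh$.

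The first step is to promote $\epsilon$ to a natural isomorphism at the level of chain complexes. Given $(D,\partial^D)$ in $\bCh(\bB)$, the complex $F_\bCh G_\bCh(D)$ has $n$-th object $FG(D_n)$ and differential $FG(\partial^D_n)$. The component maps $\epsilon_{D_n}\colon FG(D_n)\to D_n$ are isomorphisms in $\bB$, and by naturality of $\epsilon$ applied to the morphism $\partial^D_n$ in $\bB$, the square
\[
\begin{tikzcd}
FG(D_n)\ar[r,"\epsilon_{D_n}"]\ar[d,"FG(\partial^D_n)"'] & D_n\ar[d,"\partial^D_n"]\\
FG(D_{n-1})\ar[r,"\epsilon_{D_{n-1}}"'] & D_{n-1}
\end{tikzcd}
\]
commutes. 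Hence $\epsilon_D:=\{\epsilon_{D_n}\}$ is a chain map whose components are isomorphisms in $\bB$, i.e.\ an isomorphism in $\bCh(\bB)$. Naturality of the collection $\{\epsilon_D\}_{D\in \bCh(\bB)}$ with respect to chain maps $\phi\colon D\to D'$ is immediate, since naturality of $\epsilon$ at each component $\phi_n$ gives exactly the squares needed to verify $\epsilon_{D'}\circ F_\bCh G_\bCh(\phi) = \phi\circ \epsilon_D$ level by level.

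The analogous argument with $\eta$ produces a natural isomorphism $\eta_\bCh\colon \id_{\bCh(\bA)}\Rightarrow G_\bCh F_\bCh$, using the naturality of $\eta$ applied to differentials in $\bCh(\bA)$. Together, $\epsilon_\bCh$ and $\eta_\bCh$ exhibit $F_\bCh$ and $G_\bCh$ as mutually quasi-inverse, proving that $F_\bCh$ is an equivalence of categories.

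There is no real obstacle here: the entire argument is the observation that natural transformations between additive functors on $\bA$ and $\bB$ automatically respect the finite diagrams $\partial_{n-1}\circ\partial_n=0$ and induce natural transformations between the corresponding chain-level functors. The only thing worth being careful about is that the construction of $G_\bCh$ from Proposition~\ref{prop:prelims:induce} applies $G$ both to objects and to differentials, so that $FG(\partial^D_n)$ really is the differential of $F_\bCh G_\bCh(D)$ and the naturality squares of $\epsilon$ line up with the differentials as written.
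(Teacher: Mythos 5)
Your proof is correct and is exactly the standard argument the paper has in mind: the paper omits the proof entirely (it is listed among results whose ``proofs are straightforward''), and lifting a quasi-inverse $G$ to $G_\bCh$ via Proposition~\ref{prop:prelims:induce} and promoting $\epsilon,\eta$ to levelwise natural isomorphisms of chain complexes --- with naturality applied to the differentials giving the chain-map condition --- is precisely the intended filling-in of that gap.
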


\subsubsection{Abelian Categories}
Let $\bA$ be an abelian category.  Let $C$ be a chain complex in $\bA$.  The elements of $C_n$ are called the $n$-chains.  The elements of $\ker \partial_n\subset C_n$ are called the {\em $n$-cycles}; the elements of $\img\partial_{n+1}\subset C_n$ are called the \emph{$n$-boundaries}.  Since $\bA$ is an abelian category we may define the notion of homology of a chain complex.

\begin{defn}\label{def:homology}
{\em 
We say that a chain complex $(C,\partial)$ is {\em \trivial{}} if $\partial_n=0$ for all $n$. The \trivial{} chain complexes and their morphisms form the full subcategory  $\bCh_0(\bA)\subset \bCh(\bA)$.   A chain complex is called {\em acyclic} if it is exact, i.e.,  $\ker \partial = \img \partial$. A chain complex $B$ is a {\em subcomplex} of $C$ if each $B_n$ is a subspace of $C_n$ and $\partial^B= \partial^C |_B$.  That is, the inclusions $\{i_n\colon B_n\to C_n\}$ form a chain map $i\colon B\to C$.
If $\phi\colon A\to B$ is a chain map then $\ker(\phi)$ and $\img(\phi)$ are subcomplexes of $A$ and $B$ respectively.
Suppose $B$ is a subcomplex of $C$.  The {\em quotient complex} $C/B$ is the chain complex consisting of the family $\{C_n/B_n\}_{n\in \Z}$ together with differentials $\{x+B_n\mapsto \partial_n(x)+B_{n-1}\}_{n\in \Z}$.
The $n$-th \emph{homology} of $C$ is the quotient $H_n(C):= \ker \partial_n/\img \partial_{n+1}$.  We define the \emph{homology} of $C$ as $H_\bullet(C):=\{H_n(C)\}_{n\in \Z}$ equipped with boundary operators $\{0\colon H_n(C)\to H_{n-1}(C)\}_{n\in \Z}$ and regard it as a \trivial{} chain complex.
}
\end{defn}

 A chain complex $C$ is acyclic if and only if $H_\bullet(C)=0$.  Chain maps induce morphisms on homology: let $C,C'$ be chain complexes and $\phi\colon C\to C'$ a chain map.  There there exists a well-defined map $H_\bullet(\phi)$ called the {\em induced map on homology} given via 
\[
H_n(\phi): z+\img \partial_{n+1}\mapsto \phi(z)+\img\partial_{n+1}'.
\]

The proofs of the next few results are straightforward.

\begin{prop}
Homology is a functor $H_\bullet\colon \bCh\to \bCh_0$.  For each $n\in \Z$ the $n$-th homology is a functor $H_n\colon \bCh\to \bA$.
\end{prop}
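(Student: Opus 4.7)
The plan is to verify the two statements in tandem: once each $H_n$ is shown to be a well-defined functor $\bCh(\bA)\to \bA$, the assembly into $H_\bullet\colon \bCh(\bA)\to \bCh_0(\bA)$ is essentially bookkeeping, since the target complex has zero differentials by construction and so the chain map condition $H_{n-1}(\phi)\circ 0=0\circ H_n(\phi)$ is automatic. So the real work is to define $H_n$ on morphisms and check functoriality of $H_n$.

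For the definition of $H_n$ on morphisms I would work purely in the abelian category $\bA$ using universal properties, rather than appealing to elements. Given a chain map $\phi\colon C\to C'$, the identity $\partial_n'\circ \phi_n = \phi_{n-1}\circ \partial_n$ implies that the composite $\partial_n'\circ \phi_n \circ \ker(\partial_n)$ is zero (where I abuse notation to write $\ker(\partial_n)\colon Z_n\hookrightarrow C_n$ for the kernel morphism, which exists since $\bA$ is abelian). By the universal property of $\ker(\partial_n')$, there is a unique induced morphism $\phi_n^Z\colon Z_n\to Z_n'$ commuting with the kernel inclusions. Similarly, $\phi_{n-1}$ maps $\img(\partial_n)\subseteq C_{n-1}$ into $\img(\partial_n')\subseteq C_{n-1}'$: by factoring $\partial_n = \img(\partial_n)\circ e_n$ through the epi-mono factorization and using $\partial_n'\circ\phi_n = \phi_{n-1}\circ \partial_n$, one obtains a morphism $\phi_n^B\colon B_n\to B_n'$ (where $B_n = \img \partial_{n+1}$) compatible with the inclusions into $Z_n,Z_n'$. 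Composing the inclusion $B_n\hookrightarrow Z_n\to Z_n'\to H_n(C')$ with the cokernel structure shows this composite kills $B_n$, so by the universal property of $H_n(C) = \mathrm{coker}(B_n\hookrightarrow Z_n)$ we obtain a unique $H_n(\phi)\colon H_n(C)\to H_n(C')$.

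Functoriality of $H_n$ is now a uniqueness argument. For the identity $\id_C$, the induced maps $\id_C^Z$ and $\id_C^B$ are both identities by the uniqueness clause of the universal property, so $H_n(\id_C) = \id_{H_n(C)}$. For a composition $C\stackrel{\phi}{\to} C'\stackrel{\psi}{\to} C''$, both $H_n(\psi\circ\phi)$ and $H_n(\psi)\circ H_n(\phi)$ satisfy the characterizing property of the map induced on the cokernel from the composite chain map $\psi\circ\phi$, so they agree by uniqueness. Assembling over all $n$, the family $\{H_n(\phi)\}_{n\in\Z}$ is trivially a chain map in $\bCh_0(\bA)$, and the functor $H_\bullet$ is then the componentwise application of these $H_n$.

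The main obstacle, such as it is, lies in being careful with the abelian-category formalism when defining $H_n(\phi)$, since one cannot simply chase elements; the argument requires repeatedly invoking the universal properties of the kernel and cokernel (or equivalently, the epi-mono factorization available in any abelian category) to construct and identify morphisms. Once one commits to this discipline, everything else reduces to standard uniqueness diagrams. An alternative I would keep in reserve is to invoke the Freyd--Mitchell embedding theorem to reduce to the case $\bA \subseteq \mathbf{R}\text{-}\mathbf{Mod}$, where the result is the familiar elementwise statement, but the direct categorical argument is short enough that it is preferable here.
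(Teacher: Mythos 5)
Your proof is correct. The paper does not actually supply an argument for this proposition --- it is listed among results whose proofs are declared ``straightforward'' --- and the only hint of its intended route is the element-wise formula $H_n(\phi)\colon z+\img\partial_{n+1}\mapsto \phi(z)+\img\partial'_{n+1}$ given in Definition/discussion just above, which implicitly presumes a concrete category (the paper's real case of interest is $\bVec$) or an embedding theorem. Your argument is the genuinely categorical version: you construct $\phi_n^Z$ and $\phi_n^B$ from the universal properties of kernels and the epi--mono factorization, obtain $H_n(\phi)$ from the universal property of the cokernel of $B_n\hookrightarrow Z_n$, and get functoriality for free from the uniqueness clauses. This buys validity in an arbitrary abelian category without invoking Freyd--Mitchell, at the cost of slightly more bookkeeping than the element chase; for the paper's purposes (vector spaces) the two are interchangeable. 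One small notational slip: in constructing $\phi_n^B$ you momentarily speak of $\phi_{n-1}$ carrying $\img(\partial_n)$ into $\img(\partial'_n)$, whereas for $H_n$ the relevant statement is that $\phi_n$ carries $\img(\partial_{n+1})$ into $\img(\partial'_{n+1})$ (via $\phi_n\circ\partial_{n+1}=\partial'_{n+1}\circ\phi_{n+1}$); your parenthetical $B_n=\img\partial_{n+1}$ shows you intend the latter, so this is cosmetic rather than a gap.
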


We often write $H_\bullet$ more simply as $H$.

\begin{prop}
Chain homotopic maps induce the same map on homology.  
\end{prop}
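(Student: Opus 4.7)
The plan is to reduce the statement to the special case of a null-homotopic chain map and then argue using only universal properties of kernels and cokernels, so that the proof works in an arbitrary abelian category $\bA$.  Write $f := \phi - \psi$; by the definition of chain homotopy there exist $\gamma_n\colon C_n \to C'_{n+1}$ with $f_n = \gamma_{n-1}\partial_n + \partial'_{n+1}\gamma_n$.  Since $H_n$ is an additive functor (it is built from kernels and cokernels, each additive on $\bA$), establishing $H_n(f) = 0$ immediately yields $H_n(\phi) = H_n(\psi)$.

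To show $H_n(f) = 0$, I would organize the data as follows.  Let $i_n \colon Z_n \hookrightarrow C_n$ be the kernel of $\partial_n$, let $j_{n+1}\colon B'_n \hookrightarrow Z'_n$ be the canonical inclusion of $\img \partial'_{n+1}$ into the cycles, and let $\pi'_n\colon Z'_n \to H_n(C')$ be the cokernel of $j_{n+1}$.  By construction, $H_n(f)$ is the unique morphism satisfying $H_n(f)\circ \pi_n = \pi'_n \circ \overline{f_n}$, where $\overline{f_n}\colon Z_n \to Z'_n$ is the restriction of $f_n$ to cycles (this restriction exists and is unique by the universal property of the kernel, together with the intertwining relation $\partial' f = f\partial$).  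The key computation is
\[
f_n \circ i_n \;=\; \gamma_{n-1}\partial_n i_n + \partial'_{n+1}\gamma_n i_n \;=\; \partial'_{n+1}\circ(\gamma_n i_n),
\]
using $\partial_n\circ i_n = 0$.  Hence $f_n \circ i_n$ factors through $\partial'_{n+1}$, so the corresponding morphism into $Z'_n$ factors through $j_{n+1}$, and post-composition with $\pi'_n$ then gives zero.  By uniqueness in the defining property of $H_n(f)$, this forces $H_n(f) = 0$.

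The main point requiring care is phrasing everything in terms of universal properties rather than elements, since $\bA$ is only assumed to be an abelian category; none of the individual steps is difficult, but each of the three assertions ``$f_n i_n$ factors through $\partial'_{n+1}$,'' ``this factorization lands in $Z'_n$ through $j_{n+1}$,'' and ``$\pi'_n$ annihilates $j_{n+1}$'' must be derived from the universal properties of kernel, image, and cokernel respectively.  A reader who prefers may instead invoke the Freyd--Mitchell embedding theorem to reduce to a category of modules and carry out the element-level computation that the displayed formula naturally suggests; both routes yield the same conclusion.
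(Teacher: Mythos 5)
Your argument is correct: reducing to the null-homotopic case via additivity of $H_n$, and then observing that $f_n i_n = \partial'_{n+1}(\gamma_n i_n)$ factors through $\img\partial'_{n+1}$ and is therefore killed by the cokernel projection, is the standard proof. The paper itself omits the proof (it is listed among results whose proofs are "straightforward"), and your write-up supplies exactly the intended argument, with appropriate care about phrasing it via universal properties in a general abelian category.
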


\begin{prop}\label{prop:prelim:chiso}
A chain equivalence $\phi\colon C\to D$ induces an isomorphism on the homology $H(\phi)\colon H(C)\to H(D)$.
\end{prop}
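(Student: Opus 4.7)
The plan is to invoke the two immediately preceding propositions together with the functoriality of $H$. By definition of chain equivalence, there exists a chain map $\psi\colon D\to C$ such that $\psi\circ\phi \sim \id_C$ and $\phi\circ\psi \sim \id_D$. The strategy is to push these homotopies through the homology functor and read off that $H(\psi)$ is a two-sided inverse for $H(\phi)$.

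First I would apply the proposition that chain homotopic maps induce the same map on homology to conclude that $H(\psi\circ\phi) = H(\id_C)$ and $H(\phi\circ\psi) = H(\id_D)$. Next I would use the (already established) functoriality of $H_\bullet\colon \bCh\to \bCh_0$, which gives $H(\psi\circ\phi) = H(\psi)\circ H(\phi)$ and $H(\phi\circ\psi) = H(\phi)\circ H(\psi)$, together with $H(\id_C) = \id_{H(C)}$ and $H(\id_D) = \id_{H(D)}$. Combining these yields
\[
H(\psi)\circ H(\phi) = \id_{H(C)}, \qquad H(\phi)\circ H(\psi) = \id_{H(D)},
\]
so $H(\phi)$ is an isomorphism with inverse $H(\psi)$.

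There is no real obstacle here; the argument is a one-line consequence of functoriality plus homotopy-invariance on homology, and does not require anything specific about the abelian category $\bA$ beyond what is needed to define $H$ in the first place. If desired, the same argument can be recorded degree-wise, applying $H_n$ in place of $H_\bullet$, since the previous results were stated for both.
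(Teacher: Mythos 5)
Your proof is correct and is exactly the argument the paper intends: it states this proposition immediately after the functoriality of $H$ and the homotopy-invariance of induced maps, declares the proofs straightforward, and omits the details. Your write-up simply makes that one-line deduction explicit, so there is nothing to add.
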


The category $\bK$ enjoys a universal property with respect to chain equivalences.
\begin{prop}[\cite{weibel}, Proposition 10.1.2]
  Let $F\colon \bCh\to D$ be any functor that sends chain equivalences to isomorphisms.  Then $F$ factors uniquely through $\bK$.  In particular, there exists a unqiue functor $H_\bK\colon \bK\to \bCh_0$ such that $H_\bK\circ q = H$.
\end{prop}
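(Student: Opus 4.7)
The plan is to deduce this from the universal property of quotient categories (Proposition~\ref{prelims:cats:quotient}) together with Proposition~\ref{prop:prelim:chiso}. Since $\bK$ is by construction the quotient $\bCh/\!\sim$ by chain homotopy, with quotient functor $q$, Proposition~\ref{prelims:cats:quotient} tells us that $F$ factors uniquely through $q$ provided $F$ identifies chain-homotopic maps, i.e., $\phi \sim \psi$ implies $F(\phi)=F(\psi)$. The whole first statement therefore reduces to proving this implication from the weaker hypothesis that $F$ inverts chain equivalences.

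To bridge the gap between ``sends chain equivalences to isomorphisms'' and ``identifies chain-homotopic maps'', I would use the standard \emph{mapping cylinder}. Define a chain complex $\mathrm{Cyl}(C)$ with $\mathrm{Cyl}(C)_n := C_n \oplus C_n \oplus C_{n-1}$ and differential $d(x,y,z) = (\partial x + z,\,\partial y - z,\,-\partial z)$. This carries two inclusion chain maps $i_0,i_1\colon C\to \mathrm{Cyl}(C)$ given by $x\mapsto (x,0,0)$ and $x\mapsto (0,x,0)$, and a projection $p\colon \mathrm{Cyl}(C)\to C$, $(x,y,z)\mapsto x+y$. A short verification shows that $p$ is a chain equivalence and that $p\circ i_0 = p\circ i_1 = \mathrm{id}_C$. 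Applying $F$, the morphism $F(p)$ becomes an isomorphism by hypothesis, so cancelling yields $F(i_0)=F(i_1)$.

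Now suppose $\phi\sim\psi\colon C\to D$ via a chain homotopy $\gamma$. The key observation is that $\gamma$ packages into a chain map $\Phi\colon \mathrm{Cyl}(C)\to D$ defined by $\Phi(x,y,z) := \phi(x)+\psi(y)+\gamma(z)$; the defining identity $\phi - \psi = \partial'\gamma + \gamma\partial$ is exactly what makes $\Phi$ compatible with the differentials, and by construction $\Phi\circ i_0 = \phi$ and $\Phi\circ i_1 = \psi$. Therefore
\[
F(\phi) = F(\Phi)\circ F(i_0) = F(\Phi)\circ F(i_1) = F(\psi),
\]
and Proposition~\ref{prelims:cats:quotient} supplies the unique factorization $F = F'\circ q$ through $\bK$.

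For the \emph{in particular} clause, I would simply apply the first part to $F := H\colon \bCh \to \bCh_0$. Proposition~\ref{prop:prelim:chiso} asserts that $H$ sends chain equivalences to isomorphisms, so the hypothesis is met and the resulting unique factorization is the desired $H_\bK\colon \bK \to \bCh_0$ with $H_\bK\circ q = H$. The main subtlety in the approach is choosing sign conventions in $d$ so that $\Phi$ is indeed a chain map whose boundary condition coincides with the chain-homotopy formula; everything else is formal bookkeeping.
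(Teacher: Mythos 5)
The paper does not supply its own proof of this proposition---it is quoted directly from Weibel with a citation---so there is nothing internal to compare against; your mapping-cylinder argument is the standard proof of exactly this statement (and is essentially the one in the cited reference). The argument is correct as written: the differential $d(x,y,z)=(\partial x+z,\,\partial y-z,\,-\partial z)$ does satisfy $d^2=0$, the maps $i_0,i_1,p$ are chain maps with $p\circ i_0=p\circ i_1=\mathrm{id}$, $p$ is a chain equivalence (a homotopy inverse to $i_0$ via $s(x,y,z)=(0,0,-y)$), the homotopy identity $\phi-\psi=\partial'\gamma+\gamma\partial$ is precisely the compatibility of $\Phi$ with the differentials, and the reduction to Proposition~\ref{prelims:cats:quotient} plus Proposition~\ref{prop:prelim:chiso} for the ``in particular'' clause is the right bookkeeping.
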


Let $\bVec$ be the category of finite-dimensional vector spaces over a field $\K$.  We have the following result for $\bCh(\bVec)$.

\begin{prop}[\cite{gelfand}, Proposition III.2.4]\label{prop:prelims:HA:equiv}
 The pair of functors $q\circ i\colon \bCh_0(\bVec)\to \bK(\bVec)$ and $H_\bK\colon \bK(\bVec)\to \bCh_0(\bVec)$ form an equivalence of categories.
  \end{prop}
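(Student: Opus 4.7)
The plan is to invoke Proposition~\ref{prop:cats:equiv}, which reduces the task to showing that $q\circ i\colon \bCh_0(\bVec)\to \bK(\bVec)$ is full, faithful, and essentially surjective; $H_\bK$ will then automatically serve as a quasi-inverse once we observe that $H_\bK\circ(q\circ i)$ is literally the identity on $\bCh_0(\bVec)$, since a minimal complex has zero differential and therefore equals its own homology.

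For fullness and faithfulness, since $\bCh_0(\bVec)$ is a full subcategory of $\bCh(\bVec)$, the action of $q\circ i$ on hom-sets is the quotient
\[
\Hom_{\bCh_0}(A,B) = \Hom_{\bCh}(A,B) \longrightarrow \Hom_{\bCh}(A,B)/\!\sim\; =\; \Hom_{\bK}(A,B),
\]
which is surjective by construction. For injectivity, suppose chain maps $f,g\colon A\to B$ between minimal complexes satisfy $f\sim g$. Then $f-g = \gamma\partial^A + \partial^B\gamma$ for some degree-one map $\gamma$ (in the sense of Definition~\ref{defn:degree1}); as $\partial^A=\partial^B=0$, this forces $f=g$.

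The substantive step is essential surjectivity: I must show that every $(C,\partial)$ in $\bCh(\bVec)$ is chain equivalent to its homology regarded as a minimal complex. Here I would exploit that every short exact sequence of vector spaces splits, choosing for each $n$ a decomposition $C_n = B_n \oplus \cH_n \oplus W_n$, where $B_n = \img\partial_{n+1}$, $\cH_n$ is a complement of $B_n$ inside $Z_n = \ker\partial_n$ (so that $\cH_n \cong H_n(C)$), and $W_n$ is a complement of $Z_n$ in $C_n$ (so that $\partial_n$ restricts to an isomorphism $W_n \to B_{n-1}$). The inclusion $\phi\colon H(C)\to C$ and the projection $\psi\colon C\to H(C)$ corresponding to this splitting are then chain maps (the chain-map condition for $\psi$ uses that $\partial_n(C_n)\subset B_{n-1}$ is disjoint from $\cH_{n-1}$), and $\psi\phi = \id_{H(C)}$. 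A chain homotopy $\gamma_n\colon C_n\to C_{n+1}$ from $\phi\psi$ to $\id_C$ is obtained by taking $\gamma_n$ to be the inverse of $\partial_{n+1}\colon W_{n+1}\to B_n$ on the $B_n$-summand and zero on $\cH_n\oplus W_n$; checking $\partial\gamma+\gamma\partial = \id_C - \phi\psi$ reduces to a one-line verification on each of the three summands. Thus $[\phi]$ is an isomorphism in $\bK(\bVec)$ with source in $\bCh_0(\bVec)$.

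The main obstacle, and the only place the field hypothesis is used, is this splitting argument; the rest is formal. Assembling the three conditions and applying Proposition~\ref{prop:cats:equiv} yields that $q\circ i$ is an equivalence of categories, with $H_\bK$ as a quasi-inverse by the identity $H_\bK\circ(q\circ i) = \id_{\bCh_0(\bVec)}$ noted at the outset.
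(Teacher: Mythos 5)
Your proof is correct. The paper itself does not prove this proposition in-house --- it cites Gelfand--Manin --- and your splitting argument is essentially that classical proof: fullness and faithfulness come for free because a homotopy $\gamma\partial + \partial\gamma$ vanishes when both differentials are zero, and essential surjectivity is carried by the choice of complements $C_n = \img\partial_{n+1}\oplus \cH_n\oplus W_n$ with the explicit homotopy $\gamma$ inverting $\partial_{n+1}\colon W_{n+1}\to \img\partial_{n+1}$; I checked the three-summand verification of $\partial\gamma+\gamma\partial=\id-\phi\psi$ and it goes through. Where you diverge from the paper is in the proof of essential surjectivity: immediately after stating the proposition, the paper gives its own alternative derivation by splitting the statement into two pieces (the quotient functor $q\colon \bCh_0(\bVec)\to \bK_0(\bVec)$ is an isomorphism on hom-sets, and the inclusion $\bK_0(\bVec)\hookrightarrow\bK(\bVec)$ is an equivalence), and it obtains essential surjectivity not from a linear-algebra splitting but from Theorem~\ref{thm:alg:hom}, the correctness of Algorithm~\ref{alg:hom} (\textsc{Homology}), i.e., from a tower of discrete-Morse-theoretic reductions terminating in a \trivial{} complex. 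Your route is self-contained and elementary, requiring nothing beyond the exactness of splittings over a field; the paper's route buys a demonstration that its computational framework (reductions and acyclic matchings) recovers the classical equivalence, and it foreshadows the graded analogue in Theorem~\ref{thm:inc:equiv}, where the same algorithmic argument is the one that actually generalizes. One small point worth making explicit if you write this up: your claim that $H_\bK$ is a quasi-inverse follows because any functor $G$ with $G\circ(q\circ i)\cong\id$ is naturally isomorphic to the quasi-inverse of an equivalence $q\circ i$, so the single identity $H_\bK\circ(q\circ i)=\id_{\bCh_0(\bVec)}$ does suffice.
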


We can give an alternative to Proposition~\ref{prop:prelims:HA:equiv} using the results and perspectives of this paper. This will use Algorithm~\ref{alg:hom} and give the flavor of Theorem~\ref{thm:inc:equiv}.  Let $\bK_0(\bVec)$ denote the full subcategory of $\bK(\bVec)$ whose objects are the objects of $\bCh_0(\bVec)$ and whose morphisms are given by
\[
\Hom_{\bK_0}(C,D) = \Hom_{\bCh_0}(C,D)/\!\sim .
\]
There is a quotient functor $q\colon \bCh_0(\bVec)\to \bK_0(\bVec)$.  The next result shows that $\bCh_0(\bVec)$ may be identified with $\bK_0(\bVec)$.
 
\begin{prop}
The quotient functor $q\colon \bCh_0(\bVec)\to \bK_0(\bVec)$ is an isomorphism on hom-sets.
\end{prop}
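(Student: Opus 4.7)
The plan is to exploit the defining property of the subcategory $\bCh_0(\bVec)$, namely that every object has vanishing differential, and show that in this setting the chain homotopy relation collapses to equality. Since $q$ is, by its construction as a quotient functor (Definition~\ref{def:prelim:HA:hocat}), already surjective on each hom-set, the entire content of the proposition reduces to establishing injectivity of $q_{C,D}$ for each pair $C,D$ of objects in $\bCh_0(\bVec)$.

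First, I would fix minimal chain complexes $(C,0)$ and $(D,0)$ in $\bCh_0(\bVec)$ and suppose $\phi,\psi \colon C \to D$ are chain maps with $\phi \sim \psi$. By definition of chain homotopy, there exists a degree~$1$ map $\gamma = \{\gamma_n \colon C_n \to D_{n+1}\}$ with
\[
\phi_n - \psi_n = \gamma_{n-1}\circ \partial^C_n + \partial^D_{n+1}\circ \gamma_n
\]
for every $n \in \Z$. The key step is then to substitute $\partial^C = 0$ and $\partial^D = 0$, which are available precisely because $(C,0)$ and $(D,0)$ lie in $\bCh_0(\bVec)$. Both terms on the right-hand side vanish, giving $\phi_n = \psi_n$ for all $n$, hence $\phi = \psi$ in $\Hom_{\bCh_0}(C,D)$. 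This shows $q_{C,D}$ is injective, and combined with surjectivity of the quotient map, $q_{C,D}$ is an isomorphism (of abelian groups, since $q$ is additive).

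There is essentially no obstacle here; the proof is a direct unwinding of the definition of chain homotopy in the degenerate case where both source and target have trivial differentials. The substantive content of the result lies not in the argument but in its interpretation: it asserts that the objects and morphisms of $\bCh_0(\bVec)$ are already homotopically rigid, so nothing is identified when we pass to $\bK_0(\bVec)$. This is the ingredient that makes the analogue of Proposition~\ref{prop:prelims:HA:equiv} go through in the forthcoming equivalence-of-categories result (Theorem~\ref{thm:inc:equiv}), where $\bCh_0(\bVec)$ is identified with a full subcategory of the homotopy category via $q \circ i$.
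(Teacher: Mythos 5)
Your proof is correct and is essentially identical to the paper's: both observe that since $\partial = 0$ on objects of $\bCh_0(\bVec)$, the homotopy relation $\phi - \psi = \gamma\partial + \partial\gamma$ forces $\phi = \psi$, so the quotient identifies nothing. No further comment is needed.
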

\begin{proof}
Given $\psi,\phi\colon C\to D$ we have that $\psi\sim \phi$ if there exists $\gamma$ such that 
\[
\psi - \phi = \gamma\circ \partial+\partial \circ\gamma = 0,
\]
where the last equality follows since $\partial=0$ within the subcategory of \trivial{} objects.  Thus $\psi\sim \phi$ if and only if $\psi=\phi$.  
\end{proof}

\begin{prop}
The inclusion functor $i\colon \bK_0(\bVec)\to \bK(\bVec)$ is an equivalence of categories.
\end{prop}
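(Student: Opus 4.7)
The plan is to apply Proposition~\ref{prop:cats:equiv} and verify that $i$ is full, faithful, and essentially surjective. The first two properties are immediate from the construction: since $\bK_0(\bVec)$ was defined as a \emph{full} subcategory of $\bK(\bVec)$, the inclusion is the identity on the hom-sets $\Hom_{\bK_0(\bVec)}(C,D) = \Hom_{\bK(\bVec)}(C,D)$ for every pair of minimal complexes $C$ and $D$.

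The content of the argument lies in essential surjectivity: given a chain complex $(C,\partial)$ in $\bK(\bVec)$, I must exhibit a minimal complex $M$ chain-equivalent to $C$. The natural candidate is the homology $M = H_\bullet(C)$ equipped with zero differentials, which is an object of $\bCh_0(\bVec)$ and hence of $\bK_0(\bVec)$. To produce the chain equivalence I would exploit the fact that every short exact sequence of finite-dimensional vector spaces over $\K$ splits. Choosing splittings of the two standard sequences
\[
0\to \ker\partial_n\to C_n\to \img\partial_n\to 0, \qquad 0 \to \img\partial_{n+1}\to \ker\partial_n\to H_n(C)\to 0
\]
yields in each degree a direct sum decomposition $C_n\cong H_n(C)\oplus \img\partial_{n+1}\oplus \img\partial_n$ in which $\partial_n$ restricts to an isomorphism between the last summand of $C_n$ and the middle summand of $C_{n-1}$, and vanishes on the other summands.

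With this decomposition in hand, projection onto the $H_n(C)$-summand defines a chain map $\phi\colon C\to H_\bullet(C)$, and the inclusion $\psi\colon H_\bullet(C)\to C$ of the same summand is its one-sided inverse with $\phi\psi = \id$. The difference $\id - \psi\phi$ is the projection onto the two $\img\partial$-summands, and it is null-homotopic via the degree-one map $\gamma$ that sends the middle $\img\partial_{n+1}$-summand of $C_n$ isomorphically back to the $\img\partial_{n+1}$-summand of $C_{n+1}$ using $\partial_{n+1}^{-1}$, and annihilates the other two summands. The main (but routine) step will be a short index chase confirming $\gamma\partial + \partial\gamma = \id - \psi\phi$; no genuine obstacle arises because working over a field trivializes every extension problem. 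Alternatively, the same chain equivalence can be produced computationally by iteratively applying algebraic-discrete Morse reductions via Algorithm~\ref{alg:hom}, which is the viewpoint signalled in the paragraph preceding the statement and the precursor of Theorem~\ref{thm:inc:equiv} in the graded setting.
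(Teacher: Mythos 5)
Your proof is correct, and for the one nontrivial step it takes a genuinely different route from the paper. The paper handles essential surjectivity in a single line by citing Theorem~\ref{thm:alg:hom}, the correctness of Algorithm~\ref{alg:hom} (\textsc{Homology}): every complex is carried by a tower of discrete-Morse reductions to a \trivial{} one, hence is isomorphic in $\bK(\bVec)$ to an object of $\bK_0(\bVec)$; this is deliberate, since the proposition is advertised as giving the flavor of Theorem~\ref{thm:inc:equiv}, whose graded analogue is proved the same way via \textsc{ConnectionMatrix}. You instead give the classical hands-on argument: split the two exact sequences over the field $\K$ to get $C_n\cong H_n(C)\oplus \img\partial_{n+1}\oplus W_n$ with $\partial_n|_{W_n}$ an isomorphism onto the middle summand one degree down, and read off $\phi$, $\psi$, $\gamma$ from the decomposition. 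Your homotopy identity checks out ($\partial\gamma$ is the identity on the $\img\partial_{n+1}$-summand, $\gamma\partial$ the identity on $W_n$, both vanish on the homology summand), and your maps in fact satisfy the extra side conditions of a reduction in the sense of Section~\ref{sec:red:chain}, so you produce a \strict{} reduction directly without the algorithm. What each buys: your version is self-contained and isolates the only real input, namely that every extension of $\K$-vector spaces splits, and it applies verbatim to any object of $\bCh(\bVec)$; the paper's version is shorter and reinforces the computational theme, but since Theorem~\ref{thm:alg:hom} is stated for cell complexes it implicitly requires first choosing a basis of $C$ (as is done explicitly later in the proof of Theorem~\ref{thm:inc:equiv}), a step your argument avoids. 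You correctly note the algorithmic route as the alternative the paper actually intends.
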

\begin{proof}
The functor $i$ is full and faithful.  Moreover, $i$ is essentially surjective from the Theorem~\ref{thm:alg:hom}, which is the proof of correctness of Algorithm~\ref{alg:hom} (\textsc{Homology}). It follows from Proposition~\ref{prop:cats:equiv} that $i$ is an equivalence of categories.
\end{proof}

This result implies there is an inverse functor to $i$, call it $F$, such that $i$ and $F$ are an equivalence of categories.  In particular, $i\circ F(C)$ is \trivial{} and $i\circ F(C)$ and $C$ are chain equivalent.

\subsection{Order Theory}

\subsubsection{Posets}
\begin{defn}
{\em 
A \emph{partial order} $\leq$ is a reflexive, antisymmetric, transitive binary relation.
A set $\sP$ together with a partial order $\leq$ on $\sP$ is called a \emph{partially ordered set}, or \emph{poset}, and is denoted by $(\sP,\leq)$, or more simply as $\sP$.
We let $<$ be the relation on $\sP$ such that $x<y$ if and only if $x\leq y$ and $x\neq y$.
A function $\nu \colon\sP\to \sQ$ is \emph{order-preserving} if $p\leq q$ implies that $\nu(p)\leq \nu(q)$.  
The \emph{category of finite posets}, denoted $\bFPoset$, is the category whose objects are finite posets and whose morphisms are order-preserving maps.
}
\end{defn}



\begin{defn}\label{def:order:chain}
{\em
Let $\sP$ be a finite poset and $p,q\in \sP$.  We say that $q$ and $p$ are \emph{comparable} if $p\leq q$ or $q\leq p$ and that $p$ and $q$ are {\em incomparable} if they are not comparable.  We say that $q$ {\em covers} $p$ if $p< q$ and there does not exist an $r$ with $p< r < q$.  If $q$ covers $p$ then $p$ is a {\em predecessor} of $q$. Let $I\subset \sP$.  We say that $I$ is a \emph{chain} in $\sP$ if any two elements in $I$ are comparable.  We say that $I$ is an \emph{antichain} in $\sP$ if any two elements in $I$ are incomparable.
}
\end{defn}

\begin{defn}\label{defn:prelims:downset}
{\em 
Let $\sP$ be a finite poset. An {\em up-set} of $\sP$ is a subset $U\subset \sP$ such that if $p\in \sU$ and $p\leq q$ then $q\in U$.  For $p\in \sP$ the {\em up-set at $p$} is $\uparrow\!p:=\{q\in \sP:p \leq q\}$.  Following~\cite{lsa}, we denote the collection of up-sets by $\sU(\sP)$.  A {\em down-set} of $\sP$ is a set $D\subset \sP$ such that if $q\in D$ and $p\leq q$ then $p\in D$.  The {\em down-set at $q$} is $\downarrow\! q:=\{p\in \sP: p \leq q\}$.    Following~\cite{lsa}, we denote the collection of down-sets by $\sO(\sP)$.

}
\end{defn}

\begin{rem}
Any down-set can be obtained by a union of down-sets of the form $\downarrow\!q$.  In fact, $\sO(\sP)$ are the closed sets of the Alexandroff topology of the poset $\sP$.  Under a poset morphism, the preimage of a down-set is a down-set.  Similarly, the preimage of an up-set is an up-set.
\end{rem}

\begin{defn}\label{defn:interval}
{\em
Let $\sP$ be a finite poset. For $p,q\in \sP$ the {\em interval} from $p$ to $q$, denoted $[p,q], $ is the set $\{x\in \sP: p\leq x \leq q\}$. A subset $I\subset \sP$ is {\em convex} if whenever $p,q\in I$ then $[p,q]\subset I$.  Following~\cite{fran}, we denote the collection of convex sets by $I(\sP)$. 
}
\end{defn}
 
 \begin{rem}
 Let $\sP$ be a finite poset.  Any convex set of $\sP$ can be obtained by an intersection of a down-set and an up-set.  Under a poset morphism the preimage of a convex set is a convex set.   See~\cite{roman}.
 \end{rem}
 \begin{rem}
 In~\cite{fran2,fran3,fran} convex sets are instead called intervals.  We adopt the terminology convex as this is standard in order theory literature.
 \end{rem}

\subsubsection{Lattices}\label{sec:prelims:lat}

\begin{defn}
{\em
A {\em lattice} is a set $\sL$ with binary operations $\vee,\wedge \colon \sL\times \sL\to \sL$ satisfying the following four axioms:

\begin{enumerate}[wide, labelwidth=!, labelindent=0pt]
\item (idempotent) $a\wedge a = a \vee a = a$ for all $a\in \sL$,
\item (commutative) $a\wedge b = b\wedge a$ and $a\vee b = b \vee a$ for all $a,b\in \sL$,
\item (associative) $a\wedge (b\wedge c) = (a\wedge b)\wedge c$ and $a\vee(b\vee c) = (a\vee b)\vee c$ for all $a,b,c\in \sL$,
\item (absorption) $a\wedge (a\vee b) = a\vee (a\wedge b)=a$ for all $a,b\in \sL$.\\
A lattice $\sL$ is {\em distributive} if it satisfies the additional axiom:
\item (distributive) $a\wedge (b\vee c) = (a\wedge b)\vee (a\wedge c)$ and $a\vee (b\wedge c) = (a\vee b) \wedge (a\vee c)$ for all $a,b,c\in \sL$.\\
A lattice $\sL$ is {\em bounded} if there exist {\em neutral} elements $0$ and $1$ that satisfy the following property:
\item $0\wedge a = 0, 0\vee a = a, 1\wedge a = a, 1\vee a = 1$ for all $a\in \sL$.
\end{enumerate}
}
\end{defn}

A lattice homomorphism $f \colon \sL\to \sM$ is a map such that if $a,b\in \sL$ then $f(a\wedge b) = f(a)\wedge f(b)$ and $f(a\vee b) = f(a)\vee f(b)$.  
If $\sL$ and $\sM$ are bounded lattices then we also require that $f(0)=0$ and $f(1)=1$.  We are particularly interested in finite lattices.  Note that every finite lattice is bounded.  A subset $\sK\subset \sL$ is  a sublattice of $\sL$ if $a,b\in \sK$ implies that $a\vee b\in \sK$ and $a\wedge b\in \sK$. For sublattices of bounded lattices we impose the extra condition that $0,1\in \sK$.

\begin{defn}
{\em
The \emph{category of finite distributive lattices}, denoted $\bFDLat$, is the category whose objects are finite distributive lattices and whose morphisms are lattice homomorphisms.
}
\end{defn}

A lattice $\sL$ has an associated poset structure given by $a\leq b$ if $a=a\wedge b$ or, equivalently, if $b=a\vee b$.

\begin{defn}\label{def:prelims:joinirr}
{\em
An element $a\in \sL$ is {\em join-irreducible} if it has a unique predecessor; given a join-irreducible $a$ we denote its unique predecessor by $\pred a$.
The set of join-irreducible elements of $\sL$ is denoted by $\sJ(\sL)$.  The poset of join-irreducible elements, also denoted $\sJ(\sL)=(\sJ(\sL),\leq)$, is the set $\sJ(\sL)$ together with $\leq$, where $\leq$ is the restriction of the partial order of $\sL$ to $\sJ(\sL)$.
}
\end{defn}

\begin{defn}
{\em 
For $a\in \sL$ the expression
\[
a = b_1\vee \cdots \vee b_n
\]
where the $b_i$'s are distinct join-irreducibles is called \emph{irredundant} if it is not the join of any proper subset of $U=\{b_1,\ldots,b_n\}$. 
}
\end{defn}
\begin{prop}[\cite{roman},Theorem 4.29]\label{prop:jr:rep}
If $\sL$ is a finite distributive lattice then every $a\in \sL$ has an irredundant join-irreducible representation
\[
a = b_1\vee \cdots \vee b_n
\]
and all such representations have the same number of terms.
\end{prop}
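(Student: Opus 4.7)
The plan is to prove the result in two stages: existence of an irredundant join-irreducible representation for every $a\in\sL$ (which does not actually require distributivity), followed by uniqueness of the number of terms (which will crucially use distributivity). Both stages rely on the finiteness of $\sL$.

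For existence, I would proceed by induction on the cardinality of $\downarrow\! a$. If $a=0$, take the empty join ($n=0$); if $a$ is join-irreducible, take the one-term representation $a$. Otherwise $a$ is neither the bottom nor join-irreducible, so it has at least two distinct predecessors $p_1,\dots,p_k$; since $\sL$ is finite, $a=p_1\vee\cdots\vee p_k$. The inductive hypothesis gives join-irreducible representations for each $p_i$, and joining these yields one for $a$. To make it irredundant, repeatedly discard any summand $b_i$ with $b_i\leq\bigvee_{j\neq i}b_j$; the process terminates in finitely many steps because each removal decreases the number of summands.

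For uniqueness of the number of terms, the key lemma is that in a finite distributive lattice every join-irreducible $b\in\sJ(\sL)$ is \emph{join-prime}: if $b\leq x\vee y$ then $b\leq x$ or $b\leq y$. Using distributivity, $b=b\wedge(x\vee y)=(b\wedge x)\vee(b\wedge y)$. If both $b\wedge x<b$ and $b\wedge y<b$, then both are $\leq\pred b$ (the unique predecessor), so their join is also $\leq\pred b$, contradicting that the join equals $b$. Hence $b\wedge x=b$ or $b\wedge y=b$, i.e.\ $b\leq x$ or $b\leq y$. Iterating, if $b\leq c_1\vee\cdots\vee c_m$ with each $c_j$ join-irreducible, then $b\leq c_j$ for some $j$.

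Now suppose $a=b_1\vee\cdots\vee b_n=c_1\vee\cdots\vee c_m$ are two irredundant join-irreducible representations. Since each $b_i\leq c_1\vee\cdots\vee c_m$, join-primeness produces a function $\sigma\colon\{1,\dots,n\}\to\{1,\dots,m\}$ with $b_i\leq c_{\sigma(i)}$, and symmetrically a function $\tau\colon\{1,\dots,m\}\to\{1,\dots,n\}$ with $c_j\leq b_{\tau(j)}$. Composing gives $b_i\leq b_{\tau(\sigma(i))}$; if $\tau(\sigma(i))\neq i$, then $b_i$ could be dropped from the first representation, contradicting irredundancy. Hence $\tau\circ\sigma=\mathrm{id}$, and symmetrically $\sigma\circ\tau=\mathrm{id}$, so $\sigma$ is a bijection and in particular $n=m$. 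The main obstacle, and the only place distributivity is essential, is establishing the join-primeness lemma; the rest of the argument is a bookkeeping exercise in using irredundancy to upgrade the inequalities produced by join-primeness into a bijection.
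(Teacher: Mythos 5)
Your proof is correct; the paper itself gives no argument for this proposition, citing it directly to \cite{roman}, and your two-stage argument (existence by induction plus irredundancy pruning, uniqueness via join-primeness of join-irreducibles under distributivity) is exactly the standard proof found there. Note that your bijection $\sigma$ in fact gives $b_i = c_{\sigma(i)}$ (since $b_i \leq c_{\sigma(i)} \leq b_{\tau(\sigma(i))} = b_i$), so you have proved the stronger statement that the irredundant representation is unique as a set, not merely that all such representations have the same number of terms.
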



\begin{defn}\label{defn:boolean}
{\em
A {\em complemented lattice}, is a bounded lattice (with least element $0$ and greatest element $1$), in which every element $a$ has a complement, i.e., an element $b$ such that $a\vee b = 1$ and $a\wedge b= 0$. 
}
\end{defn}

 
 \begin{defn}\label{def:prelims:lat:rc}
 {\em
 A {\em relatively complemented} lattice is a lattice such that every interval $[a,b]$, viewed as a bounded lattice, is complemented.
 }
 \end{defn}

\begin{ex}\label{def:prelims:lat:subv}
{\em
Let $V$ be a vector space.  The associated \emph{lattice of subspaces}, denoted by $\Sub(V)$, consists of all subspaces of $V$ with the operations $\wedge := \cap$ and $\vee := +$ (span).  $\Sub(V)$ is a relatively complemented lattice.  It is not distributive in general.
}
\end{ex}

\begin{defn}
{\em
Let $C$ be a chain complex.  The associated \emph{lattice of subcomplexes}, denoted by $\Sub(C)$, consists of all  subcomplexes of $C$ with the operations $\wedge := \cap$ and $\vee := +$ (span), i.e.,
\begin{align*}
(A_\bullet,\partial^A)\wedge (B_\bullet,\partial^B) &:= (A_\bullet\cap B_\bullet,\partial^C|_{A\cap B}),\\
(A_\bullet,\partial^A)\vee (B_\bullet,\partial^B) &:= (A_\bullet+ B_\bullet,\partial^C|_{A+B}).
\end{align*}
$\Sub(C)$ is a bounded lattice, but  is not distributive in general. 
}
\end{defn}

\subsubsection{Birkhoff's Theorem and Transforms}\label{sec:birkhoff}
As indicated above, given a finite distributive lattice $\sL$,  $\sJ(\sL)$ has a poset structure.
In the opposite direction, given a finite poset $(\sP,\leq)$ the collection of downsets $\sO(\sP)$ is a finite distributive lattice with operations $\wedge = \cap$ and $\vee = \cup$.  The following theorem often goes under the moniker `Birkhoff's Representation Theorem'.

\begin{thm}[\cite{roman}, Theorem 10.4]\label{thm:birkhoff}
$\sJ$ and $\sO$ are contravariant functors from $\bFDLat$ to $\bFPoset$ and $\bFPoset$ to $\bFDLat$, respectively.  Moreover,  $\sO$ and $\sJ$ form an equivalence of categories.
\end{thm}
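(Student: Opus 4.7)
The plan is to prove the theorem in three stages: first verify that $\sJ$ and $\sO$ are contravariant functors, then exhibit natural isomorphisms $\eta_\sP\colon \sP \to \sJ(\sO(\sP))$ and $\epsilon_\sL\colon \sL \to \sO(\sJ(\sL))$, and finally conclude via these natural isomorphisms that the functors form an equivalence.

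For the functoriality of $\sO$, given a poset morphism $f\colon \sP\to\sQ$, I would define $\sO(f)\colon \sO(\sQ)\to\sO(\sP)$ by $D\mapsto f^{-1}(D)$. This is well-defined because the preimage of a downset under an order-preserving map is a downset. The fact that $\sO(f)$ is a lattice homomorphism follows because preimages commute with unions and intersections, and $f^{-1}(\varnothing)=\varnothing$, $f^{-1}(\sQ)=\sP$, so bounds are preserved. Contravariant functoriality $\sO(g\circ f)=\sO(f)\circ\sO(g)$ is immediate from the identity $(g\circ f)^{-1} = f^{-1}\circ g^{-1}$. For $\sJ$, given a lattice morphism $g\colon \sL\to\sM$, I would define $\sJ(g)\colon \sJ(\sM)\to\sJ(\sL)$ by sending $m\in\sJ(\sM)$ to the minimum element $\sJ(g)(m):=\bigwedge\{a\in\sL:m\leq g(a)\}$; this meet is taken over a non-empty set (it contains $1\in \sL$) and exists in the finite lattice $\sL$. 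The key verification is that $\sJ(g)(m)$ is actually join-irreducible in $\sL$; this will follow by using Proposition~\ref{prop:jr:rep} to write $\sJ(g)(m)$ as an irredundant join and arguing from minimality that only one join-irreducible can appear.

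Next, for the natural isomorphism $\eta_\sP\colon \sP\to \sJ(\sO(\sP))$, I would set $\eta_\sP(p):=\,\downarrow\! p$. I would verify that $\downarrow\! p$ is indeed join-irreducible in $\sO(\sP)$ (its unique predecessor being $\downarrow\! p \setminus \{p\}$), that $\eta_\sP$ is order-preserving and injective (since $p\leq q \iff \downarrow\!p\subseteq\,\downarrow\!q$), and surjective (any join-irreducible downset has a unique maximal element, giving the inverse). For $\epsilon_\sL\colon \sL \to \sO(\sJ(\sL))$, I would set $\epsilon_\sL(a):=\{b\in\sJ(\sL):b\leq a\}$. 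Lattice-homomorphism properties follow from the distributivity of $\sL$. The crux is bijectivity: surjectivity is easy, while injectivity and reconstruction of $a$ from $\epsilon_\sL(a)$ use Proposition~\ref{prop:jr:rep}, since every element has a unique irredundant join-irreducible representation, precisely $a=\bigvee \epsilon_\sL(a)$ with redundancies removed.

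The last step is to verify naturality of $\eta$ and $\epsilon$, i.e., that for $f\colon\sP\to\sQ$ the square comparing $f$ with $\sJ(\sO(f))$ commutes, and similarly for lattice morphisms. Once these natural isomorphisms are established, the equivalence of categories follows immediately: $\sO$ (and likewise $\sJ$) is fully faithful because $f$ is determined by its action on principal downsets via $\eta$, and essentially surjective on objects by $\epsilon$, so Proposition~\ref{prop:cats:equiv} applies. I expect the main obstacle to be the careful definition of $\sJ$ on morphisms and verification that $\sJ(g)(m)$ is join-irreducible, along with the naturality square for $\eta$, which requires unwinding the definition $\sJ(g)(m)=\min\{a: m\leq g(a)\}$ against the preimage action of $\sO(f)$; this is where distributivity of $\sL$ and Proposition~\ref{prop:jr:rep} must be used carefully.
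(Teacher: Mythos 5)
The paper offers no proof of this statement; it is quoted from the cited reference, and your outline is precisely the standard argument given there (principal downsets $\downarrow\!p$ as the join-irreducibles of $\sO(\sP)$, the map $a\mapsto\{b\in\sJ(\sL):b\leq a\}$ as its inverse-up-to-natural-isomorphism, and the lower-adjoint formula $\sJ(g)(m)=\bigwedge\{a:m\leq g(a)\}$ on morphisms), so your approach matches. The only step where your sketch is slightly understated is the join-irreducibility of $\sJ(g)(m)$ and the identity $\epsilon_\sL(a\vee c)=\epsilon_\sL(a)\cup\epsilon_\sL(c)$: minimality alone does not suffice there — you need that in a finite \emph{distributive} lattice every join-irreducible is join-prime ($m\leq x\vee y$ implies $m\leq x$ or $m\leq y$), after which minimality forces $\sJ(g)(m)$ to equal a single join-irreducible in its irredundant representation; you correctly flag distributivity as the ingredient, so this is a matter of spelling it out rather than a gap.
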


The pair of contravariant functors $\sO$ and $\sJ$ are called the {\em Birkhoff transforms}.   Given $\nu\colon \sP\to \sQ$ we say that $\sO(\nu)$ is the {\em Birkhoff dual} to $\nu$.  Similarly, for $h\colon \sK\to \sL$ we say that $\sJ(h)$ is the {\em Birkhoff dual} to $h$. 

\begin{ex}
{\em
Consider the poset $\sP$ of Example~\ref{ex:complex}, recalled in Fig.~\ref{fig:birkhoffex}(a).  The lattice of down-sets $\sO(\sP)$ is given in Fig.~\ref{fig:birkhoffex}(b) and the join-irreducibles $\sJ(\sO(\sP))$ in Fig.~\ref{fig:birkhoffex}(c).
\begin{figure}[ht!]
\begin{minipage}[t]{0.3\textwidth}
\begin{center}
\begin{tikzpicture}[node distance=.5cm]
\node (0) {};
\node (p) [above left=.5cm of 0] {$p$};
\node (r) [above right =.5cm of 0] {$r$};
\node (q) [above =1.15 cm of 0] {$q$};
\draw (q) -- (p);
\draw (q) -- (r);
\end{tikzpicture}\\
(a)
\end{center}
\end{minipage}
\begin{minipage}[t]{0.3\textwidth}
\begin{center}
\begin{tikzpicture}[node distance=.5cm]
\node (0) {$\varnothing$};
\node (v0) [above left=.5cm of 0] {$\setof{p}$};
\node (v2) [above right =.5cm of 0] {$\setof{r}$};
\node (v) [above =1.15 cm of 0] {$\setof{p,r}$};
\node (e) [above =.775cm of v] {$\setof{p,r,q}$};
\draw (0) -- (v0);
\draw (0) -- (v2);
\draw (v0) -- (v);
\draw (v2) -- (v);
\draw (v) -- (e);
\end{tikzpicture}\\
(b)
\end{center}
\end{minipage}
\begin{minipage}[t]{0.3\textwidth}
\begin{center}
\begin{tikzpicture}[node distance=.5cm]
\node (0) {};
\node (p) [above left=.5cm of 0] {$\setof{p}$};
\node (r) [above right =.5cm of 0] {$\setof{r}$};
\node (v) [above =1.15 cm of 0] {};
\node (q) [above =.775cm of v] {$\setof{p,r,q}$};
\draw (q) -- (p);
\draw (q) -- (r);
\end{tikzpicture}\\
(c)
\end{center}
\end{minipage}
\caption{(a) Poset  $\sP$ (b) Lattice of down-sets $\sO(\sP)$ (c) Join-irreducibles $\sJ(\sO(\sP))$.
}\label{fig:birkhoffex}
\end{figure}
}
\end{ex}

\subsection{Cell Complexes}\label{sec:prelims:cell}

Since our ultimate focus is on data analysis, we are interested in combinatorial topology.  We make use of the following complex, whose definition is inspired by~\cite[Chapter III (Definition 1.1)]{lefschetz}.  Recall that $\K$ is a field.

\begin{defn}
\label{defn:cellComplex}
{\em
A {\em cell complex} $(\cX,\leq,\kappa,\dim)$ is an object $(\cX,\leq)$ of $\bFPoset$ together with two associated functions $\dim\colon \cX\to \N$ and $\kappa\colon \cX\times \cX\to \K$ subject to the following conditions:
\begin{enumerate}
\item $\dim\colon(\cX,\leq)\to (\N,\leq)$ is order-preserving;
\item  For each $\xi$ and $\xi'$ in $\cX$:
\[
\kappa(\xi,\xi')\neq 0\quad\text{implies } \xi'\leq \xi \quad\text{and}\quad \dim(\xi) = \dim(\xi')+1;
\]
\item\label{cond:three} For each $\xi$ and $\xi''$ in $\cX$,
\[
\sum_{\xi'\in X} \kappa(\xi,\xi')\cdot \kappa(\xi',\xi'')=0.
\]
\end{enumerate}

For simplicity we typically write $\cX$ for $(\cX,\leq,\kappa,\dim)$.  
The partial order $\leq$ is the {\em face partial order}.
The set of cells $\cX$ is a graded set with respect to $\dim$, i.e., $\cX = \bigsqcup_{n\in \N} \cX_n$ with $\cX_n = \dim^{-1}(n)$.  
An element $\xi\in \cX$ is called a {\em cell} and $\dim \xi$ is the {\em dimension} of $\xi$. 
The function $\kappa$ is the {\em incidence function} of the complex. The values of $\kappa$ are referred to as the {\em incidence numbers}.  
}
\end{defn}

\begin{defn}
{\em
Given a cell complex $\cX$ the {\em associated chain complex $C(\cX)$} is the chain complex $C(\cX) = \{C_n(\cX)\}_{n\in\Z}$ where $C_n(\cX)$ is the vector space over $\K$ with basis elements given by the cells $\xi\in \cX_n$ and the boundary operator $\partial_n\colon C_n(\cX) \to C_{n-1}(\cX)$ is defined by
\[
\partial_n( \xi) := \sum_{\xi' \in \cX} \kappa(\xi, \xi')\xi'.
\]
Condition~(\ref{cond:three}) of Definition~\ref{defn:cellComplex} ensures $\partial_{n-1}\circ \partial_n = 0$. 
}
\end{defn}

\begin{defn}
{\em
Given a cell complex $\cX$ the {\em homology} of $\cX$, denoted $H_\bullet(\cX)$, is defined as the homology of the associated chain complex $H_\bullet(C_\bullet(\cX))$.
}
\end{defn}


\begin{defn}\label{defn:cell:cyclic}
{\em
A cell complex $\cX$ is {\em \trivial} if the associated chain complex $(C(\cX),\partial)$ is \trivial~(see Definition~\ref{def:homology}).
}
\end{defn}
Given a cell complex $(\cX,\leq,\kappa,\dim)$ and a subset $\cK\subset \cX$, we denote by $(\cK,\leq,\kappa,\dim)$ the set $\cK$ together with the restriction of $(\leq,\kappa,\dim)$ to $\cK\subset \cX$. 

\begin{defn}
{\em
Given $\cK\subset \cX$, we say that $(\cK,\leq,\kappa,\dim)$ is a \emph{subcomplex of $\cX$} if $(\cK,\leq,\kappa,\dim)$ is a cell complex.  
}
\end{defn}

\begin{rem}

 In an abuse of language, we will often refer to the subset $\cK\subset\cX$ itself as a subcomplex of $\cX$, although what is meant is the 4-tuple $(\cK,\leq,\kappa,\dim)$, i.e., the subset $\cK$ together with the restriction of $(\leq,\kappa,\dim)$ to $\cK$.
\end{rem}

\begin{rem}
Given any subcomplex $(\cK,\leq,\kappa,\dim)$ there is an associated chain complex $C(\cK)$.  However the inclusion $\cK\subset \cX$ need not induce a chain map $C(\cK)\to C(\cX)$.  In other words, the associated chain complex $C(\cK)$ need not be a subcomplex of $C(\cX)$.  For example, let  $\cX$ be as in Example~\ref{ex:complex} and set $\cK=\{e_0,e_1\}$.  $(\cK,\leq,\kappa,\dim)$ is itself a cell complex, however $C(\cK)$ is not a subcomplex of $C(\cX)$.
\end{rem}

\begin{prop}\label{prop:cell:convex}
Let $\cX$ be a cell complex. If $I\subset \cX$ is a convex set in $(\cX,\leq)$  then $(I,\leq,\kappa,\dim)$ is a subcomplex of $\cX$.
\end{prop}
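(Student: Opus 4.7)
The plan is to verify directly that $(I,\leq,\kappa,\dim)$ satisfies the three conditions of Definition~\ref{defn:cellComplex}. Conditions (1) and (2) are essentially cosmetic, inherited from the ambient complex $\cX$ by restriction of the defining data. The content of the proposition lies in condition (3): one must show that the sum defining $\partial^2 = 0$ still vanishes when the range of summation is contracted from $\cX$ to $I$.

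First I would observe that $\dim\colon (I,\leq)\to (\N,\leq)$ is order-preserving because it is a restriction of an order-preserving map on $(\cX,\leq)$, which handles condition (1). Similarly, for condition (2), since $\kappa$ on $I\times I$ is just the restriction of $\kappa$ on $\cX\times \cX$, and the implication ``$\kappa(\xi,\xi')\neq 0 \Rightarrow \xi'\leq \xi$ and $\dim\xi = \dim\xi'+1$'' holds in $\cX$, it continues to hold for any $\xi,\xi'\in I$.

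The main step is condition (3). Fix $\xi,\xi''\in I$. I would compare the sums
\[
S_\cX := \sum_{\xi'\in \cX} \kappa(\xi,\xi')\cdot \kappa(\xi',\xi''), \qquad S_I := \sum_{\xi'\in I} \kappa(\xi,\xi')\cdot \kappa(\xi',\xi'').
\]
I know $S_\cX = 0$ by condition (3) applied to $\cX$, so it suffices to argue $S_I = S_\cX$, i.e., that every $\xi'\in \cX\setminus I$ contributes zero. Suppose $\xi'\in \cX\setminus I$ gives a nonzero summand; then $\kappa(\xi,\xi')\neq 0$ and $\kappa(\xi',\xi'')\neq 0$. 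By condition (2) of the definition applied to $\cX$, this forces $\xi''\leq \xi'\leq \xi$. Since $\xi,\xi''\in I$ and $I$ is convex in $(\cX,\leq)$, we have $[\xi'',\xi]\subset I$, hence $\xi'\in I$, contradicting the assumption $\xi'\in \cX\setminus I$. Therefore $S_I = S_\cX = 0$, establishing condition (3) for $I$.

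I do not anticipate a genuine obstacle here; the argument is routine and convexity is exactly the order-theoretic property needed to close the sum under the intermediate cells that could otherwise carry nonzero incidence pairs.
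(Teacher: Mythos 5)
Your proof is correct and is precisely the routine verification the paper leaves implicit (the proposition is stated without proof). The convexity step is exactly the point: a nonzero summand forces $\xi''\leq\xi'\leq\xi$, so $\xi'\in[\xi'',\xi]\subset I$, and condition (3) for $I$ reduces to condition (3) for $\cX$.
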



\begin{defn}
{\em
Given a subcomplex $(\cK,\leq,\kappa,\dim)$ of $(\cX,\leq,\kappa,\dim)$, we say that $(\cK,\leq,\kappa,\dim)$ is \emph{closed} if $\cK\in \sO(\cX)$ and \emph{open} if $\cK\in \sU(\cX)$.
}
\end{defn}
\begin{prop}
  If $(\cK,\leq,\kappa,\dim)$ is a closed subcomplex of $(\cX,\leq,\kappa,\dim)$, then $C(\cK)$ is a subcomplex of $C(\cX)$. 
\end{prop}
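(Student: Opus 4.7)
The plan is to unwind the definitions: showing that $C(\cK)$ is a subcomplex of $C(\cX)$ amounts to showing that the inclusion of basis elements $\cK \hookrightarrow \cX$ extends linearly to a chain map, equivalently that $\partial^\cX_n$ restricted to $C_n(\cK)$ lands in $C_{n-1}(\cK)$. Once this is established, the boundary operator on $C(\cK)$ is by construction the restriction of $\partial^\cX$, which is exactly what is required for a subcomplex in the sense defined in Section~\ref{sec:prelims:HA}.

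First I would fix $\xi \in \cK_n$ and write
\[
\partial^\cX_n(\xi) = \sum_{\xi' \in \cX} \kappa(\xi,\xi')\, \xi'.
\]
I would then restrict attention to those $\xi'$ which contribute nontrivially, namely those with $\kappa(\xi,\xi')\neq 0$. By condition (2) of Definition~\ref{defn:cellComplex}, any such $\xi'$ satisfies $\xi' \leq \xi$ in the face partial order.

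Next I would invoke the hypothesis that $\cK$ is closed, that is, $\cK \in \sO(\cX)$ is a down-set (Definition~\ref{defn:prelims:downset}). Since $\xi \in \cK$ and $\xi' \leq \xi$, the defining property of a down-set yields $\xi' \in \cK$. Consequently every nonzero term in $\partial^\cX_n(\xi)$ lies in $C_{n-1}(\cK)$, so $\partial^\cX_n(C_n(\cK)) \subseteq C_{n-1}(\cK)$. This shows the inclusions $\{i_n \colon C_n(\cK) \hookrightarrow C_n(\cX)\}$ assemble into a chain map, and moreover that $\partial^\cK = \partial^\cX|_\cK$, exhibiting $C(\cK)$ as a subcomplex of $C(\cX)$.

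There is really no main obstacle; the argument is a one-line consequence of the compatibility between the face partial order (through which $\kappa$ is supported) and the down-set condition defining closed subcomplexes. The only subtlety worth flagging is ensuring that $(\cK,\leq,\kappa,\dim)$ is already known to be a cell complex in its own right, which is guaranteed because down-sets are convex and Proposition~\ref{prop:cell:convex} already provides that a convex subset inherits the cell-complex structure; this ensures that the symbol $\partial^\cK$ is meaningful before we identify it with the restriction of $\partial^\cX$.
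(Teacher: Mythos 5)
Your proof is correct and is precisely the straightforward argument the paper leaves implicit (it states this proposition without proof): nonzero incidence forces $\xi'\leq\xi$, the down-set condition then forces $\xi'\in\cK$, so the boundary operator restricts. Your closing remark that $(\cK,\leq,\kappa,\dim)$ is already a cell complex is also fine, though it is automatic from the hypothesis, since ``closed subcomplex'' presupposes that $\cK$ is a subcomplex in the cell-complex sense.
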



\begin{defn}\label{defn:latclsub}
{\em
Given a cell complex $(\cX,\leq,\kappa,\dim)$, the \emph{lattice of closed (cell) subcomplexes of $\cX$}, denoted by $\Sub_{Cl}(\cX)$, consists of all closed subcomplexes of $\cX$, together with operations
\begin{align*}
(\cK,\leq,\kappa,\dim)\wedge (\cK',\leq,\kappa,\dim) := (\cK\cap \cK',\leq,\kappa,\dim),\\
(\cK,\leq,\kappa,\dim)\vee (\cK',\leq,\kappa,\dim) := (\cK\cup \cK',\leq,\kappa,\dim).\\
\end{align*}
There is a lattice monomorphism $\spans \colon \Sub_{Cl}(\cX)\to \Sub(C(\cX))$ given by
\[
a\mapsto \spans(a) = \setof{\sum_{i=0}^n \lambda_i \xi_i: n\in \N, \lambda_i\in \K, \xi_i\in a}.
\]
The \emph{lattice of closed (chain) subcomplexes of $C(\cX)$} is defined as $\Sub_{Cl}(C(\cX)):=\img \spans$.  A subcomplex of $C(\cX)$ which belongs to $\Sub_{Cl}(C(\cX))$ is a \emph{closed (chain) subcomplex} of $C(\cX)$.
}
\end{defn}
\begin{rem}
The lattices $\Sub_{Cl}(C(\cX))$ and  $\Sub_{Cl}(\cX)$ are isomorphic. This implies that $\Sub_{Cl}(C(\cX))$ is a distributive lattice, whereas in general $\Sub(C(\cX))$ is not distributive.  The lattice morphism $\spans$ factors as
\[
\Sub_{Cl}(\cX)\xrightarrow{\spans} \Sub_{Cl}(C(\cX))\hookrightarrow \Sub(C(\cX)).
\]
\end{rem}



We define the star and closures: 
\[
\st(\xi):= \, \uparrow\! \xi= \{\xi': \xi \leq \xi'\}\quad\quad \text{ and } \quad\quad \cl(\xi) := \ \downarrow\!\xi = \{\xi':\xi'\leq \xi\}.
\]

The star defines an open subcomplex while the closure defines a closed subcomplex.  In order-theoretic terms these are an up-set and down-set of $(\cX,\leq)$ at $\xi$.  We use the duplicate notation $\st,\cl$ to agree with the literature of cell complexes.



\begin{defn} \label{defn:topcell}
{\em 
Given a complex $\cX$, a cell $\xi\in \cX$ is a {\em top-cell} if it is maximal with respect to $\leq$, i.e., $\st(\xi) = \{\xi\}$.  Following~\cite{focm}, we denote the set of top-cells is denoted $\cX^+\subset \cX$.
}
\end{defn}

\begin{defn} \label{defn:cored}
{\em
Given a subcomplex $\cX'\subseteq \cX$, a pair of cells $(\xi,\xi')\in \cX'\times \cX'$ is a {\em coreduction pair} in $\cX'$ if $\partial(\xi) = \kappa(\xi,\xi') \xi'$ with $\kappa(\xi,\xi')\neq 0$.
A cell $\xi\in \cX$ is {\em free in $\cX'$} if $\kappa(\xi,\xi')=0$ for $\xi'\in \cX'$.
}
\end{defn}

\begin{defn}\label{defn:fpoly}
{\em
Let $\cX$ be a cell complex.  The {\em $f$-vector} of $\cX$ is the integral sequence 
\[
(f_0,f_1,f_2,\ldots)
\]
where $f_i$ is the number of $i$-dimensional cells.  The \emph{$f$-polynomial} of $\cX$ is the polynomial
\[
\cF_\cX(t) = \sum_i f_it^i .
\]
The {\em Poincare polynomial} of $\cX$ is the polynomial
\[
P_\cX(t) = \sum_i \dim H_i(\cX)t^i.
\]
}
\end{defn}

\subsection{Discrete Morse Theory}\label{prelims:dmt}
We review the use of discrete Morse theory to compute homology of complexes. Our exposition is brief and follows~\cite{focm}.  See also~\cite{gbmr,mn,sko}.

\begin{defn}\label{defn:acyclicmatching}
{\em
A {\em partial matching} of cell complex $\cX$ consists of a partition of the cells in $\cX$ into three sets $A$, $K$, and $Q$ along with a bijection $w\colon Q\to K$ such that for any $\xi\in Q$ we have that $\kappa(w(\xi),\xi)\neq 0$.  A partial matching is called {\em acyclic} if  the transitive closure of the binary relation  $\ll$ on $Q$ defined by $$\xi' \ll \xi \text{ if and only if } \kappa (w(\xi),\xi')\neq 0$$
 generates a partial order $\leq$ on $Q$.
 }
 \end{defn} 


We lift the partial matching to a degree 1 map (see Definition~\ref{defn:degree1}) $V\colon C_\bullet (\cX)\to C_{\bullet+1}(\cX)$ by defining it using the distinguished basis via:
 \begin{align}\label{eqn:defn:V}
 V(x) = 
\begin{cases}
\kappa (\xi, \xi') w(x) & x\in Q, \\
0 & \text{otherwise.}
\end{cases}
 \end{align}
 
We denote acyclic partial matchings by the tuple $(A,w\colon Q\to K)$.  
An acyclic partial matching $(A,w\colon Q\to K)$ of $\cX$ can be used to construct a new chain complex. 
This is done through the observation that acyclic partial matchings produce degree 1 maps $C_\bullet(\cX)\to C_{\bullet+1}(\cX)$ called {\em splitting homotopies}.  
Splitting homotopies are reviewed in depth in Section~\ref{sec:reductions}. Further references to the use of splitting homotopies within discrete Morse theory can be found in~\cite{sko}.  
The following proposition is from~\cite{focm}, however we make a sign change to agree with the exposition in Section~\ref{sec:reductions}.
  
 \begin{prop}[\cite{focm}, Proposition 3.9]\label{prop:matchinghomotopy}
An acyclic partial matching $(A,w)$ induces a unique linear map $\gamma\colon C_\bullet(\cX)\to C_{\bullet+1}(\cX)$ so that $\img(\mathrm{id}_\cX-\partial \gamma)\subseteq C_\bullet(A)\oplus C_\bullet(K)$, $\img\gamma = C_\bullet(K)$ and $\ker\gamma = C_\bullet(A)\oplus C_\bullet(K)$.  It is given by the formula
\begin{align}\label{eqn:gamma}
\gamma = \sum_{i\geq 0} V(\mathrm{id}-\partial V)^i .
\end{align}
\end{prop}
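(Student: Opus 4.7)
The plan is to first verify that the formal series defining $\gamma$ terminates on every chain---this is the main obstacle, and the only place acyclicity enters---then to establish the three properties by direct computation on the distinguished basis, and finally to extract uniqueness from the triangular structure induced by the matching. Set $N:=\mathrm{id}-\partial V$. Since $V$ annihilates $C_\bullet(A)\oplus C_\bullet(K)$, the operator $N$ restricts to the identity on this subspace. The essential point is that for every $\xi\in Q$ there exists $m\geq 0$ with $N^m(\xi)\in C_\bullet(A)\oplus C_\bullet(K)$, so that $V N^m(\xi)=0$. Expanding $N(\xi)=\xi-\partial V(\xi)$ in the distinguished basis and inspecting its $Q$-component shows, via the definition of the relation $\ll$, that $N$ sends $\xi$ into the span of $C_\bullet(A)\oplus C_\bullet(K)$ together with basis elements of $Q$ that are strictly smaller than $\xi$ in the partial order furnished by acyclicity. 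Induction on the rank of $\xi$ in $(Q,\leq)$ then yields termination, so $\gamma$ is a well-defined endomorphism given by a finite sum on every chain.

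With $\gamma$ in hand I would verify the three properties as follows. The inclusion $\img\gamma\subseteq C_\bullet(K)$ is immediate since each summand factors through $V$; and on a basis element $\xi\in Q$ one has $\gamma(\xi)=V(\xi)+V N(\xi)+\cdots$, whose leading term is a nonzero scalar multiple of $w(\xi)$ with corrections involving only $w(\xi')$ for $\xi'<\xi$, so the triangularity of this expression in the $Q$-basis gives the reverse inclusion $\img\gamma\supseteq C_\bullet(K)$. The containment $C_\bullet(A)\oplus C_\bullet(K)\subseteq\ker\gamma$ follows because $N$ preserves $C_\bullet(A)\oplus C_\bullet(K)$ and $V$ vanishes there, while the reverse inclusion again rests on the same triangular structure of $\gamma|_{C_\bullet(Q)}$. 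The boundary statement is obtained via the telescoping identity
\[
\partial\gamma=\sum_{i\geq 0}(\mathrm{id}-N)N^i = \mathrm{id}-N^{m+1},
\]
valid for any $m$ exceeding the rank of the input; hence $\mathrm{id}-\partial\gamma=N^{m+1}$, and this has image contained in $C_\bullet(A)\oplus C_\bullet(K)$ by the termination argument.

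For uniqueness, if $\gamma'$ also satisfies the three properties, then the difference $\delta:=\gamma-\gamma'$ vanishes on $C_\bullet(A)\oplus C_\bullet(K)$, has image in $C_\bullet(K)$, and satisfies $\partial\delta(C_\bullet(Q))\subseteq C_\bullet(A)\oplus C_\bullet(K)$. The composition of $\partial|_{C_\bullet(K)}$ with the projection onto $C_\bullet(Q)$ is, in the matching basis, upper-triangular with nonzero diagonal entries $\kappa(w(\xi),\xi)$, and is therefore injective. Combined with the fact that $\partial\delta(\xi)$ has trivial $Q$-component for every $\xi\in Q$, this forces $\delta|_{C_\bullet(Q)}=0$ and hence $\delta=0$, which is the desired uniqueness.
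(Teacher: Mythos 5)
The paper does not prove this statement; it is imported verbatim from \cite{focm} (Proposition 3.9 there), so there is no in-paper argument to compare against. Your proposal is a correct and complete proof, and it follows the standard route for this result: termination of the series via the partial order that acyclicity puts on $Q$, the telescoping identity $\partial\gamma = \id - N^{m}$ for the first property, triangularity of $\gamma|_{C_\bullet(Q)}$ in the matching basis for the image and kernel claims, and injectivity of $\pi_Q\circ\partial|_{C_\bullet(K)}$ for uniqueness. The only point worth flagging is that your key step --- that the $Q$-component of $N(\xi)$ is supported on elements strictly below $\xi$ --- requires the diagonal term to cancel in $\xi - \partial V(\xi)$, which holds only with the normalization $V(\xi)=\kappa(w(\xi),\xi)^{-1}w(\xi)$ (the displayed formula \eqref{eqn:defn:V} in the paper is garbled on this point, but this is the intended convention and your argument is consistent with it).
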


  Let $\iota_A\colon C_\bullet(A)\to C_\bullet(\cX)$ and $\pi_A\colon C_\bullet(\cX)\to C_\bullet(A)$ be the canonical inclusion and projection.  Define $\psi\colon C_\bullet(\cX)\to C_\bullet(A), \phi\colon C_\bullet(A)\to C_\bullet(\cX)$ and $\partial^A\colon C_\bullet(A)\to C_{\bullet-1}(A)$ by 
  \begin{equation}\label{eqn:prelim:dmt}
    \psi:=\pi_A\circ (\id-\partial \gamma) \quad\quad  \phi:= (\id-\gamma \partial)\circ \iota_A \quad\quad \partial^A:= \psi\circ \partial\circ \phi.
  \end{equation}

 \begin{thm}[\cite{focm}, Theorem 3.10]\label{thm:focm:red}
 $(C_\bullet(A),\partial^A)$ is a chain complex and $\psi,\phi$ are chain equivalences.  In particular,
 \[
 \psi\circ \phi = \mathrm{id}\quad\quad \phi\circ\psi - \mathrm{id} = \partial\circ \gamma+\gamma\circ \partial.
 \]
 \end{thm}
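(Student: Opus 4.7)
The plan is to reduce the entire statement to algebraic manipulations starting from two \emph{splitting-homotopy identities} for $\gamma$ that follow directly from Proposition~\ref{prop:matchinghomotopy}: namely $\gamma^2 = 0$, because $\img\gamma = C_\bullet(K)$ lies inside $C_\bullet(A) \oplus C_\bullet(K) = \ker\gamma$, and $\gamma\partial\gamma = \gamma$, because the inclusion $\img(\mathrm{id}-\partial\gamma) \subseteq \ker\gamma$ gives $\gamma(\mathrm{id}-\partial\gamma) = 0$. I would also record the ancillary identities $\gamma\iota_A = 0$ (since $C_\bullet(A) \subseteq \ker\gamma$) and $\pi_A\gamma = 0$ (since $\img\gamma \subseteq C_\bullet(K)$), together with one further consequence that plays a crucial role: for every $\eta \in C_\bullet(K)$ one has $\gamma\partial\eta = \eta$. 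This identity follows because $\gamma$ restricts to a bijection $\gamma_Q\colon C_\bullet(Q)\to C_\bullet(K)$ (the image-and-kernel conditions force it), so given $\eta = \gamma(\xi')$ for the unique $\xi' \in C_\bullet(Q)$, the relation $\gamma\partial\gamma(\xi') = \gamma(\xi')$ rewrites as $\gamma\partial\eta = \eta$.

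Next, I would verify $\psi\circ\phi = \mathrm{id}_{C_\bullet(A)}$ by direct expansion: $\psi\phi = \pi_A(\mathrm{id}-\partial\gamma-\gamma\partial+\partial\gamma^2\partial)\iota_A$, in which the last summand dies by $\gamma^2 = 0$, the middle two die by $\gamma\iota_A = 0$ and $\pi_A\gamma = 0$, and what remains is $\pi_A\iota_A = \mathrm{id}$. For $\phi\circ\psi - \mathrm{id}$, I would introduce the operator $p := \mathrm{id} - \partial\gamma - \gamma\partial$. Given $x \in C_\bullet(\cX)$, property~(i) of Proposition~\ref{prop:matchinghomotopy} provides the decomposition $x - \partial\gamma x = z_A + z_K$ with $z_A \in C_\bullet(A)$ and $z_K \in C_\bullet(K)$, so $\phi\psi(x) = (\mathrm{id}-\gamma\partial)\iota_A\pi_A(z_A + z_K) = z_A - \gamma\partial z_A$, while $p(x) = z_A + z_K - \gamma\partial x$. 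The equality $\phi\psi(x) = p(x)$ therefore reduces to $z_K = \gamma\partial(x - z_A)$; applying $\gamma$ to the identity $\partial(\mathrm{id}-\partial\gamma) = \partial$ yields $\gamma\partial z_A + \gamma\partial z_K = \gamma\partial x$, so the claim becomes $\gamma\partial z_K = z_K$, which is precisely the crucial identity established above since $z_K \in C_\bullet(K)$. Hence $\phi\psi = p$, and in particular $\phi\psi - \mathrm{id} = \partial\gamma + \gamma\partial$ over $\Z_2$ (the setting of the paper's examples; over a general field the sign depends on conventions).

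For $(\partial^A)^2 = 0$ I would first verify $\psi$ is a chain map by computing $\partial^A\psi = \psi\partial\phi\psi = \psi\partial p = \psi\partial - \psi\partial\gamma\partial$; the second term vanishes because $\psi\partial\gamma = \pi_A(\partial\gamma - \partial\gamma\partial\gamma) = 0$ via $\gamma\partial\gamma = \gamma$. A dual computation yields $\phi\partial^A = \partial\phi$, and then $(\partial^A)^2 = \psi\partial(\phi\psi)\partial\phi = \psi\partial p\partial\phi = 0$ after expansion using $\partial^2 = 0$. The principal obstacle is the middle step $\phi\psi = p$: it is the only place where property~(i) of Proposition~\ref{prop:matchinghomotopy} and the nontrivial identity $\gamma\partial\eta = \eta$ on $C_\bullet(K)$ are both needed, whereas all other steps depend only on the derived identities $\gamma^2 = 0$ and $\gamma\partial\gamma = \gamma$.
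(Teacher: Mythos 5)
Your proof is correct. The paper does not actually prove this statement --- it is imported verbatim from \cite{focm} as Theorem 3.10 --- so there is no in-paper argument to compare against; but your route (extracting $\gamma^{2}=0$ and $\gamma\partial\gamma=\gamma$ from the image/kernel conditions of Proposition~\ref{prop:matchinghomotopy}, then deriving everything algebraically) is precisely the splitting-homotopy mechanism that the paper itself axiomatizes in Section~\ref{sec:red:chain} (cf.\ the passage from a splitting homotopy to the reduction~\eqref{reduction:homotopy} and Proposition~\ref{prop:ch:bij}), and every step, including the key identity $\gamma\partial\eta=\eta$ on $C_\bullet(K)$, checks out. The one point to keep in mind is the sign you already flag: your computation gives $\phi\psi=\id-(\partial\gamma+\gamma\partial)$, which is what the reduction axioms of Section~\ref{sec:red:chain} require and agrees with the displayed equation in the theorem only up to sign --- immaterial over $\Z_2$ and consistent with the paper's remark that a sign convention was changed relative to \cite{focm}.
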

 
 As a corollary $H_\bullet(C_\bullet(A))\cong H_\bullet(C_\bullet(\cX))$.  Regarding computations, acyclic partial matchings are relatively easy to produce, see~[Algorithm 3.6 (Coreduction-based Matching)]\cite{focm}, which is recalled in Section~\ref{sec:alg}.   Moreover, given an acyclic partial matching there is an efficient algorithm to produce the associated splitting homotopy~\cite[Algorithm 3.12 (Gamma Algorithm)]{focm}, also recalled in Section~\ref{sec:alg}.




\section{Graded Complexes}\label{sec:grad}

In this section (and later in Section~\ref{sec:lfc}) we introduce objects which result from a marriage of homological algebra and order theory.  We also introduce our notion of connection matrix, which is part of what we call a Conley complex.  In particular, we employ categorical language and explicitly develop an appropriate homotopy category for connection matrix theory over fields.  Along the way we provide  motivation through a selection of examples, primarily building upon Example~\ref{ex:complex} in the introduction. 

A primary result of this section, with implications for  connection matrix theory, is Proposition~\ref{prop:grad:cmiso} that shows that the Conley complex is an invariant of the chain equivalence class.  This result implies that the non-uniqueness of the connection matrix is captured in terms of a change of basis, cf.~\cite{atm}.

For the remainder of this section let $\K$ be a field and let $\sP$ be a finite poset.  Recall that $\bVec$ is the category of vector spaces over $\K$.

\subsection{Graded Vector Spaces}\label{sec:grad:vs}

\begin{defn}\label{def:graded:vs}
{\em 
A {\em $\sP$-graded vector space} $V=(V,\pi)$ is a vector space $V$ equipped with a $\sP$-indexed family of idempotents (projections) $\pi=\{\pi^p\colon V\to V\}_{p\in \sP}$ such that $\sum_{p\in \sP} \pi^p=\id_V$ and if $p\neq q$ then $\pi^p\circ \pi^q=0$.  We call $\pi$ a \emph{$\sP$-grading} of $V$. Suppose $(V,\pi_V)$ and $(W,\pi_W)$ are $\sP$-graded vector spaces.  A map $\phi\colon V\to W$ is {\em $\sP$-filtered} if for all $p,q\in \sP$ 
\begin{equation}\label{eqn:map:filt}
\phi^{pq}:=\pi^p_{W}\circ \phi\circ \pi_V^q\neq 0\text{ implies that } p\leq q.
\end{equation}
Equivalently, $\phi$ is $\sP$-filtered if $$\phi=\sum_{p\leq q}\pi^p_{W}\circ \phi\circ \pi_V^q =  \sum_{p\leq q} \phi^{pq} .$$
}
\end{defn}

In the sequel, we write $\pi$ for $\pi_V$ and drop the dependence on $V$; the domain of $\pi$ can always be inferred from context. We also write $V^p$ for $\img \pi^p$.  The next few results establish that working with $\sP$-graded vector spaces and $\sP$-filtered linear maps follows the rules of working with upper triangular matrices.  The proofs involve  elementary linear algebra. 


\begin{prop}
 A $\sP$-graded vector space $(V,\pi)$ admits a decomposition $V=\bigoplus_{p\in \sP} V^p$.
Conversely, if $V$ is a vector space and $V=\bigoplus_{p\in \sP} V^p$ then the collection $\pi = \setof{\pi^p}$ with $\pi^p(\sum_{q\in \sP} v^q) := v^p$ where $v^q \in V^q$ is a $\sP$-grading of $V$.
\end{prop}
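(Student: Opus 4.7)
The plan is to prove the two implications separately, using standard linear-algebra arguments. Both directions are essentially bookkeeping: a family of orthogonal idempotents summing to the identity is the same data as an internal direct-sum decomposition.

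For the forward direction, suppose $(V,\pi)$ is a $\sP$-graded vector space. I will first show that $V=\sum_{p\in\sP} V^p$. Given $v\in V$, applying $\id_V=\sum_{p\in\sP}\pi^p$ yields $v=\sum_{p\in\sP}\pi^p(v)$, where each summand lies in $V^p=\img\pi^p$. Next I will show the sum is direct. Suppose $\sum_{p\in\sP} v^p = 0$ with $v^p\in V^p$, say $v^p=\pi^p(u^p)$ for some $u^p\in V$. Applying $\pi^q$ to both sides and using the hypothesis $\pi^q\circ\pi^p=0$ for $p\neq q$ together with $\pi^q\circ\pi^q=\pi^q$ gives $\pi^q(v^q)=v^q=0$ (here I use that $v^q=\pi^q(u^q)$, so $\pi^q(v^q)=\pi^q\circ\pi^q(u^q)=\pi^q(u^q)=v^q$). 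Hence each $v^q=0$, establishing directness.

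For the converse, suppose $V=\bigoplus_{p\in\sP} V^p$. Each $v\in V$ has a unique expression $v=\sum_{q\in\sP} v^q$ with $v^q\in V^q$, so the rule $\pi^p(v):=v^p$ is well-defined and linear. I will verify the three defining properties: (i) idempotency follows because $\pi^p(v)=v^p\in V^p$, whose unique decomposition has $p$-th component $v^p$ itself, giving $\pi^p\circ\pi^p=\pi^p$; (ii) for $p\neq q$, $\pi^q(v)=v^q\in V^q$ has trivial $p$-th component, so $\pi^p\circ\pi^q=0$; (iii) $\sum_{p\in\sP}\pi^p(v)=\sum_{p\in\sP} v^p=v$, so $\sum_{p\in\sP}\pi^p=\id_V$.

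There is no real obstacle here; the proof is entirely formal and essentially the standard correspondence between internal direct sums and complete systems of orthogonal idempotents. The only point that requires a tiny bit of care is verifying directness in the forward direction without circularity, which is handled by expressing $v^p$ as $\pi^p(u^p)$ and using the orthogonality relations.
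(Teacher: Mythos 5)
Your proof is correct and is exactly the standard correspondence between complete systems of orthogonal idempotents and internal direct sums that the paper has in mind; the paper itself omits the argument, stating only that "the proofs involve elementary linear algebra." Your care in verifying directness via $v^q=\pi^q(u^q)$ and the orthogonality relations $\pi^p\circ\pi^q=0$ is precisely the one nontrivial point, and it is handled properly.
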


\begin{prop}\label{prop:grad:comp}
If $\phi\colon (U,\pi)\to(V,\pi)$ and $\psi\colon (V,\pi)\to (W,\pi)$ are $\sP$-filtered linear maps, then the composition $\psi\circ \phi$ is $\sP$-filtered and
\begin{align*}
(\psi\circ\phi)^{pq} = \sum_{p \leq r \leq q} \psi^{pr}\circ\phi^{rq}\quad\quad\quad\quad \psi\circ\phi= \sum_{p\leq q} (\psi\circ\phi)^{pq}.
\end{align*}
\end{prop}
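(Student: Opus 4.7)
The plan is a direct calculation using the defining identities of a $\sP$-grading, namely the completeness relation $\sum_{r \in \sP} \pi^r = \id$ and the orthogonality $\pi^r \circ \pi^{r'} = 0$ for $r \neq r'$ (with $\pi^r \circ \pi^r = \pi^r$). The key observation is that these two identities are exactly what is needed to manipulate $(\psi\circ\phi)^{pq} = \pi^p_W \circ \psi \circ \phi \circ \pi^q_U$ in the same way one computes the $(p,q)$-entry of a product of matrices via $\sum_r A_{pr} B_{rq}$.

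First I would compute $(\psi\circ\phi)^{pq}$ by inserting the completeness relation in the middle:
\[
(\psi\circ\phi)^{pq} \;=\; \pi^p_W \circ \psi \circ \id_V \circ \phi \circ \pi^q_U \;=\; \sum_{r \in \sP} \pi^p_W \circ \psi \circ \pi^r_V \circ \phi \circ \pi^q_U.
\]
Next, I would use the fact that $\psi^{pr} = \pi^p_W \circ \psi \circ \pi^r_V$ satisfies $\psi^{pr} \circ \pi^{r'}_V = \delta_{r,r'} \psi^{pr}$ (by orthogonality of the projections), which lets one write $\pi^p_W \circ \psi \circ \pi^r_V \circ \phi \circ \pi^q_U = \psi^{pr} \circ \pi^r_V \circ \phi \circ \pi^q_U = \psi^{pr} \circ \phi^{rq}$. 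This gives the unrestricted formula $(\psi\circ\phi)^{pq} = \sum_{r \in \sP} \psi^{pr} \circ \phi^{rq}$.

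Then I would apply the $\sP$-filtered hypotheses: $\phi^{rq} = 0$ unless $r \leq q$, and $\psi^{pr} = 0$ unless $p \leq r$. Therefore each summand vanishes unless $p \leq r \leq q$, which proves the first displayed equation and, as a corollary, shows $(\psi\circ\phi)^{pq} = 0$ whenever $p \not\leq q$ (since no such $r$ exists). This is precisely the $\sP$-filtered condition \eqref{eqn:map:filt} for $\psi\circ\phi$. Finally, the second displayed equation follows by expanding $\psi \circ \phi = (\sum_p \pi^p_W) \circ \psi \circ \phi \circ (\sum_q \pi^q_U)$ and discarding the terms with $p \not\leq q$ which were just shown to vanish.

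There is no real obstacle here; the proposition is a purely formal consequence of the grading axioms and the filtered condition, and everything reduces to bookkeeping with the idempotents $\pi^p$. The only thing to keep track of is the asymmetry $\pi^p \psi \pi^r \neq \pi^r \psi \pi^p$ so that the intermediate index $r$ ends up sandwiched as $p \leq r \leq q$ rather than some other configuration; this is handled automatically by placing the completeness relation in the middle of the composition rather than at either end.
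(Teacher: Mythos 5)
Your proof is correct and is exactly the elementary linear-algebra computation the paper has in mind (the paper omits the proof of this proposition, stating only that it "involves elementary linear algebra"). Inserting $\id_V=\sum_r\pi^r_V$ in the middle, using idempotency and orthogonality to collapse to $\sum_r\psi^{pr}\circ\phi^{rq}$, and then killing the terms with $r\not\geq p$ or $r\not\leq q$ via the filtered hypotheses is the standard argument and establishes both displayed identities as well as the filtered condition for $\psi\circ\phi$.
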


\begin{rem}
In~\cite{fran}, a linear map $\phi$ which obeys Eqn.~\eqref{eqn:map:filt} is called \emph{upper triangular with respect to $\sP$}.  The terminology $\sP$-filtered is apt as it is readily verified that $\phi$ obeys Eqn.~\eqref{eqn:map:filt} if and only if for all $q\in \sP$
\[
\phi(V^q)\subseteq \bigoplus_{p\leq q} W^p.
\]
This is in turn equivalent to
\[
\phi(\bigoplus_{p\leq q} V^q)\subseteq \bigoplus_{p\leq q} W^q.
\]
See Proposition~\ref{prop:map:filtgrad}; cf. Definition~\ref{defn:filt:vs}.
\end{rem}



\begin{defn}
{\em 
The \emph{category of $\sP$-graded vector spaces}, denoted $\bGVec(\sP)$, is the category whose objects are $\sP$-graded vector spaces and whose morphisms are $\sP$-filtered linear maps.  
}
\end{defn}

For a $\sP$-graded vector space $(V,\pi)$ any projection $\pi^p\colon V\to V$ factors as 
\[
V\xrightarrow{e^p} V^p \xrightarrow{\iota^p} V,
\]
where  $e^p\colon V\to V^p$ is the epimorphism to $\img\pi^p=V^p$ and $\iota^p\colon V^p\hookrightarrow V$ is the natural inclusion.  We have the identities
\[
\pi^p = \iota^p\circ e^p,\quad\quad\quad\quad \pi^p\circ \iota^p = \iota^p,\quad\quad\quad\quad
e^p\circ \pi^p = e^p.
\]

Given a linear map $\phi\colon (V,\pi)\to (W,\pi)$ we define 
$$\Phi^{pq}:= e^p\circ \phi\circ \iota^q \colon V^q\to W^p.$$
Using the upper-case $\Phi^{pq}$ as above is our convention to refer to the restriction of $\phi$ to the $(p,q)$-matrix entry.  The linear map $\phi$ is equivalent to the matrix of linear maps $\Phi = \{\Phi^{pq}\}_{p,q\in \sP}$, via
\[
\sum_{p, q} \Phi^{pq}e^q x = \sum_{p, q} \phi^{pq} x,
\]
and it is straightforward that $\phi$ is $\sP$-filtered if and only if  $\Phi$ obeys the condition
\begin{equation*}
\Phi^{pq}\neq 0 \text{ implies that } p\leq q.
\end{equation*}

Given a $\sP$-graded vector space $(V,\pi)$ and a subset $I\subset \sP$ we define
\[
\pi^I := \sum_{p\in I} \pi^p\quad\quad V^I:= \img \pi^I = \bigoplus_{p\in I} V^p.
\]

The space $V^I$ is a subspace of the underlying vector space $V$.  For a $\sP$-filtered linear map $\phi \colon (V,\pi)\to (W,\pi)$, we define $\phi^I\colon V\to V$ and $\Phi^I\colon V^I\to V^I$ via
\begin{equation}
\phi^I:=\pi^I\circ \phi\circ \pi^I 
\quad\quad \text{and} \quad\quad
\Phi^I:=e^I\circ \phi\circ \iota^I\colon V^I\to W^I .
\end{equation}

The next result is immediate.
\begin{prop}
If $\phi\colon (U,\pi)\to (V,\pi)$ and $\psi\colon (V,\pi)\to (W,\pi)$ are $\sP$-filtered maps and $I\subset \sP$ then
\[
\phi^I = \sum_{\substack{p\leq q\\ p,q\in I}} \phi^{pq}.
\]
Moreover, if $I\subset \sP$ is convex then $
\psi^I\circ \phi^I = (\psi\circ \phi)^I.$
\end{prop}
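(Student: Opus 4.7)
The plan is to prove both parts by direct expansion using the definitions $\pi^I = \sum_{p\in I}\pi^p$ and $\phi^{pq} = \pi^p\circ\phi\circ\pi^q$, together with the two key algebraic facts that $\pi^p\circ \pi^q = 0$ for $p\neq q$ (orthogonality) and $\pi^p \circ \pi^p = \pi^p$ (idempotence) from Definition~\ref{def:graded:vs}.

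For the first identity, I would expand
\[
\phi^I = \pi^I\circ\phi\circ\pi^I = \Bigl(\sum_{p\in I}\pi^p\Bigr)\circ\phi\circ\Bigl(\sum_{q\in I}\pi^q\Bigr) = \sum_{p,q\in I}\pi^p\circ\phi\circ\pi^q = \sum_{p,q\in I}\phi^{pq},
\]
and then invoke the $\sP$-filtered hypothesis on $\phi$ to kill every summand with $p\not\leq q$, leaving precisely $\sum_{p\leq q,\ p,q\in I}\phi^{pq}$.

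For the second identity, I would first observe that $\pi^I$ is idempotent: expanding $\pi^I\circ\pi^I$ and applying orthogonality and idempotence termwise collapses the double sum back to $\pi^I$. This lets me rewrite $\psi^I\circ\phi^I = \pi^I\circ\psi\circ\pi^I\circ\phi\circ\pi^I$. Meanwhile, inserting $\id_V = \sum_{r\in\sP}\pi^r$ gives
\[
(\psi\circ\phi)^I = \pi^I\circ\psi\circ\phi\circ\pi^I = \sum_{r\in\sP}\pi^I\circ\psi\circ\pi^r\circ\phi\circ\pi^I.
\]
Subtracting yields $(\psi\circ\phi)^I - \psi^I\circ\phi^I = \sum_{r\notin I}\pi^I\circ\psi\circ\pi^r\circ\phi\circ\pi^I$, so it suffices to show each such term vanishes.

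The main (and only nontrivial) step is this vanishing argument, which is exactly where convexity enters. A typical summand expands as $\sum_{p,q\in I}\psi^{pr}\circ\phi^{rq}$; since $\psi$ and $\phi$ are $\sP$-filtered, any nonzero contribution requires $p\leq r$ and $r\leq q$ for some $p,q\in I$. Convexity of $I$ then forces $r\in I$, contradicting $r\notin I$. Hence every term with $r\notin I$ is zero and the two expressions agree. The convexity hypothesis is used in exactly one place, and the argument would fail without it, which is a good sanity check on the statement.
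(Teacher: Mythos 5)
Your proof is correct: the paper offers no argument for this proposition (it is labeled ``immediate''), and your direct expansion of $\pi^I=\sum_{p\in I}\pi^p$, together with orthogonality/idempotence of the projections and the observation that convexity forces the intermediate index $r$ into $I$, is exactly the standard verification the authors have in mind (cf.\ their Proposition on $(\psi\circ\phi)^{pq}=\sum_{p\leq r\leq q}\psi^{pr}\circ\phi^{rq}$). No gaps.
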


The above result enables the definition of the following family of forgetful functors, parameterized by the convex sets of $\sP$.
\begin{defn}\label{def:grad:forget}
{\em
We denote by $\frU\colon \bGVec(\sP)\to \bVec$ the (forgetful) functor which forgets the grading, i.e., $\frU(V,\pi)=V$ and $\frU(\phi)=\phi$.
Let $I\subset P$ be a convex set.  The forgetul functor $\frU^I\colon \bGVec(\sP)\to \bGVec(I)$ is defined via
\[
\frU^I\big((V,\pi)\big) := (V^I,\{\pi^p\}_{p\in I}).
\]
For $\phi\colon (V,\pi)\to (W,\pi)$, we define
\[
\frU^I(\phi) := \Phi^I=e^I\circ \phi\circ \iota^I\colon V^I\to W^I .
\]
}
\end{defn}

\begin{prop}\label{prop:map:filtgrad}
Let $(V,\pi)$ and $(W,\pi)$ be $\sP$-graded vector spaces.  A linear map $\phi\colon (V,\pi)\to (W,\pi)$ is $\sP$-filtered if and only if $\phi(V^a)\subset W^a$ for all $a\in \sJ(\sO(\sP))$.
\end{prop}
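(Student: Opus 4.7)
The plan is to use Birkhoff's theorem (Theorem~\ref{thm:birkhoff}) to identify $\sJ(\sO(\sP))$ with $\sP$ itself: every join-irreducible element of $\sO(\sP)$ is of the form $\downarrow\! p = \{q\in\sP : q\leq p\}$ for a unique $p\in \sP$, and under this identification $V^{\downarrow p} = \bigoplus_{q\leq p} V^q$. Thus the statement to prove reduces to showing that $\phi$ is $\sP$-filtered if and only if $\phi(V^{\downarrow p})\subset W^{\downarrow p}$ for every $p\in \sP$. This is very close to the observation already made in the Remark immediately preceding the proposition, so the proof is essentially a careful unpacking of definitions.

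For the forward implication, I would assume $\phi$ is $\sP$-filtered and fix $p\in \sP$. Writing an arbitrary $v\in V^{\downarrow p}$ as $v=\sum_{q\leq p} v^q$ with $v^q \in V^q = \img \pi^q$, I compute $\phi(v) = \sum_{q\leq p}\sum_{r\in\sP} \phi^{rq}(v^q)$. Since $\phi$ is $\sP$-filtered, $\phi^{rq}\neq 0$ forces $r\leq q$, and as $q\leq p$ we obtain $r\leq p$ by transitivity. Hence $\phi(v)\in \bigoplus_{r\leq p} W^r = W^{\downarrow p}$, establishing $\phi(V^{\downarrow p})\subset W^{\downarrow p}$.

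For the reverse implication, I would assume $\phi(V^a)\subset W^a$ for every $a\in \sJ(\sO(\sP))$, fix $p,q\in \sP$, and suppose $\phi^{pq}\neq 0$. Since $V^q\subset V^{\downarrow q}$, the hypothesis applied to $a=\downarrow q$ gives $\phi(V^q)\subset W^{\downarrow q}=\bigoplus_{r\leq q} W^r$. Because the projections $\{\pi^r\}_{r\in\sP}$ are orthogonal idempotents summing to $\id_W$, the projection $\pi^p$ annihilates $W^r$ for $r\neq p$; therefore $\pi^p\circ\phi\circ\pi^q$ can be nonzero only if $p\in \downarrow q$, i.e., $p\leq q$. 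This is exactly the condition that $\phi$ is $\sP$-filtered.

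The only conceptual step is recognizing that one need only test the containment on the principal down-sets, rather than on all of $\sO(\sP)$; this is an immediate consequence of Proposition~\ref{prop:jr:rep}, since any down-set is a join (union) of principal down-sets, and filtered-ness of $\phi$ relative to a union follows at once from filtered-ness relative to each summand. I anticipate no substantive obstacle: the proof is essentially bookkeeping with the orthogonal decomposition $V=\bigoplus_{p\in\sP} V^p$ and the correspondence $p\leftrightarrow \downarrow p$ between $\sP$ and $\sJ(\sO(\sP))$.
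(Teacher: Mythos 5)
Your proof is correct and follows essentially the same route as the paper's: both directions identify the join-irreducibles of $\sO(\sP)$ with principal down-sets $\downarrow\! p$ via Birkhoff's theorem, expand $\phi$ in terms of the components $\phi^{pq}$ for the forward implication, and apply the hypothesis to $a = \downarrow\! q$ together with the orthogonality of the projections for the converse. The closing remark about reducing to principal down-sets is harmless but unnecessary, since the statement already quantifies only over $\sJ(\sO(\sP))$.
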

\begin{proof}
We start with showing that $\phi(V^a)\subset W^a$ if $\phi$ is $\sP$-filtered.  Let $a\in \sJ(\sO(\sP))$.  From Birkhoff's Theorem there exists $s\in \sP$ such that  $a=\downarrow \! s$. If $x\in V^a$ then $x=\pi^a(x) = \sum_{r\leq s} \pi^r (x)$.   Since $a$ is a down-set, if $p\leq s$ then $p\in a$ and $W^p\subset W^a$. Therefore
\[
\phi(x) = \sum_{p \leq q}\sum_{r\leq s}\phi^{pq}\pi^r(x) = \sum_{p\leq q\leq s} \phi^{pq}(x) \in W^a.
\]
Conversely, assume that $\phi(V^a)\subset W^a$ for all $a\in \sJ(\sO(\sP))$.  Suppose $\phi^{pq}(x)\neq 0$ for $p,q\in \sP$ and $x\in V$.  Let $b$ denote $\downarrow\! q$. Then $\pi^q(x)\in V^b$ and $\phi(\pi^q(x))\subset W^b$. We have $\phi^{pq}(x) = \pi^p(\phi(\pi^q(x)) \neq 0$, which implies $p\in b$.  Therefore $p \leq q$. 
\end{proof}

We write a $\Z$-indexed family of $\sP$-graded vector spaces as  $(V_\bullet,\pi_\bullet)=\{(V_n,\pi_n\})_{n\in \Z}$. For a fixed $p\in \sP$ there is a family of vector spaces $V_\bullet^p= \{V_n^p\}_{n\in \Z}$.

\subsection{Graded Chain Complexes}\label{sec:grad:ch}

The category $\bGVec(\sP)$ is additive but not abelian.  Following Section~\ref{sec:prelims:HA} we may form the category $\bCh(\bGVec(\sP))$ of chain complexes in $\bGVec(\sP)$.  An object $C$ of $\bCh(\bGVec(\sP))$ is a chain complex of $\sP$-graded vector spaces.  For short, we say that this is a {\em $\sP$-graded chain complex}.  The data of $C$ can be unpacked as the triple  $C=(C_\bullet,\partial_\bullet,\pi_\bullet)$ where:
\begin{enumerate}
    \item $(C_\bullet,\partial_\bullet)$ is a chain complex,
    \item $(C_n,\pi_n)$ is a $\sP$-graded vector space for all $n$, and
    \item $\partial_n\colon (C_n,\pi_n)\to (C_{n-1},\pi_{n-1})$ is a $\sP$-filtered linear map for each $n$.
\end{enumerate}

Typically we denote $C$ by $(C,\pi)$ to distinguish it as carrying a grading.  A morphism $\phi\colon (C,\pi)\to (C',\pi)$ is a chain map $\phi\colon (C,\partial)\to (C',\partial')$, such that $\phi_n\colon (C_n,\pi_n)\to (C'_n,\pi_n)$ is a $\sP$-filtered linear map for each $n$.  We call the morphisms of $\bCh(\bGVec(\sP))$ the {\em $\sP$-filtered chain maps}. 



Proceeding with our convention, we  define $\Delta_j^{pq}:=e^p_j\circ \partial_j\circ \iota_j^q\colon C_j^q\to C_{j-1}^p$.  Since $(C,\pi)$ is $\sP$-graded we have 
$$C_j=\bigoplus_{q\in \sP} C_j^q \quad\quad \text{and}\quad\quad C_{j-1}=\bigoplus_{p\in \sP} C_{j-1}^p.$$

The boundary operator $\partial_j\colon C_j\to C_{j-1}$ is equivalent to the matrix of maps $\{\Delta_j^{pq}\}_{p,q\in \sP}$.  The $\sP$-filtered condition of Eqn.~\eqref{eqn:map:filt} is equivalent to the condition that 
\begin{equation}
\Delta_j^{pq}\neq 0\implies p\leq q .
\end{equation}  

\begin{rem}
Viewing the boundary operator $\partial$ as a matrix of maps $\{\Delta^{pq}\}_{p,q\in \sP}$ is the origin of the term `connection matrix'.
\end{rem}

It follows from Proposition~\ref{prop:prelims:induce} that the forgetful functor $\frU\colon \bGVec(\sP)\to \bVec$ induces a (forgetful) functor $\frU_\bCh\colon \bChG\to \bCh(\bVec)$, where
\[
\frU_\bCh(C,\pi) = \big(\{\frU(C_n)\}_{n\in \Z},\{\frU(\partial_n:C_n\to C_{n-1})\}_{n\in \Z}\big) = 
\big(
\{ C_n\}_{n\in \Z},\{\Delta_n\}_{n\in\Z}\big).
\]
Similarly, for a convex set $I\subset \sP$ the functor $\frU^I\colon \bGVec(\sP)\to \bGVec(I)$ induces a functor $\frU_\bCh^I\colon \bChG\to \bCh(\bGVec(I))$.  

Given a convex set $I\subset \sP$ and the forgetful functor $\frU\colon \bCh(\bGVec(I))\to \bCh(\bVec)$ it is often useful to consider the composition 
\[
\frU_\bCh\circ \frU^I_\bCh\colon \bChG\to \bCh(\bVec).
\]
Unpacking Definition~\ref{def:grad:forget} shows that $\frU_\bCh\circ \frU^I_\bCh$ may be written simply as
\[
\frU_\bCh\circ \frU^I_\bCh(C,\pi)=(C^I,\Delta^I),
\quad\quad 
\frU_\bCh\circ \frU^I_\bCh(\phi)=\Phi^I\colon C^I\to C^I.
\]

\begin{prop}
Let $(C,\pi)$ be a $\sP$-graded chain complex.  If $a\in \sO(\sP)$, i.e., $a$ is a down-set of $\sP$, then $(C^a,\Delta^a)=\frU\circ\frU^a(C,\pi)$ is a subcomplex of $C$.
\end{prop}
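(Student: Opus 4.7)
The plan is to unpack the definitions and verify that the inclusion $\iota^a \colon C^a \hookrightarrow C$ is a chain map whose image is closed under $\partial$, since this is exactly what it means for $(C^a, \Delta^a)$ to be a subcomplex of $(C,\partial)$ in the sense of Definition~\ref{def:homology}.

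First, I would fix notation: write $C^a = \bigoplus_{p \in a} C^p$ as a subspace of $C$ via $\iota^a$, and recall that $\Delta^a$ is by construction $e^a \circ \partial \circ \iota^a$. The heart of the argument is to show the invariance $\partial(C^a) \subseteq C^a$, because once this is established, $e^a$ acts as the identity on $\partial(\iota^a(x))$ for every $x \in C^a$, so $\iota^a \circ \Delta^a = \partial \circ \iota^a$, i.e., $\iota^a$ is a chain map.

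The invariance is where the down-set hypothesis is used. Given $p, q \in \sP$, the $\sP$-filtered condition on $\partial$ (Definition~\ref{def:graded:vs}) says $\Delta^{pq} \neq 0$ only if $p \leq q$. For $x \in C^q$ with $q \in a$, we have $\partial x = \sum_{p \leq q} \Delta^{pq}(x)$, and since $a$ is a down-set, every such $p$ lies in $a$, so each summand lies in $C^p \subseteq C^a$. Summing over $q \in a$ using the decomposition $C^a = \bigoplus_{q \in a} C^q$ gives $\partial(C^a) \subseteq C^a$. (Alternatively, this is the content of Proposition~\ref{prop:map:filtgrad} applied to the boundary operator and the join-irreducible down-sets whose union is $a$.)

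Finally, I would note that $\Delta^a \circ \Delta^a = 0$ is automatic: since $\Delta^a$ agrees with $\partial$ on $C^a$ under the identification $\iota^a$, the relation $\partial \circ \partial = 0$ on $C$ restricts to $\Delta^a \circ \Delta^a = 0$ on $C^a$. Thus $(C^a, \Delta^a)$ is a chain complex and $\iota^a$ realizes it as a subcomplex of $(C,\partial)$. There is no real obstacle here; the only subtle point is remembering that it is precisely the down-set property of $a$ that makes the $\sP$-filtered condition on $\partial$ force $\partial$-invariance of $C^a$.
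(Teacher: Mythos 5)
Your proof is correct and follows the same route as the paper's: both arguments use the $\sP$-filtered condition on $\partial$ together with the down-set property of $a$ to get $\partial(C^a)\subseteq C^a$, and then identify $\Delta^a$ with $\partial|_{C^a}$. You simply spell out the invariance computation and the $\Delta^a\circ\Delta^a=0$ check that the paper leaves implicit.
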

\begin{proof}
If $a\in \sO(\sP)$ the fact that $\partial$ is $\sP$-filtered implies that $\partial( \bigoplus_{p\in a} C^p) \subseteq \bigoplus_{p\in a} C^p$.  Moreover $\Delta^a = e^p \circ \partial\circ  \iota^p =  \partial|_{C^a}$.  Therefore $(C^a,\Delta^a)$ is a subcomplex of $C$. 
\end{proof}





\subsection{The Subcategory of \Cyclic{} Objects}\label{sec:grad:cyclic}
\begin{defn}
\label{defn:grad:cyclic}
{\em 
A $\sP$-graded chain complex $(C,\pi)$ is said to be \emph{\cyclic} if for each $j\in \Z$ and $p\in \sP$
\begin{equation}\label{eqn:cyclic}
    \partial_j^{pp} = 0.
\end{equation}
Equivalently, if for each $j\in \Z$ 
    \begin{equation}
    \partial_j = \sum_{p<q} \partial_j^{pq}.
    \end{equation}  
 The \cyclic{} objects form the full subcategory $\bChGz\subset \bChG$, called the {\em subcategory of \cyclic{} objects}.
}
\end{defn}


\begin{rem}
In~\cite{fran}, a boundary operator $\partial_j$ which obeys condition \eqref{eqn:cyclic}  is called \emph{strictly upper triangular} with respect to $\sP$.
\end{rem}


\begin{prop}\label{prop:cyclic}
If $(C,\pi)$ is \cyclic{}, then $u^p(C,\pi)=(C_\bullet^p,\Delta_\bullet^{pp})$ is a \trivial{} chain complex (see Definition~\ref{def:homology}) for any $p\in \sP$. Moreover, for any $j\in \Z$ 
\[
C_j = \bigoplus_{p\in \sP} H_j(C_\bullet^p,\Delta_\bullet^{pp}).
\]
\end{prop}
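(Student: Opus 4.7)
The proof is essentially an unpacking of definitions and requires no substantial idea. First I would observe that the \cyclic{} hypothesis $\partial_j^{pp}=0$ is equivalent to $\Delta_j^{pp}=0$. This is because $\partial_j^{pp} = \pi_{j-1}^p\circ \partial_j \circ \pi_j^p = \iota_{j-1}^p\circ \Delta_j^{pp}\circ e_j^p$, and since $\iota_{j-1}^p$ is a monomorphism and $e_j^p$ an epimorphism, one map is zero iff the other is.

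From this it follows that $(C_\bullet^p,\Delta_\bullet^{pp})$ has identically vanishing differential. In particular $\Delta_{j-1}^{pp}\circ \Delta_j^{pp}=0$, so it is a bona fide chain complex, and it is \trivial{} by Definition~\ref{def:homology} (the defining condition for \trivial{} is precisely that every boundary operator is zero). This establishes the first conclusion.

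For the second conclusion I would simply compute the homology. Since $\Delta_\bullet^{pp}=0$, every chain in $C_j^p$ is a cycle and no chain is a boundary, so
\[
H_j(C_\bullet^p,\Delta_\bullet^{pp}) = \ker \Delta_j^{pp}/\img \Delta_{j+1}^{pp} = C_j^p/0 = C_j^p.
\]
Taking the direct sum over $p\in \sP$ and invoking the defining decomposition of the $\sP$-graded vector space $(C_j,\pi_j)$ from Section~\ref{sec:grad:vs} yields
\[
\bigoplus_{p\in \sP} H_j(C_\bullet^p,\Delta_\bullet^{pp}) = \bigoplus_{p\in \sP} C_j^p = C_j,
\]
as required. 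There is no real obstacle here; the only point worth care is the passage between $\partial^{pp}$ and $\Delta^{pp}$, which is immediate from the factorization $\pi^p=\iota^p\circ e^p$.
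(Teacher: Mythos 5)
Your proof is correct and follows essentially the same route as the paper's: note that the \cyclic{} condition forces $\Delta_\bullet^{pp}=0$, conclude that each $(C_\bullet^p,\Delta_\bullet^{pp})$ is \trivial{}, identify $H_j(C_\bullet^p,\Delta_\bullet^{pp})=C_j^p$, and sum over $p$ using the $\sP$-graded decomposition. The only difference is that you spell out the equivalence of $\partial_j^{pp}=0$ and $\Delta_j^{pp}=0$ via the factorization $\pi^p=\iota^p\circ e^p$, which the paper treats as immediate from the definitions.
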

\begin{proof}
If $(C,\pi)$ is \cyclic{} then the boundary operators $\Delta_j^{pp}=0$ for all $j\in \Z$ by definition.  Therefore $H_j(C^p_\bullet,\Delta^{pp})=C_j^p$.  Finally, since $(C,\pi)$ is $\sP$-graded we have that
\[
C_j = \bigoplus_{p\in \sP} C_j^p = \bigoplus_{p\in \sP} H_j(C_\bullet^p,\Delta_\bullet^{pp}).
\tag*{\qedhere}
\]
\end{proof}

Proposition~\ref{prop:cyclic} implies that $\partial_j\colon C_j\to C_{j-1}$ may be regarded as a $\sP$-filtered map on homology:
\begin{equation}\label{eqn:cm}
\partial_j\colon \bigoplus_{p\in \sP} H_j(C_\bullet^p,\Delta_\bullet^{pp}) \to \bigoplus_{p\in \sP} H_{j-1}(C_\bullet^p,\Delta_\bullet^{pp}).
\end{equation}

In the context of Conley theory,  Eqn.~\eqref{eqn:cm} implies that $\partial_j$ is a boundary operator on Conley indices.

\begin{rem}
The significance of Eqn.~\eqref{eqn:cm} is that in this form the nonzero entries in the boundary operator relate to connecting orbits.
\end{rem}

\begin{ex}
{\em
Let $X$ be a closed manifold and $\varphi\colon \R\times X\to X$ a Morse-Smale gradient flow.  The set $\sP$ of fixed points is partially ordered by the flow and there is a poset morphism $\mu\colon \sP\to \N$ which assigns each $p$ its Morse index, i.e., the dimensionality of its unstable manifold.   The associated Morse-Witten complex may be written $$C_\bullet({X,\varphi}) = \bigoplus_{p\in \sP} C^p_\bullet,$$ where $C^p_\bullet$ is the \trivial{} chain complex in which the only nonzero chain group is in dimension $\mu(p)$, and $C^p_{\mu(p)}=\K$.  The boundary map $\Delta$ is defined using trajectories~\cite{floer,robbin:salamon2}.  It is thus $\sP$-filtered. In particular, when $\K=\Z_2$ the entry $\Delta_{qp}$ counts the number of flow lines from $q$ to $p$ modulo two.  It is a classical result that the homology $H_\bullet(C({X,\varphi}))$ is isomorphic to the singular homology of $X$.
}
\end{ex}


\subsection{Graded Cell Complexes}\label{sec:grad:cell}

In applications, data often come in the form of a cell complex $\cX=(\cX,\leq,\kappa,\dim)$ graded by a partial order $\sP$.  This is codified in terms of an order preserving map $\nu\colon (\cX,\leq)\to \sP$. See~\cite{braids} for an example of how these structures arise in the context of computational dynamics.

\begin{defn}\label{defn:grad:assoc}
{\em
A {\em $\sP$-graded cell complex} is a cell complex $\cX=(\cX,\leq,\kappa,\dim)$ together with $\sP$  and a poset morphism $\nu \colon (\cX,\leq)\to \sP$.   The map $\nu$ is called the {\em grading}.   We denote by $\Cell(\sP)$ the collection of $\sP$-graded cell complexes.   For a $\sP$-graded cell complex $(\cX,\nu)$, the underlying set $\cX$ can be decomposed as  \[
\cX=\bigsqcup_{p\in \sP} \cX^p,\quad\quad \text{where } \cX^p:=\nu^{-1}(p) \, .
\]
For each $p$, the fiber $\cX^p$ together with the restriction of $(\leq,\kappa,\dim)$ to $\cX^p$ is a subcomplex of $\cX$. A $\sP$-graded cell complex $(\cX,\nu)$ determines an {\em associated $\sP$-graded chain complex} $(C_\bullet(\cX),\pi^\nu)$ (see Section~\ref{sec:grad:ch}) where for any $j\in \Z$
\[
C_j(\cX) = \bigoplus_{p\in \sP} C_j(\cX^p) \, .
\]
 The projection maps $\pi^\nu = \{\pi_j^p\}$ project to the fibers of $\nu$, i.e.,
\[
\pi_j^p\colon C_j(\cX)\to C_j(\cX^p).
\]
The boundary operator 
\[
\partial_j\colon C_j(\cX)\to C_{j-1}(\cX)
\]
is $\sP$-filtered since $\nu$ is order-preserving; $
\kappa ( \xi,\xi')\neq 0$ implies that $\xi' \leq \xi$ which in turn implies  $\nu(\xi')\leq \nu(\xi)$.
The boundary operator $\partial_j$ can be written as an upper triangular matrix of maps $\{\Delta_j^{pq}\}$ where $\Delta_j^{pq}\colon C_j(\cX^q)\to C_{j-1}(\cX^p)$.  We denote by $\cC\colon \Cell(\sP)\to \bChG$ the assignment $(\cX,\nu)\mapsto (C(\cX),\pi^\nu)$.
}
\end{defn}

Akin to graded chain complexes, there is a notion of being \cyclic{}.
\begin{defn}
{\em
A $\sP$-graded cell complex $(\cX,\nu)$ is {\em \cyclic{}} if, for each $p\in \sP$, the fiber $\nu^{-1}(p)$ is a \trivial{} cell complex (see Definition~\ref{defn:cell:cyclic}).
}
\end{defn}

A routine verification shows that \cyclic{} $\sP$-graded cell complexes engender \cyclic{} $\sP$-graded chain complexes.  
\begin{prop}
If $(\cX,\nu)$ is a \cyclic{} $\sP$-graded cell complex, then the associated $\sP$-graded chain complex $(C(\cX),\pi^\nu)$ is \cyclic{}.
\end{prop}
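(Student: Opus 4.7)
The plan is to unpack the definitions and verify the condition $\partial_j^{pp}=0$ for every $p\in\sP$ and $j\in\Z$ by a direct computation on basis cells. First, recall that for a $\sP$-graded cell complex $(\cX,\nu)$, the grading $\pi^\nu$ of $C(\cX)$ is given by the projections $\pi^p_j$ onto the subspace $C_j(\cX^p)$ spanned by the cells in the fiber $\cX^p=\nu^{-1}(p)$, where $\cX^p$ carries the restriction of $(\leq,\kappa,\dim)$ and is itself a cell complex (as noted in Definition~\ref{defn:grad:assoc}).

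To compute $\partial_j^{pp}=\pi^p_{j-1}\circ\partial_j\circ\pi^p_j$, it suffices to evaluate it on the distinguished basis. For $\xi\in\cX^p_j$, we have $\pi^p_j(\xi)=\xi$, so
\[
\partial_j^{pp}(\xi)=\pi^p_{j-1}\Bigl(\sum_{\xi'\in\cX_{j-1}}\kappa(\xi,\xi')\,\xi'\Bigr)=\sum_{\xi'\in\cX^p_{j-1}}\kappa(\xi,\xi')\,\xi',
\]
and for $\xi\notin\cX^p_j$ we have $\pi^p_j(\xi)=0$ so the composition vanishes. The right-hand side above is precisely $\partial^{C(\cX^p)}_j(\xi)$, since $(\cX^p,\leq,\kappa,\dim)$ is a subcomplex with the inherited incidence function. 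Thus the diagonal block $\partial_j^{pp}$ coincides (under the identification $C_j(\cX^p)\subset C_j(\cX)$) with the intrinsic boundary operator of $C(\cX^p)$.

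Now apply the hypothesis that $(\cX,\nu)$ is \cyclic{}: by Definition, each fiber $\cX^p$ is a \trivial{} cell complex, which by Definition~\ref{defn:cell:cyclic} means that $(C(\cX^p),\partial^{C(\cX^p)})$ is a \trivial{} chain complex, i.e., $\partial^{C(\cX^p)}_j=0$ for every $j$. Combining this with the identification established above yields $\partial_j^{pp}=0$ for every $p\in\sP$ and every $j\in\Z$, which is precisely the defining condition (Definition~\ref{defn:grad:cyclic}) for $(C(\cX),\pi^\nu)$ to be \cyclic{}.

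There is no real obstacle here: the proof is a two-line unwinding of definitions. The only subtlety worth spelling out carefully is that the projection-composed-with-boundary on the fiber agrees with the intrinsic boundary of $\cX^p$, which is immediate from the fact that $\nu$ is order-preserving and $\kappa(\xi,\xi')\ne 0$ forces $\xi'\le\xi$; all terms of $\partial_j(\xi)$ not already landing in $C_{j-1}(\cX^p)$ are killed by $\pi^p_{j-1}$.
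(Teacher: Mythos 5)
Your proof is correct and is exactly the ``routine verification'' that the paper omits: unwinding the definitions, identifying the diagonal block $\partial_j^{pp}$ with the intrinsic boundary operator of the fiber subcomplex $C(\cX^p)$, and invoking the hypothesis that each fiber is a \trivial{} cell complex. No gaps; the one subtlety you flag (why $\pi^p_{j-1}\circ\partial_j\circ\pi^p_j$ agrees with the fiber's own boundary) is handled correctly via the restriction of the incidence function.
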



\begin{ex}\label{ex:gradcomplex}
{\em
Consider $(\cX,\nu)$ and $(\cX',\nu')$ and $\sP=\{p,q,r\}$ of Example~\ref{ex:complex}.  It is a routine verification that $(\cX,\nu)$ and $(\cX',\nu')$ are both $\sP$-graded complexes.  In particular, $(\cX',\nu')$ is a \cyclic{} $\sP$-graded complex.   The underlying set $\cX$ decomposes as $\cX=\cX^p\sqcup \cX^r\sqcup \cX^q$.  More explicitly, 
\[
\cX = \{v_0,e_0,v_1,e_1,v_2\},\quad\quad \cX^p = \{v_0\},\quad\quad \cX^r = \{v_2\}, \quad\quad \cX^q = \{e_0,v_1,e_1\} .
\]
This decomposition is reflected in the algebra since each chain group $C_j(\cX)$ splits as  $$C_j(\cX)=C_j(\cX^p)\oplus C_j(\cX^r)\oplus C_j(\cX^q).$$  As in Definition~\ref{defn:grad:assoc} the boundary operator $\partial_j$ can be written as the $\sP$-filtered linear map (see Section~\ref{sec:grad:vs})
\[
\partial_j = 
\bordermatrix{    
             & C_j(\cX^p) & C_j(\cX^r) & C_j(\cX^q)  \cr
    C_{j-1}(\cX^p) & \Delta_j^{pp} & 0  & \Delta_j^{pq} \cr
    C_{j-1}(\cX^r) & 0 & \Delta_j^{rr} & \Delta_j^{rq}   \cr
    C_{j-1}(\cX^q) & 0 & 0 & \Delta_j^{qq}  }.
\]
In particular, $\Delta_1$ (the only nonzero differential) can be written as
\[
\partial_1 = 
\bordermatrix{    
                  & e_0 & e_1  \cr
              v_0 & 1 & 0  \cr
              v_1 & 1 & 1  \cr
              v_2 & 0 & 1  }
= \partial_1^{pq}+\partial_1^{rq}+\partial_1^{qq}
= \begin{pmatrix}
\Delta_1^{pq}\\
\Delta_1^{rq}\\
\Delta_1^{qq}
\end{pmatrix}.
\]
}
\end{ex}

The next two examples show how graded cell complexes arise in applied topology and topological data analysis.

\begin{ex}\label{ex:filt}
{\em
In applications the input is often a cell complex $\cX$ and a function $\bar \nu\colon \cX^+\to \R$ on top cells (see Definition~\ref{defn:topcell}).  For instance, image data is often a two dimensional cubical complex with greyscale values on pixels (2-cells). Let $(\sQ,\leq)$ be the totally ordered set where $\sQ:=\bar \nu(\cX^+)$ and $\leq$ is the total order inherited from $\R$. We may extend $\bar \nu$ to a grading $\nu\colon (\cX,\leq)\to \sQ$ via
\[
\cX\ni \xi \mapsto \min\{\bar \nu (\eta )\colon  \eta \in \st(\xi)\cap \cX^+\} \in \sQ.
\]
The map $\nu$ is a poset morphism since if $\xi \leq \eta$ then $\st(\eta)\subseteq \st(\xi)$.   As $(\cX,\nu)$ is a $\sQ$-graded cell complex we may consider the Birkhoff dual $\sO(\nu)\colon \sO(\sQ)\to {\mathsf \Sub}_{Cl}(\cX)$.  Since $\sQ$ is totally ordered, the collection $\{\sO(\nu)(a)\}_{a\in \sO(\sQ)}$ is a filtration of $\cX$.  This is the standard input for the topological data analysis pipeline.
}
\end{ex}

\begin{ex}
{\em 
Consider $(\R^n,\leq)$ where $\leq$ is given by $$(a_1,\ldots,a_n) \leq (b_1,\ldots,b_n) \iff a_i\leq b_i\quad \text{for all $i$}.$$ 
Let $(\sP,\leq)$ be a poset where $\sP\subseteq \R^n$ and the partial order $\leq$ is inherited from $\R^n$.  Let $(\cX,\nu)$ be a $\sP$-graded cell complex and consider $\sO(\nu)\colon \sO(\sP)\to \Sub_{Cl}(\cX,\leq)$.  In the theory of multi-parameter persistence~\cite{csz}, the collection $\{\sO(\nu)(a)\}_{a\in \sO(\sP)}$ of subcomplexes is called a {\em one-critical multi-filtration} of $\cX$, since any cell enters the lattice/multi-filtration at a unique minimal element with respect to the partial order on $\sO(\sP)$.  Namely, a cell $\xi$ enters the multi-filtration at $\downarrow\! \nu(\xi)$.  Multi-filtrations can be converted to one-critical multi-filtrations via the mapping telescope~\cite{csz}.
 }
\end{ex}

\subsection{Homotopy Category of Graded Complexes}

Recall the general construction of the homotopy category $\bK(\cA)$ for an additive category $\cA$ from Section~\ref{sec:prelims:HA}.  It follows from Section~\ref{sec:prelims:HA} that there is a homotopy category $\bKG$ of the category of $\sP$-graded chain complexes $\bCh(\bGVec(\sP))$.  To unpack this a bit, first recall the definition of $\sP$-filtered chain maps in Section~\ref{sec:grad:ch}. We say that two $\sP$-filtered chain maps $\phi,\psi\colon (C,\pi)\to (C',\pi)$ are {\em $\sP$-filtered chain homotopic} if there is a $\sP$-filtered chain contraction $\gamma\colon C\to C'$ such that $\phi_n-\psi_n = \gamma_{n-1}\circ \partial_n+\partial'_{n+1}\circ \gamma_n.$
We denote this by $\psi\sim_\sP \phi$.  The map $\gamma$ is called a {\em $\sP$-filtered chain homotopy} from $\phi$ to $\psi$.   A $\sP$-filtered chain map $\phi\colon (C,\pi)\to (C',\pi)$ is a {\em $\sP$-filtered chain equivalence} if there is a $\sP$-filtered chain map $\psi\colon (C',\pi)\to (C,\pi)$ such that $\psi\phi\sim_\sP \id_C$ and $\phi\psi\sim_\sP \id_{C'}$.  In this case we say that $(C,\pi)$ and $(C',\pi)$ are {\em $\sP$-filtered chain equivalent}.

Following Definition~\ref{def:prelim:HA:hocat}, the {\em homotopy category of $\sP$-graded chain complexes}, denoted by $\bK(\bGVec(\sP))$, is the category whose objects are $\sP$-graded chain complexes and whose morphisms are $\sP$-filtered chain homotopy equivalence classes of $\sP$-filtered chain maps.   There is a quotient functor $q\colon \bCh(\bGVec(\sP))\to \bK(\bGVec(\sP))$ which sends each $\sP$-graded chain complex to itself and each $\sP$-filtered chain map to its $\sP$-filtered chain homotopy equivalence class.




\begin{ex}\label{ex:homotopy}
{\em
The $\sP$-graded chain complexes $(C(\cX),\pi)$ and $(C(\cX'),\pi)$ of Example~\ref{ex:complex} are $\sP$-filtered chain equivalent via $\sP$-filtered chain maps
\begin{align*}
\phi\colon C(\cX) \to C(\cX')\quad \psi\colon C(\cX') \to C(\cX),
\end{align*}
and $\sP$-filtered chain homotopies 
\begin{align*}
\gamma\colon C(\cX)\to C(\cX)\quad \gamma'\colon C(\cX')\to C(\cX'),
\end{align*}
which are described below.  The nonzero differentials are

\begin{minipage}[t]{0.3\textwidth}
\begin{align*}
\partial_1 &= 
\bordermatrix{    
                  & e_0 & e_1  \cr
              v_0 & 1 & 0  \cr
              v_1 & 1 & 1  \cr
              v_2 & 0 & 1  }\\
\end{align*}
\end{minipage}
\begin{minipage}[t]{0.5\textwidth}
\begin{align*}
\partial_1' &= 
\bordermatrix{    
                  & e_0'   \cr
              v_0' & 1   \cr
              v_1' & 1   }.\\
\end{align*}
\end{minipage}

The nonzero parts of the chain maps $\phi$ and $\psi$ are as follows.

\begin{minipage}[t]{0.3\textwidth}
\begin{align*}
\psi_0 &= \bordermatrix{    
                  & v_0 & v_1 & v_2 \cr
              v_0' & 1 & 0 & 0  \cr
              v_1' & 0 & 1 & 1 }\\
\psi_1 &= \bordermatrix{    & e_0 & e_1  \cr
              e_0' & 1 & 0  } \\
\end{align*}
\end{minipage}
\begin{minipage}[t]{0.5\textwidth}
\begin{align*}
\phi_0 &= \bordermatrix{    
                  & v_0' & v_1' \cr
              v_0 & 1 & 0  \cr
              v_1 & 0 & 0  \cr 
              v_2 & 0 & 1  }\\
\phi_1 &= \bordermatrix{    
                  & e_0'   \cr
              e_0 & 1   \cr
              e_1 & 1 }\\ 
\end{align*}
\end{minipage}

In this case $\gamma'=0$. And the nonzero part of $\gamma$ is 

\begin{minipage}[t]{0.3\textwidth}
\begin{align*}
\gamma_0 &= \bordermatrix{    
                  & v_0 & v_1 & v_2 \cr
              e_0 & 0 & 0 & 0  \cr
              e_1 & 0 & 1 & 0 }.\\
\end{align*}
\end{minipage}


A lengthy but routine calculation shows that $\psi$ and $\phi$ are $\sP$-filtered chain maps and that $\phi\circ\psi = \gamma\circ \partial+\partial\circ \gamma$ and $\psi\circ\phi=\id$. 
}
\end{ex}

 

We can now introduce our definition of the connection matrix.  In particular, our definition of connection matrix rests on the homotopy-theoretic language.
 
 \begin{defn}\label{def:grad:cm} {\em
 Let $(C,\pi)$ be a $\sP$-graded chain complex.  A $\sP$-graded chain complex $(C',\pi)$ is a {\em Conley complex for} $(C,\pi)$ if 
 \begin{enumerate}
 \item $(C',\pi)$ is \cyclic{}, i.e., $\partial_j'^{pp}=0$ for all $p$ and $j$, and
 \item $(C',\pi)$ is isomorphic to $(C,\pi)$ in $\bKG$.
\end{enumerate}
If $(C',\pi)$ is a Conley complex for $(C,\pi)$ then we say the associated boundary operator $\partial'=\{\Delta'^{pq}\}_{p,q\in \sP}$ is a {\em connection matrix for} $(C,\pi)$.
} \end{defn}

\begin{rem}
With the definition in place, we make some remarks about  existence and uniqueness.
\begin{itemize}
    \item Given a $\sP$-graded chain complex $(C,\pi)$, a Conley complex $(C',\pi)$ for $(C,\pi)$ exists.  This follows from the proof of correctness of  Algorithm~\ref{alg:cm}.
    \item A classical issue in Conley theory is the non-uniqueness of the connection matrix~\cite{fran,atm}.  In our treatment of connection matrix theory using chain equivalence and homotopy categories we show that Conley complexes are unique up to isomorphism.  Thus a connection matrix is unique up to a similarity transformation in the sense that if one fixes a basis, then given two connection matrices $\Delta$ and $\Delta'$ there is a $\sP$-filtered chain map $\Phi$ such that $\Delta' = \Phi^{-1}\Delta\Phi$, cf.~\cite{atm}. See Remark~\ref{rem:grad:unique}.  
\end{itemize}
\end{rem}
 
\begin{ex}
{\em
Consider $(C(\cX),\pi)$ and $(C(\cX'),\pi)$ of Example~\ref{ex:complex}. A straightforward verification shows that $(C(\cX'),\pi)$ is \cyclic{} and an object of $\bChGz$ (recall Definition~\ref{defn:grad:cyclic}).  Moreover, from Example~\ref{ex:homotopy} we see that $(C(\cX'),\pi)$ and $(C(\cX),\pi)$ are isomorphic in $\bKG$.  Therefore $(C(\cX'),\pi)$ is a Conley complex for $(C(\cX),\pi)$ and $\partial'=\{\Delta^{pq}\}$ is a connection matrix for $(C(\cX),\pi)$.
}
\end{ex}

 Proposition~\ref{prop:cat:subquo} allows for the following definition.
 
 \begin{defn}
 {\em
 Let $\bKGz$ denote the full subcategory of $\bKG$ whose objects are the objects of $\bChGz$. Then 
 \[
 \bKGz=\bChGz/\! \sim_\sP
 \]
 and there is a quotient functor $q\colon\bChGz \to \bKGz$.
 }
 \end{defn}
 
  \begin{prop}\label{prop:grad:cmiso}
 \Cyclic{} $\sP$-graded chain complexes are isomorphic in $\bChG$ if and only if they are $\sP$-filtered chain equivalent. 
 
 \end{prop}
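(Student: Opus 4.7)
One direction is immediate: the quotient functor $q\colon \bChG\to \bKG$ preserves isomorphisms, so any isomorphism in $\bChG$ is automatically a $\sP$-filtered chain equivalence. For the converse I would upgrade an arbitrary chain equivalence between \cyclic{} objects into an honest isomorphism in $\bChG$, by first showing that cyclicity pins down the diagonal blocks of the maps, and then bootstrapping this diagonal rigidity to the whole map via a dimension count on down-sets.

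Concretely, starting from a $\sP$-filtered chain equivalence $\phi\colon (C,\pi)\to (C',\pi)$ with $\sP$-filtered homotopy inverse $\psi$ and $\sP$-filtered chain contractions $\gamma, \gamma'$ realizing $\psi\phi - \id_C = \partial\gamma + \gamma\partial$ and $\phi\psi - \id_{C'} = \partial'\gamma' + \gamma'\partial'$, my first step is to read off the $(p,p)$-component of the first identity using Proposition~\ref{prop:grad:comp}. This gives
\[
\psi^{pp}\phi^{pp} - \pi^p \;=\; \partial^{pp}\gamma^{pp} + \gamma^{pp}\partial^{pp},
\]
and the right-hand side vanishes because $(C,\pi)$ is \cyclic{}. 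Restricting to the $p$-th summand via $\iota^p$ and $e^p$ this becomes $\Psi^{pp}_n\Phi^{pp}_n = \id_{C^p_n}$, and the symmetric computation using cyclicity of $(C',\pi)$ delivers $\Phi^{pp}_n\Psi^{pp}_n = \id_{{C'}^p_n}$. So every diagonal block $\Phi^{pp}_n$ is a linear isomorphism; in particular $\dim C^p_n = \dim {C'}^p_n$ for every $p$ and $n$.

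The second step is to upgrade $\phi_n$ itself to a $\sP$-filtered linear isomorphism. For every down-set $a\in \sO(\sP)$, Proposition~\ref{prop:map:filtgrad} gives the inclusion $\phi_n(C^a_n)\subseteq {C'}^a_n$, and the dimension identity $\dim C^a_n = \sum_{p\in a}\dim C^p_n = \sum_{p\in a}\dim {C'}^p_n = \dim {C'}^a_n$ forces equality. Taking $a = \sP$ shows $\phi_n$ is a bijection, and the resulting equalities $\phi_n^{-1}({C'}^a_n) = C^a_n$ for every down-set $a$, combined once more with Proposition~\ref{prop:map:filtgrad}, show $\phi_n^{-1}$ is $\sP$-filtered. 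That $\phi^{-1}$ is a chain map is then just a rearrangement of $\partial'\phi = \phi\partial$, so $\phi$ becomes an isomorphism in $\bChG$.

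The hard part is this last step. A $\sP$-filtered linear bijection need not admit a $\sP$-filtered inverse for a nontrivial partial order; it is precisely the matching of graded dimensions, delivered by the cyclic hypothesis through the vanishing of the homotopy correction on the diagonal, that lets $\sP$-filteredness pass through the inverse. In this sense the whole proposition hinges on cyclicity rigidifying the chain-homotopy inverse into a genuine inverse.
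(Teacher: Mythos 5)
Your overall architecture is the same as the paper's: extract the $(p,p)$-components of the homotopy identity via Proposition~\ref{prop:grad:comp}, use strictness to kill the right-hand side, conclude that the diagonal blocks $\Phi^{pp}_n$ are isomorphisms, and then promote $\phi$ itself to an isomorphism in $\bChG$. (You are in fact slightly more careful than the paper in the first step, since you verify both composites $\Psi^{pp}_n\Phi^{pp}_n$ and $\Phi^{pp}_n\Psi^{pp}_n$ using strictness of both complexes; the paper only writes down one of them.) The paper dispatches the final promotion with the single sentence that it ``follows from elementary matrix algebra,'' i.e., that a map which is upper triangular with respect to $\sP$ and has invertible diagonal blocks is invertible with upper triangular inverse.

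The gap is in your replacement for that sentence. From $\phi_n(C^a_n)\subseteq C'^a_n$ together with $\dim C^a_n = \dim C'^a_n$ you cannot conclude $\phi_n(C^a_n) = C'^a_n$: that inference requires $\phi_n$ restricted to $C^a_n$ to be injective, which is exactly what has not yet been established (equal dimensions of source and target say nothing about the dimension of the image). Since your global bijectivity is obtained by applying this claim at $a=\sP$, the argument is circular as written. The standard repair is the triangularity argument: if $\phi_n(x)=0$ with $x\neq 0$, pick $q$ maximal among the $p$ with $\pi^p x\neq 0$; then the $q$-component of $\phi_n(x)$ is $\Phi^{qq}_n(\pi^q x)\neq 0$, a contradiction. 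Equivalently, write $\phi_n = D + N$ with $D=\sum_p \phi_n^{pp}$ invertible and $N$ strictly upper triangular, so that $D^{-1}N$ is nilpotent and $\phi_n$ can be inverted by a finite geometric series, which also exhibits the inverse as $\sP$-filtered directly. Once injectivity is in hand, your dimension count and your appeal to Proposition~\ref{prop:map:filtgrad} to see that $\phi_n^{-1}$ is $\sP$-filtered both go through.
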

  \begin{proof}
  The `only if' direction is immediate: set the homotopies $\gamma=\gamma'=0$.    Conversely, if $(C,\pi)$ and $(C',\pi)$ are $\sP$-filtered chain equivalent then there are $\sP$-filtered chain equivalences $\phi\colon (C,\pi)\to (C',\pi)$ and $\psi\colon (C',\pi)\to (C,\pi)$, and a  $\sP$-filtered chain homotopy $\gamma\colon (C,\pi)\to (C,\pi)$, such that for each $n$
  \[
  \psi_n \phi_n - \id_n = \gamma_{n-1}\partial_n+\partial_{n+1}\gamma_n.
  \]
  It follows from Proposition~\ref{prop:grad:comp} that for each $p\in \sP$
  \begin{align*}
      \psi_n^{pp}\phi_n^{pp} - \id_n^{pp} =
      (\psi_n\phi_n - \id_n)^{pp} =
      (\gamma_{n-1}\partial_n+\partial_{n+1}\gamma_n)^{pp} =
      \gamma_{n-1}^{pp}\partial_n^{pp} + \partial_{n+1}^{pp}\gamma_n^{pp} = 0.
  \end{align*}

  Therefore each $\phi_n^{pp}$ is an isomorphism with inverse $\psi_n^{pp}$.   It follows from elementary matrix algebra that $\phi=\sum_{p\leq q} \phi^{pq}$ is an isomorphism.  
\end{proof}
 
\begin{cor}\label{prop:grad:conserv}
The quotient functor $q\colon \bChGz\to \bKGz$ is a conservative functor.
\end{cor}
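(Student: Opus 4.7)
The plan is to unpack the meaning of conservativity and reduce the statement to Proposition~\ref{prop:grad:cmiso}. By definition, $q$ is conservative if and only if, whenever $\phi\colon (C,\pi)\to (C',\pi)$ is a $\sP$-filtered chain map between objects of $\bChGz$ and $q(\phi)$ is an isomorphism in $\bKGz$, the morphism $\phi$ itself is an isomorphism in $\bChGz$.

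First I would unravel what it means for $q(\phi)$ to be an isomorphism in $\bKGz$. By construction of the quotient category, this is equivalent to the existence of a $\sP$-filtered chain map $\psi\colon (C',\pi)\to (C,\pi)$ with $q(\psi)\circ q(\phi)=\id$ and $q(\phi)\circ q(\psi)=\id$ in $\bKGz$; that is, $\psi\circ \phi\sim_\sP \id_{C}$ and $\phi\circ\psi\sim_\sP \id_{C'}$. In other words, $\phi$ is a $\sP$-filtered chain equivalence.

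Next, since $(C,\pi)$ and $(C',\pi)$ are both \cyclic{}, Proposition~\ref{prop:grad:cmiso} applies: the proof of that proposition shows that the given chain equivalence $\phi$ is itself an isomorphism in $\bChG$. In particular, $\phi$ admits an inverse $\sP$-filtered chain map. Since $\bChGz$ is a full subcategory of $\bChG$ whose objects include $(C,\pi)$ and $(C',\pi)$, this inverse is a morphism in $\bChGz$, and $\phi$ is an isomorphism in $\bChGz$.

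There is no real obstacle here; the content of the corollary is packaged entirely in Proposition~\ref{prop:grad:cmiso}. The only subtlety to note is that the conclusion requires the inverse to live in $\bChGz$, which is immediate from fullness, and that the $\sP$-filtered chain equivalence witness $\psi$ obtained from $q(\phi)$ being an isomorphism need not itself be the inverse of $\phi$ in $\bChGz$; the actual inverse is the one produced in the proof of Proposition~\ref{prop:grad:cmiso} by inverting the diagonal blocks $\phi_n^{pp}$.
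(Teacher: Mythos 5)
Your proof is correct and follows the paper's route exactly: the corollary is stated in the paper with no separate argument precisely because, as you observe, the proof of Proposition~\ref{prop:grad:cmiso} shows that any $\sP$-filtered chain equivalence between \cyclic{} objects is already an isomorphism in $\bChG$, which is the content of conservativity once one unpacks what it means for $q(\phi)$ to be invertible in the homotopy category. Your closing remark correctly identifies the one point worth being careful about, namely that the homotopy inverse $\psi$ witnessing the equivalence need not be the genuine inverse, which is instead produced by inverting the diagonal blocks $\phi_n^{pp}$.
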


\begin{rem}\label{rem:grad:unique}

Proposition~\ref{prop:grad:cmiso} addresses non-uniqueness of the connection matrix in our formulation.  In particular, the connection matrix is unique up to a choice of basis.  Non-uniqueness manifests as a change of basis.
See~\cite{braids} for some applications where non-uniqueness arises.  See~\cite{fran,atm,reineck} for more discusion of non-uniqueness in connection matrix theory. 
\end{rem}

\subsection{Examples}\label{sec:grad:ex}

\begin{ex}[Computing Homology]\label{ex:homology}
{\em 
Consider the situation where $\cX$ is a cell complex and one is interested in computing the homology $H(\cK)$ of a closed subcomplex $\cK\subset \cX$.  Connection matrix theory applies to this situation in the following fashion.  Let $\sQ = \{0,1\}$ be the poset with $0\leq 1$.  Define the order-preserving map $\nu\colon (\cX,\leq)\to \sQ$ via
\[
  \nu(x) =
  \begin{cases}
          0 & \text{$x\in \cK$ }\\
          1 & \text{$x\in \cX\setminus \cK$ }
  \end{cases}
\]
The pair $(\cX,\nu)$ is a $\sP$-graded cell complex and $(C(\cX),\pi^\nu)$ is the associated $\sP$-graded chain complex (see Definition~\ref{defn:grad:assoc}).  We have $\cK=\nu^{-1}(0)$ and for any $j\in \Z$ 
\[
C_j(\cX)=C_j(\cX^0)\oplus C_j(\cX^1) = C_j(\cK)\oplus C_j(\cX\setminus \cK).
\]
Let $(D,\pi)$ be a Conley complex for $(C(\cX),\pi^\nu)$.  Then for each $j\in \Z$
\[
D_j = D_j^0\oplus D_j^1 .
\]
Moreover, as $(D,\pi)$ is $\sP$-graded, the boundary operator $\partial_j\colon D_j\to D_{j-1}$ can be written as the matrix
\[
\partial_j = 
\bordermatrix{    
           & D_j^0 & D_j^1  \cr
 D_{j-1}^0 & \Delta_j^{00} & \Delta_j^{01}  \cr
D_{j-1}^1 & 0 & \Delta_j^{11}  } .
\]
The first condition in the definition of a Conley complex (Definition~\ref{def:grad:cm}) gives that $(D,\pi)$ is \cyclic{}.  Therefore $\Delta^{00}_j=0$ and $\Delta_j^{11}=0$.  The second condition in the definition implies that there is a $\sP$-filtered chain equivalence $\phi\colon (D,\pi)\to (C(\cX),\pi^\nu)$.  We can write $\phi_j\colon D_j\to C_j(\cX)$ as
\begin{align*}
\partial_j = 
\bordermatrix{    
           & D_j^0 & D_j^1  \cr
 C_{j-1}(\cX^0) & \Phi_j^{00} & \Phi_j^{01}  \cr
C_{j-1}(\cX^1) & 0 & \Phi_j^{11}  } 
.
\end{align*}
It follows that the map 
\[
\Phi_\bullet^{00}\colon D_\bullet^0\to C_\bullet(\cX^0)
\]
is a chain equivalence.  Thus for all $j\in \Z$ $$H_j(C_\bullet(\cX^0))\cong H_j(D^0_\bullet,\Delta^{00}_\bullet) = D^0_j,$$
where the last equality follows from  Proposition~\ref{prop:cyclic} and the fact that $(D,\pi)$ is \cyclic{}.  
}
\end{ex}

\begin{ex}[Long Exact Sequence of a Pair]\label{ex:les}
{\em 
Consider $\sQ=\{0,1\}$ from Example~\ref{ex:homology} and $(\cX,\nu)$, $(\cX',\nu')$ and $\sP$ from Example~\ref{ex:complex}.  Let $\rho\colon \sP\to \sQ$  be the epimorphism given below.
\[
  \rho(x) =
  \begin{cases}
          0 & \text{$x=p$ }\\
          0 & \text{$x=r$ } \\
          1 & \text{$x=q$ } \\
  \end{cases}
\]
Let $\mu\colon \cX\to \sQ$ be the composition $\mu=\rho\circ \nu$ so that $(\cX,\mu)$ is a $\sQ$-graded cell complex and  $\cX$ partitions as $\cX=\cX^0\sqcup \cX^1$, where $\cX^i=\mu^{-1}(i)$. $\cX^0$ is a closed subcomplex and $\cX^1$ is an open subcomplex. There is a short exact sequence
\[
0\to C(\cX^0)\to C(\cX)\to C(\cX^1)\to 0.
\]
In the associated long exact sequence on homology all homology groups are zero aside from the following:
\[
\ldots \to H_1(\cX^1)\xrightarrow{\delta} H_0 (\cX^0)\to H_0(\cX)\to \ldots
\]
A straightforward computation shows that this sequence is 
\[
\ldots \to \Z_2
\xrightarrow{
\begin{pmatrix}
1\\
1
\end{pmatrix}
} \Z_2\oplus\Z_2 \xrightarrow{\begin{pmatrix}
1 & 1
\end{pmatrix}} \Z_2\to \ldots
\]
Consider the $\sQ$-graded complex $(\cX',\mu')$ where $\mu'=\rho\circ\nu'$.   A quick verification shows that the chain map $\phi\colon C(\cX')\to C(\cX)$ of Example~\ref{ex:homotopy} is a $\sQ$-filtered chain equivalence. Therefore $(C(\cX'),\pi^{\mu'})$ is a Conley complex for $(C(\cX),\pi^\mu)$. The map $\phi$ induces a morphism of short exact sequences, given in the following diagram.
\[
\begin{tikzcd}[column sep=small]
0 \ar[r] & C(\cX^0) \ar[d,"\Phi^{00}"] \ar[r]& C(\cX) \ar[d,"\phi"] \ar[r] & C(\cX^1) \ar[d,,"\Phi^{11}"] \ar[r] & 0\\
0 \ar[r]& C(\cX'^0) \ar[r]& C(\cX') \ar[r]& C(\cX'^1)\ar[r] & 0
\end{tikzcd}
\]

The morphism of short exact sequences induces a morphism of the long exact sequences.  The fact that $\phi$ is a $\sQ$-filtered chain equivalence implies that the induced maps on homology are isomorphisms.
\[
\begin{tikzcd}[column sep=small]
\ldots \ar[r] & H_1(\cX^1)  \ar[d,"\cong"]\ar[r,"\delta"] & H_0(\cX^0) \ar[d,"\cong"] \ar[r] & H_0(\cX)  \ar[d,"\cong"]\ar[r] & \ldots\\
\ldots \ar[r] & H_1(\cX'^1)  \ar[r,"\delta'"] & H_0(\cX'^0) \ar[r] & H_0(\cX')\ar[r] & \ldots\\
& C_1(\cX') \ar[u,"\id"]\ar[r,"\Delta_1'^{01}"] & C_0(\cX'^0) \ar[r]\ar[u,"\id"]  & C_0(\cX')  \ar[u,"\id"]
\end{tikzcd}
\]


This discussion shows that in the setting of a $\sQ$-graded complex -- where $\sQ=\{0,1\}$ -- the connection matrix $\Delta'$ is the connecting homomorphism of the long exact sequencen of a pair.
}
\end{ex}

\section{Reductions}\label{sec:reductions}

In this section we introduce \emph{reductions} -- the theoretical tool which formalizes our method of computing Conley complexes.  In particular, reductions formalize the use of discrete Morse theory; see Example~\ref{ex:red:dmt} and Section~\ref{sec:reductions:gmt}. 
In Section~\ref{sec:alg} we present two algorithms based on discrete Morse theory which build on the theory discussed in this section.

First, we review the tools for the chain complexes and the category $\bCh(\bVec)$;  we then proceed to the graded version within the category $\bChG$.  
Although we proceed in this order, the material of Section~\ref{sec:red:chain} can be deduced from the results of Section~\ref{sec:red:grad} by considering chain complexes graded over a poset consisting of a single element, i.e., $\sQ=\{0\}$. Even still, it is worthwhile to setup the theory of reductions for chain complexes explicitly as we will cite these results for the proof of correctness for Algorithm~\ref{alg:hom} (\textsc{Homology}).


\subsection{Reductions of Chain Complexes}\label{sec:red:chain}

In computational homological algebra, one often finds a simpler representative with which to compute homology.  A model for this is the notion of {\em reduction}, which is a particular type of chain homotopy equivalence.  The notion also goes under the moniker {\em strong deformation retract} or sometimes chain contraction~\cite{sko2}.\footnote{We previously introduced the term {\em chain contraction} in Section~\ref{sec:prelims:HA} which agrees with~\cite{weibel}.  This idea should not be confused with reduction.}  It appears in~\cite{emac} and in homological perturbation theory~\cite{barnes:lambe} and forms the basis for effective homology theory~\cite{rubio:sergeraert} and algebraic Morse theory~\cite{sko,sko2}. Our exposition of reductions primarily follows the preprint~\cite{rubio:sergeraert}. Roughly speaking, a reduction is a method of data reduction for a chain complex without losing any information with respect to homology.

\begin{defn}
{\em A {\em reduction} is a pair of chain complexes and triple of maps, often visualized as
\[
\begin{tikzcd}
C 
\arrow[out=60,in=120,distance=5mm,swap,"\gamma"]
\arrow[r, shift left=1,"\psi"]
& M \arrow[l, shift left=1,"\phi"],
\end{tikzcd}
\]
where $\phi,\psi$ are chain maps and $\gamma$ is a chain contraction, satisfying the identities:
\begin{enumerate}
\item $\psi\phi = \id_M$\label{cont:cond1},
\item $\phi\psi = \id_C-(\gamma \partial +\partial \gamma)$\label{cont:cond2},
\item $\gamma^2 = \gamma\phi = \psi\gamma = 0$.\label{cont:cond3}
\end{enumerate}
}
\end{defn}

  From the definition it is clear that $\phi$ is a monomorphism and $\psi$ is an epimorphism.  In applications, one calls $M$ the {\em reduced complex}.  When reductions arise from algebraic-discrete Morse theory $M$ is sometimes called the {\em Morse complex}.  The point is that one wants $|M|\ll |C|$, then one may compute $H(M)$ (and thus $H(C)$) efficiently.  Notice that by using~\eqref{cont:cond3}, an application $\gamma$ on the left of~\eqref{cont:cond2} gives:
 \begin{align}\label{eqn:cont:gamma}
 0 = (\gamma \phi) \psi = \gamma ( \id_C-\gamma\partial -\partial \gamma) = \gamma - \gamma\partial \gamma.
 \end{align}
 This equation is axiomized as the condition for a degree 1 map (see Definition~\ref{defn:degree1}) called a  {\em splitting homotopy}.  
\begin{defn}
{\em
Let $C$ be a chain complex.  A {\em splitting homotopy} is a degree 1 map $\gamma\colon C\to C$ such that $\gamma ^2=0$ and $\gamma \partial \gamma = \gamma$.
}
\end{defn}

The upshot is that reductions can be obtained from splitting homotopies. The conditions $\partial ^2=\gamma^2=0$ and $\gamma \partial \gamma  = \gamma$ ensure that $\gamma \partial +\partial \gamma$ is idempotent.  Therefore $\rho=\id_C-(\gamma \partial+\partial\gamma)$ is a projection onto the subspace complementary  to $\img(\gamma\partial+\partial\gamma)$.  Since $\rho$ is a projection, there is a splitting of $C$ into subcomplexes:
\[
C=\ker\rho\oplus \img\rho.
\] The image $(M,\partial_M)=(\img\rho,\partial|_{\img\rho})$ is a subcomplex of $C$.  We have the following reduction:
\begin{equation}\label{reduction:homotopy}
\begin{tikzcd}
C 
\arrow[out=60,in=120,distance=5mm,swap,"\gamma"]
\arrow[r, shift left=1,"\rho"]
& M \arrow[l, shift left=1,"i"] .
\end{tikzcd}
\end{equation}

We can calculate the differential $\partial_M$ via 
$$\partial_M = \partial\rho = \partial(\id_C-(\gamma \partial+\partial\gamma )) = \partial-\partial\gamma \partial + \partial\partial\gamma = \partial-\partial\gamma \partial.$$

Finally, it is straightforward that the remaining identities $\gamma i=\rho\gamma = 0$ are easily verified. Furthermore, $\ker\rho$ is a subcomplex of $C$ and $\gamma|_{\ker\rho}$ is a chain contraction, since $\id_{\ker\rho} = \partial\gamma +\gamma \partial$.  This implies that $\ker\rho$ is acyclic, i.e., $H_\bullet(\ker\rho) = 0$.  It is known that reductions and splitting homotopies are (up to isomorphism) in bijective correspondence, as recorded in the next result.  We include a proof here for completeness.

\begin{prop}\label{prop:ch:bij}
Reductions and splitting homotopies are in bijective correspondence, up to isomorphism.
\end{prop}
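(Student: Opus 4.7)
The plan is to exhibit explicit inverse constructions in both directions and then verify they are mutually inverse up to the appropriate notion of isomorphism. First I would make precise what ``up to isomorphism'' means: an isomorphism of reductions $(C,M,\phi,\psi,\gamma)\to (C,M',\phi',\psi',\gamma')$ on the same ambient complex $C$ with the same $\gamma$ is a chain isomorphism $\alpha\colon M\to M'$ such that $\alpha\psi=\psi'$ and $\phi'\alpha=\phi$. The claim becomes that the set of reductions of $C$ modulo this equivalence is in bijection with the set of splitting homotopies on $C$.

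In the forward direction, define $F$ sending a reduction $(C,M,\phi,\psi,\gamma)$ to its chain contraction $\gamma$. That $\gamma$ is a splitting homotopy follows directly from the axioms: $\gamma^2=0$ is part of reduction axiom (3), while $\gamma=\gamma\partial\gamma$ is obtained by left-multiplying axiom (2) by $\gamma$ and using axiom (3), which is precisely the computation of equation~\eqref{eqn:cont:gamma}. In the reverse direction, define $G$ sending a splitting homotopy $\gamma$ on $C$ to the reduction $\bigl(C,\img\rho, i,\rho,\gamma\bigr)$ constructed explicitly in the discussion leading to~\eqref{reduction:homotopy}, where $\rho=\id_C-(\gamma\partial+\partial\gamma)$, $i$ is the inclusion and $\partial_M=\partial-\partial\gamma\partial$. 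The text already verifies all three reduction axioms for this data.

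The composition $F\circ G$ is the identity on splitting homotopies, essentially by construction: the splitting homotopy extracted from the reduction $G(\gamma)$ is $\gamma$ itself. The substantive step is to show $G\circ F(R)$ is isomorphic (as a reduction) to $R=(C,M,\phi,\psi,\gamma)$. For this I would take $\alpha:=\phi\colon M\to \img\rho$, where $\rho=\phi\psi$ by axiom (2). One checks $\alpha$ is well-defined with the asserted codomain since for any $x\in M$, $\phi(x)=\phi\psi\phi(x)=\rho(\phi(x))\in\img\rho$, and conversely $\img\rho=\img(\phi\psi)\subseteq\img\phi$, so $\img\phi=\img\rho$. Injectivity of $\alpha$ follows from $\psi\phi=\id_M$, giving $\alpha$ a bijection; being the restriction of a chain map to its image, it is a chain isomorphism. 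Compatibility is immediate: $i\alpha=\phi$ and $\alpha\psi=\phi\psi=\rho$, and the chain contraction $\gamma$ on $C$ is shared.

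There is no serious obstacle. The only subtle point, which I would be careful to spell out, is that $\img\rho$ as constructed from the extracted $\gamma$ coincides with $\img\phi$; everything else reduces to direct manipulation of the three reduction axioms and the splitting-homotopy identity. The proof thus essentially consists in packaging the two constructions already present in the surrounding discussion and noting that $\phi$ itself supplies the natural isomorphism witnessing $G\circ F\cong \id$.
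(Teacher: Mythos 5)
Your proposal is correct and follows essentially the same route as the paper: extract $\gamma$ from a reduction via equation~\eqref{eqn:cont:gamma}, rebuild the canonical reduction~\eqref{reduction:homotopy} from a splitting homotopy, and show any reduction with a given $\gamma$ is isomorphic to the canonical one. The paper phrases the last step as checking that $\rho\circ i'$ and $\rho'\circ i$ are mutually inverse for two reductions sharing $\gamma$; since $\rho\circ\phi=\phi\psi\phi=\phi$, this is the same isomorphism you exhibit as the corestriction of $\phi$ to $\img\rho$, so the two arguments coincide.
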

\begin{proof}
From~\eqref{eqn:cont:gamma} we have that any chain homotopy in a reduction is a splitting homotopy.  Moreover, any splitting homotopy can be put into the reduction in~\eqref{reduction:homotopy}.   Let $\gamma$ be a splitting homotopy and consider two reductions:
\[
\begin{tikzcd}
M  \arrow[r, shift left=1,"i"]  &
\arrow[l, shift left=1,"\rho"]
C 
\arrow[out=60,in=120,distance=5mm,swap,"\gamma"]
\arrow[r, shift left=1,"\rho'"]
& M \arrow[l, shift left=1,"i'"] .
\end{tikzcd}
\] 

A routine computation using the conditions~\eqref{cont:cond1}--\eqref{cont:cond3} shows that the compositions $\rho\circ i'$ and $\rho'\circ i$ are inverses.  Therefore $M$ and $M'$ are chain isomorphic.
\end{proof}

\begin{ex}\label{ex:red:dmt}
{\em
Let $\cX$ be a cell complex and let $(A,w\colon Q\to K)$ be an acyclic partial matching, see Section~\ref{prelims:dmt}.  By Proposition~\ref{prop:matchinghomotopy} there exists a unique splitting homotopy $\gamma$.  From Theorem~\ref{thm:focm:red} defining the maps
$$\psi:=\pi_A\circ (\id_\cX-\partial \gamma) \quad\quad  \phi:= (\id_\cX-\gamma \partial)\circ \iota_A \quad\quad \partial^A:= \psi\circ \partial\circ \phi .$$
leads to a reduction:
\begin{equation}\label{reduction:dmt}
\begin{tikzcd}
C_\bullet(\cX)
\arrow[out=60,in=120,distance=5mm,swap,"\gamma"]
\arrow[r, shift left=1,"\psi"]
& (C_\bullet(A),\partial^A) \arrow[l, shift left=1,"\phi"] 
\end{tikzcd}
\end{equation}
Notice that this is a different reduction than the one defined in Diagram~\eqref{reduction:homotopy}.  However, we have $(C_\bullet(\cA),\partial^\cA) \cong (M,\partial_M)$ from Proposition~\ref{prop:ch:bij}.  In contrast to Diagram~\eqref{reduction:homotopy}, using the reduction of Diagram~\eqref{reduction:dmt} has the property that the Morse complex is comprised of critical cells of the matching.   
}
\end{ex}

\begin{defn}
{\em
  We say a reduction is {\em \strict{} } if the reduced complex $M$ is \trivial{}.  We say a splitting homotopy $\gamma$ is {\em perfect} if $\partial= \partial\gamma \partial $. 
}
\end{defn}

\begin{prop}\label{prop:ch:strictbij}
\Strict{} reductions and perfect splitting homotopies are in bijective correspondence.
\end{prop}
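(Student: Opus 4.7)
The plan is to leverage Proposition~\ref{prop:ch:bij} and simply verify that the established bijection between reductions and splitting homotopies restricts to a bijection between the strict reductions and the perfect splitting homotopies. Thus I would only need to check that, under this correspondence, strictness on one side is equivalent to perfectness on the other. Both implications follow from short direct computations using the reduction identities.

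For the forward direction, I would take a strict reduction
\[
\begin{tikzcd}
C
\arrow[out=60,in=120,distance=5mm,swap,"\gamma"]
\arrow[r, shift left=1,"\psi"]
& M \arrow[l, shift left=1,"\phi"]
\end{tikzcd}
\]
with $\partial_M = 0$, and argue as follows. Since $\phi$ is a chain map, $\partial \phi = \phi \partial_M = 0$. Applying $\partial$ on the left of the defining identity $\phi\psi = \id_C - (\gamma\partial + \partial\gamma)$ yields
\[
0 = \partial \phi \psi = \partial - \partial\gamma\partial - \partial^2\gamma = \partial - \partial\gamma\partial,
\]
using $\partial^2 = 0$, so $\partial = \partial\gamma\partial$ and $\gamma$ is perfect.

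For the reverse direction, I would start with a perfect splitting homotopy $\gamma$ and consider the canonical reduction of Diagram~\eqref{reduction:homotopy} with $M = \img\rho$, where $\rho = \id_C - (\gamma\partial + \partial\gamma)$. As noted immediately after that diagram, the differential on $M$ satisfies $\partial_M = \partial - \partial\gamma\partial$, which vanishes identically when $\gamma$ is perfect. Hence $(M,\partial_M)$ is \trivial{} and the reduction is strict. Combined with Proposition~\ref{prop:ch:bij}, this yields the claimed bijection.

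The computations are routine; there is no real obstacle beyond keeping track of $\partial^2 = 0$ and the chain map property $\partial\phi = \phi\partial_M$. The only subtlety worth flagging is that the statement, like Proposition~\ref{prop:ch:bij}, should be read up to isomorphism of reductions, since the correspondence on the reduction side is only determined up to the isomorphism exhibited in the proof of Proposition~\ref{prop:ch:bij}.
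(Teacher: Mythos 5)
Your proposal is correct and follows essentially the same route as the paper: apply $\partial$ to the identity $\phi\psi = \id_C - (\gamma\partial + \partial\gamma)$ together with $\partial\phi = \phi\partial_M = 0$ for the forward direction, and read off $\partial_M = \partial - \partial\gamma\partial = 0$ from the canonical reduction for the converse. The paper phrases the forward computation with the canonical maps $i,\rho$ rather than a general $\phi,\psi$, but the argument is the same, and your closing remark about reading the bijection up to isomorphism matches the convention already set in Proposition~\ref{prop:ch:bij}.
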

\begin{proof}
If the reduction is \strict{}, then $\partial_M=0$.  Thus $\partial i\rho = (i\partial_M)\rho = 0$.  By hypothesis $i\rho = \id_C-\partial\gamma -\gamma \partial $.  Application of $\partial $ to both sides yields $$0= \partial(i\rho) = \partial(\id_C-\partial\gamma -\gamma \partial) = \partial-\partial\gamma \partial .$$
Conversely, if $\gamma$ is perfect and $M=\img(\rho)$, then  the differential $\partial_M$ is calculated as $$\partial_M= \partial-\partial\gamma \partial =0 .$$ 
Therefore $M$ is \trivial{} and the reduction is \strict{}.
\end{proof}

A perfect splitting homotopy implies $\img(\rho)\cong H_\bullet (C)$.  This allows the homology to be read from the reduction without computation.  In addition, we have $\partial i = i\partial_M = 0$.  Therefore $\img(i)\subset \ker\partial$ and the map $i\colon M\to \ker \partial$ gives representatives for the homology in $C$. In the case of fields, perfect splittings always exist.\footnote{In fact, this is a Corollary of Algorithm~\ref{alg:hom}.}  This implies that a chain complex $C$ and its homology $H_\bullet(C)$ always fit into a reduction. Moreover any reduction where $C$ is a \trivial{} complex is trivial in the sense that the two complexes are isomorphic.

\begin{prop}\label{prop:cont:cyclic}
Let $C$ be a \trivial{} chain complex. Any reduction
\[
\begin{tikzcd}
C 
\arrow[out=60,in=120,distance=5mm,swap,"\gamma"]
\arrow[r, shift left=1,"\psi"]
& M \arrow[l, shift left=1,"\phi"] 
\end{tikzcd}
\]
is \strict{}.  Moreover, we have $M\cong C$.
\end{prop}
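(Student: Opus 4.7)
The plan is to exploit the three reduction identities under the extra hypothesis $\partial = 0$. First, substitute $\partial = 0$ into the second identity $\phi\psi = \id_C - (\gamma\partial + \partial\gamma)$; the homotopy term vanishes outright, leaving $\phi\psi = \id_C$. Combined with the universally available identity $\psi\phi = \id_M$, this already shows that $\phi$ and $\psi$ are mutually inverse linear maps at the level of the underlying graded vector spaces.

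Next I would transfer the vanishing of $\partial_C$ onto $\partial_M$ using the chain-map property of $\phi$. Specifically, $\phi\partial_M = \partial_C\phi = 0$; pre-composing with $\psi$ and using $\psi\phi = \id_M$ yields
\[
\partial_M = \psi\phi\,\partial_M = \psi\cdot 0 = 0.
\]
Hence $\partial_M = 0$, so $M$ is \trivial{} and the reduction is \strict{} as required.

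Finally, having established that both $C$ and $M$ are \trivial{} and that $\phi,\psi$ are mutually inverse chain maps, we conclude $M \cong C$ in $\bCh(\bA)$, completing the proof. I do not anticipate any real obstacle here; the argument is essentially a three-line unpacking of the reduction axioms, and the only subtlety is remembering to apply $\psi$ on the correct side to extract $\partial_M = 0$ from $\phi\partial_M = 0$.
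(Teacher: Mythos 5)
Your argument is correct and follows essentially the same route as the paper's proof: both derive $\phi\psi=\id_C$ from the vanishing of the homotopy term when $\partial_C=0$, and both obtain $\partial_M=0$ by combining $\psi\phi=\id_M$ with the chain-map property and $\partial_C=0$ (the paper writes it as $\partial_M=\partial_M\psi\phi=\psi\partial_C\phi=0$, you as $\partial_M=\psi\phi\partial_M=\psi\partial_C\phi=0$; these are the same computation). No gaps.
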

\begin{proof}
We first show that the reduction is minimal.  This follows since $\partial_M=\partial_M(\psi\phi) = \psi\partial\phi = 0$.  We now show that $M\cong C$.  We have $\psi \circ \phi  = \id_M$.  If $C$ is \trivial{} then $\phi\circ\psi = \id_C-(\gamma \partial + \partial\gamma) = \id_C$.
\end{proof}

In this sense, the homology $H_\bullet(C)$ is the algebraic core of a chain complex and the minimal representative for $C$ with respect to reductions.  This result will have an analogue in the graded case.  Finally, we show that reductions compose.  
\begin{prop}\label{prop:ch:seq}
Given the sequence of reductions:
\[
\begin{tikzcd}
C 
\arrow[out=60,in=120,distance=5mm,swap,"\gamma"]
\arrow[r, shift left=1,"\psi"]
& M \arrow[l, shift left=1,"\phi"] 
\arrow[out=60,in=120,distance=5mm,swap,"\gamma'"]
\arrow[r, shift left=1,"\psi'"]
& M' \arrow[l, shift left=1,"\phi'"] 
\end{tikzcd}
\]
there is a reduction 
\[
\begin{tikzcd}
C 
\arrow[out=60,in=120,distance=5mm,swap,"\gamma''"]
\arrow[r, shift left=1,"\psi''"]
& M' \arrow[l, shift left=1,"\phi''"] 
\end{tikzcd}
\]
with the maps given by the formulas
\[
\phi'' = \phi \circ \phi ' \quad\quad \psi'' = \psi'\circ  \psi \quad\quad \gamma'' = \gamma + \phi \circ \gamma '\circ \psi .
\]
\end{prop}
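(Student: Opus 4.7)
The plan is to verify the three defining identities of a reduction for the proposed triple $(\phi'',\psi'',\gamma'')$, and also to check that $\phi''$ and $\psi''$ are chain maps and that $\gamma''$ is a degree $1$ map from $C$ to $C$. The latter verifications are immediate: $\phi''$ and $\psi''$ are compositions of chain maps, hence chain maps; and $\gamma''$ is a sum of degree $1$ maps, hence degree $1$.

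First I would verify condition~(\ref{cont:cond1}), namely $\psi''\phi'' = \id_{M'}$. This is a direct computation: $\psi''\phi'' = \psi'\psi\phi\phi' = \psi'(\psi\phi)\phi' = \psi'\phi' = \id_{M'}$, where we use condition~(\ref{cont:cond1}) of each of the two given reductions. Next I would check the three vanishing conditions of~(\ref{cont:cond3}). For $\gamma''\phi''$, expand $(\gamma + \phi\gamma'\psi)\phi\phi'$ and use $\gamma\phi=0$ together with $\psi\phi=\id_M$ and $\gamma'\phi'=0$ to obtain zero. The identity $\psi''\gamma''=0$ is checked symmetrically. For $(\gamma'')^2$, expand the product, kill the cross terms using $\gamma\phi=0$ and $\psi\gamma=0$, and reduce the remaining term using $\psi\phi=\id_M$ and $(\gamma')^2=0$.

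The main step, and the only one with real content, is condition~(\ref{cont:cond2}), namely $\phi''\psi'' = \id_C - (\gamma''\partial + \partial\gamma'')$. I would start from $\phi''\psi'' = \phi(\phi'\psi')\psi$, rewrite $\phi'\psi' = \id_M - (\gamma'\partial_M + \partial_M\gamma')$ using condition~(\ref{cont:cond2}) for the second reduction, and then distribute $\phi$ and $\psi$ to obtain
\[
\phi''\psi'' \;=\; \phi\psi \;-\; \phi\gamma'\partial_M\psi \;-\; \phi\partial_M\gamma'\psi.
\]
The crux is now to convert the $\partial_M$'s in this expression into $\partial$'s acting on $C$. This is exactly what the chain map property buys us: $\partial_M\psi = \psi\partial$ and $\phi\partial_M = \partial\phi$, so the two middle terms become $\phi\gamma'\psi\partial$ and $\partial\phi\gamma'\psi$, respectively. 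Substituting the expression $\phi\psi = \id_C - (\gamma\partial + \partial\gamma)$ from condition~(\ref{cont:cond2}) of the first reduction and regrouping yields
\[
\phi''\psi'' \;=\; \id_C \;-\; (\gamma + \phi\gamma'\psi)\partial \;-\; \partial(\gamma + \phi\gamma'\psi) \;=\; \id_C - \gamma''\partial - \partial\gamma'',
\]
as required. I do not anticipate any genuine obstacle; the only point to watch is to use the correct chain map identities (relating $\partial$ on $C$ with $\partial_M$ on $M$) when clearing the two middle terms, which is what motivates the precise formula for $\gamma''$.
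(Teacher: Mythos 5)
Your proof is correct and follows essentially the same route as the paper: direct verification of the three reduction axioms for the composite maps, using $\gamma\phi=\psi\gamma=0$, $\psi\phi=\id_M$, and the chain-map identities to clear the middle terms. The only difference is one of emphasis — the paper writes out the three vanishing conditions of axiom (3) and leaves axiom (2) as an ``elementary computation,'' whereas you spell out axiom (2) in detail; your computation is exactly the one the paper is alluding to.
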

\begin{proof}
Elementary computations show that 
\[
\rho''\circ i'' = \id_{M''}\quad\text{and}\quad i'' \circ \rho'' = \id_C-(\partial\gamma'' + \gamma'' \partial).
\]
Conditions~\eqref{cont:cond1}--\eqref{cont:cond3} follow from the same conditions for $\gamma$ and $\gamma'$.
\begin{align*}
(\gamma'')^2 &= (\gamma+i\gamma'\rho)(\gamma+i\gamma'\rho) = \gamma^2 +  (\gamma i)\gamma'\rho + i\gamma'(\rho\gamma)  + i\gamma' (\rho i)\gamma'\rho = i (\gamma'\gamma' )\rho = 0\\
\gamma'' \circ i'' &= (\gamma + i\gamma'\rho)(i\circ i') = (\gamma i)i'  + \rho' (\rho i)\gamma '\rho =  (\rho'\gamma) '\rho = 0\\
\rho''\circ \gamma'' &= (\rho'\rho)(\gamma+i\gamma'\rho) = \rho' (\rho\gamma) + \rho'( \rho i) \gamma'\rho = (\rho'\gamma' )\rho = 0 .
\tag*{\qedhere}
\end{align*}
\end{proof}



An inductive argument gives the following result.

\begin{prop}\label{prop:cont:tower}
Given a tower of reductions
\begin{equation*}
    \begin{tikzcd}
C
\arrow[out=60,in=120,distance=5mm,swap,"\gamma_0"]
\arrow[r, shift left=1,"\psi_0"]
& M_0 \arrow[l, shift left=1,"\phi_0"] 
\arrow[out=60,in=120,distance=5mm,swap,"\gamma_1"]
\arrow[r, shift left=1,"\psi_1"] 
& \ldots \arrow[l, shift left=1,"\phi_1"]
\arrow[r, shift left=1,"\psi_{n-1}"] 
& M_{n-1} \arrow[l, shift left=1,"\phi_{n-1}"] 
\arrow[out=60,in=120,distance=5mm,swap,"\gamma_n"]
\arrow[r, shift left=1,"\psi_n"] 
& M_n \arrow[l, shift left=1,"\phi_n"] ;
\end{tikzcd}
\end{equation*}
\begin{enumerate}
\item there is a reduction 
\begin{equation}\label{eq:ch:tower}
\begin{tikzcd}
C
\arrow[out=60,in=120,distance=5mm,swap,"\Gamma"]
\arrow[r, shift left=1,"\Psi_n"]
& M_n\arrow[l, shift left=1,"\Phi_n"] 
\end{tikzcd}
\end{equation}
with maps given by the formulas
\[
 \Psi_m = \prod_{i=0}^m \psi_i   \quad\quad   \Phi_m = \prod_{i=0}^m \phi_i 
\quad\quad 
\Gamma = \gamma_0 + \sum_{i=0}^{n-1} \Phi_i\circ \gamma_{i+1}\circ \Psi_i .
\]
\item $\Gamma$ is a splitting homotopy and $\Gamma$ is perfect if any $\gamma_i$ is perfect.

\end{enumerate}
\end{prop}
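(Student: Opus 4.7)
The plan is to argue by induction on $n$, leveraging Proposition~\ref{prop:ch:seq} to collapse the tower one step at a time, and to interpret the formulas as the telescoping output of this iterated two-step composition.

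For the base case $n=0$ the claim is trivial (the reduction is itself, with $\Psi_0=\psi_0$, $\Phi_0=\phi_0$, $\Gamma=\gamma_0$). For the inductive step, suppose we have already produced a reduction
\[
C
\xrightleftharpoons[\Phi_{n-1}]{\Psi_{n-1}}
M_{n-1}
\]
with chain contraction $\Gamma_{n-1} = \gamma_0 + \sum_{i=0}^{n-2}\Phi_i\circ \gamma_{i+1}\circ \Psi_i$. Applying Proposition~\ref{prop:ch:seq} to this reduction together with the last reduction $M_{n-1}\rightleftarrows M_n$ (with contraction $\gamma_n$) produces a reduction $C\rightleftarrows M_n$ whose structure maps are precisely $\Psi_{n-1}$ composed with $\psi_n$, $\phi_n$ composed with $\Phi_{n-1}$, and chain contraction
\[
\Gamma_{n-1} + \Phi_{n-1}\circ \gamma_n \circ \Psi_{n-1} = \gamma_0 + \sum_{i=0}^{n-1} \Phi_i\circ \gamma_{i+1}\circ \Psi_i,
\]
which matches the claimed formulas for $\Psi_n$, $\Phi_n$, and $\Gamma$. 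This closes the induction and establishes part~(1). That $\Gamma$ is itself a splitting homotopy is then immediate from Proposition~\ref{prop:ch:bij}, since $\Gamma$ arises as the chain contraction of a reduction.

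For the perfectness claim in part~(2), suppose some $\gamma_k$ is perfect. By Proposition~\ref{prop:ch:strictbij} the reduction at stage $k$ is strict, so $M_k$ is minimal. We now claim by a short downstream induction that each $M_j$ for $j\ge k$ is minimal. Indeed, if $M_j$ is minimal, then the reduction
\[
M_j
\xrightleftharpoons[\phi_{j+1}]{\psi_{j+1}}
M_{j+1}
\]
has a minimal source, and Proposition~\ref{prop:cont:cyclic} forces this reduction to be strict with $M_{j+1}\cong M_j$, hence $M_{j+1}$ is also minimal. In particular $M_n$ is minimal, so the composed reduction $C\rightleftarrows M_n$ built in part~(1) is strict, and another application of Proposition~\ref{prop:ch:strictbij} yields that $\Gamma$ is perfect.

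I do not expect significant obstacles: the only slightly delicate point is to check that the telescoping formula for $\Gamma$ produced by iterating Proposition~\ref{prop:ch:seq} agrees with the stated closed form, but this is a direct bookkeeping exercise using the definitions $\Psi_m=\prod_{i=0}^m\psi_i$ and $\Phi_m=\prod_{i=0}^m\phi_i$. The perfectness half is conceptually the most interesting step, but once one observes that a perfect splitting anywhere in the tower collapses all downstream complexes to minimal ones via Proposition~\ref{prop:cont:cyclic}, the conclusion is automatic.
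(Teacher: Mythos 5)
Your proof is correct and follows essentially the same route as the paper's: part (1) by induction via Proposition~\ref{prop:ch:seq}, the splitting-homotopy claim via Proposition~\ref{prop:ch:bij}, and perfectness by propagating minimality of $M_k$ downstream through Proposition~\ref{prop:cont:cyclic} and concluding with Proposition~\ref{prop:ch:strictbij}. You simply spell out the telescoping bookkeeping and the downstream induction in more detail than the paper does.
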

\begin{proof}
Part (1) follows from Proposition~\ref{prop:ch:seq} and an inductive argument.  Given~\eqref{eq:ch:tower} the fact that $\gamma$ is a splitting homotopy follows from the proof of Proposition~\ref{prop:ch:bij}.  If $\gamma_i$ is perfect, then $M_i$ is \trivial{} by Proposition~\ref{prop:ch:strictbij}.  Thus $M_j$ is \trivial{} for $j\geq i$ by Proposition~\ref{prop:cont:cyclic}. In particular $M_n$ is \trivial{} and~\eqref{eq:ch:tower} is \strict{}.  Therefore $\gamma$ is a perfect splitting homotopy by Proposition~\ref{prop:ch:strictbij}.
\end{proof}




\subsection{Graded Reductions}\label{sec:red:grad}

The graded version of a reduction is obtained by porting the definition to the category of $\sP$-graded chain complexes.

\begin{defn}
{\em 
A {\em $\sP$-graded reduction} is a pair of $\sP$-graded chain complexes and triple of $\sP$-filtered maps
\begin{center}
\begin{tikzcd}
(C,\pi)
\arrow[out=60,in=120,distance=5mm,swap,"\gamma"]
\arrow[r, shift left=1,"\psi"]
& (M,\pi) \arrow[l, shift left=1,"\phi"] 
\end{tikzcd}
\end{center}
where $\phi,\psi$ are chain maps and $\gamma$ is a chain contraction, satisfying the identities:
\begin{enumerate}
\item $\psi\phi = \id_M$,
\item $\phi\psi = \id_C-(\gamma \partial+\partial\gamma)$,
\item $\gamma^2 = \gamma\phi = \psi\gamma = 0.$
\end{enumerate}
 An $\sP$-graded reduction is {\em \strict{}} if $(M,\pi)$ is \cyclic{}. 
 }
\end{defn}

 \begin{defn}
 {\em
 A \emph{$\sP$-filtered splitting homotopy} is a  degree 1 map $\gamma\colon (C,\pi)\to (C,\pi)$ such that $\gamma^2=0$ and $\gamma\partial\gamma = \gamma$. A $\sP$-filtered splitting homotopy is \emph{perfect} if $\partial^{pp} = \partial^{pp}\gamma^{pp}\partial^{pp}$ for each $p\in \sP$.
 }
 \end{defn}    

  
 Again, one may define $\rho=\id_C-(\gamma\partial+\partial\gamma)$ and $M=\img(\rho)$.  Then $M$ is a $\sP$-graded subcomplex of $(C,\pi)$, $p\circ i = \id_M$ and $i\circ p = \id_C-(\gamma\partial+\partial\gamma)$.  
  


\begin{prop}\label{prop:grad:bij}
$\sP$-filtered splitting homotopies and $\sP$-graded reductions are in bijective correspondence.  Furthermore, perfect $\sP$-filtered splitting homotopy and \strict{} $\sP$-graded reductions are in bijective correspondence.
\end{prop}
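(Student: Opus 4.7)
The plan is to mirror the two chain-complex proofs, Propositions~\ref{prop:ch:bij} and~\ref{prop:ch:strictbij}, while tracking the $\sP$-filtered structure at each step; the underlying algebraic identities are unchanged, so the substantive content will be to check that the constructions and verifications respect the grading, and that perfectness corresponds to a diagonal-block condition on $\partial_M$.

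For the forward direction, given a $\sP$-graded reduction $(\phi,\psi,\gamma)$, applying $\gamma$ on the left of reduction axiom~(2) and invoking $\gamma\phi = 0$ from axiom~(3) yields $\gamma = \gamma\partial\gamma$; together with $\gamma^2 = 0$ from axiom~(3), this shows that $\gamma$ is a splitting homotopy, and its $\sP$-filtered property is part of the definition of a $\sP$-graded reduction.

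For the reverse direction, given a $\sP$-filtered splitting homotopy $\gamma$ on $(C,\pi)$, I will set $\rho := \id_C - \gamma\partial - \partial\gamma$, which is a $\sP$-filtered idempotent chain map by Proposition~\ref{prop:grad:comp}. The main obstacle is to equip $M := \img\rho$ with a $\sP$-grading $\pi_M$ and to produce $\sP$-filtered maps $\phi\colon M \hookrightarrow C$ and $\psi\colon C \twoheadrightarrow M$ satisfying the reduction axioms. My approach is first to use Proposition~\ref{prop:grad:comp} to observe that each diagonal block $\rho^{pp}\colon C^p \to C^p$ is itself idempotent; setting $M^p := \img\rho^{pp} \subseteq C^p$ defines an abstract $\sP$-graded vector space $\bigoplus_p M^p$. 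A standard upper-triangular-matrix argument (any $\sP$-filtered idempotent is conjugate to its diagonal part via a unipotent $\sP$-filtered automorphism of $C$) then produces a $\sP$-filtered isomorphism $\bigoplus_p M^p \xrightarrow{\cong} \img\rho$, which endows $M$ with the desired grading and realizes $\phi, \psi$ as $\sP$-filtered chain maps. The remaining reduction identities~(1)--(3) then follow from the identical computations used in the ungraded Proposition~\ref{prop:ch:bij}. To complete the bijection I will, as in the chain-complex case, check that the two passages are mutually inverse up to isomorphism of $\sP$-graded reductions, using the uniqueness argument (composition of $\rho \circ \iota'$ and $\rho' \circ \iota$) from the proof of Proposition~\ref{prop:ch:bij} applied block-wise.

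The second claim, matching perfectness of $\gamma$ with \strict{} $\sP$-graded reductions, will follow immediately from a diagonal-block computation. By Proposition~\ref{prop:grad:comp}, the induced differential on $M$ satisfies $\partial_M^{pp} = \partial^{pp} - \partial^{pp}\gamma^{pp}\partial^{pp}$; hence $(M,\pi_M)$ is \cyclic{} (equivalently, the reduction is \strict{}) if and only if $\partial^{pp} = \partial^{pp}\gamma^{pp}\partial^{pp}$ for every $p \in \sP$, which is precisely the definition of perfectness of $\gamma$. This step is a direct diagonal-block specialization of Proposition~\ref{prop:ch:strictbij}.
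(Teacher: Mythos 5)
Your proposal is correct and follows essentially the same route as the paper: the first bijection is the $\sP$-filtered rerun of Proposition~\ref{prop:ch:bij}, and the perfect/\strict{} correspondence comes from taking diagonal blocks of $\partial_M=\partial-\partial\gamma\partial$, exactly as in the paper's proof. The only difference is that you spell out how $M=\img\rho$ acquires its $\sP$-grading (via the unipotent conjugation of the filtered idempotent $\rho$ to its diagonal part), a point the paper simply asserts; this is a welcome elaboration rather than a different argument.
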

\begin{proof}
The proof of the first result follows the proof of Proposition~\ref{prop:ch:bij}, except the maps are now $\sP$-filtered.  For the second part, note that for any reduction
\[
i\partial_M\rho = \partial (i \rho) = \partial(\id_C-\gamma\partial-\partial\gamma) = \partial-\partial\gamma\partial.
\]
For a \strict{} reduction, we have
\[
0 =i^{pp}\partial_M^{pp}\rho^{pp} = \partial^{pp}-\partial^{pp}\gamma^{pp}\partial^{pp}.
\]

Thus $\gamma$ is perfect.  Conversely, let $\gamma$ be a perfect $\sP$-filtered splitting homotopy.  The formula for the differential on $M=\img(\rho)$ is $\partial_M = \partial-\partial\gamma\partial$.  Since the maps $\partial$ and $\gamma$ are $\sP$-filtered, we have 
\begin{align*}
\partial_M^{pp} = (\partial-\partial\gamma\partial)^{pp} = \partial^{pp}-\partial^{pp}\gamma^{pp}\partial^{pp} = 0 .
\tag*{\qedhere}
\end{align*}
\end{proof}
Observe that in a \strict{} reduction $\img(\phi^{pp})\subset \ker\partial^{pp}$ since $\partial^{pp} \phi^{pp} = \phi^{pp}\partial_M^{pp} = 0$.  Therefore the images $\phi^{pp}(M^p)$ are representatives of the homology $H_\bullet(C^p,\Delta^{pp})$.    We may also show that \cyclic{} $\sP$-graded chain complexes are minimal with respect to reductions.  This mirrors Proposition~\ref{prop:cont:cyclic}.  The point is that \cyclic{} $\sP$-graded complexes are the graded analogue of \trivial{} complexes.

\begin{prop}
Let $(C,\pi)$ be \cyclic{}.  Any reduction
\begin{center}
\begin{tikzcd}
(C,\pi)
\arrow[out=60,in=120,distance=5mm,swap,"\gamma"]
\arrow[r, shift left=1,"\psi"]
& (M,\pi) \arrow[l, shift left=1,"\phi"] 
\end{tikzcd}
\end{center}
is \strict{}. Moreover $(M,\pi)$ and $(C,\pi)$ are $\sP$-filtered chain isomorphic, i.e., isomorphic in $\bChG$.
\end{prop}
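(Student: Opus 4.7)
My plan is to first show that the reduction is \strict{} (i.e.\ that $(M,\pi)$ is \cyclic{}) by a direct computation using the composition rule for $\sP$-filtered maps, and then to obtain the isomorphism in $\bChG$ as a corollary of Proposition~\ref{prop:grad:cmiso}.

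For the first step, I would use the fact that $\psi$ and $\phi$ are $\sP$-filtered chain maps with $\psi\phi=\id_M$ to write $\partial_M = \partial_M\psi\phi = \psi\,\partial\,\phi$. Applying Proposition~\ref{prop:grad:comp} to this triple composition, the $(p,p)$-entry collapses to the diagonal,
\[
\partial_M^{pp} \;=\; (\psi\,\partial\,\phi)^{pp} \;=\; \sum_{p\le r\le s\le p}\psi^{pr}\,\partial^{rs}\,\phi^{sq}\Big|_{q=p} \;=\; \psi^{pp}\,\partial^{pp}\,\phi^{pp},
\]
which vanishes by the hypothesis that $(C,\pi)$ is \cyclic. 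Thus $\partial_M^{pp}=0$ for every $p\in\sP$, so $(M,\pi)$ is \cyclic{} and the reduction is \strict.

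For the second step, note that the reduction itself exhibits $\phi$ and $\psi$ as mutually inverse $\sP$-filtered chain equivalences: $\psi\phi=\id_M$ on the nose, and $\phi\psi = \id_C-(\gamma\partial+\partial\gamma)$, so $\phi\psi\sim_\sP\id_C$ via the $\sP$-filtered chain homotopy $\gamma$. Hence $(C,\pi)$ and $(M,\pi)$ are $\sP$-filtered chain equivalent. Since both are now known to be \cyclic{}, Proposition~\ref{prop:grad:cmiso} immediately upgrades this chain equivalence to an honest isomorphism in $\bChG$; in particular, $\phi$ itself is a $\sP$-filtered chain isomorphism (its diagonal blocks $\phi^{pp}$ are invertible, and upper-triangularity promotes this to invertibility of $\phi$).

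The only point that requires genuine care is the vanishing of the cross terms in the first display: one must invoke Proposition~\ref{prop:grad:comp} rather than assume $\partial=0$ as in the \trivial{} case of Proposition~\ref{prop:cont:cyclic}. Once that composition identity is in hand, everything else is a formal consequence of the reduction axioms and Proposition~\ref{prop:grad:cmiso}.
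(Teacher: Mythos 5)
Your proof is correct and follows essentially the same route as the paper: compute $\partial_M=\partial_M\psi\phi=\psi\partial\phi$, observe that the $(p,p)$-entry collapses to $\psi^{pp}\partial^{pp}\phi^{pp}=0$ by the \cyclic{} hypothesis, and then invoke Proposition~\ref{prop:grad:cmiso} to upgrade the $\sP$-filtered chain equivalence to an isomorphism in $\bChG$. Your extra appeal to Proposition~\ref{prop:grad:comp} to justify the collapse of the diagonal entry is a welcome bit of added rigor but does not change the argument.
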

\begin{proof}
We have $\partial_M=\partial_M\psi\phi = \psi\partial\phi$.  Thus $\partial_M^{pp}=\psi^{pp}\partial^{pp}\phi^{pp}=0$.  Therefore $(M,\pi)$ is \cyclic{}. Since $i$ and $p$ are chain equivalences, invoking Proposition~\ref{prop:grad:cmiso} shows that $(M,\pi)$ and $(C,\pi)$ are $\sP$-filtered chain isomorphic. 
\end{proof}

For a tower of graded reductions, we have the following result, which is analogous to Proposition~\ref{prop:cont:tower}.

\begin{prop}\label{prop:cont:gtower}
Given a tower of $\sP$-graded reductions
\begin{center}
\begin{tikzcd}
(C,\pi)
\arrow[out=60,in=120,distance=5mm,swap,"\gamma_0"]
\arrow[r, shift left=1,"\psi_0"]
& (M_0,\pi) \arrow[l, shift left=1,"\phi_0"] 
\arrow[out=60,in=120,distance=5mm,swap,"\gamma_1"]
\arrow[r, shift left=1,"\psi_1"] 
& \ldots \arrow[l, shift left=1,"\phi_1"]
\arrow[r, shift left=1,"\psi_{n-1}"] 
& (M_{n-1},\pi) \arrow[l, shift left=1,"\phi_{n-1}"] 
\arrow[out=60,in=120,distance=5mm,swap,"\gamma_n"]
\arrow[r, shift left=1,"\psi_n"] 
& (M_n,\pi) \arrow[l, shift left=1,"\phi_n"] ;
\end{tikzcd}
\end{center}

\begin{enumerate}
\item there is a reduction 
\begin{center}
\begin{tikzcd}
(C,\pi)
\arrow[out=60,in=120,distance=5mm,swap,"\Gamma"]
\arrow[r, shift left=1,"\Psi_n"]
& (M,\pi) \arrow[l, shift left=1,"\Phi_n"] 
\end{tikzcd}
\end{center}
with maps given by the formulas
\[
 \Psi_m = \prod_{i=0}^m \psi_i   \quad\quad   \Phi_m = \prod_{i=0}^m \phi_i 
\quad\quad 
\Gamma = \gamma_0 + \sum_{i=0}^{n-1} \Phi_i\circ \gamma_{i+1}\circ \Psi_i.
\]

\item $\Gamma$ is a $\sP$-filtered splitting homotopy and $\Gamma$ is perfect if any $\gamma_i$ is perfect.

\end{enumerate}
\end{prop}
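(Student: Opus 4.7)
The plan is to mirror exactly the argument for Proposition~\ref{prop:cont:tower}, replacing each appeal to an ungraded result with its $\sP$-filtered counterpart. The two ingredients we will need are (i) a composition lemma for $\sP$-graded reductions — the graded analogue of Proposition~\ref{prop:ch:seq} — and (ii) the bijective correspondence between \strict{} $\sP$-graded reductions and perfect $\sP$-filtered splitting homotopies recorded in Proposition~\ref{prop:grad:bij}.

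First I would establish the composition lemma: given two consecutive $\sP$-graded reductions
\[
(C,\pi)\rightleftarrows (M,\pi) \rightleftarrows (M',\pi),
\]
the formulas $\phi''=\phi\circ\phi'$, $\psi''=\psi'\circ\psi$, and $\gamma''=\gamma+\phi\circ\gamma'\circ\psi$ define a $\sP$-graded reduction $(C,\pi)\rightleftarrows (M',\pi)$. The algebraic identities $\psi''\phi''=\id_{M'}$, $\phi''\psi''=\id_C-(\gamma''\partial+\partial\gamma'')$, and $(\gamma'')^2=\gamma''\phi''=\psi''\gamma''=0$ are exactly the computations already carried out in the proof of Proposition~\ref{prop:ch:seq}, and they make no use of abelianness; the only additional content is that $\phi''$, $\psi''$, and $\gamma''$ are $\sP$-filtered, which follows from Proposition~\ref{prop:grad:comp} since each constituent map is $\sP$-filtered and compositions and sums of $\sP$-filtered maps are $\sP$-filtered.

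With the composition lemma in hand, part (1) of the proposition follows by a straightforward induction on $n$: at each step we compose the reduction produced so far with the next reduction $(M_{i-1},\pi)\rightleftarrows (M_i,\pi)$ in the tower, and a routine bookkeeping argument gives the closed forms $\Psi_n=\prod_{i=0}^n \psi_i$, $\Phi_n=\prod_{i=0}^n \phi_i$, and $\Gamma=\gamma_0+\sum_{i=0}^{n-1} \Phi_i\circ\gamma_{i+1}\circ\Psi_i$.

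For part (2), the first assertion is immediate from Proposition~\ref{prop:grad:bij}: since the formulas in part (1) produce a $\sP$-graded reduction $(C,\pi)\rightleftarrows (M_n,\pi)$, the associated chain contraction $\Gamma$ is a $\sP$-filtered splitting homotopy. The second assertion is the only place that requires thought: I would argue that if some $\gamma_i$ is perfect, then by Proposition~\ref{prop:grad:bij} the reduction with chain contraction $\gamma_i$ is \strict{}, i.e.\ $(M_i,\pi)$ is \cyclic{}. Applying the previous proposition (reductions out of a \cyclic{} source are \strict{}) inductively to the remaining reductions in the tower shows that $(M_j,\pi)$ is \cyclic{} for every $j\geq i$; in particular $(M_n,\pi)$ is \cyclic{}, so the composite reduction is \strict{}, and Proposition~\ref{prop:grad:bij} now gives that $\Gamma$ is perfect. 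The main (though still mild) obstacle is keeping track of the $\sP$-filtered nature of every composite map in the composition lemma; once that is verified, the rest of the proof is a mechanical transcription of the ungraded case.
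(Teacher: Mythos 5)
Your proof is correct and is exactly the argument the paper intends: the paper states this proposition without proof as the graded analogue of Proposition~\ref{prop:cont:tower}, whose proof proceeds via the composition lemma (Proposition~\ref{prop:ch:seq}), induction, and the strict-reduction/perfect-homotopy correspondence. You supply the same chain of reasoning with the correct graded substitutes (Proposition~\ref{prop:grad:comp} for $\sP$-filteredness of the composite maps, Proposition~\ref{prop:grad:bij} in place of Proposition~\ref{prop:ch:strictbij}, and the graded version of Proposition~\ref{prop:cont:cyclic} for propagating \cyclic{}ness down the tower), so nothing is missing.
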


\section{Connection Matrix Algorithm}\label{sec:alg}

In this section we introduce the algorithm for computing Conley complexes and connection matrices.  The algorithm is based on (graded) Morse theory, which is described in Section~\ref{sec:reductions:gmt}.  It is formalized via the framework of reductions developed in Section~\ref{sec:reductions}.   

In Section~\ref{sec:alg:prelims} we recall the Morse theoretic algorithms of~\cite{focm}.  The exposition relies on the discrete Morse theory reviewed in Section~\ref{prelims:dmt}. In Section~\ref{sec:alg:hom} we demonstrate, via Algorithm~\ref{alg:hom} (\textsc{Homology}),  how to compute the homology of a chain complex using discrete Morse theory and reductions.  Section~\ref{sec:alg:cm} describes Algorithm~\ref{alg:cm} (\textsc{ConnectionMatrix}), the algorithm for computing a connection matrix based on graded discrete Morse theory and graded reductions.  The computation of a connection matrix is analogous to computing homology, except generalized to the category of $\sP$-graded chain complexes, i.e., the algorithm \textsc{ConnectionMatrix} is the analogy of the algorithm \textsc{Homology}, only adapted to the graded setting.  This compelling analogy provides a nice conceptual method for digesting how connection matrices can be computed.

\subsection{Morse Theoretic Algorithms}\label{sec:alg:prelims}

Our algorithm relies on~\cite[Algorithm 3.6]{focm} and~\cite[Algorithm 3.12]{focm}, which are reproduced below, respectively, as the algorithms  \textsc{Matching} and \textsc{Gamma}.    In particular, Lemma~\ref{prop:alg:cyclic}, which relies on  \textsc{Matching}, is used to to verify the correctness of Algorithms~\ref{alg:hom} and~\ref{alg:cm}.  First, recall the notion of a coreduction pair and free cell, from Definition~\ref{defn:cored}, and that of acyclic partial matching, from Definition~\ref{defn:acyclicmatching}.

\begin{algorithmic}
\Function{Matching}{$\cX$}
	\State $\cX' \gets \cX$
    \While {$\cX'$ is not empty} 
    	\While {$\cX'$ admits a coreduction pair $(\xi,\xi')$}
        	\State Excise $(\xi,\xi')$ from $\cX'$
            \State $K\gets \xi$, $Q\gets \xi'$
            \State $w(\xi'):=\xi$
    	\EndWhile
        \While {$\cX'$ does not admit a coreduction pair}
        	\State Excise a free cell $\xi$ from $\cX'$
            \State $A\gets \xi$
        \EndWhile
    \EndWhile
    \State \Return $(A,w\colon  Q\to K)$
\EndFunction
\Function{Gamma}{$\xi_{in}, w\colon Q\to K$}
\State $\xi \gets \xi_{in}$
\State $c\gets 0$
\While{$\xi\not\in C(A)\oplus C(K)$}
\State Choose a $\leq$-maximal $\xi'\in Q$ with $\kappa(\xi,\xi')\neq 0$
\State $\xi''\gets w(\xi')$
\State $c\gets c+\xi''$
\State $\xi\gets \xi+\partial \xi''$
\EndWhile
\State \Return $c$
\EndFunction
\end{algorithmic}

The proof of correctness of our algorithms depend upon the following lemma. 
\begin{lem}\label{prop:alg:cyclic}
Let $\cX$ be a cell complex.  If $(A,w)$ is an acyclic partial matching on $\cX$ obtained from $\textsc{Matching}(\cX)$ such that $A=\cX$ then $(C_\bullet(A),\partial^A)=(C_\bullet(\cX),\partial^\cX)$ is a \trivial{} complex.
\end{lem}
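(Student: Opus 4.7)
Plan: Denote by $\xi_1, \xi_2, \ldots, \xi_n$ the order in which the cells of $\cX$ are excised by \textsc{Matching}, and set $\cX'_i := \cX \setminus \{\xi_1, \ldots, \xi_{i-1}\}$, so that $\cX'_1 = \cX$. The hypothesis $A = \cX$ forces every $\xi_i$ to be placed into $A$; since the inner coreduction loop runs to completion before any free-cell excision, this can only occur if $\cX'_i$ admits no coreduction pair for every $i$. Consequently, each $\xi_i$ is a free cell of $\cX'_i$, meaning $\kappa(\xi_i, \xi') = 0$ for every $\xi' \in \cX'_i$. The strategy is to show that these two facts together force $\partial \xi_i = 0$ for each $i$, which is exactly the assertion that $(C_\bullet(\cX), \partial^{\cX})$ is \trivial{}.

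Freeness of $\xi_i$ in $\cX'_i$ tells us that the only cells possibly appearing in $\partial \xi_i$ have already been excised, so $\partial \xi_i = \sum_{k<i} \kappa(\xi_i, \xi_k)\, \xi_k$. Assume, toward a contradiction, that $\partial \xi_i \neq 0$ for some $i$, and let $j := \max\{k < i : \kappa(\xi_i, \xi_k) \neq 0\}$. By the maximality of $j$ together with freeness of $\xi_i$ in $\cX'_i$, the coefficient $\kappa(\xi_i, \xi_k)$ vanishes for every $\xi_k \in \cX'_j$ with $k \neq j$. Restricting the boundary of $\xi_i$ to $\cX'_j$ therefore collapses it to a single nonzero term,
\[
\partial^{\cX'_j} \xi_i \;=\; \kappa(\xi_i, \xi_j)\,\xi_j,
\]
which exhibits $(\xi_i, \xi_j)$ as a coreduction pair in $\cX'_j$. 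This contradicts the fact that no coreduction pair was ever present. Hence $\partial \xi_i = 0$ for every $i$, and $(C_\bullet(\cX),\partial^{\cX}) = (C_\bullet(A),\partial^A)$ is \trivial{}.

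The one conceptual point worth flagging is that the boundary appearing in Definition~\ref{defn:cored} of a coreduction pair must be interpreted as the boundary restricted to the current subset $\cX'$; otherwise the notion would be independent of $\cX'$ and the algorithm would fail on examples as simple as the boundary of a triangle. Once this interpretation is fixed, the remaining verifications (for instance, that $\xi_i, \xi_j \in \cX'_j$, which is immediate from $j < i$) are entirely routine, and no further technical obstacle arises.
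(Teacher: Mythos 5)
Your proof is correct and follows essentially the same route as the paper's: both argue that if some $\partial\xi_i\neq 0$ then a coreduction pair would have been available at the moment one of its faces was excised, contradicting the fact that $A=\cX$ forces the coreduction loop never to fire. Your choice of $j$ as the \emph{last}-excised face with nonzero incidence is in fact slightly more careful than the paper's version, which picks an arbitrary face $\xi'$ with $\kappa(\xi,\xi')\neq 0$ and so does not immediately guarantee that the boundary restricted to $\cX\setminus U'$ collapses to a single term.
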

\begin{proof}
Let $\xi\in \cX=A$.  We wish to show that $\partial(\xi) = 0$.  Since $A=\cX$ there are no coreduction pairs in the execution of the algorithm.  This implies of the two secondary {\bf while} loops in \textsc{Matching}, only the second {\bf while} has executed.   This {\bf while} loop has iterated $n=|\cX|$ times and each iteration adds a cell $\xi$ to the collection of critical cells $A$.  We may therefore regard $A$ as a stack and label each $\xi\in A$ with the integer giving the particular iteration of the {\bf while} loop that added $\xi$ to $A$.  Denote this labeling $\mu\colon A\to \N$.   Now set $n=\mu(\xi)$ and let $U=\mu^{-1}[0,n)$.  From Algorithm 3.6 we must have that $\xi$ is a free cell in $\cX\setminus U$.  Therefore if $\kappa(\xi,\xi')\neq 0$ for some $\xi'\in \cX$ then $\xi'\in U$.  Suppose that $\kappa(\xi,\xi')\neq 0$ for some $\xi'\in U$.  Let $m=\mu(\xi')$ and $U'=\mu^{-1}[0,m)$.  We must have that $(\xi,\xi')$ is a coreduction pair in $\cX\setminus U'$.  This is a contradiction of the execution of the algorithm.
\end{proof}

\subsection{Homology Algorithm}\label{sec:alg:hom}

We first give an algorithm for computing the homology of a complex $\cX$ based on discrete Morse theory. This will provide an intuition and the basis for the Algorithm~\ref{alg:cm}, \textsc{ConnectionMatrix}.
We let $A_\infty$ denote the output of \textsc{Homology}.

\begin{alg}\label{alg:hom}
{\em
\hspace{5mm}
\begin{algorithmic}
\Function{Homology}{$\cX_{in},\partial_{in}$}
\State $A\gets\cX_{in}, \Delta \gets \partial_{in}$
  \Do
    \State $\cX\gets A,\partial \gets \Delta$
    \State $(A,w\colon Q\to K)\gets \textsc{Matching}(\cX)$
    \For{$\xi \in A$}
    \State compute and store $\Delta(\xi)$ using $\textsc{Gamma}(\xi,w)$
    \EndFor 
  \doWhile{$|A|<|\cX|$}
  \State \Return $A$
\EndFunction
\end{algorithmic}
}
\end{alg}
\begin{thm}\label{thm:alg:hom}
Given a cell complex $\cX$, Algorithm~\ref{alg:hom} (with input $\cX$ and $\partial$) halts and outputs the homology of $\cX$.
\end{thm}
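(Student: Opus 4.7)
The plan is to establish two claims in sequence: first that the outer do-while loop halts, and second that the returned chain complex $(C(A_\infty),\Delta)$ has zero differential and is chain equivalent to $(C(\cX_{in}),\partial)$. Together these imply the conclusion, since a chain complex with zero boundary operator equals its own homology.

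For halting, I would argue that at each iteration, the matching $(A,w\colon Q\to K) \gets \textsc{Matching}(\cX)$ either produces $|A|<|\cX|$, in which case the loop re-enters with a strictly smaller complex, or it produces $|A|=|\cX|$, in which case the loop exits. Since $|\cX_{in}|$ is finite and strictly decreasing under the first alternative, the loop can execute at most $|\cX_{in}|$ times before the second alternative is triggered, so termination is guaranteed.

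For correctness, the strategy is to interpret each iteration as producing a reduction in the sense of Section~\ref{sec:red:chain}. Concretely, $\textsc{Matching}(\cX)$ returns an acyclic partial matching which, by Proposition~\ref{prop:matchinghomotopy}, determines a unique splitting homotopy $\gamma$; Theorem~\ref{thm:focm:red} then packages this as a reduction onto the Morse complex $(C(A),\partial^A)$, where $\partial^A=\psi\circ\partial\circ\phi$. The inner for-loop computes precisely this boundary operator one basis cell at a time via calls to $\textsc{Gamma}(\xi,w)$, which implements the formula~\eqref{eqn:gamma} for $\gamma$. Hence each outer iteration genuinely yields a reduction of the current complex, and by Proposition~\ref{prop:cont:tower}(1) the full finite sequence of iterations composes into a single reduction from $C(\cX_{in})$ onto $C(A_\infty)$.

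It then remains to verify that $(C(A_\infty),\partial^{A_\infty})$ is \trivial{}. Termination of the do-while loop forces $|A|=|\cX|$ on the final iteration; that is, $\textsc{Matching}$ produced $A=\cX$ with empty $Q$ and $K$, which is exactly the hypothesis of Lemma~\ref{prop:alg:cyclic}. Applying that lemma gives $\partial^{A_\infty}=0$, so the composed reduction is \strict{} in the sense of Proposition~\ref{prop:ch:strictbij}. Since reductions induce chain equivalences and chain equivalences induce isomorphisms on homology (Proposition~\ref{prop:prelim:chiso}), we conclude $H_\bullet(\cX) \cong H_\bullet(C(A_\infty)) = C(A_\infty)$, as desired. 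The main technical point where I expect some care is linking the loop-exit condition $A=\cX$ to the hypothesis of Lemma~\ref{prop:alg:cyclic}: one must trace through \textsc{Matching} and confirm that $A=\cX$ forces the coreduction-pair inner loop never to execute, so that Lemma~\ref{prop:alg:cyclic} applies. Everything else is a routine assembly of results already established in Sections~\ref{sec:prelims:HA}, \ref{prelims:dmt}, \ref{sec:red:chain} and \ref{sec:alg:prelims}.
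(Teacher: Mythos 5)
Your proposal is correct and follows essentially the same route as the paper's proof: termination from finiteness of $\cX_{in}$ and halting of \textsc{Matching}, each iteration realized as a reduction via the splitting homotopy of Proposition~\ref{prop:matchinghomotopy} and Example~\ref{ex:red:dmt}, composition of the iterations into a tower of reductions, and triviality of $C(A_\infty)$ from Lemma~\ref{prop:alg:cyclic} applied at the loop-exit condition $A=\cX$. The "technical point" you flag is already discharged inside the proof of Lemma~\ref{prop:alg:cyclic}, whose hypothesis is exactly the exit condition, so no additional tracing through \textsc{Matching} is needed.
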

\begin{proof}
The fact that $\cX_{in}$ is finite, together with the fact that \textsc{Matching} halts~\cite{focm}, implies that \textsc{Homology} halts.  Finally, if the algorithm terminates with $A_\infty=\textsc{Homology}(\cX_{in},\partial_{in})$ then $C(A_\infty)$ is a \trivial{} chain complex by Lemma~\ref{prop:alg:cyclic}. 

It remains to prove that $C(A_\infty)\cong H(\cX)$.  In any iteration of the {\bf do} loop, there are chain equivalences $\psi\colon C(\cX)\to C(A)$ and $\phi\colon C(A)\to C(\cX)$, which are as defined in Eqn.~\eqref{eqn:prelim:dmt} of Section~\ref{prelims:dmt}, using $\gamma(\cdot)=\textsc{Gamma}(\cdot,w)$.  The pair of complexes $C(\cX)$, $C(A)$ and the triple maps $\phi,\psi,\gamma$ fit into a reduction via Example~\ref{ex:red:dmt}. Therefore an execution of the entire the {\bf do-while} loop is associated to a tower of reductions:
\begin{equation*}
\begin{tikzcd}
C(\cX_{in})\arrow[out=60,in=120,distance=5mm,swap,"\gamma_0"]  \arrow[r, shift left=1] & 
\ldots \arrow[l, shift left=1] \arrow[r, shift left=1] &
\arrow[l, shift left=1] C_\bullet(\cX)
\arrow[out=60,in=120,distance=5mm,swap,"\gamma"]
\arrow[r, shift left=1,"\psi"]
& (C_\bullet(A),\partial^A) \arrow[l, shift left=1,"\phi"] \arrow[r, shift left=1]\arrow[out=60,in=120,distance=5mm] 
&\ldots\arrow[l, shift left=1]  \arrow[r, shift left=1]&
\arrow[l, shift left=1] C(A_\infty).
\end{tikzcd}
\end{equation*}

Thus the output $C(A_\infty)$ is isomorphic to the homology $H(\cX_{in})$.
\end{proof}

\begin{ex}\label{ex:alg:homology}
{\em
In this example we give some flavor of the concepts behind Algorithm~\ref{alg:hom} (\textsc{Homology}).  Consider the cubical complex $\cK$ given in Fig.~\ref{fig:alg:hom}(a) that consists of four $2$-cells, 14 edges and nine vertices.  We work over the field $\Z_2$.  Therefore we have
\[
C_2(\cK) = \Z_2^4 \quad\quad C_1(\cK) = \Z_2^{14}\quad\quad C_0(\cK) = \Z_2^9.
\]
The complex $\cK$ is open on the right in order to simplify the Morse theory, viz., there is no critical vertex.  For the sake of an example, we want to illustrate that in practice the algorithm uses multiple rounds of Morse theory, and there is an associated nontrivial tower of reductions. Unfortunately, the algorithm \textsc{Homology} as stated is too effective in this example as the \textsc{Matching} subroutine simplifies to a \trivial{} complex in only one round of Morse theory.  Instead, we may substitute \textsc{Matching} with the set of cubical acyclic partial matchings proposed in~\cite{hms2}.  In this case $\cK$ is a cubical complex in $\R^2$, and each coordinate direction -- the $x$ and $y$ directions -- gives an acyclic partial matching by attempting to match cells `right' along that direction.
\begin{figure}[ht!]
\begin{minipage}[t]{0.325\textwidth}
\begin{center}
\begin{tikzpicture}[dot/.style={draw,circle,fill,inner sep=1.5pt},line width=.7pt]
\draw[->,thin] (-.25,-.25) -- (.25,-.25);
\draw[->,thin] (-.25, -.25) -- (-.25,.25);
\node (u) at (.5,-.25) {$x$};
\node (v) at (-.25,.5) {$y$};
\fill[pattern=north east lines](0,0) rectangle (1,2);
\fill[pattern=north east lines](2,0) rectangle (2.9,2);
\foreach \x in {0,1,2}
    \foreach \y in {0,1,2}
        \node (\x\y) at (\x,\y)[dot] {};
\foreach \y in {0,1,2}
    \node (3\y) at (3,\y) {};
\foreach \x in {0,1,2}
    \foreach \y[count=\yi] in {0,1}
        \draw (\x\y)--(\x\yi);
\foreach \x [count=\xi]in {0,1,2}
    \foreach \y in {0,2}
        \draw (\x\y)--(\xi\y);
\draw (21)--(31);
\draw (01)--(11);
\end{tikzpicture}

(a)
\end{center}
\end{minipage}
\begin{minipage}[t]{0.3\textwidth}
\begin{center}
\begin{tikzpicture}[dot/.style={draw,circle,fill,inner sep=1.5pt},line width=.7pt]
\node (u) at (-.25,-.25) {};
\fill[pattern=north east lines](0,0) rectangle (1,2);
\fill[pattern=north east lines](2,0) rectangle (2.9,2);
\foreach \x in {0,1,2}
    \foreach \y in {0,1,2}
        \node (\x\y) at (\x,\y)[dot] {};
\foreach \y in {0,1,2}
    \node (3\y) at (3,\y) {};
\foreach \x in {0,1,2}
    \foreach \y[count=\yi] in {0,1}
        \draw (\x\y)--(\x\yi);
\foreach \x [count=\xi]in {0,1,2}
    \foreach \y in {0,2}
        \draw (\x\y)--(\xi\y);
\draw (21)--(31);
\draw (01)--(11);
\foreach \y in {0,.5,1,1.5,2}
    \foreach \x in {0,2}
    \draw[-latex,line width=1.5pt] (\x,\y) -- (\x+.6,\y);
\foreach \y [count=\xi] in {0,2}
    \draw[-latex,line width=1.5pt] (1,\y) -- (1.6,\y);
\node (e0) at (1.25,.5){$e_0$};
\node (e1) at (1.25,1.5){$e_1$};
\node (v0) at (1.35,1) {$v_0$};
\end{tikzpicture}

(b)
\end{center}
\end{minipage}
\begin{minipage}[t]{0.15\textwidth}
\begin{center}
\begin{tikzpicture}[dot/.style={draw,circle,fill,inner sep=1.5pt},line width=.7pt]
\node (u) at (-5-.25,-.25) {};

 \node (0) at (-5,0) {};
 \node (1) at (-5,1)[dot] {};
 \node (2) at (-5,2) {};
 \draw (0) -- (1);
 \draw (1) -- (2);
 \draw[-latex,line width=1.5pt] (1) -- (-5,1.6);
\node (e0) at (-5+.25,.5){$e_0$};
\node (e1) at (-5+.35,1.5){$e_1$};
\node (v0) at (-5+.35,1) {$v_0$};
\end{tikzpicture}

(c)
\end{center}
\end{minipage}
\begin{minipage}[t]{0.15\textwidth}
\begin{center}
\begin{tikzpicture}[dot/.style={draw,circle,fill,inner sep=1.5pt},line width=.7pt]
\node (u) at (-5-.25,-.25) {};
 \node (0) at (-5,0) {};
 \node (1) at (-5,1) {};
 \draw (0) -- (1);
 \node (e0) at (-5+.25,.5){$e_0$};
\end{tikzpicture}

(d)
\end{center}
\end{minipage}
\caption{(a) Cubical complex $\cK$. (b)  First Pairing $(A_0,w_0)$.  (c) Second Pairing $(A_1,w_1)$. (d)  \Trivial{} complex, i.e., the homology of $\cK$.  A single 1-cell $e_0$ remains with $\partial(e_0) = 0$.}\label{fig:alg:hom}
\end{figure}

The algorithm begins with executing the first iteration of the {\bf while} loop that computes an acyclic partial matching on $\cK$ by attempting to pair all cells right along the $x$ direction. This furnishes an acyclic partial matching $(A_0,w_0)$.   This is visualized in Fig.~\ref{fig:alg:hom}(b) where a pair $\xi\in Q$ and $\xi'\in K$  with $w(\xi)=\xi'$ is visualized with directed edge $\xi'\to \xi$. 
The directed edges in Fig.~\ref{fig:alg:hom}(b) may also be thought of as a graphical representation of the degree 1 map $V$ (see Section~\ref{prelims:dmt}). 
The cells $e_0,e_1$ and $v_0$ do not have right coboundaries and are therefore critical, i.e., $A_0=\{e_0,e_1,v_0\}$.  
The set of cells $A_0$ is the basis for a chain complex $(C(A_0),\Delta)$ where
\begin{align*}
C_1(A_0) = \Z_2\langle e_0\rangle \oplus \Z_2\langle e_1\rangle, \quad\quad C_0(A_0) = \Z_2\langle v_0\rangle,
\quad\quad
\Delta_1 = \bordermatrix{& e_0 & e_1  \cr
                         v_0 & 1 & 1  }.
\end{align*}
The second iteration of the {\bf while} loop attempts to pair remaining cells, i.e., the cells in $A_0$, upwards (along the $y$ direction).  This furnishes an acyclic partial matching $(A_1,w_1)$ on $A_0$, visualized in Fig.~\ref{fig:alg:hom}(c). The cells $v_0$ and $e_1$ are paired, i.e., $w_1(v_0)=e_1$, and $A_1 = \{e_0\}$. 
 Moreover, the set of cells $A_1$ is a basis for the chain complex $(C(A_1),\Delta)$ where
\[
C_1(A_1) = \Z_2\langle e_0\rangle \quad\quad \text{and}\quad\quad \Delta = 0.
\]
In the final iteration of the {\bf while} there are no coreduction pairs and $A_2 = A_1$ is returned. The algorithm terminates with $A_\infty=\textsc{Homology}(\cK,\partial) = A_1$.  The two rounds of Morse theory occurring during the algorithm give rise to the chain complexes $C(\cX),C(A_0)$ and $C(A_1)$, together with the maps $\{\phi_i,\psi_i,\gamma_i\}$, which fit into the tower of reductions:
\begin{equation*}
\begin{tikzcd}
C(\cK)\arrow[out=60,in=120,distance=5mm,swap,"\gamma_0"]  \arrow[r, shift left=1,"\psi_0"] & 
C(A_0) \arrow[out=60,in=120,distance=5mm,swap,"\gamma_1"]
\arrow[l, shift left=1,"\phi_0"] \arrow[r, shift left=1,"\psi_1"] &
C(A_1) \arrow[l, shift left=1,"\phi_1"] .
\end{tikzcd}
\end{equation*}
}
\end{ex}


\subsection{Graded Morse Theory}\label{sec:reductions:gmt}

In this section, we review a graded version of discrete Morse theory.  Consider a $\sP$-graded cell complex $(\cX,\nu)$.  Recall that the underlying set $\cX$ decomposes as $\cX=\bigsqcup_{p\in \sP} \cX^p$ where $\cX^p = \nu^{-1}(p)$.
\begin{defn}
{\em
Let $(\cX,\nu)$ be a $\sP$-graded cell complex and  let $(A,w\colon Q\to K)$ be an acyclic partial matching on $\cX$.  We say that $(A,w)$ is {\em $\sP$-graded}, or simply {\em graded}, if it satisfies the property that $w(\xi)=\xi'$ only if $\xi,\xi'\in \cX^p$ for some $p\in \sP$.  That is, matchings may only occur in the same fiber of the grading.  
}
\end{defn}

The idea of graded matchings can be found many places in the literature, for instance, see~\cite{mn} and~\cite[Patchwork Theorem]{koz}.  Recall from Definition~\ref{defn:grad:assoc} that a $\sP$-graded cell complex $(\cX,\nu)$ has an associated $\sP$-graded chain complex $(C(\cX),\pi^\nu)$.

\begin{prop}\label{prop:dmt:gmt}
Let $(\cX,\nu)$ be a $\sP$-graded cell complex and $(A,w:Q\to K)$ a graded acyclic partial matching. Let $A^p=A\cap \cX^p$. Then 
\begin{enumerate}
    \item $(C(A),\partial^A,\pi)$ is a $\sP$-graded chain complex where the projections $\pi=\{\pi_n^p\}_{n\in \Z,p\in\sP}$ are given by
    \begin{equation}\label{eqn:ace:proj}
    \pi_j^p\colon C_j(A)\to C_j(A^p).
    \end{equation}
    \item The maps $\phi,\psi$ of Eqn.~\eqref{eqn:prelim:dmt} and $\gamma$ of Eqn.~\eqref{eqn:gamma} fit into a  $\sP$-graded reduction
\begin{equation}\label{eqn:ace:red}
\begin{tikzcd}
(C(\cX),\pi^\nu)
\arrow[out=60,in=120,distance=5mm,swap,"\gamma"]
\arrow[r, shift left=1,"\psi"]
& (C(A),\pi)\arrow[l, shift left=1,"\phi"] .
\end{tikzcd}
\end{equation}

\end{enumerate}
\end{prop}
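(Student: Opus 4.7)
The plan is to observe that the graded condition on $(A,w)$ forces every map appearing in the discrete Morse construction to respect the $\sP$-grading, after which both claims follow by combining the ungraded reduction (Theorem~\ref{thm:focm:red}) with the elementary calculus of $\sP$-filtered maps from Section~\ref{sec:grad:vs}.

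I would first verify that the degree one map $V$ of Eqn.~\eqref{eqn:defn:V} sends each fiber $C_\bullet(\cX^p)$ into itself:\ if $\xi \in Q \cap \cX^p$ then $w(\xi) \in \cX^p$ by the graded matching condition, and $V$ vanishes otherwise. In the matrix notation of Section~\ref{sec:grad:vs} this says $V^{pq}=0$ whenever $p\neq q$, so $V$ is $\sP$-filtered (in fact diagonal). Since $\partial$ is $\sP$-filtered (because $(\cX,\nu)$ is a $\sP$-graded cell complex by Definition~\ref{defn:grad:assoc}), the closed-form expression $\gamma = \sum_{i\geq 0} V(\id-\partial V)^i$ of Eqn.~\eqref{eqn:gamma}, together with Proposition~\ref{prop:grad:comp}, shows $\gamma$ is $\sP$-filtered. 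Similarly, the natural inclusion $\iota_A\colon C_\bullet(A)\hookrightarrow C_\bullet(\cX)$ and projection $\pi_A\colon C_\bullet(\cX)\to C_\bullet(A)$ decompose as direct sums over $p$ of the fiberwise inclusion and projection between $C_\bullet(A^p)$ and $C_\bullet(\cX^p)$, and so are $\sP$-filtered (again diagonal).

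For part (1), since $A=\bigsqcup_{p\in\sP} A^p$ is a disjoint partition (obtained by restricting $\nu$ to $A$), each chain group decomposes as $C_n(A)=\bigoplus_{p\in\sP} C_n(A^p)$, and the projections of Eqn.~\eqref{eqn:ace:proj} equip $(C_n(A),\pi_n)$ with a $\sP$-grading. To conclude that $(C(A),\partial^A,\pi)$ is an object of $\bChG$, it remains to verify that $\partial^A$ is $\sP$-filtered. The definitions $\phi=(\id-\gamma\partial)\circ\iota_A$ and $\psi=\pi_A\circ(\id-\partial\gamma)$ in Eqn.~\eqref{eqn:prelim:dmt} present both $\phi$ and $\psi$ as compositions and sums of $\sP$-filtered maps, hence both are $\sP$-filtered; then $\partial^A=\psi\circ\partial\circ\phi$ is $\sP$-filtered by a further composition.

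For part (2), the three defining identities of a reduction, $\psi\phi=\id_{C(A)}$, $\phi\psi=\id_{C(\cX)}-(\gamma\partial+\partial\gamma)$, and $\gamma^2=\gamma\phi=\psi\gamma=0$, are established at the level of the underlying chain complexes by Theorem~\ref{thm:focm:red} and Proposition~\ref{prop:matchinghomotopy}; the graded condition plays no role in those equations. Combined with the $\sP$-filteredness verified above, the triple $(\phi,\psi,\gamma)$ constitutes a $\sP$-graded reduction in the sense of Section~\ref{sec:red:grad}, giving the diagram in Eqn.~\eqref{eqn:ace:red}. The only nontrivial bookkeeping is the verification that $\gamma$ is $\sP$-filtered, since $V$, $\iota_A$, and $\pi_A$ are manifestly diagonal by construction; everything else is a direct transfer of the ungraded theory.
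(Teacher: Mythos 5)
Your proposal is correct and follows essentially the same route as the paper: both arguments hinge on observing that the graded matching condition makes $V$ of Eqn.~\eqref{eqn:defn:V} $\sP$-filtered, deducing from the formula $\gamma=\sum_{i\geq 0}V(\id-\partial V)^i$ that $\gamma$ is a $\sP$-filtered splitting homotopy, and then transferring the ungraded reduction of Theorem~\ref{thm:focm:red} to the graded setting. The only cosmetic difference is that the paper invokes the bijection of Proposition~\ref{prop:grad:bij} to produce the $\sP$-graded reduction, whereas you verify directly that $\phi$, $\psi$, and $\partial^A$ are $\sP$-filtered as composites of $\sP$-filtered maps; both are valid.
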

\begin{proof}
It follows from Proposition~\ref{thm:focm:red} that $(C(A),\partial^A)$ is a chain complex. We must show that if $\partial_A^{pq}\neq 0$ then $p\leq q$.  By Proposition~\ref{prop:matchinghomotopy} there is a unique splitting homotopy $\gamma\colon C(\cX)\to C(\cX)$ associated to the matching $(A,w)$. The fact that $(A,w)$ is graded implies that $V$, as defined in Eqn.~\eqref{eqn:defn:V} of Section~\ref{prelims:dmt}, is $\sP$-filtered.  From the definition of $\gamma$ given in~\eqref{eqn:gamma} a routine verification shows that $\gamma$ is $\sP$-filtered.  Therefore by Proposition~\ref{prop:grad:bij} there is an associated reduction
\[
\begin{tikzcd}
(C(\cX),\pi^\nu)
\arrow[out=60,in=120,distance=5mm,swap,"\gamma"]
\arrow[r, shift left=1,"\psi"]
& (C(A),\pi)\arrow[l, shift left=1,"\phi"] .
\end{tikzcd}
\]

Let $p\in \sP$.  Consider $(A^p,w^p)$ the matching restricted to the fiber $\cX^p = \nu^{-1}(p)$.  We have $$A^p = A\cap \cX^p\quad\quad \quad \quad  w^p\colon Q\cap \cX^p\to K\cap \cX^p.$$

It follows that $(A^p,w^p)$ is an acyclic partial matching on the fiber $\cX^p$.  Proposition~\ref{prop:matchinghomotopy} implies that there is a unique splitting homotopy $\gamma^p\colon C(\cX^p)\to C(\cX^p)$. In particular, $\gamma^{pp}=\gamma^p$.
\end{proof}

\begin{ex}
{\em 
Consider the graded complex $(\cX,\nu)$ of Example~\ref{ex:complex}.   Let
\begin{align*}
    A:= \{v_0,v_2,e_0\} \quad\quad Q:=\{v_1\} \quad\quad K:=\{e_1\}\quad\quad  w(v_1)=e_1
\end{align*}
This is depicted in Fig.~\ref{fig:gradedmatching}.
\begin{figure}[h!]
\centering
\begin{tikzpicture}[dot/.style={draw,circle,fill,inner sep=1.5pt},line width=.7pt]
\node (v0) at (0:0)[dot,label=above:{$v_0$}] {};
\node (v1) at (0cm:2cm)[dot,label=above:{$v_1$}] {};
\node (v2) at (0cm:4cm)[dot,label=above:{$v_2$}] {};
\node (e0) at (0cm:1cm)[label=above:{$e_0$}] {};
\node (e1) at (0cm:3cm)[label=above:{$e_1$}] {};
\draw (v0) to[bend left=10] (v1);
\draw (v1) to[bend left=10] (v2);
\draw[-latex,line width=2.25pt] (v1) -- (.075cm:3cm);
\end{tikzpicture}
 \caption{Graded matching $(A,w)$ on $\cX$.  The pairing $w(v_1)=e_1$ is visualized with an arrow $v_1\to e_1$.  The set $A=\{v_0,e_0,v_2\}$ are the critical cells. }\label{fig:gradedmatching}
\end{figure}

This is a acyclic partial matching $(A,w)$ on $\cX$.  It is straightforward that $(A,w)$ is graded as $v_1,e_1\in \nu^{-1}(q)$.  The maps of the associated $\sP$-graded reduction are precisely the ones described in Example~\ref{ex:homotopy}.
}
\end{ex}

\subsection{Connection Matrix Algorithm}\label{sec:alg:cm}

We can now state the algorithm for computing a connection matrix, which relies on \textsc{Matching} and \textsc{Gamma} of Section~\ref{sec:alg:prelims}.



\begin{alg}\label{alg:cm}
{\em 
\hspace{5mm}
\begin{algorithmic}
\Function{ConnectionMatrix}{$\cX_{in},\nu_{in},\partial_{in}$}
\State $A\gets\cX_{in}, \Delta \gets \partial_{in}, \mu \gets \nu_{in}$
  \Do
    \State $\cX\gets A,\partial \gets \Delta, \nu\gets \mu$
    \For{$p\in \sP$}
    \State $(A^p,\omega^p\colon Q^p\to K^p)\gets \textsc{Matching}(\cX^p)$
    \EndFor
    \State $(A,w)\gets (\bigsqcup_{p\in \sP}A^p,~\bigsqcup_{p\in \sP}w^p)$
    \For{$\xi \in A$}
    \State compute and store $\Delta(\xi)$ using $\textsc{Gamma}(\xi,w)$
    \EndFor 
    \State $\mu\gets \nu|_A$
  \doWhile{$|A|<|\cX|$}
  \State \Return $(A,\Delta,\mu)$
\EndFunction
\end{algorithmic}
}
\end{alg}

\begin{thm}\label{thm:alg:cm}
Let $(\cX,\nu)$ be a $\sP$-graded cell complex.   Algorithm~\ref{alg:cm} (with input $(\cX,\nu)$ and $\partial$) halts.  Moreover, the returned data $(A_\infty,\Delta,\pi)$  has the property that $(C(A_\infty),\Delta,\pi)$ is a Conley complex for $(C(\cX),\pi^\nu)$.
\end{thm}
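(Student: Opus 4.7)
The plan is to verify separately that the algorithm halts, that the output is \cyclic{}, and that it is isomorphic to $(C(\cX_{in}),\pi^{\nu_{in}})$ in $\bKG$; together these give the Conley complex property of Definition~\ref{def:grad:cm}. For termination, observe that \textsc{Matching} halts on each finite fiber $\cX^p$ by~\cite{focm}, and the resulting $A = \bigsqcup_{p\in\sP} A^p$ satisfies $A\subseteq \cX$. Thus $|A|$ is monotonically non-increasing across iterations of the outer loop, and the loop exits as soon as no fiber admits a coreduction pair; since $\cX_{in}$ is finite this occurs in at most $|\cX_{in}|$ iterations.

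For the \cyclic{} condition, note that at termination $|A_\infty| = |\cX|$, where $\cX$ is the state at the start of the final iteration; equivalently $A_\infty^p = \cX^p$ for every $p\in \sP$, so \textsc{Matching} produced no pairings in any fiber. Applying Lemma~\ref{prop:alg:cyclic} separately to each $\cX^p$ shows that $(C(A_\infty^p), \Delta^{pp})$ is a \trivial{} chain complex, hence $\Delta^{pp}=0$ for every $p$. This is precisely Eqn.~\eqref{eqn:cyclic}, so $(C(A_\infty),\Delta,\pi)$ is \cyclic{} in the sense of Definition~\ref{defn:grad:cyclic}.

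For the isomorphism claim, the key step is to realize each iteration as a single $\sP$-graded reduction. Since \textsc{Matching} is run independently on each fiber $\cX^p$, the resulting matching $w = \bigsqcup_{p\in\sP} w^p$ pairs cells only within a common fiber, so $(A,w)$ is a $\sP$-graded acyclic partial matching. Proposition~\ref{prop:dmt:gmt} then produces a $\sP$-graded reduction from $(C(\cX),\pi^\nu)$ to $(C(A),\partial^A,\pi)$, with splitting homotopy computed by \textsc{Gamma} in the manner of Example~\ref{ex:red:dmt}. Composing these reductions across all iterations of the outer loop yields a tower of $\sP$-graded reductions, which by Proposition~\ref{prop:cont:gtower} collapses to a single $\sP$-graded reduction from $(C(\cX_{in}),\pi^{\nu_{in}})$ to $(C(A_\infty),\Delta,\pi)$.

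Since the chain maps $\Phi,\Psi$ in a $\sP$-graded reduction are $\sP$-filtered chain equivalences, this exhibits $(C(A_\infty),\Delta,\pi)$ and $(C(\cX_{in}),\pi^{\nu_{in}})$ as isomorphic in $\bKG$, completing the verification of Definition~\ref{def:grad:cm}. The main conceptual obstacle is ensuring that every operation respects the $\sP$-filtered structure — that the per-fiber matchings assemble into a \emph{graded} matching, that the associated splitting homotopy is $\sP$-filtered, and that composition of $\sP$-graded reductions is itself a $\sP$-graded reduction — but all three ingredients are already established in Sections~\ref{sec:reductions} and~\ref{sec:reductions:gmt}, so the remainder of the proof is essentially bookkeeping.
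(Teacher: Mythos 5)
Your proposal is correct and follows essentially the same route as the paper: halting from finiteness of $\cX_{in}$ and termination of \textsc{Matching}, the \cyclic{} property from Lemma~\ref{prop:alg:cyclic} applied fiberwise, and the isomorphism in $\bKG$ by assembling each iteration into a $\sP$-graded reduction via Proposition~\ref{prop:dmt:gmt} and composing the resulting tower. Your termination argument is slightly more explicit than the paper's one-line version, but the substance is identical.
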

\begin{proof}
Since $\cX_{in}$ is finite and \textsc{Matching} halts, it follows that \textsc{ConnectionMatrix} halts.     Let $(A_\infty,\Delta_\infty,\mu_\infty)=\textsc{ConnectionMatrix}(\cX_{in},\nu,\partial)$.  It follows from Proposition~\ref{prop:dmt:gmt} that $(C(A_\infty),\Delta,\pi)$, where $\pi$ is given by Eqn.~\eqref{eqn:ace:proj}, is $\sP$-graded.   It follows from Lemma~\ref{prop:alg:cyclic} that for each $p\in \sP$ the fiber $A_\infty^p$ is \trivial{}.  This implies that the $\sP$-graded chain complex $C(A_\infty)$ is \cyclic{}. 

It remains to show that $(C(A_\infty),\Delta,\pi)$ is a Conley complex.  In any iteration of the {\bf do} loop, it follows from Proposition~\ref{prop:dmt:gmt} that there are $\sP$-filtered chain equivalences $\psi\colon C(\cX)\to C(A)$ and $\phi\colon C(A)\to C(\cX)$, which are as defined in \eqref{eqn:prelim:dmt}, using $\gamma(\cdot)=\textsc{Gamma}(\cdot,w)$.  The pair of complexes $(C(\cX),\pi^\nu)$, $(C(A),\pi^\mu)$ and the triple maps $\phi,\psi,\gamma$ fit into the $\sP$-graded reduction of~\eqref{eqn:ace:red}.  Therefore an execution of the entire {\bf do-while} loop is associated to a tower of reductions:
\begin{equation}\label{dia:algtower}
\begin{tikzcd}[column sep=1.8em]
(C(\cX_{in}),\pi_{in})\arrow[out=60,in=120,distance=5mm,swap,"\gamma_0"]  \arrow[r, shift left=1] & 
\ldots \arrow[l, shift left=1] \arrow[r, shift left=1] &
\arrow[l, shift left=1] 
(C(\cX),\pi^\nu)
\arrow[out=60,in=120,distance=5mm,swap,"\gamma"]
\arrow[r, shift left=1,"\psi"]
& (C(A),\pi^\mu) \arrow[l, shift left=1,"\phi"] \arrow[r, shift left=1]\arrow[out=60,in=120,distance=5mm] 
&\ldots\arrow[l, shift left=1]  \arrow[r, shift left=1]&
\arrow[l, shift left=1] (C(A_\infty),\pi).
\end{tikzcd}
\end{equation}
Thus the output $(C(A_\infty),\Delta)$ is a Conley complex.
\end{proof}
\begin{ex}\label{ex:alg:cm}
{\em

We give an example of the algorithm \textsc{ConnectionMatrix}.   Let $\cX$ be the cubical complex in Fig.~\ref{fig:alg:cm}(a) and let $\cK$ be the cubical complex from Example~\ref{ex:alg:homology}. We again work over the field $\Z_2$. The cubical complex $\cX$ consists of $\cK$ together with the 2-cells $\{\xi_0,\xi_1\}$ and the 1-cell $e_2$. The 2-cells in $\cX\setminus \cK$ are shaded, while the 2-cells in $\cK$ are drawn with hatching. Let $\sQ=\{0,1\}$ be the poset with order $0\leq 1$.  There is a $\sQ$-graded cell complex where $(\cX,\nu)$ and $\nu\colon \cX\to \sQ$ is given via 
\[
  \nu(x) =
  \begin{cases}
          0 & \text{$x\in \cK$ } \\
          1 & \text{$x\in \cX\setminus \cK$.} 
  \end{cases}
\]
 Once again, using \textsc{Matching} would be too effective on this example to illustrate multiple rounds of Morse theory.  We proceed as before and use the graded cubical acyclic partial matchings proposed in~\cite{hms2}.  The $x$ and $y$ directions each give an acyclic partial matching by attempting to pair cells along this direction; in this case care is taken to ensure cells are only matched if they belong to same fiber, i.e., the matching is graded. 
\begin{figure}[h!]
\begin{minipage}[t]{0.3\textwidth}
\begin{center}
\begin{tikzpicture}[dot/.style={draw,circle,fill,inner sep=1.5pt},line width=.7pt]
\draw[->,thin] (-.25,-.25) -- (.25,-.25);
\draw[->,thin] (-.25, -.25) -- (-.25,.25);
\node (u) at (.5,-.25) {$x$};
\node (v) at (-.25,.5) {$y$};
\fill[pattern=north east lines](0,0) rectangle (1,2);
\fill[pattern=north east lines](2,0) rectangle (2.9,2);
\fill[color=gray!20](1,0) rectangle (2,2);

\foreach \x in {0,2}
    \foreach \y in {0,1,2}
        \node (\x\y) at (\x,\y)[dot] {};
\foreach \y in {0,1,2}
    \node (3\y) at (3,\y) {};
\foreach \x in {0,1,2}
    \foreach \y[count=\yi] in {0,1}
        \draw (\x\y)--(\x\yi);
\foreach \x [count=\z]in {0,1,2}
    \foreach \y in {0,2}
        \draw (\x\y)--(\z\y);
\draw (21)--(31);
\draw (01)--(11);
\draw (11)--(21);
\foreach \x [count=\z] in {0,2}
        \draw (\x1)--(\z1);
\foreach \y in {0,1,2}
    \node (1\y) at (1,\y)[dot] {};
\node (e0) at (1.5,.5){$\xi_0$};
\node (e1) at (1.5,1.75){$\xi_1$};
\node (v0) at (1.5,1.2){$e_2$};
\end{tikzpicture}

(a)
\end{center}
\end{minipage}
\begin{minipage}[t]{0.3\textwidth}
\begin{center}

\begin{tikzpicture}[dot/.style={draw,circle,fill,inner sep=1.5pt},line width=.7pt]
\fill[pattern=north east lines](0,0) rectangle (1,2);
\fill[pattern=north east lines](2,0) rectangle (2.9,2);
\fill[color=gray!20](1,0) rectangle (2,2);
\node (u) at (-.25,-.25) {};

\foreach \x in {0,2}
    \foreach \y in {0,1,2}
        \node (\x\y) at (\x,\y)[dot] {};
\foreach \y in {0,1,2}
    \node (3\y) at (3,\y) {};
\foreach \x in {0,1,2}
    \foreach \y[count=\yi] in {0,1}
        \draw (\x\y)--(\x\yi);
\foreach \x [count=\xi]in {0,1,2}
    \foreach \y in {0,2}
        \draw (\x\y)--(\xi\y);
\draw (21)--(31);
\draw (01)--(11);
\draw (11)--(21);
\foreach \y in {0,.5,1,1.5,2}
    \foreach \x in {0,2}
    \draw[-latex,line width=1.5pt] (\x,\y) -- (\x+.6,\y);
\foreach \y [count=\xi] in {0,2}
    \draw[-latex,line width=1.5pt] (1,\y) -- (1.6,\y);
\foreach \y in {0,1,2}
    \node (1\y) at (1,\y)[dot] {};
\end{tikzpicture}

(b)
\end{center}
\end{minipage}
\begin{minipage}[t]{0.15\textwidth}
\begin{center}
\begin{tikzpicture}[dot/.style={draw,circle,fill,inner sep=1.5pt},line width=.7pt]
\node (u) at (-.25,-.25) {};

\fill[color=gray!20](0,0) rectangle (1,2);
 \node (0) at (0,0) {};
 \node (1) at (0,1)[dot] {};
 \node (2) at (0,2) {};
 \draw (0,0) -- (1);
 \draw (1) -- (0,2);
 \draw (1)--(1,1);
 \draw[-latex,line width=1.5pt] (1) -- (0,1.6);
  \draw[-latex,line width=1.5pt] (.5,1) -- (.5,1.6);
\node (x0) at (.5,.5){$\xi_0$};
\node (x1) at (.5,1.75){$\xi_1$};
\node (e2) at (.75,1.2){$e_2$};
\node (e0) at (-.25,.5){$e_0$};
\node (e1) at (-.25,1.5){$e_1$};
\node (v0) at (-.25,1) {$v_0$};
\end{tikzpicture}

(c)
\end{center}
\end{minipage}
\begin{minipage}[t]{0.15\textwidth}
\begin{center}
\begin{tikzpicture}[dot/.style={draw,circle,fill,inner sep=1.5pt},line width=.7pt]
\node (u) at (-.25,-.25) {};

\fill[color=gray!20](0,0) rectangle (1,1);
 \draw (0,0) -- (0,1);
 \node (x0) at (.5,.5){$\xi_0$};
\node (e0) at (-.25,.5){$e_0$};
\end{tikzpicture}

(d)
\end{center}
\end{minipage}
\caption{(a) Graded Cubical Complex.  (b) First Graded Pairing $(A_0,w_0)$.  (c) Second Graded Pairing $(A_1,w_1)$. (d)  Conley complex.  A 2-cell $\xi_0$ and a 1-cell $e_0$ remain with $\partial(\xi_0)=e_0$. }\label{fig:alg:cm}
\end{figure}

The algorithm starts by computing a graded acyclic partial matching on $\cK$, attempting to pair all cells to the right (within their fiber).  The cells $e_0,e_1,v_0$ have right coboundaries $\xi_0,\xi_1,e_2$ respectively.  However, these do not lie in the same fiber as $e_0,e_1,v_0\in \cX^0$ and $\xi_0,\xi_1,e_2\in \cX^1$. Therefore these cells cannot be paired and $A_0=\{e_0,e_1,v_0,\xi_0,\xi_1,e_2\}$.  The second round of Morse theory attempts to pair the remaining cells up (within their fiber).  In this case, $w(v_0)=e_1$ and $w(e_2)=\xi_1$ and $A_1 = \{e_0,\xi_0\}$.  These two rounds of graded Morse theory give a tower of graded reductions:
\begin{equation*}
\begin{tikzcd}
(C(\cX),\pi^\nu)\arrow[out=60,in=120,distance=5mm,swap,"\gamma_0"]  \arrow[r, shift left=1,"\psi_0"] & 
(C(A_0),\pi^{\mu_0}) \arrow[out=60,in=120,distance=5mm,swap,"\gamma_1"]
\arrow[l, shift left=1,"\phi_0"] \arrow[r, shift left=1,"\psi_1"] &
(C(A_\infty),\pi^{\mu_\infty}) \arrow[l, shift left=1,"\phi_1"].
\end{tikzcd}
\end{equation*}
The returned data $(A_\infty,\Delta,\mu)$ form the \cyclic{} $\sP$-graded chain complex, where 
\[
C_2(A_\infty) = \Z_2\langle \xi_0\rangle \quad\quad C_1(A_\infty) = \Z_2\langle e_0\rangle\quad\quad \Delta_2^{01} = \begin{pmatrix} 1 \end{pmatrix}.
\]
We can visualize the tower in terms of a sequence of fiber graphs (see Example~\ref{ex:bigcomplex}) as in Fig.~\ref{fig:alg:fgraphs}.
\begin{figure}[h!]
\centering
\begin{minipage}{.3\textwidth}
\centering
\begin{tikzpicture}[node distance=.5cm]
\node (v0) at (0,0){};
\node[draw,ellipse]  (a) at (0,.5) {$0 : 9t^0+14t^1+4t^2$};
\node[draw,ellipse]  (b) [above=.5cm of a]{$1 : t^1+2t^2$};
\draw[->,>=stealth,thick] (b)--(a);
\end{tikzpicture}\\
(a)
\end{minipage}
\begin{minipage}{.45\textwidth}
\centering
\begin{tikzpicture}[node distance=.5cm]
\node (v0) at (0,0){};
\node[draw,ellipse]  (a) at (0,.5) {$0 : t^0+2t^1$};
\node[draw,ellipse]  (b) [above=.5cm of a]{$1 : t^1+2t^2$};
\draw[->,>=stealth,thick] (b)--(a);
\end{tikzpicture}\\
(b)
\end{minipage}
\begin{minipage}{.15\textwidth}
\centering
\begin{tikzpicture}[node distance=.5cm]
\node (v0) at (0,0){};
\node[draw,ellipse]  (a) at (0,.5) {$0 : t^1$};
\node[draw,ellipse]  (b) [above=.5cm of a]{$1 : t^2$};
\draw[->,>=stealth,thick] (b)--(a);
\end{tikzpicture}\\
(c)
\end{minipage}
\caption{(a) Fiber graph for the initial $\sQ$-graded cubical complex $(\cX,\nu)$.  (b) Fiber graph for intermediate $\sQ$-graded cell complex  $(A_0,\mu_0)$.  (c) Conley-Morse graph for $(\cX,\nu)$, i.e., fiber graph for the final $\sQ$-graded cell complex $(A_\infty,\mu_\infty)$; see Example\ \ref{ex:bigcomplex}.}
\label{fig:alg:fgraphs}
\end{figure}
}
\end{ex}

\begin{rem}
Algorithm~\ref{alg:cm} may be refined by returning either the entire tower of reductions or the reduction defined by the compositions as in Theorem~\ref{prop:cont:gtower}.  Returning these data allow one to lift generators back in the original chain complex.
\end{rem}
\begin{rem}
Our implementation of this algorithm is available at~\cite{cmcode}.  Also available is a Jupyter notebook for the application of the algorithm to a Morse theory on braids; see~\cite{hms2,braids}.   The application to braids falls within the scope of a larger project, namely developing the ability to compute connection matrices for transversality models; see~\cite{braids}.  More details on the specifics of the algorithm, along with timing data, are covered in~\cite{hms2}.
\end{rem}


\section{Filtered Complexes}\label{sec:lfc}


In this section we introduce chain complexes filtered by a lattice.  We also review the connection matrix as defined by Robbin-Salamon~\cite{robbin:salamon2}.   

The results of this section which are significant for connection matrix theory are Proposition~\ref{prop:filt:cmiso}, which shows that a strict lattice-filtered chain complexes is an invariant of the chain equivalence class, cf. Proposition~\ref{prop:grad:cmiso} and Theorem~\ref{thm:filt:cm}, which establishes that computing a connection matrix in the sense of~\cite{robbin:salamon2} can be done at the level of the $\sP$-graded chain complex.  The relationship between posets and lattices encapsulated by Birkhoff's theorem~(Section \ref{sec:birkhoff}) is also reflected in the homological algebra.  Namely, we establish an equivalence of categories -- Theorem~\ref{thm:filt:equiv} -- between the category of $\sL$-filtered chain complexes and $\sP$-graded chain complexes where $\sL=\sO(\sP)$. 

For the remainder of this section we fix $\sL$ in $\bFDLat$.  
\subsection{Filtered Vector Spaces}\label{sec:filt:vs}

\begin{defn}\label{defn:filt:vs}
{\em
An {\em $\sL$-filtered vector space} $V=(V,f)$ is a vector space $V$ equipped with a lattice morphism $f\colon \sL\to \Sub(V)$. We call $f$ an {\em $\sL$-filtering} of $V$.  Suppose $(V,f)$ and $(W,g)$ are $\sL$-filtered vector spaces.  A map $\phi\colon V\to W$ is $\sL$-filtered if \[
\phi(f(a))\subseteq g(a),\quad \text{for all $a\in \sL$.}
\]
The {\em category of $\sL$-filtered vector spaces}, denoted $\bFVec(\sL)$, is the category whose objects are $\sL$-filtered vector spaces and whose morphisms are $\sL$-filtered linear maps.
}
\end{defn}

Since $f$ is a finite lattice homomorphism, we have that under $f$  $$0_\sL\mapsto 0\quad \quad\text{and} \quad\quad 1_\sL \mapsto V .$$  

We write a family of $\sL$-filtered vector spaces as $(V_\bullet,f_\bullet)=\{(V_n,f_n)\}_{n\in \Z}$. For a fixed $a\in \sL$ there is a family of vector spaces $f(a) = f_\bullet(a)=\{f_n(a)\}_{n\in \Z}$.

Proposition~\ref{prop:map:filtgrad} enables the definition of a functor which constructs a lattice-filtered chain complex from a poset-graded chain complex.  Recall that $u$ is the forgetful functor $u\colon \bGVec(\sP)\to \bVec$ which forgets the grading.
\begin{defn}\label{defn:filt:funcL}
{\em 
Let $\sL=\sO(\sP)$. Define the functor $\fL\colon \bGVec(\sP)\to \bFVec(\sL)$ via
\[
\fL\big[(V,\pi)\big] := (\frU(V,\pi),f) = (V,f)
\]
where the $\sL$-filtering $f\colon \sL\to \Sub(V)$ sends $a\in \sO(\sP)$ to 
\[
 V^a = \frU^a\big[(V,\pi)\big] \in \Sub(V) \, .
 \]
Proposition~\ref{prop:map:filtgrad} states that a $\sP$-filtered map $\phi\colon (V,\pi)\to (W,\pi)$ is $\sL$-filtered.  Therefore we define $\fL$ to be the identity on morphisms:
\[
\fL(\phi) := \phi \in \Hom_{\bFVec}((V,f),(W,g))
\]
}
\end{defn}

\begin{thm}\label{thm:filt:vs:equiv}
Let $\sL:=\sO(\sP)$.  The functor $\fL\colon \bGVec(\sP)\to \bFVec(\sL)$ is additive, full, faithful and essentially surjective.
\end{thm}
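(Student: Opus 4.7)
Additivity and faithfulness are immediate from the definition: $\fL$ acts as the identity on hom-sets (it only reinterprets a $\sP$-filtered linear map as an $\sL$-filtered one between the underlying vector spaces), so $\fL(\phi+\psi)=\phi+\psi=\fL(\phi)+\fL(\psi)$, and $\fL(\phi)=\fL(\psi)$ forces $\phi=\psi$. For fullness, let $\phi$ be an $\sL$-filtered map from $\fL(V,\pi)$ to $\fL(W,\pi)$. By definition $\phi(V^a)\subseteq W^a$ for every $a\in \sL=\sO(\sP)$, in particular for every join-irreducible down-set $a\in \sJ(\sO(\sP))$. Proposition~\ref{prop:map:filtgrad} then yields that $\phi$ is $\sP$-filtered, so $\phi=\fL(\phi)$ lies in the image of $\fL_{(V,\pi),(W,\pi)}$.

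The substantive step is essential surjectivity. Given $(W,g)\in \bFVec(\sL)$, I will construct a $\sP$-grading $\pi$ on $W$ for which $\fL(W,\pi)=(W,g)$ on the nose. For each $p\in \sP$ the set $\pred\!(\downarrow\! p):=\,\downarrow\! p\setminus\{p\}$ is again a down-set (hence an element of $\sL$), and $g(\pred\!(\downarrow\!p))\subseteq g(\downarrow\!p)$. Using that we work over a field, pick for each $p$ a vector-space complement $V^p$ of $g(\pred\!(\downarrow\!p))$ inside $g(\downarrow\!p)$, i.e.\ $g(\downarrow\!p)=g(\pred\!(\downarrow\!p))\oplus V^p$.

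I then claim, and will prove by induction on $|a|$, that for every down-set $a\in \sO(\sP)$
\[
g(a) \;=\; \bigoplus_{p\in a} V^p.
\]
The empty case is trivial. For the inductive step, choose a maximal element $p$ of $a$, so that $a\setminus\{p\}\in \sO(\sP)$ and $a=(a\setminus\{p\})\cup\downarrow\!p$ with intersection $\pred\!(\downarrow\!p)$. Because $g$ is a lattice homomorphism into $\Sub(W)$,
\[
g(a)=g(a\setminus\{p\})+g(\downarrow\!p),\qquad g(a\setminus\{p\})\cap g(\downarrow\!p)=g(\pred\!(\downarrow\!p)).
\]
Using $g(\downarrow\!p)=g(\pred\!(\downarrow\!p))\oplus V^p$ and $g(\pred\!(\downarrow\!p))\subseteq g(a\setminus\{p\})$, one rewrites $g(a)=g(a\setminus\{p\})+V^p$. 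Directness follows from $V^p\cap g(a\setminus\{p\})\subseteq g(\downarrow\!p)\cap g(a\setminus\{p\})=g(\pred\!(\downarrow\!p))$ together with $V^p\cap g(\pred\!(\downarrow\!p))=0$. Combined with the inductive hypothesis on $a\setminus\{p\}$, this yields the claim.

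Taking $a=\sP$ gives $W=\bigoplus_{p\in\sP}V^p$, so the projections $\pi^p\colon W\to V^p$ form a $\sP$-grading $\pi$ of $W$. By the displayed identity, the filtering $f$ induced by $\pi$ satisfies $f(a)=\bigoplus_{p\in a}V^p=g(a)$ for every $a\in \sL$, so $\fL(W,\pi)=(W,g)$. Together with additivity, fullness and faithfulness, Proposition~\ref{prop:cats:equiv} is set up to be invoked in the subsequent step of the paper. The main obstacle to watch is the inductive verification that the sum $\sum_{p\in a}V^p$ remains direct for arbitrary down-sets $a$, which is precisely where the distributivity of $\sO(\sP)$ (encoded in $g$ being a lattice morphism) is essential.
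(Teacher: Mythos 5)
Your proof is correct and takes essentially the same approach as the paper's: identity on hom-sets for additivity and (via Proposition~\ref{prop:map:filtgrad}) full faithfulness, and for essential surjectivity a choice of complements $V^p$ of $g(\downarrow\!p\setminus\{p\})$ inside $g(\downarrow\!p)$ at each join-irreducible, followed by the direct-sum identity $g(a)=\bigoplus_{p\in a}V^p$. The only (cosmetic) difference is that you carry out the induction explicitly on $|a|$ by stripping off a maximal element, where the paper invokes irredundant join-irreducible representations and well-founded induction over $\sL$.
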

\begin{proof}
The functor $\fL$ is additive since $\fL$ is an identity on hom-sets. That $\fL$ is a bijection on hom-sets (fully faithful) follows from Proposition~\ref{prop:map:filtgrad}.  We now show that $\fL$ is essentially surjective. Let $(V,f)$ be an $\sL$-filtered vector space; first we will construct a $\sP$-graded vector space $(W,\pi)$ and then we will show that it satsifies $\fL(W,\pi)=(V,f)$.  $\Sub(V)$ is a relatively complemented lattice (see Definition~\ref{def:prelims:lat:rc} and Example~\ref{def:prelims:lat:subv}).  Therefore we may choose for each join irreducible $p\in \sJ(\sL)$ a subspace $W^p\in \Sub(V)$ such that $W^p+f(\pred p) = f(p)$ and $W^p\cap f(\pred p)=0$.  Thus $f(p)=W^p\oplus f(\pred p)$.    As $\sL$ is an object of $\bFDLat$, Proposition~\ref{prop:jr:rep} gives that any $a\in \sL$ can be written as the irredundant join of join-irreducibles, i.e., we have $a=\vee_i q_i$ with $q_i\in \sJ(\sL)$.  Thus
\[
f(a) = f(\vee_i q_i) = \vee_i f(q_i) \, .
\] 
It follows from well-founded induction over the underlying poset of $\sL$, that for all $a\in \sL$ 
\[
f(a) = \bigoplus_{\substack{q\leq a,\\ q\in \sJ(\sL)}} W^q.
\]
Now set $W=\bigoplus_{q\in \sJ(\sL)} W^q$ and $\pi=\{\pi^q\}_{q\in \sJ(\sL)}$ where $\pi^q$ is defined to be the projection $\pi^q\colon V\to W^q$.  $(W,\pi)$ is a $\sJ(\sL)$-graded vector space.  From Birkhoff's theorem, $\sJ(\sL)$ and $\sP$ are isomorphic, which implies that $(W,\pi)$ may be regarded as a $\sP$-graded vector space.  Now we show that $\fL(W,\pi) = (V,f)$.  From the definition of $\fL$ it suffices to choose $a\in \sO(\sP)$ and show that $W^a = f(a)$.  This follows since $f(a) = \bigoplus_{q\leq a} W^q = W^a $.
\end{proof}

\subsection{Filtered Chain Complexes}\label{sec:filt:ch}

Similar to $\bGVec(\sP)$, the category $\bFVec(\sL)$ is additive but not abelian.  Following Section~\ref{sec:prelims:HA} once again, we may form the category $\bCh(\bFVec(\sL))$ of chain complexes in $\bFVec(\sL)$.  An object $C$ of $\bCh(\bFVec(\sL))$ is a chain complex in $\sL$-filtered vector spaces.  For short, we say that this is an {\em $\sL$-filtered chain complex}.  The data of $C$ can be unpacked as the triple  $C=(C_\bullet,\partial_\bullet,f_\bullet)$ where:
\begin{enumerate}
    \item $(C_\bullet,\partial_\bullet)$ is a chain complex,
    \item $(C_n,f_n)$ is an $\sL$-filtered vector space for each $n$, and
    \item $\partial_n\colon (C_n,f_n)\to (C_{n-1},f_{n-1})$ is an $\sL$-filtered linear map.
\end{enumerate}

We will denote $C$ as $(C,f)$ to distinguish the $\sL$-filtering.  A morphism $\phi\colon (C,f)\to (C',f')$ is a chain map $\phi\colon (C,\partial)\to (C',\partial')$ such that for each $n$, $\phi_n\colon (C_n,f_n)\to (C'_n,f'_n)$ is an $\sL$-filtered linear map.  We entitle the morphisms of $\bCh(\bFVec(\sL))$ the {\em $\sL$-filtered chain maps}.


If $(C,f)$ is an $\sL$-filtered chain complex then $\partial_n(f_n(a))\subseteq f_{n-1}(a)$.
Thus $\{f_n(a)\}_{n\in \Z}$ together with $\{\partial_n|_{f_n(a)}\}_{n\in \Z}$ is a subcomplex of $C$.  We define the map $f\colon \sL\to \Sub(C)$ via
\begin{equation}\label{eqn:sc:morphism}
f(a):=\big(\{f_n(a)\},\{\partial_n|_{f_n(a)}\}\big)\in \Sub(C) .
\end{equation}
A chain complex equipped with a lattice homomorphism $\sL\to \Sub(C)$ is the object that Robbin and Salamon work with. The next two results show that these two perspectives are equivalent.  The proofs are immediate, and are included for completeness.
\begin{prop}
If $(C,f)$ is an $\sL$-filtered chain complex then $f\colon \sL\to \Sub(C)$, given as in~\eqref{eqn:sc:morphism}, is a lattice morphism.
\end{prop}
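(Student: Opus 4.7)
The plan is to verify that $f\colon \sL \to \Sub(C)$ preserves the lattice operations $\wedge$ and $\vee$ (and the neutral elements, since $\sL$ is finite hence bounded) by reducing everything to the chain-degree level, where by hypothesis each $f_n\colon \sL \to \Sub(C_n)$ is already a lattice morphism.

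First I would recall from Definition~\ref{defn:filt:vs} that $(C_n, f_n)$ being an $\sL$-filtered vector space means $f_n$ is a lattice morphism into $\Sub(C_n)$, so for all $a,b \in \sL$ and all $n$,
\[
f_n(a\wedge b) = f_n(a)\cap f_n(b), \qquad f_n(a\vee b) = f_n(a) + f_n(b),
\]
and $f_n(0_\sL) = 0$, $f_n(1_\sL) = C_n$. Next, recall that by definition of $\Sub(C)$ (see the definition immediately preceding Section~\ref{sec:birkhoff}), the meet and join of two subcomplexes are computed componentwise on chain groups, with the boundary operator restricted from $C$. Together with the formula \eqref{eqn:sc:morphism}, this immediately gives that for each $n$ the $n$-th chain group of $f(a) \wedge f(b)$ is $f_n(a) \cap f_n(b) = f_n(a \wedge b)$, which is precisely the $n$-th chain group of $f(a \wedge b)$; the analogous identity holds for $\vee$.

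It remains to check that the boundary operators agree on both sides of each identity. Here the point is that $f(a\wedge b)$ carries the differential $\partial_n|_{f_n(a\wedge b)}$, while $f(a)\wedge f(b)$ carries $\partial_n|_{f_n(a)\cap f_n(b)}$; these two restrictions are obviously equal since $f_n(a\wedge b) = f_n(a)\cap f_n(b)$ as subspaces of $C_n$. The same argument applies to $\vee$. Preservation of $0_\sL$ and $1_\sL$ is handled identically.

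There is no real obstacle in this proof; the content is entirely bookkeeping, and the only thing worth emphasizing is that the definition of $\Sub(C)$ as a lattice (with componentwise operations and restricted differentials) is compatible with the componentwise definition of $f$ — so lattice morphism properties lift from each $f_n$ to $f$ with no additional work.
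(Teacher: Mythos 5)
Your proof is correct and follows essentially the same route as the paper's: both reduce the identity $f(a\wedge b)=f(a)\wedge f(b)$ (and its join analogue) to the degree-wise identities $f_n(a\wedge b)=f_n(a)\cap f_n(b)$ supplied by the hypothesis that each $(C_n,f_n)$ is an $\sL$-filtered vector space, observing that the operations on $\Sub(C)$ are computed componentwise with restricted differentials. Your version is slightly more explicit about the differentials and the neutral elements, but there is no substantive difference.
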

\begin{proof}
Let $a,b\in \sL$.  Let $A=\{f_n(a)\}_{n\in \Z}$ and $B=\{f_n(b)\}_{n\in \Z}$.  Then
\begin{align*}
f(a)\vee f(b) &=   (A,\partial|_A)\vee (B,\partial|_B) = (A+B,\partial|_{A+B}) = f(a\vee b)\\
f(a)\wedge f(b) &=  (A,\partial|_A)\wedge (B,\partial|_B) = (A\cap B,\partial|_{A+B})
= f(a\vee b)
\tag*{\qedhere}
\end{align*}
\end{proof}

\begin{prop}
Let $C=(C_\bullet,\partial_\bullet)$ be a chain complex together with a lattice morphism $f\colon \sL\to \Sub(C)$.  If $\{f_n\colon \sL\to \Sub(C_n)\}_{n\in \Z}$ is the family of maps defined as
\[
f_n(a) := A_n\subseteq C_n \quad\quad\text{where }  f(a) = (A_\bullet,\partial_\bullet^A),
\]
then $(C_\bullet,\partial_\bullet,f_\bullet)$ is an $\sL$-filtered chain complex.
\end{prop}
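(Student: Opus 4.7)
The plan is straightforward: we need to verify the three defining conditions of an $\sL$-filtered chain complex listed in Section~\ref{sec:filt:ch}. Condition (1) holds by hypothesis, since $(C_\bullet,\partial_\bullet)$ is given as a chain complex. It remains to check condition (2) — that each $(C_n,f_n)$ is an $\sL$-filtered vector space (i.e.\ that $f_n$ is a lattice morphism $\sL\to \Sub(C_n)$) — and condition (3) — that each $\partial_n$ is $\sL$-filtered.

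For condition (2), I would fix $n\in\Z$ and argue that $f_n$ inherits the lattice-morphism property from $f$. Recall from Definition~\ref{defn:latclsub} (and the analogous definition for $\Sub(C)$ in Section~\ref{sec:prelims:lat}) that the lattice operations $\wedge,\vee$ on $\Sub(C)$ are defined degree-wise: $(A_\bullet,\partial^A)\wedge (B_\bullet,\partial^B) = (A_\bullet\cap B_\bullet, \partial|_{A\cap B})$ and similarly for $\vee$. Writing $f(a)=(A_\bullet,\partial^A)$ and $f(b)=(B_\bullet,\partial^B)$, the equalities $f(a\vee b)=f(a)\vee f(b)$ and $f(a\wedge b)=f(a)\wedge f(b)$ in $\Sub(C)$ yield, in degree $n$, the identities $f_n(a\vee b)=A_n+B_n = f_n(a)+f_n(b)=f_n(a)\vee f_n(b)$ and $f_n(a\wedge b)=A_n\cap B_n=f_n(a)\wedge f_n(b)$. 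The neutral-element conditions $f(0_\sL)=0$ and $f(1_\sL)=C$ descend to $f_n(0_\sL)=0$ and $f_n(1_\sL)=C_n$ in the same degree-wise fashion.

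For condition (3), fix $a\in\sL$ and write $f(a)=(A_\bullet,\partial^A_\bullet)$. By the definition of a subcomplex of $C$ (Definition~\ref{def:homology}), we have $\partial^A_n = \partial_n|_{A_n}$, which in particular means $\partial_n(A_n)\subseteq A_{n-1}$. Translating via $f_n(a)=A_n$, this is exactly $\partial_n(f_n(a))\subseteq f_{n-1}(a)$, which is the $\sL$-filtered condition from Definition~\ref{defn:filt:vs}.

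There is no real obstacle here; the proof is essentially an unpacking of definitions and a bookkeeping exercise showing that the degree-wise projection of the lattice structure on $\Sub(C)$ onto each $\Sub(C_n)$ is compatible with both the lattice operations and the boundary maps. The only mild subtlety is keeping straight that the lattice structure on $\Sub(C)$ is defined degree-wise (so lattice operations commute with the assignment $(A_\bullet,\partial^A)\mapsto A_n$), but this is immediate from the definitions in Section~\ref{sec:prelims:lat}. With conditions (1)--(3) all verified, $(C_\bullet,\partial_\bullet,f_\bullet)$ is by definition an $\sL$-filtered chain complex.
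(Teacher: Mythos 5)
Your proof is correct and follows essentially the same route as the paper's: verify degree-wise that $f_n$ inherits the lattice-morphism property from the degree-wise definition of $\vee$ and $\wedge$ on $\Sub(C)$, and observe that $\partial_n(f_n(a))\subseteq f_{n-1}(a)$ is just the statement that $f(a)$ is a subcomplex. The extra check of the neutral elements is a small, harmless addition beyond what the paper records.
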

\begin{proof}
We show first show that $(C_\bullet,f_\bullet)$ is a family of $\sL$-filtered vector spaces spaces. Let $a,b\in\sL$.  Let $A_\bullet=f(a), B_\bullet=f(b)$ and $D_\bullet = f(a\vee b)$.  As $f$ is a lattice morphism, we have that
\[
A_\bullet \vee B_\bullet = f(a)\vee f(b) = f(a\vee b) = D_\bullet.
\]
This implies $A_n+B_n=D_n$ for all $n$.  It follows that $f_n(a)\vee f_n(b) = A_n+B_n=D_n=f_n(a\vee b)$.  Similarly, it follows that $f_n(a)\wedge f_n(b)=f_n(a\wedge b)$. Observe that $\partial_n$ is an $\sL$-filtered linear map for each $n$ because $f(a)\in \Sub(C)$ implies that  $\partial_n f_n(a)\subseteq f_{n-1}(a)$.  
\end{proof}






\subsection{The Subcategory of \Cyclic{} Objects}\label{sec:filt:cyclic}

There is a notion of \cyclic{} object in the category $\bChFL$.  Recall from Definition~\ref{def:prelims:joinirr} the notion of a join-irreducible element $a\in \sJ(\sL)$ and its unique predecessor $\pred a\in \sL$.  

\begin{defn}
{\em 
An $\sL$-filtered chain complex $(C,f)$ is \emph{\cyclic} if 
\[
\partial_n (f_n(a)) \subseteq f_{n-1}(\pred a), \quad \text{for all } a\in \sJ(\sL) \text{ and } n\in \Z.
\]
In this case $f$ is a \emph{\cyclic{} filtering}.  The \cyclic{} objects form a subcategory $\bChFLz\subset \bChFL$, called the {\em subcategory of \cyclic{} objects}.
}
\end{defn}


In \cite[Section 8]{robbin:salamon2} a connection matrix is defined to be an $\sO(\sP)$-filtered chain complex $(C,f)$ such that for any $b\in \sO(\sP)$ and $n\in \Z$
\[
\partial_n(f_n(b))\subset f_{n-1}(b\setminus \setof{p})
\]
whenever $p$ is maximal in $b$. 
 For any $b\in \sO(\sP)$, $p$ is maximal in $b$ if and only if $b$ covers $b\setminus{\setof{p}}$.  Therefore, the following result shows that our notion of a \cyclic{} $\sL$-filtered complex is equivalent to their definition of connection matrix.

\begin{prop}
Let $(C,f)$ be an $\sL$-filtered chain complex.  Then $(C,f)$ is \cyclic{} if and only if it obeys the following property: given $n\in \Z$ and $a,b\in \sL$ such that $b$ covers $a$ then $\partial_n(f_n(b))\subseteq f_{n-1}(a)$.
\end{prop}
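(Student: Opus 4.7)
The plan is to prove the equivalence in two directions, with the reverse implication being immediate and the forward implication relying on the distributivity of $\sL$ together with the irredundant join representation (Proposition~\ref{prop:jr:rep}).

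For the reverse implication, suppose the general covering property holds. If $a\in \sJ(\sL)$, then by definition $a$ covers its unique predecessor $\pred a$, so the property immediately gives $\partial_n(f_n(a))\subseteq f_{n-1}(\pred a)$, which is exactly the \cyclic{} condition.

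For the forward implication, assume $(C,f)$ is \cyclic{} and let $b$ cover $a$ in $\sL$. The key step is to show that $b = a\vee q$ for some join-irreducible $q$ with $\pred q \leq a$. To do this, I will use the standard fact that in a distributive lattice, a join-irreducible $q$ satisfies $q\leq x\vee y \Rightarrow q\leq x$ or $q\leq y$ (which follows from $q = q\wedge(x\vee y) = (q\wedge x)\vee(q\wedge y)$ together with irreducibility of $q$). Writing $b = q_1\vee\cdots\vee q_k$ as an irredundant join of join-irreducibles via Proposition~\ref{prop:jr:rep}, since $a<b$ there exists some $q_i\not\leq a$; then $a<a\vee q_i\leq b$ forces $a\vee q_i=b$. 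If $q_j\not\leq a$ is another such index, then $q_j\leq b=a\vee q_i$ gives $q_j\leq q_i$ and symmetrically $q_i\leq q_j$, so such a $q=q_i$ is unique. For this $q$, we examine $a\vee \pred q$: since $a\leq a\vee\pred q\leq b$ and $b$ covers $a$, either $a\vee\pred q = a$ or $a\vee\pred q = b$; the latter would yield $q\leq a\vee\pred q$ and hence (by join-irreducibility) $q\leq a$ or $q\leq \pred q$, both impossible. Thus $\pred q\leq a$.

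With $b=a\vee q$ established, the conclusion follows quickly. Since $f_n\colon \sL\to \Sub(C_n)$ is a lattice morphism, $f_n(b) = f_n(a)+f_n(q)$, so
\[
\partial_n(f_n(b)) = \partial_n(f_n(a))+\partial_n(f_n(q)) \subseteq f_{n-1}(a)+f_{n-1}(\pred q) \subseteq f_{n-1}(a),
\]
where the first inclusion uses that $\partial_n$ is $\sL$-filtered together with the \cyclic{} hypothesis applied to the join-irreducible $q$, and the second inclusion uses $\pred q \leq a$ and monotonicity of $f_{n-1}$.

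The main obstacle is the lattice-theoretic lemma that in a finite distributive lattice every covering relation $a\lessdot b$ is of the form $b = a\vee q$ for a unique join-irreducible $q\not\leq a$, and moreover $\pred q \leq a$. This is a routine consequence of distributivity but must be assembled carefully; once it is in hand, the rest of the argument is a one-line computation.
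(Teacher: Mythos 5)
Your proof is correct and follows essentially the same route as the paper's: both directions hinge on Proposition~\ref{prop:jr:rep}, on identifying the unique join-irreducible $q$ with $b=a\vee q$ and $\pred q\leq a$, and on the lattice-morphism property of $f$ to split $f_n(b)$ and apply the \cyclic{} condition. The only differences are cosmetic --- you decompose $f_n(b)$ as $f_n(a)+f_n(q)$ and invoke $\sL$-filteredness of $\partial_n$ for the $f_n(a)$ summand, where the paper sums over all $q_i$ in the irredundant representation --- and your primality argument makes explicit the step ($a\vee\pred q\neq b$) that the paper leaves terse.
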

\begin{proof}
The `if' direction is immediate: $a$ covers $\pred a$ for $a\in \sJ(\sL)$.  Thus $\partial_n(f_n(a))\subseteq f_{n-1}(\pred a)$.  Now suppose that $(C,f)$ is \cyclic{} and that $b$ covers $a$.  As $\sL$ is an object of $\bFDLat$, Proposition~\ref{prop:jr:rep} states that any $b\in \sL$ can be written as the irredundant join of join-irreducibles, i.e., we have $b=\vee_i q_i$ with $q_i\in \sJ(\sL)$.  Since $f$ is a lattice morphism,
\[
f_n(b) = f_n(\vee_i q_i) = \vee_i f_n(q_i) .
\] 
Moreover, since $b$ covers $a$ there is precisely one $q_j$ such that $a\vee q_j = b$ with $q_j\not\leq a$ and $q_i \leq a$ for $i\neq j$.  That $b$ covers $a$ implies that $\pred{q_j} \leq a$, otherwise $a < a\vee \pred{q_j} < b$. For any $x\in f_n(b)$ we have $x=\sum_i x_i$ with $x_i\in f_n(q_i)$ and $\partial_n(x) = \sum_i \partial_n (x_i)$.  Since $f$ is a \cyclic{} filtering, $\partial_n(x_i)\in f_{n-1}(\pred{q_i})$ and $\partial_n(x)\in \vee_i f_{n-1}(\pred{q_i}) = f_{n-1}(\vee_i \pred{q_i}) \subseteq f_{n-1}(a)$.
\end{proof}


\subsection{Equivalence of Categories}\label{subsec:filt:equiv}
  
We now examine the relationship between graded and filtered chain complexes.  Our primary aim is to establish an equivalence between these two categories, as well as their \cyclic{} subcategories.   With $\sL=\sO(\sP)$, it follows from Proposition~\ref{prop:prelims:induce} that the functor $\fL\colon \bGVec(\sP)\to \bFVec(\sL)$ (see Definition~\ref{defn:filt:funcL}) induces a functor 
\begin{equation}\label{eqn:functor:LCh}
\fL_\bCh \colon \bChG\to \bChFL.
\end{equation}
Recalling the definition of $\fL_\bCh$ from Section~\ref{sec:prelims:HA}, we have
\[
\fL_\bCh\big[(C,\pi)\big] := (C,f)
\]
where the $\sL$-filtering $f\colon \sL\to \Sub(C)$ is given by
\[
 \sL\ni a \mapsto (C^a_\bullet,\Delta^a_\bullet) = \frU\circ \frU^a\big[(C,\pi)\big] \in \Sub(C) \, .
\]
Here, $\frU\circ \frU^a$ is the forgetful functor $\frU\circ\frU^a\colon \bChG\to \bCh(\bVec)$, described in Definition~\ref{def:grad:forget}.  When the context is clear we abbreviate $\fL_\bCh$ by $\fL$.

\begin{thm}\label{thm:filt:equiv}
Let $\sL=\sO(\sP)$.  The functor $\fL\colon \bChG\to \bChFL$ is additive, fully faithful and essentially surjective (hence an equivalence of categories).  Moreover, $\fL$ restricts to an equivalence of the subcategories 
\[
\fL\colon \bChGz\to \bChFLz.
\]
\end{thm}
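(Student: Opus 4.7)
The plan is to deduce the first half of the theorem directly from the already-established equivalence at the level of vector spaces, and then to lift the conclusion to the subcategories of \cyclic{} objects via an explicit translation of the \cyclic{} conditions. By Theorem~\ref{thm:filt:vs:equiv}, the functor $\fL\colon \bGVec(\sP)\to \bFVec(\sL)$ is additive, full, faithful, and essentially surjective, so Proposition~\ref{prop:cats:equiv} packages these properties into an equivalence of additive categories. Proposition~\ref{prop:HA:equiv} promotes this to an equivalence of the associated categories of chain complexes, which is precisely the induced functor $\fL_\bCh\colon \bChG\to \bChFL$ of~\eqref{eqn:functor:LCh}. A second application of Proposition~\ref{prop:cats:equiv} (in its other direction) recovers the three properties stated in the theorem.

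For the restriction to the \cyclic{} subcategories, the key is Birkhoff's Theorem (Theorem~\ref{thm:birkhoff}), under which the join-irreducibles $\sJ(\sO(\sP))$ correspond bijectively to $\sP$ via $p\mapsto a_p:=\downarrow\! p$, with unique predecessor $\pred a_p = a_p\setminus\{p\} = \{q\in\sP : q<p\}$. The next step is to verify the bi-implication that $(C,\pi)\in \bChG$ is \cyclic{} if and only if $\fL(C,\pi)\in \bChFL$ is \cyclic{}. For the forward direction, if $\partial^{pp}=0$ for every $p\in\sP$, then for $x\in C^q$ with $q\leq p$, the $\sP$-filtered property of $\partial$ places $\partial x$ in $\bigoplus_{r\leq q} C^r$; this lies in $\bigoplus_{r<p} C^r$ whenever $q<p$, and the \cyclic{} hypothesis eliminates the $C^p\to C^p$ component in the remaining case $q=p$, giving $\partial(f(a_p))\subseteq f(\pred a_p)$ as required. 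For the reverse direction, filtered \cyclic{}ity applied to $C^p\subseteq f(a_p)$ gives $\partial(C^p)\subseteq f(\pred a_p)=\bigoplus_{q<p} C^q$, after which post-composition with $\pi^p$ forces $\partial^{pp}=0$.

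With this equivalence of \cyclic{}ity conditions in hand, the restriction $\fL\colon \bChGz\to \bChFLz$ is well-defined (forward direction), and it automatically inherits fullness, faithfulness, and additivity because both $\bChGz\subset \bChG$ and $\bChFLz\subset \bChFL$ are full subcategories. For essential surjectivity of the restriction, the plan is: given $(D,g)\in \bChFLz$, essential surjectivity of $\fL$ on the larger categories yields $(C,\pi)\in\bChG$ with $\fL(C,\pi)\cong (D,g)$ in $\bChFL$; since \cyclic{}ity in $\bChFL$ is invariant under $\sL$-filtered isomorphism (the $\sL$-filtered property of both $\phi$ and $\phi^{-1}$ forces $\phi(g(a))=g(a)$, so the defining inclusion $\partial(g(a))\subseteq g(\pred a)$ is transported through $\phi$), the object $\fL(C,\pi)$ is \cyclic{}, and then the reverse direction forces $(C,\pi)\in \bChGz$. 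The main obstacle is purely bookkeeping — keeping straight which indices range over $\sP$ versus $\sJ(\sL)$ during the Birkhoff translation — and no new techniques are required beyond the results already assembled earlier in the paper.
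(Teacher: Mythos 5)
Your proposal is correct and follows essentially the same route as the paper: the first half is deduced from Theorem~\ref{thm:filt:vs:equiv} together with Proposition~\ref{prop:HA:equiv}, and the restriction to the \cyclic{} subcategories is handled by proving that $(C,\pi)$ is \cyclic{} if and only if $\fL(C,\pi)$ is \cyclic{}, using the Birkhoff identification $a=\downarrow\!s$, $\pred a=\bigcup_{p<s}\downarrow\!p$ exactly as the paper does. Your extra step showing that \cyclic{}ity in $\bChFL$ is invariant under $\sL$-filtered isomorphism (to get essential surjectivity of the restricted functor) is a point the paper leaves implicit, and making it explicit is a mild improvement rather than a departure.
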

\begin{proof}
The first part follows from  from Theorem~\ref{thm:filt:vs:equiv} and Proposition~\ref{prop:HA:equiv}.  For the second part, suppose $(C,\pi)$ is a $\sP$-graded chain complex and $(C,f)$ is an $\sL$-filtered chain complex such that $\sL=\sO(\sP)$ and $\fL((C,\pi))=(C,f)$.  We show that $(C,\pi)$ is \cyclic{} if and only if $(C,f)$ is \cyclic{}.  We first show that if $(C,\pi)$ is \cyclic{} then $(C,f)$ is \cyclic{}. Let $a\in \sJ(\sO(\sP))$.  We show that $\partial(f(a))\subseteq f(\pred a)$.  By Birkhoff's theorem, there exists $s\in \sP$ such that $a=\downarrow\! s$ and $\pred a = \bigcup_{p<s} \downarrow \! p$. If $x\in C^a_\bullet$ then $x = \pi^a(x)$.  Hence
\[
\partial x = \partial(\pi^a (x)) = \sum_{p < q} \partial^{pq} \sum_{r\leq s} \pi^r (x) = \sum_{p< q \leq s} \partial^{pq}( x ) .
\]
Since $\partial^{pq} (x)\in C^p$ and $\pred a = \bigcup_{p < s}\downarrow p$ it follows that $\partial x\in f(\pred a)$ as desired.

We now show that if $(C,f)$ is \cyclic{} then $(C,\pi)$ is \cyclic{}.  Let $p\in \sP$ and $a$ denote $\downarrow\! p$.  Let $x\in C$.  Then $\pi^p(x)\in C^p\subset C^a$.  Since $(C,f)$ is \cyclic{} $\partial(\pi^p(x))\in \partial(C^a)\subset C^{\pred a}$.  That $p\not\in \pred a$ implies $\partial^{pp} = \pi^p\partial (\pi^p (x))) = 0$. Therefore $\fL$ restricts to an equivalence of the strict subcategories.
\end{proof}

\subsection{Filtered Cell Complexes}

We consider again the data analysis perspective, and define the appropriate concept for cell complexes.  Recall from Section~\ref{sec:prelims:cell} that the notion of subcomplex for a cell complex is more general than for a chain complex.  Given a cell complex $\cX=(\cX,\leq,\kappa,\dim)$ we work with $\Sub_{Cl}(\cX)$ -- the lattice of closed subcomplexes (see Definition~\ref{defn:latclsub}).

\begin{defn}
{\em
An {\em $\sL$-filtered cell complex} is a cell complex $\cX=(\cX,\leq,\kappa,\dim)$ together with a lattice morphism $f\colon \sL\to \Sub_{Cl}(\cX)$.  The morphism $f$ is called an {\em $\sL$-filtering} of $\cX$.  We write $(\cX,f)$ to denote an $\sL$-filtered cell complex.
}
\end{defn}

\begin{defn}\label{defn:filt:assoc}
{\em
Let $(\cX,\nu)$ be a $\sP$-graded cell complex and $\sL=\sO(\sP)$.  The \emph{associated $\sL$-filtered chain complex} is the pair $(C(\cX),f^\nu)$ where $f^\nu$ is the composition 
$$\sL\xrightarrow{\sO(\nu)}\Sub_{Cl}(\cX)
\xrightarrow{\spans} \Sub(C(\cX)),$$ 
given explicitly by sending $a\in \sL$ to
\[\text{span}(\sO(\nu)(a))=
\setof{\sum_{i=0}^n \lambda_i \xi_i: n\in \N, \lambda_i\in \K, \xi_i\in \sO(\nu)(a)}
\in \Sub(C(\cX)).
\]
We write  $\cL\colon \Cell(\sP)\to \bChFL$ for the assignment $(\cX,\nu)\mapsto (C(\cX),f^\nu)$.

}
\end{defn}


The next result follows from an examination of the definitions of $\cC,\cL$ and $\fL$.

\begin{prop}\label{prop:filt:functor2}
Let $\sL=\sO(\sP)$.  The functor $\mathfrak{L}\colon \bChG\to \bChFL$ fits into the following commutative diagram with the assignments $\cC$ and $\cL$ (denoted by dashes arrows).
\begin{equation*}
\begin{tikzcd}
\text{\emph{Cell}}(\sP) \ar[dr,dashed,swap,"\cL"] \ar[r,"\cC",dashed] & \bChG \ar[d,"\fL"]\\
& \bChFL
\end{tikzcd}
\end{equation*}
\end{prop}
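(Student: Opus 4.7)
The plan is to unfold both sides of the desired equality $\fL\circ \cC = \cL$ on an arbitrary object $(\cX,\nu)\in \mathrm{Cell}(\sP)$ and check that they deliver the same $\sL$-filtered chain complex; since all three arrows in the diagram are actually \emph{assignments} (or a functor, in the case of $\fL$) defined explicitly in Definitions~\ref{defn:grad:assoc}, \ref{defn:filt:funcL} and~\ref{defn:filt:assoc}, commutativity reduces to a bookkeeping comparison.

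First I would verify that the underlying chain complexes coincide. By Definition~\ref{defn:grad:assoc}, $\cC(\cX,\nu)=(C(\cX),\pi^\nu)$, and by the construction of $\fL$ in Definition~\ref{defn:filt:funcL} (extended to chain complexes via Proposition~\ref{prop:prelims:induce}), the underlying chain complex of $\fL(\cC(\cX,\nu))$ is obtained by forgetting the grading, so it equals $(C(\cX),\partial)$. This is also the underlying chain complex of $\cL(\cX,\nu)$ by Definition~\ref{defn:filt:assoc}. So the only content is the comparison of the two $\sL$-filterings.

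Next I would compare the filterings on an arbitrary $a\in \sL=\sO(\sP)$. On one hand, $\fL\circ \cC$ assigns to $a$ the subcomplex
\[
\frU\circ \frU^a(C(\cX),\pi^\nu) \;=\; \bigl(\bigoplus_{p\in a} C(\cX^p),\;\Delta^a\bigr),
\]
which is the chain subcomplex of $C(\cX)$ spanned by the cells in the fiber union $\bigcup_{p\in a}\cX^p$. On the other hand, $\cL(\cX,\nu)$ assigns to $a$ the subcomplex $\spans(\sO(\nu)(a))$. The key identification is that $\sO(\nu)(a)=\nu^{-1}(a)$: since $\nu$ is order-preserving and $a$ is a down-set, $\nu^{-1}(a)$ is a down-set of the face poset $(\cX,\leq)$, hence a closed subcomplex, and this is exactly how the Birkhoff dual $\sO(\nu)$ acts. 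Therefore
\[
\spans(\sO(\nu)(a)) \;=\; \spans\!\Bigl(\bigsqcup_{p\in a}\cX^p\Bigr) \;=\; \bigoplus_{p\in a} C(\cX^p),
\]
and the boundary operators agree on the nose since both are the restriction of $\partial$ to this span.

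The argument is essentially a definition-chase, so I do not expect any serious obstacle; the only mild subtlety is to be explicit about the two-step definition of $f^\nu=\spans\circ \sO(\nu)$ and to confirm that the Birkhoff image $\sO(\nu)(a)$ does land in $\Sub_{Cl}(\cX)$ (which needs $\nu$ to be order-preserving, guaranteed by Definition~\ref{defn:grad:assoc}). Once that identification is recorded, the equality of filterings at every $a\in \sL$ gives commutativity of the diagram.
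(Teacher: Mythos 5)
Your proposal is correct and is essentially the paper's own argument: the paper offers no written proof beyond the remark that the result ``follows from an examination of the definitions of $\cC$, $\cL$ and $\fL$,'' and your definition-chase (identifying $\sO(\nu)(a)=\nu^{-1}(a)=\bigsqcup_{p\in a}\cX^p$ and hence $\spans(\sO(\nu)(a))=\bigoplus_{p\in a}C(\cX^p)=C^a$ with the restricted boundary) is precisely that examination. Since $\cC$ and $\cL$ are only assignments on objects, your object-level verification is all that is required.
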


\subsection{Homotopy Category of Filtered Complexes}\label{sec:grad:hom}

Again we may follow Section~\ref{sec:prelims:HA} to introduce the homotopy category $\bKF$ of the category of $\sL$-filtered chain complexes $\bCh(\bFVec(\sL))$.  To spell this out a bit further, we say that two $\sL$-filtered chain maps $\phi,\psi\colon (C,f) \to (D,g)$ are \emph{$\sL$-filtered chain homotopic} if there is an $\sL$-filtered chain contraction $\gamma\colon C\to D$ such that $\phi-\psi=\gamma\circ\partial+\partial\circ\gamma$.  We denote this by $\psi\sim_\sL \phi$. 


Proceeding as in Section~\ref{sec:prelims:HA}, the {\em homotopy category of $\sL$-filtered chain complexes}, which we denote by $\bKF$, is the category whose objects are $\sL$-filtered chain complexes and whose morphisms are $\sL$-filtered chain homotopy equivalence classes of $\sL$-filtered chain maps.  It follows from Proposition~\ref{prop:prelims:induce} that the functor $\fL$ induces a functor  on the homotopy categories $\fL_\bK\colon \bKG\to \bKF$.  This  functor is defined on objects as
$\fL_\bK ((C,\pi)) = \fL_\bCh(C,\pi)$
and on morphisms as $\fL_\bK([\phi]_\sP) = [\phi]_\sL$. Moreover, this functor satisfies the identity
\[
\fL_\bK\circ q = q\circ \fL_\bCh.
\]

\begin{defn}
 {\em
 Let $\bKFz$ denote the full subcategory of $\bKF$ whose objects are the objects of $\bChFLz$. Then 
 \[
 \bKFz=\bChFLz/\! \sim_\sL,
 \]
 and there is a quotient functor $q\colon\bChFLz \to \bKFz$.
 }
 \end{defn}

\begin{prop}\label{prop:filt:hequiv}
Let $\sL=\sO(\sP)$.  The functors $\fL_\bK\colon \bKG\to \bKF$ and $\fL_\bK\colon \bKGz\to \bKFz$ are equivalences of categories.
\end{prop}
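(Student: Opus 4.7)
The plan is to verify the three conditions of Proposition~\ref{prop:cats:equiv} (full, faithful, essentially surjective) for $\fL_\bK$, leveraging the already-established equivalence $\fL_\bCh\colon \bChG\to \bChFL$ from Theorem~\ref{thm:filt:equiv} together with the identity $\fL_\bK\circ q = q\circ \fL_\bCh$. Essential surjectivity is immediate: given $(C,f)$ in $\bKF$, Theorem~\ref{thm:filt:equiv} provides $(D,\pi)$ in $\bChG$ with an $\sL$-filtered chain isomorphism $\fL_\bCh(D,\pi)\cong (C,f)$ in $\bChFL$, and the quotient functor $q\colon \bChFL\to \bKF$ sends isomorphisms to isomorphisms. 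Fullness is also a quick consequence of fullness of $\fL_\bCh$: any class $[\phi]_\sL\in \Hom_{\bKF}(\fL_\bK(C,\pi),\fL_\bK(D,\pi))$ has a representative $\phi$ for which Theorem~\ref{thm:filt:equiv} supplies a $\sP$-filtered chain map $\psi$ with $\fL_\bCh(\psi)=\phi$, and then $\fL_\bK([\psi]_\sP) = [\phi]_\sL$.

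The main technical step, and the one requiring care, is faithfulness. Suppose $\phi,\psi\colon (C,\pi)\to (D,\pi)$ are $\sP$-filtered chain maps with $\fL_\bK([\phi]_\sP)=\fL_\bK([\psi]_\sP)$; then $\fL_\bCh(\phi)\sim_\sL \fL_\bCh(\psi)$ via some $\sL$-filtered chain contraction $\gamma = \{\gamma_n\colon C_n\to D_{n+1}\}$ satisfying $\fL_\bCh(\phi_n)-\fL_\bCh(\psi_n) = \gamma_{n-1}\partial_n + \partial_{n+1}\gamma_n$. The key observation is that $\fL$ acts as the identity on underlying linear maps (Definition~\ref{defn:filt:funcL}), and by Proposition~\ref{prop:map:filtgrad} a linear map between $\sP$-graded vector spaces is $\sL$-filtered (for $\sL=\sO(\sP)$) if and only if it is $\sP$-filtered. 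Applying this degree-wise to $\gamma$ promotes it to a $\sP$-filtered chain contraction, establishing $\phi\sim_\sP \psi$. Hence $[\phi]_\sP = [\psi]_\sP$, proving faithfulness.

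For the restriction $\fL_\bK\colon \bKGz\to \bKFz$, the subcategories $\bKGz$ and $\bKFz$ are by definition the full subcategories of $\bKG$ and $\bKF$ respectively on the \cyclic{} objects (see Proposition~\ref{prop:cat:subquo}). Fullness and faithfulness are inherited automatically from the ambient case, and essential surjectivity follows because $\fL_\bCh$ already restricts to an equivalence $\bChGz\to \bChFLz$ (second clause of Theorem~\ref{thm:filt:equiv}): any \cyclic{} $\sL$-filtered complex is isomorphic in $\bChFLz$, hence also in $\bKFz$, to the image of a \cyclic{} $\sP$-graded complex. I anticipate the main obstacle to be the faithfulness argument, specifically the verification that an $\sL$-filtered chain contraction really does coincide with a $\sP$-filtered degree-one map; but this reduces cleanly to the vector-space identification already performed in Proposition~\ref{prop:map:filtgrad}, so no new homotopy-theoretic machinery is needed.
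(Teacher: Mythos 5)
Your proposal is correct and follows essentially the same route as the paper, which simply cites Theorem~\ref{thm:filt:equiv} together with the homotopy-category construction of Section~\ref{sec:prelims:HA}; you have just unfolded that citation into the explicit full/faithful/essentially-surjective checks. In particular, your identification of faithfulness as the only nontrivial point --- promoting an $\sL$-filtered chain contraction to a $\sP$-filtered one via Proposition~\ref{prop:map:filtgrad} --- is exactly the content the paper's terse proof implicitly relies on.
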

\begin{proof}
This follows from Theorem~\ref{thm:filt:equiv} and the construction of the homotopy categories outlined in Section~\ref{sec:prelims:HA}.
\end{proof}

In analogy to Proposition~\ref{prop:grad:cmiso}, \cyclic{} $\sL$-filtered chain complexes which are $\sL$-filtered chain equivalent are isomorphic in $\bChFL$.  The may be phrased in terms of the following result.

 \begin{prop}\label{prop:filt:cmiso}
The functor $q\colon \bChFLz\to \bKFz$ is conservative.
 \end{prop}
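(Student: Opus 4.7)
The plan is to deduce this statement from its graded analogue, Corollary~\ref{prop:grad:conserv}, by transporting the problem through the equivalence of categories between the strict graded and strict filtered worlds established in Theorem~\ref{thm:filt:equiv} and Proposition~\ref{prop:filt:hequiv}. Concretely, suppose $\phi\colon (C,f)\to (C',f')$ is a morphism in $\bChFLz$ such that $q(\phi)$ is an isomorphism in $\bKFz$; we wish to show that $\phi$ itself is an isomorphism in $\bChFLz$.

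First I would use Theorem~\ref{thm:filt:equiv}, which states that the restricted functor $\fL\colon \bChGz\to \bChFLz$ is an equivalence of categories. Essential surjectivity produces objects $(D,\pi),(D',\pi)\in \bChGz$ together with $\bChFL$-isomorphisms $\alpha\colon \fL(D,\pi)\to (C,f)$ and $\alpha'\colon \fL(D',\pi)\to (C',f')$. Since $\fL$ is fully faithful, the composite $(\alpha')^{-1}\circ \phi\circ \alpha\colon \fL(D,\pi)\to \fL(D',\pi)$ is the image under $\fL$ of a unique morphism $\tilde\phi\colon (D,\pi)\to (D',\pi)$ in $\bChGz$. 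Because $\phi$ is an isomorphism in $\bChFLz$ if and only if $(\alpha')^{-1}\circ\phi\circ\alpha$ is, and the latter equals $\fL(\tilde\phi)$, we have reduced the claim to showing that $\tilde\phi$ is an isomorphism in $\bChGz$.

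Next I would pass to the homotopy categories. The identity $\fL_\bK\circ q = q\circ \fL$ from Section~\ref{sec:grad:hom}, applied to $\tilde\phi$, gives $\fL_\bK(q(\tilde\phi)) = q(\fL(\tilde\phi))$. The right-hand side is $q((\alpha')^{-1}\circ\phi\circ\alpha)$, which is an isomorphism in $\bKFz$ since $q(\phi)$ is by hypothesis and isomorphisms are preserved by composition with $q(\alpha),q(\alpha')$. By Proposition~\ref{prop:filt:hequiv} the functor $\fL_\bK\colon \bKGz\to \bKFz$ is an equivalence of categories, hence conservative; therefore $q(\tilde\phi)$ is an isomorphism in $\bKGz$.

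Finally I would invoke Corollary~\ref{prop:grad:conserv}, which asserts that $q\colon \bChGz\to \bKGz$ is conservative, to conclude that $\tilde\phi$ is an isomorphism in $\bChGz$. Applying the functor $\fL$ (which, like any functor, preserves isomorphisms) and composing with $\alpha,\alpha'$ then shows that $\phi$ is an isomorphism in $\bChFLz$, as required. I do not anticipate any genuine obstacle: the argument is entirely formal once the equivalence $\fL$ and its homotopy-level counterpart $\fL_\bK$ are in hand, and the only nontrivial input is the graded version Proposition~\ref{prop:grad:cmiso}, which has already been proved by matching the diagonal blocks $\phi^{pp}$ of a $\sP$-filtered chain equivalence.
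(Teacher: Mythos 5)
Your argument is correct and is essentially the paper's own proof: both deduce the statement from the conservativity of $q\colon \bChGz\to \bKGz$ (Corollary~\ref{prop:grad:conserv}) by transporting along the equivalences $\fL$ and $\fL_\bK$ via the commutative square $\fL_\bK\circ q = q\circ \fL$. You have merely written out in full the diagram chase that the paper compresses into ``It follows that $q\colon \bChFLz\to \bKFz$ is conservative.''
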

 \begin{proof}
 By Birkhoff's Theorem there is some $\sP$ such that $\sL\cong \sO(\sP)$. Without loss of generality, we let $\fL$ be the composition (of equivalences of categories)
 \[
 \bChG\xrightarrow{\fL} \bChFO\to \bChFL.
 \]
 We have the following diagram.
 \[
\begin{tikzcd}
 \bChGz\ar[r,"\fL"] \ar[d,"q"]  & \bChFLz \ar[d,"q"]\\
 \bKGz\ar[r,"\fL_\bK"] & \bKFz
\end{tikzcd}
 \]
 Theorem~\ref{thm:filt:equiv} and Proposition~\ref{prop:filt:hequiv}  state, respectively, that $\fL$ and $\fL_\bK$ are equivalences of categories.  Moreover, Proposition~\ref{prop:grad:conserv} states that $q\colon \bChGz\to \bKGz$ is conservative.   It follows that $q\colon \bChFLz\to \bKFz$ is conservative.
 \end{proof}

\begin{cor}\label{cor:filt:cmiso}
Let $(C,f)$ and $(D,g)$ be \cyclic{} $\sL$-filtered chain complexes.   $(C,f)$ and $(D,g)$ are $\sL$-filtered chain isomorphic if and only if they are isomorphic in $\bChFL$.
 \end{cor}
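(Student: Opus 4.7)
The plan is to deduce this corollary directly from Proposition~\ref{prop:filt:cmiso}, which asserts that the quotient functor $q\colon \bChFLz \to \bKFz$ is conservative. The corollary is simply the unpacking of conservativity at the level of objects in the subcategory of \cyclic{} objects, so no additional computation or category-theoretic machinery beyond what is already developed in Section~\ref{sec:lfc} is needed.

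First, I would handle the easy direction: if $(C,f)$ and $(D,g)$ are isomorphic in $\bChFL$, with both objects lying in the full subcategory $\bChFLz$, then there exists an $\sL$-filtered chain isomorphism $\phi\colon (C,f)\to (D,g)$. Applying the quotient functor $q$, the class $[\phi]$ is then an isomorphism in $\bKFz$ (with inverse $[\phi^{-1}]$), so $(C,f)$ and $(D,g)$ are $\sL$-filtered chain equivalent, which is what I read the statement "$\sL$-filtered chain isomorphic" as meaning (paralleling the terminology in Proposition~\ref{prop:grad:cmiso}).

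For the nontrivial direction, suppose $(C,f)$ and $(D,g)$ are $\sL$-filtered chain equivalent. Then there exist $\sL$-filtered chain maps $\phi\colon (C,f)\to (D,g)$ and $\psi\colon (D,g)\to (C,f)$ with $\psi\circ\phi \sim_\sL \id_C$ and $\phi\circ\psi \sim_\sL \id_D$. Passing to $\bKFz$ via $q$, this translates exactly to the statement that $[\phi]$ is an isomorphism in $\bKFz$ with inverse $[\psi]$. Since both $(C,f)$ and $(D,g)$ are objects of $\bChFLz$, the chain map $\phi$ is a morphism in $\bChFLz$, and $q(\phi)=[\phi]$ is an isomorphism in $\bKFz$. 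Invoking Proposition~\ref{prop:filt:cmiso} (conservativity of $q$ on the subcategory of \cyclic{} objects), $\phi$ must itself be an isomorphism in $\bChFLz$, hence an isomorphism in $\bChFL$.

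There is no real obstacle here since all the structural work has already been done, first in Proposition~\ref{prop:grad:cmiso} for the $\sP$-graded case, and then transferred across the equivalence $\fL$ of Theorem~\ref{thm:filt:equiv} and its homotopy-category version Proposition~\ref{prop:filt:hequiv} to yield Proposition~\ref{prop:filt:cmiso}. The only subtlety worth noting is that conservativity is a statement about morphisms while the corollary is about objects; this is bridged by the standard observation that any object isomorphism in a quotient category is witnessed by a morphism whose representative becomes an isomorphism after applying $q$, so conservativity automatically upgrades to an object-level statement within the full subcategory $\bChFLz$.
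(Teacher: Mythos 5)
Your proof is correct and matches the paper's intent exactly: the corollary is stated there without proof as an immediate consequence of Proposition~\ref{prop:filt:cmiso}, and your unpacking of conservativity (every isomorphism in the quotient category $\bKFz$ is $q(\phi)$ for some morphism $\phi$ of $\bChFLz$, which must then itself be an isomorphism) is precisely the intended argument. Your reading of the statement — that one side should be $\sL$-filtered chain \emph{equivalence}, paralleling Proposition~\ref{prop:grad:cmiso} — is also the right resolution of the wording.
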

 
 \begin{rem}
   Corollary~\ref{cor:filt:cmiso} implies that, up to isomorphism, the \cyclic{} $\sL$-filtered chain complexes are an invariant of the $\sL$-filtered chain equivalence class, cf.~\cite[Conjecture 7.4]{robbin:salamon2}.
 \end{rem}

Fix $\sP$ and $\sL=\sO(\sP)$.  Consider the following commutative diagram.
\begin{center}
\begin{equation}\label{dia:full}
\begin{tikzcd}[back line/.style={densely dotted}, row sep=1.5em, column sep=1.5em]
& \bChFLz \ar[leftarrow]{dl}[swap]{\fL} \ar[back line]{rr}{} \ar[densely dotted]{dd}[near start]{q} 
  & & \bChFL \ar[back line]{dd}[near start]{} \ar[leftarrow]{dl}[back line,swap]{\fL} \\
\bChGz \ar[crossing over,back line]{rr}[near end]{} \ar[densely dotted]{dd}[swap, near start]{q} 
  & & \bChG \\
& \bKFz  \ar[densely dashed]{rr}[near start]{}\ar[leftarrow,swap]{dl}{\fL} 
  & & \bKF \ar[leftarrow]{dl}[swap]{\fL} \\
\bKGz \ar[densely dashed]{rr}{} & & \bKG \ar[back line, crossing over, leftarrow]{uu}[swap, near end]{}
\end{tikzcd}
 \end{equation}
 \end{center}
 Our results thus far have the following implications.
\begin{itemize}
    \item It follows from Theorem~\ref{thm:filt:equiv} and Proposition~\ref{prop:filt:hequiv} that the solid arrows are equivalences of categories.
    \item Propositions~\ref{prop:grad:cmiso} and~\ref{prop:filt:cmiso} show that the quotient functors 
    \[
    q\colon \bChGz\to \bKGz,\quad q\colon \bChFLz \to \bKFz
    \]
     are conservative.
    \item Finally, in Section~\ref{sec:catform} we show how the proof of correctness of the algorithm \textsc{ConnectionMatrix} establishes that the dashed arrows -- the inclusion functors $\bKGz\to \bKG$ and $\bKFz\to \bKF$ -- are essentially surjective (and thus equivalences of categories); see Theorem~\ref{thm:inc:equiv} and Corollary~\ref{cor:inc:equiv}.
\end{itemize}

Finally, we reach our definition of Conley complex and connection matrix for an $\sL$-filtered chain complex.

\begin{defn}
{\em
Let $(C,f)$ be an $\sL$-filtered chain complex and $\sL=\sO(\sP)$.  A $\sP$-graded chain complex $(C',\pi)$ is called a \emph{Conley complex} for $(C,f)$ if
\begin{enumerate}
    \item $(C',\pi)$ is strict, i.e., $\partial_j'^{pp}=0$, for all $p$ and $j$, and
    \item $\fL(C',\pi)$ is isomorphic to $(C,f)$ in $\bKF$.
\end{enumerate}

}
\end{defn}

With the theory that has been built up, the following result is straightforward.

\begin{thm}\label{thm:filt:cm}
Let $(\cX,\nu)$ be a $\sP$-graded cell complex. Let $\sL=\sO(\sP)$.  If $(C',\pi)$ is a Conley complex for $(C(\cX),\pi^\nu)$ then $(C',\pi)$ is a Conley complex for $(C(\cX),f^\nu)$.
\end{thm}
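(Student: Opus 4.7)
The plan is to unpack the two definitions of Conley complex and observe that the theorem is essentially a formal consequence of the functoriality of $\fL$ together with the commutative diagram in Proposition~\ref{prop:filt:functor2}. By hypothesis $(C',\pi)$ is a Conley complex for $(C(\cX),\pi^\nu)$, which by Definition~\ref{def:grad:cm} means (i) $(C',\pi)$ is \cyclic{}, i.e., $\partial_j'^{pp}=0$ for every $p\in\sP$ and $j\in\Z$, and (ii) $(C',\pi)$ is isomorphic to $(C(\cX),\pi^\nu)$ in the homotopy category $\bKG$. Condition (i) is exactly the first clause in the definition of Conley complex for the $\sL$-filtered chain complex $(C(\cX),f^\nu)$, so it remains to verify that $\fL(C',\pi)$ is isomorphic to $(C(\cX),f^\nu)$ in $\bKF$.

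For this, I would apply the induced functor $\fL_\bK\colon \bKG\to \bKF$ (obtained from Proposition~\ref{prop:prelims:induce} applied to $\fL\colon \bGVec(\sP)\to \bFVec(\sL)$, as described just before Proposition~\ref{prop:filt:hequiv}) to the given isomorphism in $\bKG$. Since functors preserve isomorphisms, this yields an isomorphism
\[
\fL_\bK(C',\pi) \ \cong \ \fL_\bK(C(\cX),\pi^\nu) \quad \text{in } \bKF.
\]
Using the identity $\fL_\bK\circ q = q\circ \fL_\bCh$ and the fact that $\fL_\bK$ acts on objects as $\fL_\bCh$, this is the same as the isomorphism $\fL(C',\pi)\cong \fL(C(\cX),\pi^\nu)$ in $\bKF$.

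Finally, I would identify $\fL(C(\cX),\pi^\nu)$ with $(C(\cX),f^\nu)$ using Proposition~\ref{prop:filt:functor2}, which states that the triangle with the assignments $\cC\colon \Cell(\sP)\to \bChG$ and $\cL\colon \Cell(\sP)\to \bChFL$ and the functor $\fL$ commutes; that is, $\fL\circ \cC = \cL$. Applied to $(\cX,\nu)$, this gives $\fL(C(\cX),\pi^\nu)=\fL\circ\cC(\cX,\nu)=\cL(\cX,\nu)=(C(\cX),f^\nu)$. Combining this equality with the isomorphism above yields $\fL(C',\pi)\cong (C(\cX),f^\nu)$ in $\bKF$, completing the verification of clause (ii) of the $\sL$-filtered Conley complex definition.

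There is no real obstacle here; the argument is purely formal once the categorical machinery of Sections~\ref{sec:grad} and \ref{sec:lfc} is in place. The only thing worth double-checking is the alignment of conventions, namely that the ``strict'' condition used in the two definitions (graded versus filtered Conley complex) agrees for graded objects, which is immediate from the fact that the filtered notion only requires the underlying graded complex to be strict and all the filtering data on $\fL(C',\pi)$ comes directly from the grading $\pi$ of $C'$ via the forgetful functors $\frU\circ\frU^a$.
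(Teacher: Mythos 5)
Your proposal is correct and follows essentially the same route as the paper: both arguments apply the induced functor $\fL_\bK$ (equivalently, use the identity $q\circ\fL=\fL_\bK\circ q$ from Diagram~\eqref{dia:full}) to transport the isomorphism $q(C',\pi)\cong q(C(\cX),\pi^\nu)$ from $\bKG$ to $\bKF$, and then invoke Proposition~\ref{prop:filt:functor2} to identify $\fL(C(\cX),\pi^\nu)$ with $(C(\cX),f^\nu)$. Your explicit check that the strictness clause carries over verbatim is a minor addition the paper handles implicitly by noting $(C',\pi)\in\bChGz$.
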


\begin{proof}
Since $(C',\pi)$ is a Conley complex for $(C(\cX),\pi^\nu)$, by definition it is an object of $\bChGz$.  Moreover, by definition $q(C',\pi)\cong q(C(\cX),\pi^\nu)$.
It follows from~\eqref{dia:full} that
\[
q\circ \fL (C',\pi)=  \fL_\bK\circ q(C',\pi) \cong \fL_\bK\circ q(C(\cX),\pi^\nu) = q\circ\fL ( C(\cX),\pi^\nu) 
\]
It follows  from Proposition~\ref{prop:filt:functor2} that $
q\circ\fL (C(\cX),\pi^\nu) = q (C(\cX),f^\nu)$.  Therefore $q\circ \fL (C',\pi)\cong q(C(\cX),f^\nu)$.
\end{proof}

Conceptually, Theorem~\ref{thm:filt:cm}  implies that one may do homotopy-theoretic computations within the category $\bChG$ in order to compute the relevant objects of interest for $\bKF$.  At this point in the paper, we refer the reader back to the left hand side of Diagram~\eqref{dia:concept}.


\section{Categorical Connection Matrix Theory}\label{sec:catform}

We now return to Diagram~\eqref{dia:full} of Section~\ref{sec:grad:hom} and discuss a categorical setup for connection matrix theory.

\begin{thm}\label{thm:inc:equiv}
The inclusion functor $i\colon \bKGz\to \bKG$ is an equivalence of categories.
\end{thm}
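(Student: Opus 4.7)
The plan is to prove the theorem via Proposition~\ref{prop:cats:equiv}, showing that $i$ is fully faithful and essentially surjective. Full faithfulness is immediate: by definition $\bKGz$ is the full subcategory of $\bKG$ on the objects of $\bChGz$, so on each hom-set $i$ is literally an identity map. Thus the entire content lies in essential surjectivity.

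For essential surjectivity I would take an arbitrary $\sP$-graded chain complex $(C,\pi)\in\bKG$ and produce a strict $(C',\pi)\in\bKGz$ that is isomorphic to $(C,\pi)$ in $\bKG$, i.e., $\sP$-filtered chain equivalent. This is exactly the statement that $(C,\pi)$ admits a Conley complex, which is the output of Algorithm~\ref{alg:cm} via Theorem~\ref{thm:alg:cm}. The plan is therefore to reduce the abstract graded chain complex $(C,\pi)$ to the concrete graded cell-complex input that the algorithm consumes, and then invoke the theorem.

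The bridge from abstract to concrete is as follows. Choose a basis for each fiber $C_n^p$, and let $\cX$ be the disjoint union of these basis elements. Define $\dim(\xi)=n$ for $\xi\in C_n^p$, set $\nu(\xi)=p$, and let $\kappa(\xi,\xi')$ be the coefficient of $\xi'$ in $\partial\xi$. Take the face partial order $\leq$ to be the reflexive-transitive closure of the relation $\xi'\prec\xi$ defined by $\kappa(\xi,\xi')\neq 0$; this is antisymmetric because $\dim$ strictly drops along $\prec$, and the cocycle condition $\partial^2=0$ gives condition~(\ref{cond:three}) of Definition~\ref{defn:cellComplex}. Since $\partial$ is $\sP$-filtered, $\kappa(\xi,\xi')\neq 0$ forces $\nu(\xi')\leq\nu(\xi)$, so $\nu$ is order-preserving, making $(\cX,\nu)$ a $\sP$-graded cell complex with $\cC(\cX,\nu)=(C,\pi)$. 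Theorem~\ref{thm:alg:cm} then produces a Conley complex $(C(A_\infty),\Delta,\pi)\in\bChGz$ that is $\sP$-filtered chain equivalent to $(C,\pi)$, hence isomorphic to $(C,\pi)$ in $\bKG$. This establishes that every object of $\bKG$ is isomorphic to one in the image of $i$.

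The main obstacle is really just recognizing that Algorithm~\ref{alg:cm}, although stated for cell complexes arising from applications, applies to any finitely generated $\sP$-graded chain complex once a basis has been chosen; the bookkeeping above shows that such a choice is always possible and produces genuine graded cell complex data. A secondary subtlety is that the algorithm terminates because $|\cX|$ is finite, so the argument as given covers the finitely generated case relevant to the paper's applications; the same conclusion would extend to arbitrary $\sP$-graded chain complexes by the abstract existence of maximal perfect $\sP$-filtered splitting homotopies (Proposition~\ref{prop:grad:bij}), but for the equivalence stated here the finite case via Algorithm~\ref{alg:cm} is the cleanest route.
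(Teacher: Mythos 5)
Your proposal is correct and follows essentially the same route as the paper's own proof: full faithfulness is immediate from fullness of the subcategory, and essential surjectivity is obtained by choosing a basis adapted to the grading, packaging $(C,\pi)$ as a $\sP$-graded cell complex, and invoking Theorem~\ref{thm:alg:cm} to produce a Conley complex isomorphic to $(C,\pi)$ in $\bKG$. Your added remarks (antisymmetry of $\leq$ via the dimension drop, condition~(3) from $\partial^2=0$, and choosing the basis fiberwise) only make explicit details the paper leaves implicit.
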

\begin{proof}
By definition, the inclusion functor $i$ is faithful; since the subcategory $\bKGz$ is full,  $i$ also full.  It follows from Proposition~\ref{prop:cats:equiv} that it only remains to show that $i$ is essentially surjective.  Let $(C,\pi)$ be a $\sP$-graded chain complex and let $\cX$ be a basis for $C_\bullet$.  For $\xi\in \cX$, define $\dim(\xi)=n$ if $\xi\in C_n$ and $\nu(\xi)=p$ if $\xi\in C_\bullet^p$.  Define $\kappa(\xi,\xi')$ as the appropriate coefficient of $\partial$, i.e.,
\[
\partial(\xi) = \sum_{\xi'\in \cX} \kappa(\xi,\xi')\xi',
\]
and define the partial order $\leq$ to be the reflexive transitive closure of the relation
\[
\xi' < \xi \text{ if and only if } \kappa(\xi,\xi')\neq 0.
\]
Then $\cX=(\cX,\leq,\kappa,\dim)$ together with $\nu\colon (\cX,\leq)\to \sP$ is a $\sP$-graded cell complex.  Consider the \cyclic{} $\sP$-graded chain complex $(C(A),\pi^\mu)$ where  $(A,\Delta,\mu)=\textsc{ConnectionMatrix}(\cX,\nu,\partial)$. It follows from Theorem~\ref{thm:alg:cm} that this is an object of $\bKGz\subset \bKG$ and that it is a Conley complex for $(C,\pi)$. Therefore $(C(A),\pi^\mu)$ is isomorphic to $(C,\pi)$ in $\bKG$.  Therefore the inclusion functor $i$ is essentially surjective, which completes the proof.
\end{proof}

\begin{cor}\label{cor:inc:equiv}
The inclusion functor $\bKFz\subset \bKF$ is an equivalence of categories.
\end{cor}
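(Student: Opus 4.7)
The plan is to deduce the corollary from the previously established equivalences by means of the commutative square
\[
\begin{tikzcd}
\bKGz \ar[r, "i"] \ar[d, "\fL_\bK"'] & \bKG \ar[d, "\fL_\bK"] \\
\bKFz \ar[r, "i'"'] & \bKF
\end{tikzcd}
\]
which commutes because $\fL_\bK$ acts identically on objects and morphisms in the way that $\fL$ does, so restriction to the strict subcategory is preserved. Three of the four functors in this square have already been shown to be equivalences: the top row by Theorem~\ref{thm:inc:equiv}, and the two vertical functors $\fL_\bK\colon \bKG\to \bKF$ and $\fL_\bK\colon \bKGz\to \bKFz$ by Proposition~\ref{prop:filt:hequiv}.

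I would apply Proposition~\ref{prop:cats:equiv}, checking the three conditions directly. Full and faithful are immediate because $\bKFz$ is, by construction, a full subcategory of $\bKF$. Thus the only content is essential surjectivity of $i'$. For this, start with an arbitrary object $(C,f)$ in $\bKF$. Essential surjectivity of $\fL_\bK\colon \bKG\to \bKF$ yields a $\sP$-graded chain complex $(C',\pi)$ with $\fL_\bK(C',\pi)\cong (C,f)$ in $\bKF$. Essential surjectivity of $i\colon \bKGz\to \bKG$ then yields a \cyclic{} $\sP$-graded chain complex $(C'',\pi'')$ with $(C'',\pi'')\cong (C',\pi)$ in $\bKG$. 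Applying $\fL_\bK$ to this isomorphism and composing with the previous one gives $\fL_\bK(C'',\pi'')\cong (C,f)$ in $\bKF$. Since $\fL_\bK$ carries $\bKGz$ into $\bKFz$, the object $\fL_\bK(C'',\pi'')$ lies in the essential image of $i'$, which settles essential surjectivity.

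There is no real obstacle here; the proof is a formal diagram chase from the three prior equivalences, essentially a two-out-of-three argument for equivalences of categories. The only care to take is to verify that $\fL_\bK$ genuinely intertwines the two inclusion functors, but this is immediate from the definition of $\fL$ on objects and morphisms together with the observation (used in the proof of Theorem~\ref{thm:filt:equiv}) that $\fL$ sends \cyclic{} objects to \cyclic{} objects.
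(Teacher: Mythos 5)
Your proof is correct and is essentially the paper's own argument: the paper cites the bottom square of its summary diagram, in which three of the four functors (the two instances of $\fL_\bK$ from Proposition~\ref{prop:filt:hequiv} and the inclusion $\bKGz\to\bKG$ from Theorem~\ref{thm:inc:equiv}) are equivalences, and concludes by the same two-out-of-three reasoning you spell out. You have merely made explicit the essential-surjectivity chase and the commutativity of the square, both of which the paper leaves implicit.
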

\begin{proof}
This follows from an examination of the bottom square of~\eqref{dia:full}: three of the functors are categorical equivalences.
\end{proof}

\begin{cor}\label{cor:cat:func}
Let $\sL=\sO(\sP)$.  There exists inverse functors, which we call Conley functors, 
\begin{enumerate}
    \item $\mathfrak{F}\colon \bKG\to \bKGz$, and
    \item $\mathfrak{G}\colon \bKF\to \bKGz$,
\end{enumerate}
which take a $\sP$-graded chain complex or $\sL$-filtered chain complex to its Conley complex.
\end{cor}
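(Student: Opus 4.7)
The plan is to obtain both $\mathfrak{F}$ and $\mathfrak{G}$ as quasi-inverses of categorical equivalences already established in the preceding material, and then to check that on objects they really produce Conley complexes in the sense of Definition~\ref{def:grad:cm} and its $\sL$-filtered analogue (stated just before Theorem~\ref{thm:filt:cm}).

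For $\mathfrak{F}$, I would invoke Theorem~\ref{thm:inc:equiv}, which states that the inclusion $i\colon \bKGz \hookrightarrow \bKG$ is an equivalence of categories, and then take $\mathfrak{F}\colon \bKG \to \bKGz$ to be any quasi-inverse to $i$. The existence of such a quasi-inverse is the standard consequence of Proposition~\ref{prop:cats:equiv} together with the construction in~\cite[Theorem IV.4.1]{mac2013categories}. For $\mathfrak{G}$, I would consider the composition $i \circ \fL_\bK\colon \bKGz \to \bKF$, where $\fL_\bK\colon \bKGz \to \bKFz$ is the equivalence from Proposition~\ref{prop:filt:hequiv} and $i\colon \bKFz \hookrightarrow \bKF$ is the equivalence from Corollary~\ref{cor:inc:equiv}. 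Being a composition of two equivalences, this is itself an equivalence, and $\mathfrak{G}$ is taken to be a quasi-inverse.

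The remaining verification is the interpretation of these functors as producing Conley complexes. Given $(C,\pi)\in \bKG$, the object $\mathfrak{F}(C,\pi)$ lies in $\bKGz$ and hence is \cyclic{}, while the natural isomorphism $i \circ \mathfrak{F} \simeq \id_{\bKG}$ yields $\mathfrak{F}(C,\pi) \cong (C,\pi)$ in $\bKG$. Together these two properties are exactly the conditions in Definition~\ref{def:grad:cm} for $\mathfrak{F}(C,\pi)$ to be a Conley complex for $(C,\pi)$. The analogous argument applies to $\mathfrak{G}$: given $(C,f)\in \bKF$, the object $\mathfrak{G}(C,f)$ is strict and satisfies $\fL_\bK \circ \mathfrak{G}(C,f) \cong (C,f)$ in $\bKF$, which is precisely the $\sL$-filtered Conley complex condition used in Theorem~\ref{thm:filt:cm}.

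There is no real obstacle: the entire content has been accumulated in Theorem~\ref{thm:inc:equiv}, Corollary~\ref{cor:inc:equiv}, and Proposition~\ref{prop:filt:hequiv}, and the corollary is essentially a repackaging. The one subtlety worth flagging is that $\mathfrak{F}$ and $\mathfrak{G}$ are only defined up to natural isomorphism, reflecting the fact that a Conley complex is only unique up to isomorphism in $\bKGz$ (cf.\ Remark~\ref{rem:grad:unique}). From the computational side, one may regard Algorithm~\ref{alg:cm} (\textsc{ConnectionMatrix}) as supplying an explicit choice of these quasi-inverses on representatives arising from $\sP$-graded cell complexes, and its proof of correctness (Theorem~\ref{thm:alg:cm}, together with Theorem~\ref{thm:filt:cm}) already contains the essential object-level content of the statement.
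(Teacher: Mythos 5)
Your argument is correct and is precisely the one the paper leaves implicit: the corollary is stated without proof immediately after Theorem~\ref{thm:inc:equiv} and Corollary~\ref{cor:inc:equiv}, and the intended justification is exactly to take quasi-inverses of the inclusion equivalences (composed with $\fL_\bK$ in the filtered case) and observe that strictness plus the natural isomorphism to the identity are the two defining conditions of a Conley complex. Your remark that the Conley functors are only determined up to natural isomorphism, matching Remark~\ref{rem:grad:unique}, is also consistent with the paper's viewpoint.
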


\begin{rem}
Corollary~\ref{cor:cat:func} provides a functorial framework for connection matrix theory. Algorithm~\ref{alg:cm} (\textsc{ConnectionMatrix}) computes the functor $\mathfrak{F}$ on objects.  Applying $\mathfrak{F}$ on morphisms gives the {\em transition matrix}, see~\cite{atm,franzosa2017transition,reineck1988connecting}.  The implications of this will be explored in a future paper.
\end{rem}

\section{Franzosa's Connection Matrix Theory}\label{sec:CMT}

In this section we will review connection matrix theory as developed by R. Franzosa in the sequence of papers~\cite{fran2,fran3,fran} from the late 1980's. The primary result of this section is Theorem~\ref{thm:braid:cm}, which states that our notion of connection matrix agrees with that of Franzosa after composition with the functor $\frH\circ \fB$.

\subsection{The Categories of Braids}
It was Conley's observation~\cite{conley:cbms} that focusing on the attractors of a dynamical system provides a generalization of Smale's Spectral Decomposition~\cite[Theorem 6.2]{smale}.  There is a lattice structure to the attractors of a dynamical system~\cite{lsa,lsa2,robbin:salamon2} and one is often naturally led  to studying a finite sublattice of attractors $\sA$ and an associated sublattice of attracting blocks $\sN$ with $\omega\colon \sN\to \sA$; see~\cite{kkv,lsa,lsa2}. This setup is expressed in the diagram below.
\[
\begin{tikzcd}[column sep = 1.5cm]
\sN  \ar[d,two heads,"\omega"]  \ar[r,"\subset"]
&  \sABlockR(\varphi)  \ar[d,two heads,"\omega"]\\
\sA \ar[r,"\subset"]
& \sAtt(\varphi)
\end{tikzcd}
\]
A sublattice of attracting blocks is what Franzosa terms an {\em index filtration}~\cite{fran,fran2,fran3}.  However, as these sublattices are not necessarily totally ordered, we follow~\cite{lsa} and call this an {\em index lattice}.

In his work, Franzosa introduces the notion of a {\em chain complex braid} as a data structure to hold the singular chain complexes that arise out of the topological data within the index lattice.  The chain complex braid is organized by the poset of join-irreducibles of the index lattice.  Implicit in Franzosa's work is a description of a category for chain complex braids over a fixed poset $\sP$.  We now describe this category, which we label $\bChB$.   First we recall the notion of adjacent convex sets.

\begin{defn}
{\em
An ordered collection $(I_1,\ldots, I_N)$ of convex sets of $\sP$ is {\em adjacent} if
\begin{enumerate}
\item $I_1,\ldots,I_n$ are mutually disjoint;
\item $\bigcup_{i=1}^n I_i$ is a convex set in $\sP$;
\item For all $p,q\in\sP$, $p\in I_i, q\in I_j, i < j$ implies $q \nless p$.
\end{enumerate}
}
\end{defn}

We are primarily interested in adjacent pairs of convex sets $(I,J)$ and for simplicity write the union $I\cup J$ as $IJ$.  
We denote the set of convex sets as $I(\sP)$ and the set of adjacent pairs and triples of convex sets as $I_2(\sP)$ and $I_3(\sP)$.  
\begin{defn}
{\em 
A pair $(I,J)$ of convex sets is {\em incomparable} if $p$ and $q$ are incomparable for any $p\in I$ and $q\in J$.  This immediately implies that $(I,J)$ and $(J,I)$ are adjacent.
}
\end{defn}

\begin{defn}
{\em
Following~\cite{fran},  a sequence of chain complexes and chain maps $$C_1\xrightarrow{i} C_2 \xrightarrow{p} C_3$$
is {\em weakly exact} if $i$ is injective, $p\circ i = 0$ and $p\colon C_2/\img (i)\to C_3$ induces an isomorphism on homology.
}
\end{defn}

\begin{prop}[\cite{fran}, Proposition 2.2]
Let 
\[
C_1\xrightarrow{i} C_2\xrightarrow{p} C_3
\]
be a weakly exact sequence of chain complexes  and $\partial_i$ the boundary operator of $C_i$.  There exists a natural degree -1 homomorphism $\partial\colon H(C_3)\to H(C_1)$ such that
\begin{enumerate}
    \item if $[x]\in H(C_3)$ then $\partial([x])=[i^{-1}\partial_2 p^{-1}(x)]$, 
    \item $\ldots\to H(C_1)\xrightarrow{i} H(C_2)\xrightarrow{p} H(C_3)\xrightarrow{\partial} H(C_1) \to \ldots$ is exact.
\end{enumerate}
\end{prop}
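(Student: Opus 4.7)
The plan is to reduce the statement to the standard zig-zag construction for a short exact sequence of chain complexes, using the weak exactness hypothesis only to identify the target.

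First, I would observe that since $p\circ i=0$, the map $p$ factors through the quotient $C_2/\img(i)$. Writing $q\colon C_2\to C_2/\img(i)$ for the canonical projection, this gives a chain map $\bar{p}\colon C_2/\img(i)\to C_3$ with $\bar{p}\circ q=p$. Because $i$ is injective, the sequence
\begin{equation*}
0\longrightarrow C_1\xrightarrow{\ i\ } C_2\xrightarrow{\ q\ } C_2/\img(i)\longrightarrow 0
\end{equation*}
is a genuine short exact sequence of chain complexes, to which the standard machinery of homological algebra applies, yielding a natural connecting homomorphism $\partial'\colon H(C_2/\img(i))\to H(C_1)$ and a long exact sequence involving $i_*$, $q_*$, and $\partial'$.

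Next, the hypothesis that $\bar{p}$ induces an isomorphism $\bar{p}_*\colon H(C_2/\img(i))\to H(C_3)$ on homology is used to transport this long exact sequence along $\bar{p}_*$. I would define the desired connecting map as
\begin{equation*}
\partial := \partial'\circ \bar{p}_*^{-1}\colon H(C_3)\longrightarrow H(C_1),
\end{equation*}
which is visibly a degree $-1$ homomorphism and is natural in the weakly exact sequence. Substituting $q_*=\bar{p}_*^{-1}\circ p_*$ in the long exact sequence for the short exact sequence above, and composing the $\partial'$ arrow with $\bar{p}_*^{-1}$, yields the claimed long exact sequence
\begin{equation*}
\cdots\to H(C_1)\xrightarrow{\ i_*\ } H(C_2)\xrightarrow{\ p_*\ } H(C_3)\xrightarrow{\ \partial\ } H(C_1)\to\cdots.
\end{equation*}

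For the formula $\partial([x])=[i^{-1}\partial_2 p^{-1}(x)]$, I would trace the zig-zag. Given a cycle $x\in C_3$, use surjectivity of $\bar{p}_*$ to choose a representative of $[x]$ lying in $\img(p)$ (this is the interpretation of $p^{-1}(x)$): there exists $y\in C_2$ with $[p(y)]=[x]$. Since $p(\partial_2 y)=\partial_3 p(y)=0$ and $\ker p\cap \img(q) = 0$ in the quotient picture via $\bar{p}$ being a chain map, we have $\partial_2 y\in \img(i)$, so $i^{-1}(\partial_2 y)$ is well-defined in $C_1$ and is a cycle. This is precisely the output of the standard snake construction for $\partial'$ applied to $[q(y)]=\bar{p}_*^{-1}[x]$, confirming the formula.

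The main obstacle is the mild ambiguity in the symbol $p^{-1}(x)$: $p$ is not assumed surjective on the chain level, only $\bar{p}_*$ is surjective. The fix is that the formula is to be interpreted at the level of homology classes, i.e., after replacing $x$ by any cycle of the form $p(y)$ in the same homology class, which exists by weak exactness. Once this is pinned down, everything reduces to the well-known behavior of the connecting homomorphism of a short exact sequence, and all verifications (independence of choices, naturality, exactness) are the familiar diagram chases.
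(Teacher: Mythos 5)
The paper does not actually prove this proposition; it is quoted verbatim from Franzosa, and your route --- promote the weakly exact sequence to the genuine short exact sequence $0\to C_1\xrightarrow{i} C_2\xrightarrow{q} C_2/\img(i)\to 0$, take its standard connecting homomorphism $\partial'$, and transport along the isomorphism $\bar p_*\colon H(C_2/\img(i))\to H(C_3)$ by setting $\partial=\partial'\circ\bar p_*^{-1}$ --- is exactly the standard argument behind the cited result. The definition of $\partial$, the exactness of the transported long sequence, and the need to read $p^{-1}(x)$ at the level of homology classes are all handled correctly.

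One step is misjustified, although your own construction already contains the fix. You claim $\partial_2 y\in\img(i)$ because $p(\partial_2 y)=\partial_3 p(y)=0$ together with ``$\ker p\cap\img(q)=0$''. That inference does not work: weak exactness does not force $\ker p=\img(i)$ at the chain level, so knowing $\partial_2 y\in\ker p$ does not place $\partial_2 y$ in $\img(i)$; moreover $\img(q)$ is all of $C_2/\img(i)$, so the stated intersection condition is not meaningful. The correct justification is the one implicit in your appeal to the snake construction applied to $[q(y)]$: choose $y$ as a $q$-lift of a cycle $z\in C_2/\img(i)$ representing $\bar p_*^{-1}[x]$. Then $q(\partial_2 y)=\partial(q(y))=0$, hence $\partial_2 y\in\ker q=\img(i)$ exactly, while $p(y)=\bar p(z)$ still represents $[x]$. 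With the lift chosen this way, the computation of $[i^{-1}\partial_2 y]$ and the independence-of-choices check go through as you describe, and the rest of the argument stands.
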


\begin{defn}
{\em
A {\em chain complex braid} $\scC$ over $\sP$ is a collection of chain complexes and chain maps in $\bCh(\bVec)$ such that
\begin{enumerate}
\item for each $I\in I(\sP)$ there is a chain complex $(C(I),\Delta(I))$,
\item for each $(I,J)\in I_2(\sP)$ there are chain maps $$i(I,IJ)\colon C(I)\to C(IJ)\quad\quad\text{and}\quad\quad p(IJ,J)\colon C(IJ)\to C(J),$$ which satisfy
\begin{enumerate}
\item $C(I)\xrightarrow{i(I,IJ)} C(IJ)\xrightarrow{p(IJ,J)} C(J)$ is weakly exact,
\item if $I$ and $J$ are incomparable then $p(JI,I)i(I,IJ)= \id|_{C(I)}$,
\item if $(I,J,K)\in I_3(P)$ then the following braid diagram commutes.
\[
\begin{tikzcd}[column sep=scriptsize,row sep=0.4 em]
& & C(J)\ar[dr] & &\\
& C(IJ)\ar[dr]\ar[ur] & & C(JK)\ar[dr] &\\
C(I)\ar[ur]\ar[rr] & & C(IJK)\ar[ur]\ar[rr] & & C(K)
\end{tikzcd}
\]
\end{enumerate}
\end{enumerate}
}
\end{defn}

\begin{defn}
{\em
The \emph{category of chain complex braids over $\sP$}, denoted $\bChB$, is the category whose objects are chain complex braids over $\sP$.  Given two chain complex braids $\mathscr{C}$ and $\scC'$ a morphism $\Psi\colon \scC\to \scC'$  is a collection of chain maps $\{\Psi(I)\colon C(I)\to C'(I)\}_{I\in I(\sP)}$  such that for $(I,J)\in I_2(\sP)$ the following diagram commutes.
\[
\begin{tikzcd}[row sep = large]
C(I)\ar[r]\ar[d,"\Psi(I)"] &C(IJ)\ar[d,"\Psi(IJ)"] \ar[r] & C(J)\ar[d,"\Psi(J)"]\\
C'(I) \ar[r] & C'(IJ) \ar[r] & C'(J)
\end{tikzcd}
\] 
}
\end{defn}


For a given sublattice of attractors $\sA$, two index lattices $\sN,\sN'$ associated with the same sublattice of attractors $\sA$, i.e., $\omega(\sN)=\sA$ and $\omega(\sN')=\sA$, may yield different chain complex braids.   However, the homology groups of the chain complexes contained in the chain complex braid are an invariant.  This is the motivation for {\em graded module braids}, which formalize the `homology' of a chain complex braid.  To match our terminology with Franzosa's~\cite{fran}, we define `graded module braids' in the generality of graded $R$-modules.  However, in this paper we specifically work in the case that $R$ is a field, and a graded $R$-module is a graded vector space.


\begin{defn}
{\em
Let $R$ be a ring.  A \emph{graded $R$-module} is a family $M_\bullet=\{M_n\}_{n\in \Z}$ of $R$-modules.  A \emph{graded $R$-module homomorphism} is a family $f\colon M_\bullet\to M_\bullet'$ are families of $R$-module homomorphisms $f=\{f_n\colon M_n\to M_n'\}_{n\in \Z}$. 
The category of \emph{graded $R$-modules}, denoted $\bRMod^\Z$, is the category whose objects are graded $R$-modules and whose morphisms are graded $R$-module homomorphisms. Let $M_\bullet$ and $M'_\bullet$ be graded $R$-modules.  A \emph{degree $d$ map} $\gamma$ from $M_\bullet$ to $M'_\bullet$ is a family of $R$-module homomorphisms $\{\gamma_n\colon M_n\to M'_{n+d}\}_{n\in \Z}$.
}
\end{defn}

\begin{defn}
{\em
A {\em graded $R$-module braid over $\sP$} $\scG$ is a collection of graded $R$-modules maps satisfying:
\begin{enumerate}
\item for each $I\in I(\sP)$ there is a graded $R$-module $G(I)$;
\item for each $(I,J)\in I_2(\sP)$ there are maps:
\begin{align*}
i(I,IJ)\colon G(I)\to G(IJ) \text{ of degree 0,}\\
p(IJ,J)\colon G(IJ)\to G(J) \text{ of degree 0,}\\
\partial(J,I)\colon G(J)\to G(I) \text{ of degree -1}
\end{align*}
which satisfy
\begin{enumerate}
\item $\ldots  \to G(I) \xrightarrow{i}  G(IJ)\xrightarrow{p} G(J) \xrightarrow{\partial} G(I) \to \ldots$ is exact,
\item if $I$ and $J$ are incomparable then $p(JI,I)i(I,IJ)= \id|_{G(I)}$
\item if $(I,J,K)\in I_3(P)$ then the braid diagram~\eqref{dia:braid} commutes.
\begin{equation}\label{dia:braid}
\begin{tikzcd}[column sep=normal,row sep=small]
\vdots \ar[d]&& \cdots \ar[drr] \ar[dll]&& \vdots \ar[d]\\
G(I)\ar[drr,"i"]\ar[dd,bend right,"i",swap] &&&&      G(K)\ar[dll,"\partial",swap]\ar[dd,bend left,"\partial"]\\ 
&&      G(IJ)\ar[drr,"p"]\ar[dll,"i",swap]       &&  \\
G(IJK)\ar[dd,bend right,swap,"p"]\ar[drr,"p"]  &&      &&G(J)\ar[dd,bend left,"\partial"]\ar[dll,"i",swap]\\
&&      G(JK)\ar[dll,"p",swap]\ar[drr,"\partial"]   &&      \\
G(K)\ar[drr,"\partial"] \ar[dd,bend right,"\partial",swap] &&&&      G(I)\ar[dd,bend left,"i"]\ar[dll,"i",swap]  \\
&&      G(IJ)\ar[dll,"p",swap]\ar[drr,"i"]    &&      \\
G(J)\ar[d]\ar[drr]    &&      &&      \ar[dll]G(IJK)\ar[d]\\
\vdots &&   \cdots  &&  \vdots
\end{tikzcd}
\end{equation}
\end{enumerate}

\end{enumerate}
}
\end{defn}

\begin{defn}
{\em
A morphism $\Theta\colon \scG\to \scG'$ of graded $R$-module braids is a collection of graded $R$-module homomorphisms $\{\Theta(I)\colon G(I)\to G'(I)\}_{I\in I(\sP)}$ such that for each $(I,J)\in I_2(\sP)$ the following diagram commutes:
\[
\begin{tikzcd}[row sep=large]
\ldots\ar[r] & G(I)\ar[r,"i"]\ar[d,"\Theta(I)"] & G(IJ)\ar[r,"p"] \ar[d,"\Theta(IJ)"]& G(J)\ar[r,"\partial"]\ar[d,"\Theta(J)"] & G(I)\ar[r] \ar[d,"\Theta(I)"]& \ldots\\
\ldots\ar[r] & G'(I)\ar[r,"i'"] & G'(IJ)\ar[r,"p'"] & G'(J)\ar[r,"\partial'"] & G'(I)\ar[r] & \ldots
\end{tikzcd}
\]
}
\end{defn}

\begin{rem}
Since a morphism of braids $\Theta\colon \scG\to \scG'$ involves a fixed map $\Theta(I)$ for each convex set $I$,  there is a commutative diagram involving the two braid diagrams of~\eqref{dia:braid} and $\Theta$ for any $(I,J,K)\in I_3(\sP)$.  In fact, as remarked in~\cite{bar,mcr} one does not need to use graded $R$-module braids, but only a collection of long exact sequences given this definition of morphism.
\end{rem}

\begin{defn}
{\em
Given a fixed ring $R$, the \emph{category of graded $R$-module braids over $\sP$}, denoted by ${\bf GMB(\sP,\bRMod^\Z)}$, is the category of graded $R$-module braids and their morphisms.
}
\end{defn}

When $R$ may be understood from the context we refer to a graded $R$-module braid as a graded module braid. This terminology matches Franzosa~\cite{fran}. For the purposes of this paper, $R$ is a field and we work with $\bGMB$.  Implicit in~\cite[Proposition 2.7]{fran} is the description of a functor from $\mathfrak{H}\colon \bChB\to \bGMB$ which is the analogy of the homology functor.   

\begin{defn}
{\em 
A pair of chain complex braid morphisms $\Psi,\Phi\colon \scC\to \scC'$ are {\em $\sP$-braided homotopic} if there is a collection $\{\Gamma(I)\colon C(I)\to C'(I)\}_{I\in I(\sP)}$ of chain contractions such that for each $I$
\[
\Phi(I)-\Psi(I)=\Delta(I)\Gamma(I)+\Gamma(I)\Delta'(I).
\] 
The collection $\Gamma=\{\Gamma(I)\}_{I\in I(\sP)}$ is called a {\em $\sP$-braided chain homotopy}.   We write $\Psi\sim_\sP \Phi$ if $\Psi$ and $\Phi$ are $\sP$-braided homotopic. 
}
\end{defn}

\begin{prop}
The binary relation $\sim_\sP$ is a congruence relation on $\bChB$.
\end{prop}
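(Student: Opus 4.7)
The plan is to verify the two defining properties of a congruence relation: first, that $\sim_\sP$ is an equivalence relation on each hom-set $\Hom_{\bChB}(\scC,\scC')$, and second, that it is compatible with pre- and post-composition of morphisms in $\bChB$. Both will be reduced to the pointwise situation over each convex set $I \in I(\sP)$, where we can invoke the fact that ordinary chain homotopy is a congruence relation on $\bCh(\bVec)$ (Section~\ref{sec:prelims:HA}).

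First I would verify the equivalence relation axioms. Reflexivity follows by taking $\Gamma(I) = 0$ for every $I \in I(\sP)$. For symmetry, if $\Gamma = \{\Gamma(I)\}_{I\in I(\sP)}$ witnesses $\Psi \sim_\sP \Phi$, then $-\Gamma = \{-\Gamma(I)\}_{I \in I(\sP)}$ witnesses $\Phi \sim_\sP \Psi$. For transitivity, given $\Gamma$ witnessing $\Psi \sim_\sP \Phi$ and $\Gamma'$ witnessing $\Phi \sim_\sP \Xi$, the collection $\{\Gamma(I) + \Gamma'(I)\}_{I \in I(\sP)}$ witnesses $\Psi \sim_\sP \Xi$. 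Each of these constructions uses only that the standard relation $\sim$ on $\Hom_{\bCh(\bVec)}(C(I),C'(I))$ is an equivalence relation, applied separately for each $I$. The key observation here, which simplifies things considerably, is that a $\sP$-braided chain homotopy is defined \emph{pointwise}: there is no compatibility condition required between the chain contractions $\Gamma(I)$ for different convex sets $I$.

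Next I would verify compatibility with composition. Suppose $\Psi, \Phi \colon \scC \to \scC'$ satisfy $\Psi \sim_\sP \Phi$ via $\Gamma = \{\Gamma(I)\}$, and let $\Theta \colon \scC' \to \scC''$ and $\Omega \colon \scB \to \scC$ be morphisms of chain complex braids. For each $I \in I(\sP)$, we compute at level $I$:
\[
\Theta(I)\Phi(I) - \Theta(I)\Psi(I) = \Theta(I)\big(\Delta'(I)\Gamma(I) + \Gamma(I)\Delta(I)\big) = \Delta''(I)\Theta(I)\Gamma(I) + \Theta(I)\Gamma(I)\Delta(I),
\]
using that $\Theta(I)$ is a chain map. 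Thus $\{\Theta(I)\circ\Gamma(I)\}_{I \in I(\sP)}$ is a $\sP$-braided chain homotopy showing $\Theta\circ\Psi \sim_\sP \Theta\circ\Phi$. The analogous calculation with $\{\Gamma(I)\circ\Omega(I)\}_{I \in I(\sP)}$ shows $\Psi \circ \Omega \sim_\sP \Phi \circ \Omega$.

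Finally, I would briefly note that composition of braid morphisms is defined componentwise (i.e., $(\Theta\circ\Psi)(I) = \Theta(I)\circ\Psi(I)$), which is what makes the above computations valid, and that additivity (should one wish to record that $\sim_\sP$ is an additive congruence) follows from the standard fact that the sum of two chain homotopies realizes the sum of the differences at each $I$. I do not anticipate any genuine obstacle: the only thing to watch is that the proposed chain homotopies really are collections indexed over $I(\sP)$ with no further constraint to check, and the verifications are the standard ones for $\bCh(\bVec)$ applied separately on each $C(I)$.
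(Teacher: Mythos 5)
Your proof is correct and is exactly the routine componentwise verification the paper intends (the paper states this proposition without proof, treating it as elementary): equivalence-relation axioms via $0$, $-\Gamma$, and $\Gamma+\Gamma'$ on each $C(I)$, and compatibility with composition via $\{\Theta(I)\Gamma(I)\}$ and $\{\Gamma(I)\Omega(I)\}$, using that braid morphisms are chain maps componentwise and that composition in $\bChB$ is defined over each convex set. The only thing worth flagging is notational: the paper's displayed homotopy identity $\Phi(I)-\Psi(I)=\Delta(I)\Gamma(I)+\Gamma(I)\Delta'(I)$ has the source and target differentials in an order that does not typecheck for $\Gamma(I)\colon C(I)\to C'(I)$; you silently used the correct reading $\Delta'(I)\Gamma(I)+\Gamma(I)\Delta(I)$, which is the intended one.
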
 

\begin{defn}
{\em
Let $\scC,\scC'$ are chain complex braids.  A morphism of chain complex braids $\Phi\colon \scC\to \scC'$ is a {\em $\sP$-braided chain equivalence} if there is a $\sP$-braided chain map $\Psi\colon \scC'\to \scC$ such that $\Psi\Phi\sim_\sP \id_\scC$ and $\Phi\Psi\sim_\sP \id_\scC'$.   The \emph{homotopy category of chain complex braids over $\sP$}, which we denote $\bKB$, is the category whose objects are chain complex braids over $\sP$ and whose morphisms are $\sP$-braided chain homotopy equivalence classes of chain complex braid morphisms.  In other words, $\bKB$ is the quotient category $\bChB/\! \sim_\sP$ formed by defining the hom-sets via
 \[
 \Hom_{\bKB}(\scC,\scC') = \Hom_{\bChB}(\scC,\scC')\slash \! \sim_\sP,
 \]
 where $\sim_\sP$ is the braided homotopy equivalence relation. Denote by $q\colon \bChB\to \bKB$ the quotient functor which sends each chain complex braid over $\sP$ to itself and each chain complex braid morphism to its $\sP$-braided chain homotopy equivalence class.  It follows from the construction that two chain complex braids $\scC,\scC'$ are isomorphic in $\bKB$ if and only if $\scC,\scC'$ are $\sP$-braided chain equivalent.
}
\end{defn}

\begin{prop}\label{prop:braid:functor}
Let $\scC$ and $\scC'$ be chain complex braids over $\sP$.  If $\scC,\scC'$ are braided chain equivalent then $\mathfrak{H}(\scC)\cong \mathfrak{H}(\scC')$.  In particular, there is a functor
\[
\mathfrak{H}_\bK\colon \bKB \to \bGMB,
\]
 that sends braided chain equivalences to graded module braid isomorphisms.
\end{prop}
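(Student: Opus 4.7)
The plan is to reduce the statement to the universal property of the quotient category $\bKB = \bChB/\!\sim_\sP$ recalled in Proposition~\ref{prelims:cats:quotient}. Given the functor $\mathfrak{H}\colon \bChB\to\bGMB$ (the braid analogue of the homology functor), what we need to verify is that $\mathfrak{H}$ respects the congruence $\sim_\sP$, i.e., if $\Psi\sim_\sP \Phi$ then $\mathfrak{H}(\Psi)=\mathfrak{H}(\Phi)$. Once this is established, Proposition~\ref{prelims:cats:quotient} produces a unique functor $\mathfrak{H}_\bK\colon \bKB\to\bGMB$ with $\mathfrak{H}_\bK\circ q = \mathfrak{H}$. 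Any functor sends isomorphisms to isomorphisms, and by construction of $\bKB$ a braided chain equivalence $\Phi\colon\scC\to\scC'$ descends to an isomorphism $q(\Phi)$ in $\bKB$; hence $\mathfrak{H}_\bK(q(\Phi))=\mathfrak{H}(\Phi)$ is an isomorphism of graded module braids, which gives $\mathfrak{H}(\scC)\cong\mathfrak{H}(\scC')$.

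First I would unpack the definition of $\mathfrak{H}$ on morphisms: for a chain complex braid morphism $\Psi=\{\Psi(I)\}_{I\in I(\sP)}$, the homomorphism $\mathfrak{H}(\Psi)$ is the collection $\{H_\bullet(\Psi(I))\}_{I\in I(\sP)}$ of induced maps on homology of each convex-set-indexed chain complex. Suppose $\Psi\sim_\sP\Phi$ with $\sP$-braided chain homotopy $\Gamma=\{\Gamma(I)\}$, so that for every $I\in I(\sP)$
\[
\Phi(I)-\Psi(I)=\Delta'(I)\Gamma(I)+\Gamma(I)\Delta(I).
\]
This says each $\Gamma(I)$ is a chain homotopy between $\Psi(I)$ and $\Phi(I)$ in $\bCh(\bVec)$. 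By the classical fact (recorded earlier in the excerpt) that chain homotopic maps induce identical maps on homology, we get $H_\bullet(\Psi(I))=H_\bullet(\Phi(I))$ for every $I$. Therefore $\mathfrak{H}(\Psi)=\mathfrak{H}(\Phi)$ as morphisms of graded module braids, which is exactly the hypothesis needed for Proposition~\ref{prelims:cats:quotient}.

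Invoking Proposition~\ref{prelims:cats:quotient} then yields a unique functor $\mathfrak{H}_\bK\colon\bKB\to\bGMB$ with $\mathfrak{H}_\bK\circ q=\mathfrak{H}$. To finish, I would note that if $\Phi\colon\scC\to\scC'$ is a $\sP$-braided chain equivalence with homotopy inverse $\Psi$, then $q(\Phi)$ is an isomorphism in $\bKB$ with inverse $q(\Psi)$; applying the functor $\mathfrak{H}_\bK$ produces mutually inverse morphisms $\mathfrak{H}(\Phi)$ and $\mathfrak{H}(\Psi)$ between $\mathfrak{H}(\scC)$ and $\mathfrak{H}(\scC')$.

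The only step that requires any real care is the verification that $\mathfrak{H}(\Psi)=\mathfrak{H}(\Phi)$ is genuinely a morphism of graded module braids, i.e., that it intertwines not only the induced maps $i,p$ but also the connecting homomorphisms $\partial$ appearing in the long exact sequences of each adjacent pair $(I,J)\in I_2(\sP)$. This is however immediate from naturality of the connecting homomorphism applied to the chain map between the weakly exact sequences
\[
C(I)\to C(IJ)\to C(J)\quad\text{and}\quad C'(I)\to C'(IJ)\to C'(J),
\]
which is implicit in the definition of $\mathfrak{H}$ as a functor $\bChB\to\bGMB$ already quoted from~\cite{fran}. Thus the main obstacle is essentially bookkeeping rather than substantive: one must be careful that $\sim_\sP$ is defined so as to give a genuine chain homotopy in each convex set separately, which the definition does.
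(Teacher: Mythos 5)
Your proposal is correct, and it follows exactly the route the paper intends: the paper states this proposition without proof, treating it as immediate from the fact that each $\Gamma(I)$ is an ordinary chain homotopy (so chain homotopic maps induce equal maps on homology, convex set by convex set) together with the universal property of the quotient category from Proposition~\ref{prelims:cats:quotient}. Your added remark that one must check $\mathfrak{H}(\Psi)$ intertwines the connecting homomorphisms is the right point of care, and it is indeed covered by the naturality built into the definition of $\mathfrak{H}$ taken from Franzosa.
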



\subsection{Franzosa's Connection Matrix}

In this section we review Franzosa's definition of a connection matrix.
Let $\scC$ be a chain complex braid in $\bChB$.  
Historically, the connection matrix was introduced as a $\sP$-filtered (upper triangular) boundary operator $\Delta$ on the direct sum of homological Conley indices associated to the elements of $\sP$
\[
\Delta\colon \bigoplus_{p\in \sP} H_\bullet(C(p))\to  \bigoplus_{p\in \sP} H_\bullet(C(p))
\] 
that recovers the associated graded module braid $\mathfrak{H}(\scC)$.  See Definition~\ref{defn:cmt:cm} for the precise notion.   
$\Delta$ may be thought of as a matrix of linear maps $\{\Delta^{pq}\}$ and the identification with the matrix structure is the genesis of the phrase {\em connection matrix}.  

Recall that the functor $\fL$ of~\eqref{eqn:functor:LCh} is used to build an $\sO(\sP)$-filtered chain complex from $\sP$-graded complex.  The next results show that graded chain complexes can be used to build chain complex braids. First, recall the forgetful functor $u$, as well as the family of forgetful functors $\{\frU^I\}$ parameterized by the convex sets $I\in I(\sP)$ defined in Definition~\ref{def:grad:forget} (see also Section~\ref{sec:grad}).  For a $\sP$-graded chain complex $(C,\pi)$ and a convex set $I\subset \sP$, $\frU\circ \frU^I(C,\pi)=(C^I,\Delta^I)$ is a chain complex.  Given a $\sP$-filtered chain map $\phi\colon (C,\pi)\to (C',\pi)$, $\frU\circ \frU^I(\phi)$ is the chain map $\Phi^I=e^I\circ \phi\circ i^I\colon C^I\to C'^I$.

\begin{prop}[\cite{fran}, Proposition 3.4]\label{prop:fran:3.4}
Let $(C,\pi)$ be an $\sP$-graded chain complex.  The collection $\fB(C,\pi)$ consisting of the chain complexes $\{\frU\circ \frU^I(C,\pi)\}_{I\in (\sP)}$ and the natural chain maps $i(I,IJ)$ and $p(IJ,J)$ for each $(I,J)\in I_2(\sP)$ form a chain complex braid over $\sP$.  
\end{prop}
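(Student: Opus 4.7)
The plan is to verify the three chain complex braid axioms directly, with the key structural observation being the following adjacency-to-order-theory translation: for any adjacent pair $(I,J) \in I_2(\sP)$, the convex set $I$ is a down-set of the (convex) set $IJ$. Indeed, if $p\in I$, $q\in IJ$, and $q<p$, then $q\notin J$ (else $p\in I$, $q\in J$ with $q<p$ would violate adjacency), so $q\in I$. Because $\partial$ is $\sP$-filtered (Definition~\ref{def:graded:vs}, Proposition~\ref{prop:map:filtgrad}), the subspace $C^I\subseteq C^{IJ}$ is closed under $\Delta^{IJ}$, making $(C^I,\Delta^I)$ a subcomplex of $(C^{IJ},\Delta^{IJ})$. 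I then define $i(I,IJ)\colon C^I\hookrightarrow C^{IJ}$ to be the inclusion and $p(IJ,J)\colon C^{IJ}\twoheadrightarrow C^J$ to be the projection along the direct-sum decomposition $C^{IJ}=C^I\oplus C^J$.

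With these definitions, axiom (2a) is immediate and in fact holds more strongly than weakly exactly: $i$ is injective, $p\circ i=0$ since $I\cap J=\varnothing$, and the quotient differential induced on $C^{IJ}/C^I$ matches $\Delta^J$ under the vector-space identification $C^{IJ}/C^I\cong C^J$, so the induced map $C^{IJ}/\img(i)\to C^J$ is a chain isomorphism. For axiom (2b), when $(I,J)$ is incomparable both $(I,J)$ and $(J,I)$ are adjacent, whence both $I$ and $J$ are down-sets of $IJ$, $C^I$ and $C^J$ are both subcomplexes, and $C^{IJ}=C^I\oplus C^J$ splits as a direct sum of chain complexes; the composition $p(JI,I)\circ i(I,IJ)$ is the inclusion $C^I\hookrightarrow C^{IJ}$ followed by the projection $C^{IJ}\twoheadrightarrow C^I$, which is $\id_{C^I}$. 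For axiom (2c), each of the six complexes in the braid diagram for an adjacent triple $(I,J,K)$ is a direct sum of fibers $C^p$ indexed by a subset of $I\cup J\cup K$, and every arrow is the inclusion or projection determined by containment of index sets. Commutativity of each square reduces to the trivial statement that nested inclusions compose to an inclusion, nested projections to a projection, and a composition of an inclusion followed by a projection factors through the intersection of the relevant index sets.

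The main point requiring care is the down-set claim above; everything else is bookkeeping with $\sP$-graded direct sums. In particular, no homological perturbation or spectral-sequence machinery is needed, because weak exactness is realized here as honest short exactness with a splitting at the vector-space level. The only subtlety is to keep straight which projections are chain maps: the projection $C^{IJ}\twoheadrightarrow C^J$ is a chain map precisely because $I$ (not $J$) is a down-set of $IJ$, and one must use this asymmetry consistently when checking the squares of the braid diagram in axiom (2c).
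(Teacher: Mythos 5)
Your proof is correct; the paper gives no proof of this statement, citing it directly from Franzosa's Proposition 3.4, and your direct verification --- observing that adjacency makes $I$ a down-set of $IJ$, so that $C^I$ is a subcomplex, the projection onto $C^J$ is a chain map, and weak exactness holds as honest split short exactness --- is exactly the standard argument for $\sP$-filtered (upper-triangular) boundary operators. Nothing is missing.
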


\begin{prop}[\cite{atm}, Proposition 3.2]\label{prop:UTMap}
Let $(C,\pi)$ and $(C',\pi)$ be $\sP$-graded chain complexes.  If $\phi\colon (C,\pi) \to (C',\pi)$ is a $\sP$-filtered chain map then the collection $\{\frU\circ \frU^I(\phi)=\Phi^I\colon C^I\to C'^I\}_{I\in I(\sP)}$ is a chain complex braid morphism from $\fB(C,\pi)$ to $\fB(C',\pi)$.
\end{prop}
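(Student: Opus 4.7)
The statement decomposes into three verifications. First, each restriction $\Phi^I \colon C^I \to (C')^I$ must be a chain map with respect to the convex-restricted differentials $\Delta^I = e^I \circ \partial \circ \iota^I$ and $\Delta'^I$. Second, for every adjacent pair $(I,J) \in I_2(\sP)$ the square involving the natural inclusion $i(I,IJ)\colon C^I \hookrightarrow C^{IJ}$ must commute. Third, the analogous square involving the projection $p(IJ,J) \colon C^{IJ} \to C^J$ must commute. These are precisely the requirements in the definition of a chain complex braid morphism.

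For step one, the strategy is to pull everything back to the global identity $\phi \circ \partial = \partial' \circ \phi$ in $\bCh(\bVec)$ and then restrict to a convex $I$. The proposition following Definition~\ref{def:grad:forget} of the paper provides exactly what we need: for a convex $I \subset \sP$ and composable $\sP$-filtered maps $\psi, \phi$ one has $\psi^I \circ \phi^I = (\psi \circ \phi)^I$. Applying this to both sides of $\phi \circ \partial = \partial' \circ \phi$ yields $\phi^I \circ \partial^I = \partial'^I \circ \phi^I$, and after conjugation by $e^I$ and $\iota^I$ this becomes $\Phi^I \circ \Delta^I = \Delta'^I \circ \Phi^I$.

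For step two, we use that the adjacency condition forces $I$ to be a down-set of $IJ$, i.e., if $p \in I$, $q \in IJ$, and $q \leq p$ then $q \in I$. Chasing the square amounts to comparing $\Phi^{IJ}\circ \iota^I$ with $\iota^I \circ \Phi^I$, where $\iota^I \colon C^I \hookrightarrow C^{IJ}$ denotes the inclusion. Expanding both sides as sums of components $\phi^{qp}$ over $p \in I$, the down-set property of $I$ inside $IJ$ shows that the components with $q \in IJ \setminus I$ vanish, so the two sums agree. The projection square is dual: the adjacency condition equivalently says that $J$ is an up-set of $IJ$, and the same component-wise bookkeeping yields the identity $p(IJ,J) \circ \Phi^{IJ} = \Phi^J \circ p(IJ,J)$.

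The main technical obstacle is keeping the forgetful functors $\frU \circ \frU^I$ and the matrix-entry manipulations $\phi^{qp}$ straight while transferring information between $C$, $C^{IJ}$, $C^I$, and $C^J$. There is no serious difficulty once one invokes the convex composition identity, but care is required to confirm that the adjacency axioms really do enforce the down-set/up-set relationship of $I, J$ inside $IJ$; this is the one place where the hypothesis that $(I,J)$ is adjacent (not merely a pair of disjoint convex sets) is essential, and it is what allows the components of $\phi$ outside the relevant fiber to be discarded.
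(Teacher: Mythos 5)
The paper does not prove this proposition at all --- it is imported verbatim from the cited reference (Proposition 3.2 of that work), so there is no internal argument to compare against. Your plan is correct and is the natural way to supply the missing details: the chain-map property of each $\Phi^I$ follows from the identity $\psi^I\circ\phi^I=(\psi\circ\phi)^I$ for convex $I$ applied to $\phi\circ\partial=\partial'\circ\phi$, and your observation that adjacency of $(I,J)$ makes $I$ a down-set and $J$ an up-set of $IJ$ is exactly what kills the off-fiber components $\phi^{pq}$ in the two commuting squares (for the inclusion square, a component with $q\in I$, $p\in IJ\setminus I$, $p\leq q$ would violate the down-set property; dually for the projection square). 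Nothing is missing.
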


Propositions~\ref{prop:fran:3.4} and~\ref{prop:UTMap} describe a functor 
\begin{equation}
    \fB\colon \bChG\to \bChB.
\end{equation} 
That is, the functor $\fB$ is defined on objects as $
\fB(C,\pi) = \{C^I,\Delta^I\}_{I\in I(\sP)}$ together with the natural inclusion and projection maps.  Moreover, $\fB$ is defined on morphisms as
\[
\fB(\phi) = \{\Phi^I\colon C^I\to C'^I\}_{I\in I(\sP)}.
\]

\begin{prop}\label{prop:cmt:functor}
The functor $\fB\colon \bChG\to \bChB$ is additive.  Moreover $\fB$ induces a functor on homotopy categories 
\[
\fB_{\bK}\colon \bKG\to \bKB.
\]
\end{prop}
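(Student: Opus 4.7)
The plan is to verify the two assertions of the proposition separately, with the bulk of the work being the second claim.

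For additivity, note that $\fB$ is defined on a morphism $\phi\colon (C,\pi)\to (C',\pi)$ by the family $\{\Phi^I\}_{I\in I(\sP)}$, where $\Phi^I = e^I \circ \phi \circ \iota^I$. Since the inclusion $\iota^I$ and projection $e^I$ are fixed linear maps, the assignment $\phi \mapsto \Phi^I$ is itself linear for each convex $I$. Hence $\fB(\phi_1 + \phi_2)(I) = \Phi_1^I + \Phi_2^I = \fB(\phi_1)(I) + \fB(\phi_2)(I)$, so $\fB$ induces a group homomorphism on every hom-set, which is precisely the additivity condition.

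To show that $\fB$ descends to the homotopy categories, I will invoke Proposition~\ref{prelims:cats:quotient} applied to $q\circ \fB\colon \bChG \to \bKB$. The task then reduces to showing that if $\phi \sim_\sP \psi$ in $\bChG$, then $\fB(\phi) \sim_\sP \fB(\psi)$ in $\bChB$, for then $q\circ\fB$ factors uniquely through the quotient $q\colon \bChG \to \bKG$, yielding the desired functor $\fB_\bK$. Given a $\sP$-filtered chain homotopy $\gamma$ with $\phi - \psi = \gamma\partial + \partial'\gamma$, the natural candidate braided homotopy is the collection $\Gamma = \{\Gamma(I)\}_{I\in I(\sP)}$ defined by $\Gamma(I) := \gamma^I = e^I \circ \gamma \circ \iota^I$. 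Each $\Gamma(I)$ is a degree-1 map $C^I \to C'^I$, and the required identity
\[
\Phi^I - \Psi^I = \Delta'^I \circ \Gamma(I) + \Gamma(I) \circ \Delta^I
\]
is obtained by applying the forgetful functor $\frU \circ \frU^I$ to the chain homotopy equation and using the key fact that, for convex $I$, composition of $\sP$-filtered maps satisfies $(fg)^I = f^I g^I$. This expands to $(\phi - \psi)^I = (\gamma\partial)^I + (\partial'\gamma)^I = \gamma^I \partial^I + \partial'^I \gamma^I$, which is the homotopy relation at stage $I$.

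The main (mild) obstacle here is conceptual: one must invoke the convex-set compatibility $(fg)^I = f^I g^I$, which is precisely what makes the restriction $\frU \circ \frU^I$ behave as an additive functor on the convex-set subcategory, and fails for more general subsets of $\sP$. Once this is in hand, verifying that $\Gamma$ is a genuine braided chain homotopy --- i.e., respects all inclusion and projection maps $i(I,IJ), p(IJ,J)$ --- is automatic, since these maps are themselves obtained by applying the functor $\fB$ to the identity structure maps of $(C,\pi)$ and $(C',\pi)$, and $\fB$ has already been shown to be a well-defined functor on morphisms by Proposition~\ref{prop:UTMap}. Applying Proposition~\ref{prelims:cats:quotient} then produces the unique induced functor $\fB_\bK\colon \bKG \to \bKB$ satisfying $\fB_\bK \circ q = q \circ \fB$.
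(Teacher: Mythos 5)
Your proposal is correct and is exactly the argument the paper intends (it omits the proof as routine): additivity follows from linearity of $\phi\mapsto e^I\circ\phi\circ\iota^I$, and the induced functor comes from restricting a $\sP$-filtered chain homotopy $\gamma$ to $\Gamma(I)=\gamma^I$ on each convex $I$, using $(\psi\circ\phi)^I=\psi^I\circ\phi^I$ for convex $I$ and then factoring through the quotient via Proposition~\ref{prelims:cats:quotient}. Note only that your final compatibility check of $\Gamma$ with the maps $i(I,IJ)$ and $p(IJ,J)$ is not actually required, since the paper's definition of $\sP$-braided homotopy demands only the per-$I$ homotopy identity.
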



We can now state Franzosa's definition of connection matrix.  In brief, this is a $\sP$-graded chain complex capable of reconstructing the appropriate graded module braid.

\begin{defn}[\cite{fran}, Definition 3.6]\label{defn:cmt:cm}
{\em
Let $\scG$ be a graded module braid over $\sP$ and $(C,\pi)$ be a $\sP$-graded chain complex.  The boundary operator $\partial$ of $(C,\pi)$ is called a {\em C-connection matrix for} $\scG$ if
\begin{enumerate}
    \item There is an isomorphism of graded module braids \begin{align}\label{eqn:defn:cm}
\mathfrak{H}\circ \fB(C,\pi)\cong \scG.
\end{align}
    \item  If, in addition, $(C,\pi)$ is \cyclic{}, i.e., $\partial^{pp}=0$ for all $p\in \sP$, then $\partial$ is a {\em connection matrix (in the sense of Franzosa) for} $\scG$.
\end{enumerate}
}
\end{defn}


In light of Definition~\ref{defn:cmt:cm}, the connection matrix is an efficient codification of data which is capable of recovering the braid $\scG$.  In Conley theory, a graded module braid $\scG$ over $\sJ(\sL)$ is derived from a index lattice.   In this way the connection matrix is a graded object (over $\sJ(\sL)$) capable of recovering (up to homology) the data of the index lattice.  Moreover, both the chain complex braid $\scC = \fB(C,\pi)$ and the graded module braid $\scG' = \mathfrak{H}\circ \fB(C,\pi)$ associated to the connection matrix are simple objects in their appropriate categories.  Observe that 
\begin{enumerate}
    \item For $\scC$ we have $C(I) = \bigoplus_{p\in I} C^p$ for all $I\in I(\sP)$.  
    \item For the graded module braid $\scG'$ if $[\alpha] \in G(I)$ then $\partial(J,I)([\alpha]) = [\Delta^{J,I}(\alpha)]$ from~\cite[Proposition 3.5]{fran}. 
\end{enumerate}
  
The next result is one of Franzosa's theorems on existence of connection matrices, written in our terminology.  As Franzosa works with $R$-modules, instead of vector spaces as we do, he assumes the chain complexes consist of free $R$-modules.
 
\begin{thm}[\cite{fran}, Theorem 4.8]
Let $\scC$ be a chain complex braid over $\sP$.  Let $\{B^p\}_{p\in \sP}$ be a collection of free chain complexes such that $H(B^p) \cong H(C(p))$ and set $B=\bigoplus_{p\in \sP} B^p$.  There exists a $\sP$-filtered boundary operator $\Delta$ so that  $(B,\pi)$, where $\pi= \{\pi^p\colon B\to B^p\}_{p\in \sP}$, is a $\sP$-graded chain complex.  Moreover, there exists a morphism of chain complex braids  $\Psi\colon \scB  \to \scC$ where $\scB= \fB(B,\pi)$ such that $\mathfrak{H}(\Psi)$ is a graded module braid isomorphism.
\end{thm}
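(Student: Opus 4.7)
\bigskip
\noindent\textbf{Proof proposal.} The plan is to build $\Delta$ and the morphism $\Psi$ simultaneously by induction along a linear extension $p_1 < p_2 < \cdots < p_N$ of the partial order on $\sP$. For each $k$, let $a_k := \{p_1,\ldots,p_k\} \in \sO(\sP)$, so that each $a_k$ is a down-set and $a_k = a_{k-1} \cup \{p_k\}$ with $p_k$ maximal in $a_k$. The inductive hypothesis at stage $k$ is:
(i) a $\sP$-filtered boundary operator $\Delta^{(k)}$ on $B^{(k)} := \bigoplus_{i\le k} B^{p_i}$ making $(B^{(k)},\pi)$ a $\sP$-graded chain complex (with the grading supported on $a_k$);
(ii) a chain map $\Psi^{(k)}(a_k)\colon B^{(k)} \to C(a_k)$ that is $\sO(a_k)$-filtered, in the sense that for each $I\in I(a_k)$ the restriction/projection $\Phi^I := e^I \circ \Psi^{(k)}(a_k)\circ \iota^I$ defines a chain map $B^I \to C(I)$; and
(iii) each $\Phi^I$ induces an isomorphism $H(\Phi^I)\colon H(B^I)\xrightarrow{\cong} H(C(I))$.

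\bigskip
\noindent For the base case $k=1$, $p_1$ is minimal, so $B^{(1)} = B^{p_1}$, $\Delta^{(1)} = \partial^{B^{p_1}}$, and by hypothesis $H(B^{p_1}) \cong H(C(p_1))$; a chain map $\Psi^{(1)}\colon B^{p_1}\to C(p_1)$ inducing this isomorphism exists because $B^{p_1}$ is free. For the inductive step, the weakly exact sequence
\begin{equation*}
C(a_{k-1}) \xrightarrow{i} C(a_k) \xrightarrow{p} C(p_k)
\end{equation*}
provided by the chain complex braid gives a long exact sequence in homology with connecting homomorphism $\partial\colon H(C(p_k))\to H(C(a_{k-1}))$. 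Using $H(B^{p_k})\cong H(C(p_k))$ and the fact that $H(\Psi^{(k-1)}(a_{k-1}))$ is an iso (by (iii)), we transport $\partial$ to a degree $-1$ map $\bar\Delta\colon H(B^{p_k})\to H(B^{(k-1)})$. Because $B^{p_k}$ is free, $\bar\Delta$ lifts to a chain map $\Delta^{p_k}\colon B^{p_k}\to B^{(k-1)}$ of degree $-1$; define $\Delta^{(k)}$ on $B^{(k)} = B^{(k-1)}\oplus B^{p_k}$ by the matrix $\begin{pmatrix}\Delta^{(k-1)} & \Delta^{p_k} \\ 0 & \partial^{B^{p_k}}\end{pmatrix}$. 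Filteredness with respect to $\sP$ follows because $\Delta^{p_k}$ lands in the piece indexed by $a_{k-1}$, which is below $p_k$ in the linear extension; however, to ensure $\sP$-filteredness one must check that components $\Delta^{q,p_k}$ vanish for $q\not\le p_k$ in $\sP$, which is arranged by restricting the lift to the summands indexed by predecessors of $p_k$ in $\sP$ and using that the images of the transported connecting map in $H(B^q)$ for $q\not\le p_k$ are forced to be zero by the braid diagram. Finally, extend $\Psi^{(k-1)}(a_{k-1})$ to $\Psi^{(k)}(a_k)$ by using the lifting property of the chain equivalence $B^{p_k}\to C(p_k)$ through $p\colon C(a_k)\to C(p_k)$ (again using freeness of $B^{p_k}$), adjusting by an element of $C(a_{k-1})$ to achieve commutativity with differentials.

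\bigskip
\noindent At stage $k=N$, we have $\Delta := \Delta^{(N)}$ and a morphism $\Psi(\sP)\colon B\to C(\sP)$. For a general convex set $I\in I(\sP)$, the required component $\Psi(I)$ is defined as $\Phi^I$. That the collection $\{\Psi(I)\}_{I\in I(\sP)}$ assembles into a morphism of chain complex braids $\Psi\colon \fB(B,\pi)\to \scC$ follows from naturality of the inclusions and projections: for any adjacent pair $(I,J)\in I_2(\sP)$ the squares commute by construction of $\fB$ (Proposition~\ref{prop:fran:3.4}) and by the braid axioms. The induced map $\mathfrak{H}(\Psi)$ is an isomorphism on each $G(I)$ by (iii), applied on $I\subseteq \sP$ (which follows by the same inductive argument run over $I$ using the restricted braid structure), and the five lemma guarantees compatibility with the long exact sequences in the graded module braid, hence $\mathfrak{H}(\Psi)$ is an isomorphism of graded module braids.

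\bigskip
\noindent The main obstacle is the inductive step: verifying that the components $\Delta^{q,p_k}$ of the lifted differential can actually be chosen to vanish for $q\not\le p_k$, so that $\Delta$ is genuinely $\sP$-filtered (and not merely upper-triangular with respect to the linear extension). This is where the braid structure enters essentially: one must use the commutative braid diagram for incomparable pairs, together with the fact that $p(JI,I)i(I,IJ)=\id$ for incomparable $(I,J)$, to show that the connecting homomorphism lands in the correct summand of $H(B^{(k-1)})$ when projected. A secondary but routine obstacle is the simultaneous construction of $\Delta$ and $\Psi$ in a compatible way; this is handled by carefully using the freeness of each $B^p$ to lift and adjust at each stage, an approach analogous to the standard obstruction-theoretic construction of chain-level lifts of homology classes.
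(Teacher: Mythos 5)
First, a point of reference: the paper does not prove this statement. It is imported from Franzosa (his Theorem 4.8), restated in the paper's terminology, with only the remark that Franzosa works with free $R$-modules where the paper uses vector spaces. So there is no in-paper argument to measure yours against; the relevant comparison is with Franzosa's original proof, whose broad shape --- an inductive construction, lifting connecting homomorphisms to the chain level using freeness, and the five lemma to propagate isomorphisms from singletons to all convex sets --- your sketch does follow.

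That said, your proposal contains a genuine gap, and it is precisely the one you flag as ``the main obstacle.'' Inducting along a linear extension $p_1 < \cdots < p_N$ only produces a differential that is upper triangular with respect to that total order; the theorem requires $\Delta^{qp}=0$ whenever $q \not\le p$ in $\sP$ itself. Your proposed remedy --- restricting the lift to summands indexed by predecessors of $p_k$ and asserting that the offending components ``are forced to be zero by the braid diagram'' --- is stated but not proved, and as written it does not quite parse: the codomain of the transported connecting homomorphism is $H(B^{(k-1)},\Delta^{(k-1)})$, which does \emph{not} decompose as $\bigoplus_{i<k}H(B^{p_i})$ once $\Delta^{(k-1)}$ has off-diagonal entries, so there is no canonical ``component in $H(B^q)$'' whose vanishing you can invoke. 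Closing this is where Franzosa's argument does its real work: he inducts over adjacent decompositions $(I,J)$ of convex sets rather than over a single linear extension, and uses the incomparability axiom $p(JI,I)\,i(I,IJ)=\id$ together with chain-level (not homology-level) bookkeeping to control where the lifts land. A secondary unclosed step is the simultaneous extension of $\Psi$: lifting $B^{p_k}\to C(p_k)$ through $p\colon C(a_k)\to C(p_k)$ needs surjectivity on the relevant cycles (weak exactness is only a statement about homology), and the ``adjustment by an element of $C(a_{k-1})$'' must achieve $\Psi^{(k)}\circ\Delta^{(k)}=\partial\circ\Psi^{(k)}$ on the new summand, which couples back to the choice of $\Delta^{p_k}$. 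Neither issue refutes the strategy, but as written the argument is a plan for a proof rather than a proof.
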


Here is a simple application of Franzosa's theorem.  Let $\scC$ be a chain complex braid.  Choose $B = \{C(p)\}_{p\in P}$.  The theorem says that there exists a $\sP$-graded chain complex $(B,\pi)$, where $B=\bigoplus_{p\in\sP} C(p)$, and a morphism of chain complex braids  $\Psi\colon \fB(B,\pi)\to \scC$ that induces an isomorphism on graded module braids.  Therefore for any chain complex braid over $\sP$ there is a simple representative -- one coming from a $\sP$-graded chain complex $(B,\pi)$ -- that is quasi-isomorphic to $\scC$ (in the sense that there is a morphism $\Psi$ of chain complex braids that induces an isomorphism on graded module braids).  In the case when one works with fields the homology $H(C(p))$ of each chain complex $C(p)$ is a $\Z$-graded vector space (see Definition~\ref{def:homology}).  Therefore we may choose $B=\{H_\bullet(C(p))\}_{p\in \sP}$.  Invoking the theorem gives a $\sP$-graded chain complex $(B,\pi)$ such that 
\[
\Delta\colon \bigoplus_{p\in \sP} H_\bullet(C(p))\to \bigoplus_{p\in \sP} H_\bullet(C(p)).
\] 
 In our terminology this implies that $(B,\pi)$ is a Conley complex and $\Delta$ is a connection matrix, both in the sense of our definition of connection matrix (Definition~\ref{def:grad:cm}) and of Definition~\ref{defn:cmt:cm} of Franzosa.

 \begin{rem}

The classical definition of the connection matrix (Definition~\ref{defn:cmt:cm}) does not involve a chain equivalence.  In particular, the connection matrix is not associated to a representative of a chain equivalence class.  In fact, in Franzosa's definition the isomorphism of~\eqref{eqn:defn:cm} is not required to be induced from a chain complex braid morphism.  
\end{rem}

\begin{defn}\label{defn:braids:assign}
{\em
Let $(\cX,\nu)$ be a $\sP$-graded cell complex.   The preimage of each convex set $\cX^I:=\nu^{-1}(I)$ is a convex set in $(\cX,\leq,\kappa,\dim)$.  Therefore each $\cX^I = (\cX^I,\leq^I,\kappa^I,\dim^I)$ is a cell complex where $(\leq^I,\kappa^I,\dim^I)$ are the restrictions to $\cX^I$.  This implies that each  $(C_\bullet(\cX^I),\partial|_{\cX^I})$ is a chain complex.  A routine computation shows that the collection $$\{(C_\bullet(\cX^I),\partial|_{\cX^I})\}_{I\in I(\sP)}$$ satisfies the axioms of a chain complex braid over $\sP$, and that this is precisely the image of $(\cX,\nu)$ under the composition $$\Cell(\sP)\xrightarrow{\cC} \bChG \xrightarrow{\fB} \bChB.$$
The composition defines an assignment $\cB\colon \Cell(\sP)\to \bChB$. 
}
\end{defn}

\begin{thm}\label{thm:braid:cm}
Let $(\cX,\nu)$ be a $\sP$-graded cell complex, $(C(\cX),\pi^\nu)$ be the associated $\sP$-graded chain complex and $\scG = \frH(\cB(\cX,\nu))$ be the associated graded module braid.  If $(C',\pi)$ is a Conley complex for $(C(\cX),\pi^\nu)$ then $\partial'$ is a connection matrix (in the sense of Franzosa,  Definition~\ref{defn:cmt:cm}) for $\scG$.
\end{thm}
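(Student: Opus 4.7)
The plan is to unpack Definition~\ref{defn:cmt:cm} and verify its two clauses by pushing the defining isomorphism of a Conley complex through the chain of functors $\fB$ and $\frH$ set up earlier in the paper. Since a Conley complex $(C',\pi)$ is by Definition~\ref{def:grad:cm} already \cyclic{}, clause (2) of Franzosa's definition (strict upper triangularity of $\partial'$) holds immediately. It remains only to establish the graded module braid isomorphism $\frH\circ\fB(C',\pi) \cong \scG$.

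First I will rewrite the target graded module braid using Definition~\ref{defn:braids:assign}, which identifies the assignment $\cB$ with the composition $\fB\circ\cC$. This gives
\[
\scG \;=\; \frH(\cB(\cX,\nu)) \;=\; \frH\circ\fB\circ\cC(\cX,\nu) \;=\; \frH\circ\fB(C(\cX),\pi^\nu),
\]
so the desired identity reduces to $\frH\circ\fB(C',\pi)\cong \frH\circ\fB(C(\cX),\pi^\nu)$.

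Next I will invoke the Conley-complex hypothesis, which provides an isomorphism $(C',\pi)\cong (C(\cX),\pi^\nu)$ in $\bKG$; that is, the two graded complexes are $\sP$-filtered chain equivalent. Proposition~\ref{prop:cmt:functor} gives an induced functor $\fB_\bK\colon \bKG\to \bKB$, which carries this isomorphism to an isomorphism $\fB_\bK(C',\pi)\cong \fB_\bK(C(\cX),\pi^\nu)$ in $\bKB$, i.e., a $\sP$-braided chain equivalence of the associated chain complex braids. Finally, Proposition~\ref{prop:braid:functor} supplies the functor $\frH_\bK\colon \bKB\to \bGMB$ which sends braided chain equivalences to graded module braid isomorphisms, so applying $\frH_\bK$ yields the required isomorphism in $\bGMB$.

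There is essentially no obstacle here beyond bookkeeping: the theorem is a direct consequence of the fact that the chain of constructions $\fB$ and $\frH$ passes to the homotopy categories, which was already arranged in Section~\ref{sec:CMT}. The only point worth being explicit about is the naturality of the identification $\scG=\frH\circ\fB(C(\cX),\pi^\nu)$, which is built into Definition~\ref{defn:braids:assign} via the factorization $\cB=\fB\circ \cC$. With that in hand, the two conditions of Definition~\ref{defn:cmt:cm} are satisfied, so $\partial'$ is a connection matrix for $\scG$ in the sense of Franzosa.
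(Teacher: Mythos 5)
Your proposal is correct and follows essentially the same route as the paper's own proof: both use the fact that the Conley complex is $\sP$-filtered chain equivalent to $(C(\cX),\pi^\nu)$, push this through $\fB$ (Proposition~\ref{prop:cmt:functor}) to get a $\sP$-braided chain equivalence, apply $\frH$ via Proposition~\ref{prop:braid:functor}, and identify $\scG$ with $\frH\circ\fB(C(\cX),\pi^\nu)$ from the definition of $\cB$. Your explicit note that strictness of $\partial'$ is immediate from Definition~\ref{def:grad:cm} is a harmless addition the paper leaves implicit.
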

\begin{proof}
By definition $(C,\pi)$ and $(C',\pi)$ are $\sP$-filtered chain equivalent.  It follows from Proposition~\ref{prop:cmt:functor} that the associated chain complex braids $\fB(C',\pi)$ and $\fB(C,\pi)$ are $\sP$-braided chain equivalent.   Then 
\[
\frH\circ \fB(C',\pi)\cong \frH\circ \fB(C,\pi)=\scG,
\]
where the first isomorphism follows from Proposition~\ref{prop:braid:functor} and the equality follows from the definition of $\cB$.
\end{proof}

Theorem~\ref{thm:braid:cm} implies that one may do homotopy-theoretic computations within the category $\bChG$ in order to compute connection matrices in the classical sense of Definition~\ref{defn:cmt:cm}.   At this point in the paper we refer the reader back to the full Diagram~\eqref{dia:concept}, which encapsulates much of the machinery introduced so far in Part II of the paper.  Most importantly, taken together Theorems~\ref{thm:filt:cm} and~\ref{thm:braid:cm} imply that if one finds a \cyclic{} $\sP$-graded chain complex $(C',\pi)$  that is $\sP$-filtered chain equivalent to the given $(C(\cX),\pi^\nu)$, then 
\begin{enumerate}
\item One can construct a \cyclic{} $\sL$-filtered chain complex, $\fL(C',\pi)$, which is chain equivalent to the associated lattice-filtered complex $(C(\cX),f^\nu)$. 
\item $\partial'$ is a connection matrix in the classical sense of Definition~\ref{defn:cmt:cm} for the associated graded-module braid $\mathfrak{H}(\cB(\cX,\nu)$.
\end{enumerate}

Therefore taken together Theorems~\ref{thm:braid:cm} and~\ref{thm:filt:cm}  imply that to compute connection matrices in both the sense of Franzosa~\cite{fran} and Robbin-Salamon~\cite{robbin:salamon2}, it suffices to find a Conley complex in $\bChGz$.

%
%


%


\section{Relationship to Persistent Homology}\label{sec:PH}

Persistent homology is a quantitative method within applied algebraic topology and the most popular tool of topological data analysis.  We give a brief outline, and refer the reader to~\cite{edelsbrunner:harer,oudot} and their references within for further details.  In this section we show that given an $\sL$-filtered chain complex, one can recover its persistent homology using a Conley complex and connection matrix; see Example~\ref{ex:PH}.  Persistent homology may be viewed as a family of functors, parameterized by pairs of elements $a,b\in \sL$ with $a\leq b$:
\[
\{PH_\bullet^{a,b}\colon \bChFL\to \bCh_0(\bVec)\}_{a\leq b}.
\] 
\begin{rem}
To be consistent with the literature of persistent homology, our notation is $PH_\bullet^{a,b}$ where $a\leq b$.  This is in contrast to our `matrix' notation that runs through the rest of the paper.
\end{rem}

Let $a,b\in \sL$ with $a\leq b$. $PH_\bullet^{a,b}(-)$ is defined on objects as follows. Let $(C,f)$ be an $\sL$-filtered chain complex (see Section~\ref{sec:filt:ch}).  There is an inclusion of subcomplexes 
\begin{equation}\label{eqn:pers:sub}
    \iota^{a,b}\colon f(a)\hookrightarrow f(b) .
\end{equation}
Recall from Section~\ref{sec:prelims:HA} that we view homology as a functor $H_\bullet\colon \bCh\to \bCh_0$.  Applying $H_\bullet$ to Eqn.~\eqref{eqn:pers:sub} yields a map $H_\bullet(\iota^{a,b})\colon H_\bullet(f(a))\to H_\bullet(f(b))$.  Then
\[
PH_\bullet^{a,b}\big[(C,f)\big] :=\img H_\bullet(\iota^{a,b})\in \bCh_0  .
\]

From this setup we can recover the standard persistence: for $j\in \Z$ the \emph{$j$-th persistent homology group of $a\leq b$} is the vector space
\[
PH_j^{a,b}\big[(C,f)\big] :=\img H_j(\iota^{a,b}).
\]
The \emph{$j$-th persistent Betti numbers} are the integers $$\beta_j^{a,b} = \dim\img(H_j(\iota^{a,b})).$$

 $PH_\bullet^{a,b}(-)$ is defined on morphisms as follows. Let $\phi\colon (C,f)\to (C',f')$ be an $\sL$-filtered chain map.  Since $\phi$ is $\sL$-filtered, $\phi$ restricts to chain maps $\phi^a\colon f(a)\to f'(a)$ and $\phi^b\colon f(b)\to f'(b)$, which fit into the following commutative diagram.
 \[
 \begin{tikzcd}[row sep = large]
 H_\bullet f(a)\ar[d,"H_\bullet(\phi^a)",swap] \ar[r,"H_\bullet(\iota^{a,b})"] & H_\bullet f(b) \ar[d,"H_\bullet(\phi^b)"] \\
 H_\bullet f'(a) \ar[r,"H_\bullet(\iota'^{a,b})"] & H_\bullet f'(b)
 \end{tikzcd}
 \]
As the diagram commutes, $H_\bullet(\phi^b)$ restricts to a map $H_\bullet(\phi^b) \colon \img H_\bullet(\iota^{a,b})\to \img H_\bullet(\iota'^{a,b})$, and
\[
PH_\bullet^{a,b}(\phi) := H_\bullet(\phi^b)\colon \img H_\bullet(\iota^{a,b})\to \img H_\bullet(\iota'^{a,b}).
\]

\begin{prop}\label{prop:PH:factor}
$PH_\bullet^{a,b}$ sends $\sL$-filtered chain equivalences to isomorphisms in $\bCh_0$.
\end{prop}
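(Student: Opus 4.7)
The plan is to show that the restrictions $\phi^a$ and $\phi^b$ are themselves chain equivalences, and then to deduce the claim by a quick diagram chase.

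First I would unwind the definition of $\sL$-filtered chain equivalence. Let $\phi\colon (C,f)\to (C',f')$ be an $\sL$-filtered chain equivalence, with $\sL$-filtered chain inverse $\psi$ and $\sL$-filtered chain homotopies $\gamma\colon C\to C$ and $\gamma'\colon C'\to C'$ satisfying $\psi\phi - \id_C = \gamma\partial + \partial\gamma$ and $\phi\psi - \id_{C'} = \gamma'\partial' + \partial'\gamma'$. Because every map in sight is $\sL$-filtered, for each $c\in \sL$ all of these maps restrict to the subcomplexes $f(c)\subseteq C$ and $f'(c)\subseteq C'$. In particular, restricting to $c=a$ and $c=b$ one obtains chain maps $\phi^c\colon f(c)\to f'(c)$ and $\psi^c\colon f'(c)\to f(c)$ together with chain homotopies $\gamma^c$ and $\gamma'^c$ witnessing that $\phi^c$ is an ordinary chain equivalence between $f(c)$ and $f'(c)$.

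Next, I would apply Proposition~\ref{prop:prelim:chiso} to conclude that for each such $c$ the induced map $H_\bullet(\phi^c)\colon H_\bullet f(c) \to H_\bullet f'(c)$ is an isomorphism. The naturality square
\[
\begin{tikzcd}[row sep=normal]
H_\bullet f(a) \ar[r,"H_\bullet(\iota^{a,b})"] \ar[d,"H_\bullet(\phi^a)",swap] & H_\bullet f(b) \ar[d,"H_\bullet(\phi^b)"] \\
H_\bullet f'(a) \ar[r,"H_\bullet(\iota'^{a,b})"] & H_\bullet f'(b)
\end{tikzcd}
\]
then yields, upon taking images, that $H_\bullet(\phi^b)$ restricts to the map $PH_\bullet^{a,b}(\phi)\colon \img H_\bullet(\iota^{a,b})\to \img H_\bullet(\iota'^{a,b})$.

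Finally, I would verify bijectivity of this restricted map by a two-line chase. Injectivity is immediate because $H_\bullet(\phi^b)$ is an isomorphism, so its restriction to any subspace is injective. For surjectivity, use commutativity of the square together with surjectivity of $H_\bullet(\phi^a)$ to compute
\[
H_\bullet(\phi^b)\bigl(\img H_\bullet(\iota^{a,b})\bigr) = H_\bullet(\iota'^{a,b})\bigl(H_\bullet(\phi^a)(H_\bullet f(a))\bigr) = H_\bullet(\iota'^{a,b})(H_\bullet f'(a)) = \img H_\bullet(\iota'^{a,b}).
\]
There is no real obstacle; the only subtle point is noting that $\sL$-filteredness of $\gamma$ and $\gamma'$ is what permits the restriction of the chain homotopies to $f(c)$, which in turn is what makes $\phi^c$ a chain equivalence in the usual sense. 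Once that is observed, the result is a functoriality statement combined with the standard fact that chain equivalences induce isomorphisms on homology.
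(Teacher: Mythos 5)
Your proof is correct and follows essentially the same route as the paper's: restrict $\phi$ (and its homotopy inverse and homotopies) to $f(a)$ and $f(b)$ to see that $\phi^a,\phi^b$ are chain equivalences, invoke Proposition~\ref{prop:prelim:chiso}, and conclude that the induced map on images is an isomorphism. You simply spell out the details (the restriction of the homotopies and the injectivity/surjectivity chase on the naturality square) that the paper leaves implicit.
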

\begin{proof}

Let $a,b\in \sL$ with $a\leq b$. Let $\phi\colon (C,f)\to (C',f')$ be an $\sL$-filtered chain equivalence.  Since $\phi$ is $\sL$-filtered, $\phi^a$ and $\phi^b$ are chain equivalences.  Proposition~\ref{prop:prelim:chiso} implies $H(\phi^a)$ and $H(\phi^b)$ are isomorphisms.  Thus $PH_\bullet^{a,b}(\phi)$ is an isomorphism.
\end{proof}

\begin{thm}\label{thm:PH:iso}
Let $(C,f)$ be a $\sL$-filtered chain complex.  Let $(C',\pi)$ be a Conley complex for $(C,f)$.  Then for all $j\in \Z$ and $a\leq b$ in $\sL$ 
\[
PH_\bullet^{a,b}\circ\fL(C',\pi)\cong PH_\bullet^{a,b}\big[(C,f)\big].
\]
\end{thm}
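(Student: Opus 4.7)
The plan is to derive this as a short consequence of Proposition~\ref{prop:PH:factor} together with the definition of Conley complex. The key observation is that ``Conley complex for $(C,f)$'' already packages the statement that $\fL(C',\pi)$ and $(C,f)$ are isomorphic in $\bKF$, and by construction of the homotopy category this is equivalent to their being $\sL$-filtered chain equivalent in $\bChFL$.

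First I would unwind the definition: $(C',\pi)$ is a Conley complex for $(C,f)$ exactly when $\fL(C',\pi)\cong (C,f)$ in $\bKF$. Since $\bKF$ is defined as the quotient $\bChFL/\!\sim_\sL$ and isomorphism in a homotopy category constructed this way coincides with chain equivalence in the underlying category (this is the standard fact recorded after Definition~\ref{def:prelim:HA:hocat} and made explicit in Proposition~\ref{prop:filt:cmiso} for the strict subcategory), there exist mutually inverse (up to $\sL$-filtered chain homotopy) $\sL$-filtered chain maps $\phi\colon \fL(C',\pi)\to (C,f)$ and $\psi\colon (C,f)\to \fL(C',\pi)$. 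In particular $\phi$ is an $\sL$-filtered chain equivalence.

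Next I apply Proposition~\ref{prop:PH:factor}, which states precisely that $PH_\bullet^{a,b}$ sends $\sL$-filtered chain equivalences to isomorphisms in $\bCh_0$. Applying it to $\phi$ for each pair $a\le b$ in $\sL$ yields the desired isomorphism
\[
PH_\bullet^{a,b}(\phi)\colon PH_\bullet^{a,b}\circ\fL(C',\pi) \xrightarrow{\ \cong\ } PH_\bullet^{a,b}\bigl[(C,f)\bigr].
\]

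No step poses a genuine obstacle, since the heavy lifting has already been done: the fact that $\fL$ is well-defined on $\bChG$ and induces a functor to $\bFVec(\sL)$ is Theorem~\ref{thm:filt:equiv}, and the fact that $PH_\bullet^{a,b}$ respects chain equivalences is Proposition~\ref{prop:PH:factor}. The only mild care needed is to verify, directly from Definition~\ref{def:grad:cm} combined with the definition of Conley complex for an $\sL$-filtered complex, that isomorphism in $\bKF$ does supply an actual $\sL$-filtered chain equivalence between $\fL(C',\pi)$ and $(C,f)$ in $\bChFL$, rather than just a homotopy class; this is immediate from the quotient construction of $\bKF$. Deducing the classical persistent homology groups and persistent Betti numbers as a corollary then follows by specializing to fixed $j\in\Z$.
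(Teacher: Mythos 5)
Your proof is correct and takes essentially the same approach as the paper: both arguments reduce to Proposition~\ref{prop:PH:factor} together with the definitional fact that a Conley complex for $(C,f)$ satisfies $\fL(C',\pi)\cong (C,f)$ in $\bKF$. The only cosmetic difference is that the paper factors $PH_\bullet^{a,b}$ through the quotient functor $q\colon \bChFL\to\bKF$ (via Proposition~\ref{prelims:cats:quotient}) and applies the induced functor to the isomorphism in $\bKF$, whereas you lift that isomorphism to an explicit $\sL$-filtered chain equivalence and apply $PH_\bullet^{a,b}$ directly; the two are interchangeable.
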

\begin{proof}
It follows from Proposition~\ref{prop:PH:factor} and Proposition~\ref{prelims:cats:quotient} that $PH_\bullet^{a,b}$ factors as $PH_\bK^{a,b}\circ q$, giving the following commutative diagram.
\[
\begin{tikzcd}
\bChFL\ar[dr,"q",swap]  \ar[rr,"PH_\bullet^{a,b}"] &&\bCh_0(\bVec)\\
& \bKF \ar[ur,"PH^{a,b}_\bK",swap]
\end{tikzcd}
\]
Since $(C',\pi)$ is a Conley complex for $(C,f)$, by definition we have that
\[
q\big[(C,f)\big] \cong q(\fL(C',\pi)) .
\]
It follows that
\[
PH_\bullet^{a,b}(C,f) = PH^{a,b}_\bK\circ q (C,f) \cong 
PH^{a,b}_\bK\circ q\circ \fL(C',\pi) = PH_\bullet^{a,b}\circ \fL(C',\pi).
\tag*{\qedhere}
\]
\end{proof}

\begin{rem}
As a corollary, all computational tools that tabulate the persistent homology groups, such as the persistence diagrams and barcodes (see~\cite{edelsbrunner:harer,oudot}), can be computed from the Conley complex.
\end{rem}


Let $\cX$ be a finite cell complex and $\sL$ a finite distributive lattice.  Suppose that $\{\cX^a\subset \cX\mid a\in \sL\}$ be an isomorphic lattice of subcomplexes.  Defining $f$ by taking $a\in \sL$ to $\cX^a\subset \cX$ yields a lattice morphism $f\colon \sL\to \Sub_{Cl}(\cX)$. Therefore  $(\cX,f)$ is an $\sL$-filtered cell complex.  Recall $\cL$ as defined in Definition~\ref{defn:filt:assoc}.

\begin{defn}
{\em
Let $\sL$ be a finite distributive lattice and $(\cX,f)$ be an $\sL$-filtered cell complex.  The \emph{persistent homology of $(\cX,f)$} is defined to be 
\[
PH_\bullet^{a,b}(\cX,f):= PH_\bullet^{a,b}\circ \cL(\cX,f).
\]
}
\end{defn}


\begin{proof}[Proof of Theorem~\ref{thm:PH}]

Define $f\colon \sL\to \Sub_{Cl}(\cX)$ as the lattice morphism $f(a)=\cX^a$ for $a\in \sL$.  Then $(\cX,f)$ is an $\sL$-filtered subcomplex.  Set $$M_\bullet=\bigoplus_{a\in \sJ(\sL)} H_\bullet(\cX^a,\cX^{\pred a}),$$
and define $g\colon \sL\to \Sub(M)$ as the lattice morphism 
\[
g(a) = M^a= \bigoplus_{\setof{b\in \sJ(\sL)\mid b\leq a}} H_\bullet(\cX^b,\cX^{\pred b}).
\]
Then $(M,g)$ is an $\sL$-filtered subcomplex.  We wish to show that $(M,g)$ and $(\cX,f)$ have the isomorphic persistent homology. 
By hypothesis $(M,\Delta,\pi)$ (where $\pi$ are the natural projections) is a Conley complex for $\cL(\cX,f)$ and $\Delta$ is a connection matrix for $\cL(\cX,f)$.   Moreover, from the definition of $g$ we have that $(M,g)=\fL(M,\Delta,\pi)$. Let $a,b\in \sL$ with $a\leq b$. We have that
\[
 PH_\bullet^{a,b}(M,g) = 
PH_\bullet^{a,b}(\fL(M,\Delta,\pi)) \cong
PH_\bullet^{a,b}(\cL(\cX,f)) = PH_\bullet^{a,b}(\cX,f),
\]
where the isomorphism follows from Theorem~\ref{thm:PH:iso}.
\end{proof}

\section*{Acknowledgements}

The work of SH and KM was partially supported by grants NSF-DMS-1248071, 1521771 and DARPA contracts HR0011-16-2-0033 and FA8750-17-C-0054, and NIH grant R01 GM126555-01. The work of KS was partially supported by the NSF Graduate Research Fellowship Program under grant DGE-1842213 and by EPSRC grant EP/R018472/1.  KS would like to thank Chuck Weibel for some very useful discussions regarding homological algebra and connection matrix theory.

\bibliographystyle{abbrv}
\bibliography{ref}

\begin{thebibliography}{10}

\bibitem{allili2019acyclic}
M.~Allili, T.~Kaczynski, C.~Landi, and F.~Masoni.
\newblock Acyclic partial matchings for multidimensional persistence: Algorithm
  and combinatorial interpretation.
\newblock {\em Journal of Mathematical Imaging and Vision}, 61(2):174--192,
  2019.

\bibitem{bar}
M.~Barakat and S.~Maier-Paape.
\newblock Computation of connection matrices using the software package conley.
\newblock {\em International Journal of Bifurcation and Chaos},
  19(09):3033--3056, 2009.

\bibitem{barnes:lambe}
D.~W. Barnes and L.~A. Lambe.
\newblock A fixed point approach to homological perturbation theory.
\newblock {\em Proceedings of the American Mathematical Society},
  112(3):881--892, 1991.

\bibitem{bubenik2014categorification}
P.~Bubenik and J.~A. Scott.
\newblock Categorification of persistent homology.
\newblock {\em Discrete \& Computational Geometry}, 51(3):600--627, 2014.

\bibitem{csz}
G.~Carlsson, G.~Singh, and A.~Zomorodian.
\newblock Computing multidimensional persistence.
\newblock {\em Journal of Computational Geometry}, 1(1), 2010.

\bibitem{conley:cbms}
C.~C. Conley.
\newblock {\em Isolated invariant sets and the {M}orse index}.
\newblock Number~38. American Mathematical Soc., 1978.

\bibitem{davey:priestley}
B.~A. Davey and H.~A. Priestley.
\newblock {\em Introduction to lattices and order}.
\newblock Cambridge university press, 2002.

\bibitem{edelsbrunner:harer}
H.~Edelsbrunner and J.~Harer.
\newblock {\em Computational topology: an introduction}.
\newblock American Mathematical Soc., 2010.

\bibitem{emac}
S.~Eilenberg and S.~Mac~Lane.
\newblock On the groups ${H}(\pi, n)$, i.
\newblock {\em Ann. of Math.(2)}, 58(1):55--106, 1953.

\bibitem{floer}
A.~Floer.
\newblock Witten's complex and infinite-dimensional {M}orse theory.
\newblock {\em Journal of differential geometry}, 30(1):207--221, 1989.

\bibitem{fran2}
R.~Franzosa.
\newblock Index filtrations and the homology index braid for partially ordered
  {M}orse decompositions.
\newblock {\em Transactions of the American Mathematical Society},
  298(1):193--213, 1986.

\bibitem{fran3}
R.~Franzosa.
\newblock The continuation theory for {M}orse decompositions and connection
  matrices.
\newblock {\em Transactions of the American Mathematical Society},
  310(2):781--803, 1988.

\bibitem{fran}
R.~Franzosa.
\newblock The connection matrix theory for {M}orse decompositions.
\newblock {\em Transactions of the American Mathematical Society},
  311(2):561--592, 1989.

\bibitem{atm}
R.~Franzosa and K.~Mischaikow.
\newblock Algebraic transition matrices in the {C}onley index theory.
\newblock {\em Transactions of the American Mathematical Society},
  350(3):889--912, 1998.

\bibitem{franzosa2017transition}
R.~Franzosa and E.~Vieira.
\newblock Transition matrix theory.
\newblock {\em Transactions of the American Mathematical Society},
  369(11):7737--7764, 2017.

\bibitem{gelfand}
S.~I. Gelfand and Y.~I. Manin.
\newblock {\em Methods of homological algebra}.
\newblock Springer Science \& Business Media, 2013.

\bibitem{gbmr}
A.~Gonzalez-Lorenzo, A.~Bac, J.-L. Mari, and P.~Real.
\newblock Allowing cycles in discrete {M}orse theory.
\newblock {\em Topology and its Applications}, 228:1--35, 2017.

\bibitem{focm}
S.~Harker, K.~Mischaikow, M.~Mrozek, and V.~Nanda.
\newblock Discrete {M}orse theoretic algorithms for computing homology of
  complexes and maps.
\newblock {\em Foundations of Computational Mathematics}, 14(1):151--184, 2014.

\bibitem{hms2}
S.~Harker, K.~Mischaikow, and K.~Spendlove.
\newblock Morse theoretic templates for high dimensional homology computation.
\newblock {\em arXiv preprint arXiv:2105.09870}, 2021.

\bibitem{cmcode}
S.~Harker and K.~Spendlove.
\newblock py{CH}om{P}: {C}omputational {H}omology {P}roject with {P}ython
  bindings, 2021.
\newblock \url{https://github.com/kellyspendlove/pyCHomP}.

\bibitem{host}
H.~A. Harrington, N.~Otter, H.~Schenck, and U.~Tillmann.
\newblock Stratifying multiparameter persistent homology.
\newblock {\em arXiv preprint arXiv:1708.07390}, 2017.

\bibitem{kkv}
W.~D. Kalies, D.~Kasti, and R.~Vandervorst.
\newblock An algorithmic approach to lattices and order in dynamics.
\newblock {\em SIAM Journal on Applied Dynamical Systems}, 17(2):1617--1649,
  2018.

\bibitem{kmv}
W.~D. Kalies, K.~Mischaikow, and R.~Vandervorst.
\newblock An algorithmic approach to chain recurrence.
\newblock {\em Foundations of Computational Mathematics}, 5(4):409--449, 2005.

\bibitem{lsa}
W.~D. Kalies, K.~Mischaikow, and R.~Vandervorst.
\newblock Lattice structures for attractors i.
\newblock {\em Journal of Computational Dynamics}, 1:307--338, 2014.

\bibitem{lsa2}
W.~D. Kalies, K.~Mischaikow, and R.~Vandervorst.
\newblock Lattice structures for attractors ii.
\newblock {\em Foundations of Computational Mathematics}, 16(5):1151--1191,
  2016.

\bibitem{kmv3}
W.~D. Kalies, K.~Mischaikow, and R.~Vandervorst.
\newblock Lattice structures for attractors iii.
\newblock {\em In preparation}, 2019.

\bibitem{koz}
D.~Kozlov.
\newblock {\em Combinatorial algebraic topology}, volume~21.
\newblock Springer Science \& Business Media, 2007.

\bibitem{lefschetz}
S.~Lefschetz.
\newblock {\em Algebraic topology}, volume~27.
\newblock American Mathematical Soc., 1942.

\bibitem{mac2013categories}
S.~Mac~Lane.
\newblock {\em Categories for the working mathematician}, volume~5.
\newblock Springer Science \& Business Media, 2013.

\bibitem{mcmodels}
C.~McCord.
\newblock Simplicial models for the global dynamics of attractors.
\newblock {\em Journal of Differential Equations}, 167(2):316--356, 2000.

\bibitem{scalar}
C.~McCord and K.~Mischaikow.
\newblock On the global dynamics of attractors for scalar delay equations.
\newblock {\em Journal of the American Mathematical Society}, 9(4):1095--1133,
  1996.

\bibitem{mcr}
C.~McCord and J.~Reineck.
\newblock Connection matrices and transition matrices.
\newblock {\em Banach Center Publications}, 47(1):41--55, 1999.

\bibitem{mil}
E.~Miller.
\newblock Data structures for real multiparameter persistence modules.
\newblock {\em arXiv preprint arXiv:1709.08155}, 2017.

\bibitem{mischaikow}
K.~Mischaikow.
\newblock Global asymptotic dynamics of gradient-like bistable equations.
\newblock {\em SIAM journal on mathematical analysis}, 26(5):1199--1224, 1995.

\bibitem{mn}
K.~Mischaikow and V.~Nanda.
\newblock {M}orse theory for filtrations and efficient computation of
  persistent homology.
\newblock {\em Discrete \& Computational Geometry}, 50(2):330--353, 2013.

\bibitem{braids}
K.~Mischaikow, K.~Spendlove, and R.~Vandervorst.
\newblock Morse pre-orders, connection matrices, and state transition models.
\newblock {\em Preprint.}, 2021.

\bibitem{mrozek}
M.~Mrozek.
\newblock Leray functor and cohomological {C}onley index for discrete dynamical
  systems.
\newblock {\em Transactions of the American Mathematical Society},
  318(1):149--178, 1990.

\bibitem{oudot}
S.~Y. Oudot.
\newblock {\em Persistence theory: from quiver representations to data
  analysis}, volume 209.
\newblock American Mathematical Society Providence, RI, 2015.

\bibitem{reineck1988connecting}
J.~F. Reineck.
\newblock Connecting orbits in one-parameter families of flows.
\newblock {\em Ergodic theory and dynamical systems}, 8(8*):359--374, 1988.

\bibitem{reineck}
J.~F. Reineck.
\newblock The connection matrix in {M}orse-{S}male flows.
\newblock {\em Transactions of the American Mathematical Society},
  322(2):523--545, 1990.

\bibitem{richeson}
D.~S. Richeson.
\newblock {\em Connection matrix pairs for the discrete {C}onley index}.
\newblock PhD thesis, Northwestern University, 1998.

\bibitem{robbin:salamon:1}
J.~W. Robbin and D.~Salamon.
\newblock Dynamical systems, shape theory and the {C}onley index.
\newblock {\em Ergodic Theory and Dynamical Systems}, 8(8*):375--393, 1988.

\bibitem{robbin:salamon2}
J.~W. Robbin and D.~A. Salamon.
\newblock Lyapunov maps, simplicial complexes and the stone functor.
\newblock {\em Ergodic Theory and Dynamical Systems}, 12(1):153--183, 1992.

\bibitem{roman}
S.~Roman.
\newblock {\em Lattices and ordered sets}.
\newblock Springer Science \& Business Media, 2008.

\bibitem{rubio:sergeraert}
J.~Rubio and F.~Sergeraert.
\newblock Constructive homological algebra and applications.
\newblock {\em arXiv preprint arXiv:1208.3816}, 2012.

\bibitem{salamon}
D.~Salamon.
\newblock Connected simple systems and the {C}onley index of isolated invariant
  sets.
\newblock {\em Transactions of the American Mathematical Society},
  291(1):1--41, 1985.

\bibitem{scaramuccia2018}
S.~Scaramuccia, F.~Iuricich, L.~De~Floriani, and C.~Landi.
\newblock Computing multiparameter persistent homology through a discrete
  morse-based approach.
\newblock {\em arXiv preprint arXiv:1811.05396}, 2018.

\bibitem{sclro}
M.~Scolamiero, W.~Chach{\'o}lski, A.~Lundman, R.~Ramanujam, and S.~{\"O}berg.
\newblock Multidimensional persistence and noise.
\newblock {\em Foundations of Computational Mathematics}, 17(6):1367--1406,
  2017.

\bibitem{sko}
E.~Sk{\"o}ldberg.
\newblock {M}orse theory from an algebraic viewpoint.
\newblock {\em Transactions of the American Mathematical Society},
  358(1):115--129, 2006.

\bibitem{sko2}
E.~Sk{\"o}ldberg.
\newblock Algebraic {M}orse theory and homological perturbation theory.
\newblock {\em arXiv preprint arXiv:1311.5803}, 2013.

\bibitem{smale}
S.~Smale.
\newblock Differentiable dynamical systems.
\newblock {\em Bulletin of the American mathematical Society}, 73(6):747--817,
  1967.

\bibitem{weibel}
C.~A. Weibel.
\newblock {\em An introduction to homological algebra}.
\newblock Number~38. Cambridge university press, 1995.

\end{thebibliography}

\end{document}